\documentclass[11pt]{amsart}
\usepackage[mathscr]{eucal}
\usepackage{palatino, mathpazo, amsfonts, mathrsfs, amscd}
\usepackage[all]{xy}
\usepackage{url}
\usepackage{amssymb, amsmath, amsthm}
\usepackage{stackrel}
\usepackage{bm}

\usepackage{tikz-cd}
\usepackage{tikz}
\usetikzlibrary{arrows}
%\usepackage{marginnote}

%\usepackage{showkeys}
%\usepackage[notref,notcite]{showkeys}

%\usepackage[
%colorlinks=true,
%linkcolor =cyan, 
%citecolor = cyan,
%]{hyperref}

\newtheorem{theorem}{Theorem}[section]
\newtheorem{lemma}[theorem]{Lemma}

\newtheorem{proposition}[theorem]{Proposition}

\theoremstyle{remark}
\newtheorem{remark}[theorem]{Remark}
\newtheorem{example}[theorem]{Example}
\newtheorem{definition}[theorem]{Definition}
\newtheorem{notation}[theorem]{Notation}

\numberwithin{equation}{subsection}
\usepackage{todonotes}
\newtheorem{assumption}[theorem]{Assumption}

\makeatletter
\def\imod#1{\allowbreak\mkern10mu({\operator@font mod}\,\,#1)}
\makeatother

\newcommand{\sslash}{\mathbin{/\mkern-6mu/}}

\newcommand{\spec}{\operatorname{Spec}}

  % mathscript
\newcommand{\cc}[1]{\mathcal{#1}}  % mathcal
  % mathcal

\newcommand{\CC}{\mathbb{C}}
\newcommand{\ZZ}{\mathbb{Z}}

\newcommand{\PP}{\mathbb{P}}
\newcommand{\QQ}{\mathbb{Q}}
\newcommand{\UU}{\mathbb{U}}

\newcommand{\sWbar}{\overline{\mathscr{W}}}
\newcommand{\sMbar}{\overline{\mathscr{M}}}

\newcommand{\bq}{\bm{q}}
\newcommand{\bg}{\mathbf{g}}

\newcommand{\bt}{\bm{t}}
\newcommand{\omlog}{\omega_{\cc C, \op{log}}}
\newcommand{\bs}{\bm{s}}

\newcommand{\ii}{\mathbb{1}}

\newcommand{\jj}{\mathfrak{j}}

\newcommand{\bv}[1]{\mathbf{#1}}

\newcommand{\dabsfact}[1]{\operatorname{D}(#1)}
\newcommand{\cX}{\cc X}
\newcommand{\cE}{\cc E}
\newcommand{\cXR}{{\cc Y_-/\CC^*_R}}
\newcommand{\cY}{\cc Y}

\newcommand{\cV}{\cc V}
\newcommand{\cYR}{{\cc Y_+/\CC^*_R}}
\newcommand{\cZ}{\cc Z}
\newcommand{\vgit}{\op{vGIT}_l}
\newcommand{\vgitz}{\op{vGIT}_0}
\newcommand{\orlov}{\op{Orlov}_l}
\newcommand{\orlovz}{\op{Orlov}_0}
\newcommand{\age}{\iota}
\newcommand{\sV}{\mathscr V}
\newcommand{\sL}{\mathscr L}

\DeclareMathOperator{\Gr}{Gr}
\DeclareMathOperator{\taut}{taut}

\DeclareMathOperator{\ch}{ch}

\DeclareMathOperator{\tot}{tot}

\newcommand{\br}[1]{\left\langle#1\right\rangle}  % angle brackets
\newcommand{\fl}[1]{\left\lfloor#1\right\rfloor_l}
\newcommand{\set}[1]{\left\{#1\right\}}  % a set
\newcommand{\op}[1]{\operatorname{#1}}

\setcounter{tocdepth}{1}

\title{Integral Transforms and Quantum Correspondences}
\author[Shoemaker]{Mark Shoemaker}
\address{
  \begin{tabular}{l}
   Mark Shoemaker \\
   \hspace{.1in} Colorado State University \\
      \hspace{.1in} Department of Mathematics \\
   \hspace{.1in} 1874 Campus Delivery, Fort Collins, CO, USA, 80523-1874\\
   \hspace{.1in} Email: {\bf mark.shoemaker@colostate.edu} \\
  \end{tabular}
}

\begin{document}
\maketitle
%NOTES:
%\begin{enumerate}
%
%\item CR cohomology, use this notation consistently, replace cs with c, do CR degree
%
%\item this ``induced by a functor'' stuff is really not adding much.  just use cohomology etc.... there is no need to bring HH into everything!!  This removes many items on your to-do list
%
%%
%\item must show by hand that Chern lands in the right spot from the category with support. Q: how is category with support usually defined?
%\item need to check that QDM is well defined on $\cY_-$, should be the same as total space??
%%https://arxiv.org/pdf/0801.3499.pdf Cor 3.7
%%
%%\item ACTUALLY YOU DO NOT NEED TO SHOW BY HAND: The fact that $D^b(BG)$ strongly generates $D^b(C^N/G, 0)_{BG}$ implies that chern of the later lands in image of chern of the former.
%
%%
%%\item We need to show that the map $\Delta_-$ preserves the pairings on $HH$.  This will require a by hand calculation for $\CC^N/G$ which shouldn't be too bad.
%%
%%\item don't worry about GLSM invariants, just use FJRW and GW for now
%%
%%
%%\item just define the narrow part of HH, it seems by section 5.2 of https://arxiv.org/pdf/0710.1937.pdf that the other is bigger anyway.  However we know by above that Chern will land only in the narrow part.
%%\item note that we don't have HKR for or $P(G)$ space... but we could use Orlov to prove it?
%\end{enumerate}

\begin{abstract}
%The purpose of this paper is to systematically incorporate Iritani's integral structures into a collection of well-known correspondences in Gromov--Witten theory and FJRW theory.
We reframe a collection of well-known comparison results in genus zero Gromov--Witten theory in order to relate these to integral transforms between derived categories.  This implies that various comparisons among Gromov--Witten theories and FJRW theory are compatible with the integral structure introduced by Iritani.  We conclude with a proof that a version of the LG/CY correspondence 
relating quantum $D$-modules with Orlov's equivalence is implied by a version of the crepant transformation conjecture.
\end{abstract}

\tableofcontents

\section{Introduction}

In \cite{LPS}, a collection of correspondences between Gromov--Witten theory and FJRW theory were shown to be compatible.  
Let \[w: \CC^N \to \CC\] be a quasi-homogeneous polynomial whose total degree $d$ is equal to the sum of the weights of each variable.  Let $G \leq SL_N(\CC)$ be a diagonal subgroup of automorphisms of $w$.  Pictorially, the following square, which we call the \emph{LG/CY square}, was proven in \cite{LPS} to commute:
\begin{equation}\label{e:square}
\begin{tikzcd}
\op{GWT}_0([\CC^N/G]) \ar[d, "\text{local GW/FJRW}" '] \ar[r, leftrightarrow, "\op{CTC}"] & \op{GWT}_0(\tot(\cc O_{\PP(G)}(-d))) \ar[d, "\op{QSD}"] \\
\op{FJRW}_0(w, G) \ar[r, leftrightarrow, "\op{LG/CY}"] & \op{GWT}_0(\cZ).
\end{tikzcd}
\end{equation}
In this diagram, $\op{GWT}_0([\CC^N/G])$ is shorthand for a formal structure encoding the genus zero Gromov--Witten theory of $[\CC^N/G]$  and $ \op{GWT}_0(\tot(\cc O_{\PP(G)}(-d)))$ is the same structure on a partial crepant resolution of $[\CC^N/G]$.  
On the bottom,
$\op{FJRW}_0(w, G)$ denotes the genus zero FJRW theory of the Landau--Ginzburg model given by the pair $(w, G)$, and $\op{GWT}_0(\cZ)$ is the genus zero Gromov--Witten theory of a hypersurface $\cZ$, defined as the vanishing locus of $w$ in an appropriate finite quotient of weighted projective space $\PP(G)$.  The arrows are the crepant transformation conjecture \cite{CIT, CR, CIJ}, quantum Serre duality \cite{CG}, the LG/CY correspondence \cite{ChR}, and the local GW/FJRW correspondence \cite{LPS}.

%\subsection{Results}
The goal of this paper is to relate each of the above correspondences to an integral transform between appropriate derived categories.  Informally, we would like to ``lift'' the LG/CY square to the derived category to obtain a cube of relations.
Some of the above correspondences are already known to be compatible with integral transforms.  In \cite{CIJ} the (equivariant) crepant transformation conjecture was shown to be compatible with a natural Fourier--Mukai transform.  A similar result has been shown for the LG/CY correspondence on the bottom  in the case where $G$ is cyclic.  
It is compatible with Orlov's equivalence between the category of matrix factorizations for $(w, G)$ and the derived category of $\cZ$, as proven in \cite{CIR}.

In this paper we show that there are derived functors corresponding to both of the vertical arrows of \eqref{e:square} as well, after restricting to subcategories of $D([\CC^N/G])$ and $D(\tot(\cc O(-d)))$ with proper support.  Consider the following maps:
\[\begin{tikzcd} BG \ar[r, bend left, "i"] & \ar[l, bend left, "\pi"] [\CC^N/G] &  & \tot(\cc O(-d)) \ar[d, "\pi"] \\
&& \cZ \ar[r, hookrightarrow, "j"] & \PP(G).
\end{tikzcd}\]
We prove the following:
\begin{theorem}[Theorem~\ref{t:MLK} and Theorem~\ref{t:QSD}]\label{t:1}
The functors
\begin{align}\label{a:1} i_*^1 \circ \pi_*&: D([\CC^N/G])_{BG} \to D([\CC^N/G], w), \text{ and} \\ j^* \circ \pi_*&: D(\tot(\cc O(-d)))_{\PP(G)} \to D(\cZ) \nonumber
\end{align}  are compatible with the local GW/FJRW correspondence and with quantum Serre duality, respectively.  In \eqref{a:1}, $D([\CC^N/G], w)$ denotes the category of matrix factorizations of $w$, and $i^1$ denotes the inclusion as a map between Landau--Ginzburg models $(BG, 0)$ and $([\CC^N/G], w)$.
\end{theorem}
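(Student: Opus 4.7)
The plan is to verify each compatibility statement by checking commutativity of a diagram whose horizontal arrows are the stated correspondences between genus-zero quantum D-modules and whose vertical arrows are Iritani's Gamma-integral structure maps, sending a K-theory class $E$ to a flat section of the form $s(E) = L(\tau,z)\, z^{-\mu} z^{\rho}\, \hat{\Gamma}\, (2\pi i)^{\deg/2} \ch(E)$ of the respective quantum D-module.  In the hypergeometric setup of this paper, the compatibility reduces to matching two effects: first, how the derived functor transforms the Gamma-twisted Chern character, which by Grothendieck--Riemann--Roch is governed by multiplication by a Koszul or Euler factor; and second, how the correspondence transforms the I-function, which is governed by a ratio of $\Gamma$-factors.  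The reflection identity $\Gamma(x)\Gamma(1-x) = \pi/\sin(\pi x)$ provides the bridge between these two effects and is the key computational tool.

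For the QSD half (Theorem~\ref{t:QSD}) I would trace a class $F \in D(\tot(\cc O_{\PP(G)}(-d)))_{\PP(G)}$ around the square in both directions.  The composition $j^* \circ \pi_*$ multiplies $\ch(F)$ by the Euler class $1 - e^{dH}$ of $\cc O(d)|_{\cZ}$ (via the Koszul resolution of the zero section of $\pi$), while the ratio $\hat{\Gamma}_{\cZ}/\hat{\Gamma}_{\tot(\cc O(-d))}|_{\cZ} = 1/(\Gamma(1+dH)\Gamma(1-dH))|_{\cZ}$ is precisely the image of this Euler class under the reflection identity.  The residual discrepancy is accounted for by the convex-to-concave cancellation of Pochhammer factors that distinguishes the I-functions of $\tot(\cc O(-d))$ and of $\cZ$; this is the standard Coates--Givental--Iritani manifestation of quantum Serre duality, and both routes will be seen to compute the same flat section of the quantum D-module of $\cZ$.

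For the local GW/FJRW half (Theorem~\ref{t:MLK}) the argument runs in parallel, but with the matrix factorization category $D([\CC^N/G],w)$ in place of $D(\cZ)$.  The functor $\pi_*$ pushes a class on $[\CC^N/G]$ with support on $BG$ down by the Koszul factor $\prod_i(1-e^{-\chi_i})$ for the $G$-weights $\chi_i$ on $\CC^N$, and $i^1_*$ produces a matrix factorization whose Polishchuk--Vaintrob Chern character, identified with an element of the FJRW state space via the age-shifted Hochschild homology isomorphism, is to match the image of the Gamma-twisted $\ch(F)$ under the local GW/FJRW correspondence.  The FJRW I-function is supported on narrow twisted sectors indexed by $g \in G$, and its difference from the corresponding part of the orbifold I-function of $[\CC^N/G]$ is again a product of $\Gamma$-factors related to the Koszul factor via the reflection identity, so the argument closes by sector-by-sector comparison.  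The main obstacle will be verifying this matching carefully: one needs to show that the Gamma-integral structure on $D([\CC^N/G],w)$, built from the Polishchuk--Vaintrob Chern character and an appropriate Gamma class, interacts with $i^1_*$ so that the age shift built into matrix factorizations coincides with the age shift appearing in the narrow-sector decomposition of the FJRW state space.  By contrast, the QSD side should follow fairly mechanically from the techniques already present in the literature, its only subtlety being the interpretation of proper support required to make $\pi_*$ and the resulting flat section well-defined.
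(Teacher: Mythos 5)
Your proposal captures the broad shape of the argument — state-space comparison via Chern characters, flat-section comparison via $\Gamma$-factors — but it mislocates the difficulty and overstates the role of the reflection identity, and this leaves a genuine gap.

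First, for the local GW/FJRW half, the state-space map $\Delta_-$ sends $\ii_g \mapsto \phi_{gj^{-1}}$, and on both sides the $\Gamma$-class factor is $\prod_j \Gamma(1 - m_j(g))$ — they match \emph{literally}, not via the reflection identity. The Koszul factor $\prod_j(1-e^{-2\pi i m_j(g)})$ appears in the Chern character on both sides (equations~\eqref{e:chzerobeta} and~\eqref{e:chalphabeta}), so the statement $\Delta_- \circ \ch = \ch \circ i^1_* \circ \pi_*$ is an elementary calculation with no $\Gamma$-function gymnastics. The real substance of Theorem~\ref{t:MLK} is Proposition~\ref{p:Lcom-}: that $\Delta_-$ intertwines the fundamental solutions $L^{\cY_-}$ and $L^{(w,G)}$ after a nontrivial change of variables $f(\bt)$. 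This is where the enumerative input lives, and in the paper it rests entirely on the MLK Lagrangian-cone theorem of~\cite{LPS} (Theorem~5.5 there), which identifies the twisted cones $\sL^{0,\bs}$ and $\sL^{1,\bs}$ and only holds under the Fermat hypothesis. Your proposal substitutes for this a ``sector-by-sector comparison of $\Gamma$-factors between the FJRW and orbifold $I$-functions,'' but that is not a proof: matching $I$-functions coefficient-by-coefficient does not by itself show the \emph{cones} (and hence the connections, after the mirror map) agree, and it does not identify the mirror map $f$. Asserting that ``both routes will be seen to compute the same flat section'' glosses over exactly the step that takes the bulk of the proof.

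Second, for the quantum Serre duality half your intuition is closer to what happens (the Todd class $\op{Td}(\cc O_{\cY_+}(-d))$ does enter via Grothendieck--Riemann--Roch, as in Lemma~\ref{l:ch+}, and the reflection identity is genuinely relevant in reconciling the $\Gamma$-classes of $\cY_+$ and $\cZ$), but even here the delicate part is again the $L$-compatibility in Proposition~\ref{p:Lcom+}, including the extra factor $e^{-\pi i dH/z}$ and the change of variables $\bar f_+$, neither of which your proposal produces. Also note your description of $j^*\circ\pi_*$ as ``multiplying by the Euler class $1-e^{dH}$'' is imprecise: $\pi_*$ from the proper-support category involves integration along the fiber (dividing by $-dH$ after a lift to compact support), and the GRR correction is the Todd class, not an Euler class.

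In short: your approach is correct in outline but omits the enumerative input (the MLK cone identification and the analogous result underlying Proposition~\ref{p:Lcom+}) that actually proves the quantum $D$-module isomorphism, and it incorrectly attributes the state-space match on the FJRW side to the $\Gamma$-reflection identity when it is simply an equality of Koszul Chern-character factors with identical $\Gamma$-classes.
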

The restriction to proper support is unsurprising in retrospect, and corresponds to restricting the Gromov--Witten theory of $[\CC^N/G]$  and $\tot(\cc O_{\PP(G)}(-d))$ to the \emph{narrow cohomology}, that is the subspace of cohomology generated by forms with compact support \cite[Appendix~2]{Sh1}.  This narrow theory is defined generally in \cite{Sh1} and is described below in Section~\ref{ss:spec}.  The theorem above requires a reformulation of both the local GW/FJRW correspondence and quantum Serre duality in terms of \emph{narrow quantum $D$-modules}.  We believe this is a more natural way of describing these correspondences.  It does not make mention of equivariant Gromov--Witten theory, and leads the vertical arrows of \eqref{e:square} to become \emph{isomorphisms} of quantum $D$-modules, which the previous formulations were lacking.  

Given the LG/CY square and Theorem~\ref{t:1}, it is natural to ask whether the corresponding square of derived functors commutes as well.  It turns out it does not (see Example~\ref{countex}),
however the induced maps on $K$-theory do commute.  
\begin{theorem}[Theorem~\ref{t:Ksquare}]
The functors defined in \eqref{a:1} commute with the equivalences
\begin{align}\label{a:2}
\op{vGIT}&: D([\CC^N/G])_{BG} \leftrightarrow D(\tot(\cc O(-d)))_{\PP(G)} \\
\op{Orlov}&: D([\CC^N/G], w)  \leftrightarrow D(\cZ) \nonumber
\end{align}
after passage to $K$-theory.  
\end{theorem}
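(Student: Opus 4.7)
The plan is to deduce this $K$-theoretic commutativity from the commutativity of the LG/CY square \eqref{e:square} by passing through the Iritani-style integral structure.

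First, I would translate everything to narrow quantum cohomology. Each of the four categories in the square admits an integral-structure map from its $K$-theory to the state space of the corresponding narrow quantum $D$-module. Theorems~\ref{t:MLK} and \ref{t:QSD} identify the vertical functors $i_*^1 \circ \pi_*$ and $j^* \circ \pi_*$ with the local GW/FJRW correspondence and with quantum Serre duality, respectively. On the horizontal side, the Fourier--Mukai compatibility of \cite{CIJ} identifies $\op{vGIT}$ on $K$-theory with the crepant transformation conjecture, and \cite{CIR} identifies $\op{Orlov}$ on $K$-theory with the LG/CY correspondence. Thus the four edges of the $K$-theoretic square become the four edges of \eqref{e:square} after applying the integral structures at each vertex.

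Next, I would invoke \cite{LPS}: the square \eqref{e:square} commutes as a diagram of narrow quantum $D$-modules. Together with the previous step, this shows that the image of the $K$-theoretic square under the integral structures is commutative. To promote this to honest $K$-theoretic commutativity, I would use rational injectivity of the integral structure: its leading term is the (orbifold) Chern character, which, for the smooth Deligne--Mumford stacks, matrix-factorization categories, and smooth hypersurfaces involved here, induces an isomorphism from $K$-theory with proper support tensored with $\QQ$ onto cohomology with compact support. Hence injectivity holds rationally, which is the standard context in which the $K$-theoretic integral structures are formulated.

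The main obstacle is carefully aligning conventions so that $\op{vGIT}$ and $\op{Orlov}$, as they appear in \eqref{a:2}, correspond after the integral structure to exactly the CTC and LG/CY maps of \eqref{e:square}, with no extra shifts, twists, or sign ambiguities. The failure of the derived-level square to commute (Example~\ref{countex}) indicates that this alignment is delicate: different choices of grade-restricting windows for $\op{vGIT}$ and $\op{Orlov}$ produce functors that differ by autoequivalences, and those autoequivalences act trivially on $K$-theory. That is exactly what permits the $K$-theory square to commute even when the derived-level square does not, and it must be tracked carefully throughout the translation step.
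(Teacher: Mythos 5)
Your approach reverses the logical order of the paper and introduces a circularity for general $G$. The paper proves Theorem~\ref{t:Ksquare} by \emph{direct categorical computation}, before any of the quantum $D$-module compatibility results, and then \emph{uses} Theorem~\ref{t:Ksquare} as an essential ingredient in deriving the LG/CY compatibility of $\orlov$ with integral structures (Theorem~\ref{t:LGCY}); see the fifth equality in the proof of Theorem~\ref{t:LGCY}, which invokes the $K$-theoretic commuting diagram. You propose to deduce the $K$-theoretic square from the commutativity of \eqref{e:square} at the quantum level together with compatibility of each edge with the integral structure. But your bottom-edge input — that ``\cite{CIR} identifies $\op{Orlov}$ on $K$-theory with the LG/CY correspondence'' — is only available for $G = \langle j\rangle$ cyclic; for general $G$ that identification is precisely Theorem~\ref{t:LGCY} of this paper, whose proof rests on Theorem~\ref{t:Ksquare}. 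Moreover, the commutativity of \eqref{e:square} established in \cite{LPS} lives in the framework of Lagrangian cones with non-isomorphism vertical arrows, not in the narrow quantum $D$-module framework of this paper with the precise $\bs^\square$ maps, so even granting the cyclic case your translation step is not automatic.

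The paper's actual argument avoids the quantum side entirely. It first observes that $\vgit$ restricts to an equivalence on objects with support in the unstable locus, so $D(\cY_-)_{BG} \simeq D(\cY_+)_{\PP(G)}$. It then factors $\orlov = \vgit \circ \tilde\phi_+^{-1}$, reduces the outer square to the inner one via Lemma~\ref{l:phij} ($\tilde\phi_+ \circ j^* = i^1_*$), and checks commutativity on the generating set $\{i^0_*(\cc O_{BG}(l+d+i, \zeta))\}$ of $K(\cY_-)_{BG}$ using Proposition~\ref{p:vgit} (which computes $\vgit$ explicitly on these objects) and the identity $\pi_* \circ i^0_* = \mathrm{id}$. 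This is a short, self-contained computation. Your proposal could in principle give an alternative proof \emph{if} one first proved the LG/CY integral-structure compatibility for general $G$ by some other means and established rational injectivity of the Chern character on $K$-theory with proper support; but as written it assumes the very result the paper derives from Theorem~\ref{t:Ksquare}.

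One smaller point worth noting: your observation that window autoequivalences act trivially on $K$-theory, which is ``exactly what permits the $K$-theory square to commute,'' is a reasonable intuition, but the paper's proof sidesteps it by fixing a specific window parameter $l$ throughout and showing commutativity directly at that $l$, rather than arguing that ambiguities wash out in $K$-theory.
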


\subsubsection{CTC implies LG/CY}

The primary goal of the original paper \cite{LPS}  was to prove that the LG/CY correspondence was implied by the crepant transformation conjecture.  %The crepant transformation conjecture is an older and better-studied phenomenon than the LC/CY correspondence, and thus this would use a more familiar correspondence to prove 
This was successfully carried out via the LG/CY square of \eqref{e:square}.  However 
due to the fact that the vertical arrows of \eqref{e:square} were not isomorphisms, the verification that the bottom horizontal arrow  was implied by the top required a careful technical analysis.
%overcoming several technical challenges.  %Furthermore, hidden in that diagram 

By rephrasing the LG/CY square in terms of narrow quantum $D$-modules, the implication becomes much more direct.  
We explain this in Section~\ref{s:CTC}. 
More precisely, we formulate a narrow version of the crepant transformation conjecture (Theorem~\ref{t:CTCnar}) which is easily implied by the equivariant crepant transformation conjecture proven in \cite{CIJ}.   As noted above, Theorem~\ref{t:1} does not use equivariant Gromov--Witten theory, and the correspondences between narrow quantum $D$-modules are in fact isomorphisms.  It is then almost immediate that the narrow crepant transformation conjecture implies the LG/CY correspondence.  Furthermore this implication is automatically compatible with the integral transforms described previously.  

Pictorially we have a cube:
\begin{equation}
\begin{tikzcd}[cramped, sep=small]
 K([\CC^N/G])_{BG} \ar[rr, leftrightarrow, "\vgit"] \ar[rd, "i^1_* \circ \pi_*"] \ar[dd]& & K(\tot(\cc O(-d)))_{\PP(G)}  \ar[rd, "j^* \circ \pi_*"] \ar[dd] &  \\
 & i^1_*\left(K(BG)\right) \ar[rr, leftrightarrow, crossing over, near start, "\orlov"] & &  j^*\left(K(\PP(G))\right) \ar[dd]\\
  QDM_{\op{nar}}(\cY_-) \ar[rr, leftrightarrow, near end, "\text{narrow CTC}"]  %\ar[rr, leftrightarrow, "\Theta_l^{\op{nar}}"] 
  \ar[rd, leftrightarrow, "\text{local GW/FJRW}" '] & & QDM_{\op{nar}}(\cY_+)  \ar[rd, leftrightarrow, "\text{QSD}"]  &  \\
 & QDM_{\op{nar}}(w,G) \ar[from = uu, crossing over] \ar[rr, leftrightarrow, "\text{LG/CY}" '] %\ar[rr, leftrightarrow, "\overline \Theta_l^{\op{nar}}"]
  & &  QDM_{\op{amb}}(\cZ) 
\end{tikzcd}
\end{equation}
where $QDM$ denotes the quantum $D$-modules, as defined in Section~\ref{ss:spec}.  In this formulation the existence of the LG/CY arrow is immediately implied by the other three bottom arrows.  Commutativity of the front square then follows from commutativity of the other sides, and we obtain a quantum $D$-module version of the LG/CY correspondence:

\begin{theorem}[Theorem~\ref{t:LGCY}]
%There exists a global quantum $D$-module 
After pulling back via a mirror map, the narrow quantum $D$-module of $(w, G)$ is gauge equivalent to the ambient quantum $D$-module of $\cZ$.  %Under this identification the integral structures are preserved, in a manner compatible with Orlov's equivalence.
This identification preserves the integral structures and is compatible with Orlov's equivalence.
\end{theorem}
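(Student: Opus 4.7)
The strategy is to read off the LG/CY gauge equivalence from the other three edges of the front face of the cube. Concretely, I would define the equivalence from $QDM_{\op{nar}}(w,G)$ to $QDM_{\op{amb}}(\cZ)$ to be the composition
\[
  \text{QSD} \circ (\text{narrow CTC}) \circ (\text{local GW/FJRW})^{-1},
\]
after pulling back along the mirror map that matches parameters across the narrow CTC. Because Theorem~\ref{t:1} upgrades both local GW/FJRW and QSD from equivariant comparisons to honest isomorphisms of narrow quantum $D$-modules, and because the narrow CTC supplies an analytic continuation over the extended stringy Kähler moduli space, the composition is a well-defined gauge equivalence of quantum $D$-modules. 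The first half of the theorem, the existence of the gauge equivalence together with an explicit mirror map, is then immediate.

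For the preservation of integral structures and compatibility with Orlov's equivalence, I would trace the resulting map through the side faces of the cube. Each of the three arrows in the composition is already known to intertwine Iritani's integral structures via an explicit map on $K$-theory: the narrow CTC is compatible with $\vgit$ by \cite{CIJ}, while local GW/FJRW and QSD are compatible with $i^1_* \circ \pi_*$ and $j^* \circ \pi_*$ respectively by Theorem~\ref{t:1}. Hence the LG/CY composition is compatible with the $K$-theoretic composite obtained by walking along the corresponding three edges of the top face of the cube. By the preceding theorem (Theorem~\ref{t:Ksquare}) this composite agrees with $\orlov$ on the image subcategories, which simultaneously establishes preservation of the integral lattice and compatibility with Orlov's equivalence.

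The hard part, as I see it, is coordinating the parameter spaces and mirror maps for the three constituent arrows. The narrow CTC is a nontrivial analytic continuation between the two GIT chambers, whereas the narrow local GW/FJRW and QSD identifications are essentially pointwise isomorphisms at a fixed limit point. One must therefore check that the domain of the narrow CTC extends far enough in each direction for the other two arrows to be applied at the two endpoints, and that the various mirror maps can be composed consistently. I expect this to be a bookkeeping exercise rather than a conceptual obstacle, since the narrow reformulations in Theorem~\ref{t:1} are precisely what is needed to make the composition unambiguous. Once this is settled, the rest of the argument is a pure diagram chase on the cube.
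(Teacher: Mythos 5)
Your proposal matches the paper's proof essentially verbatim: the paper defines $\overline{\Theta}_l^{\op{nar}} := \bar\Delta_+ \circ \Theta_l^{\op{nar}} \circ \bar\Delta_-^{-1}$ and then chases the cube, invoking Theorem~\ref{t:Ksquare} to replace $j^*\circ\pi_*\circ\op{vGIT}_l$ by $\op{Orlov}_l\circ i^1_*\circ\pi_*$ on the image subcategories, exactly as you describe. The parameter-matching concern you flag is handled in the paper by the explicit mirror maps $\bar\tau_\pm := \bar f_+\circ\tau_+$ and $f\circ\tau_-$ defined over the common base $M^\circ$.
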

This theorem had previously been proven  in \cite{CIR} in the case  $G = \langle j \rangle$ a cyclic group.  
%This proves the $D$-module formulation of the LG/CY correspondence and its compatibility with Orlov's equivalence in a more general context than was previously known.  Namely $G$ can be any admissible diagonal subgroup of $SL_N(\CC)$.

\subsection{Connections to other work}

As mentioned above, this paper builds on many previous results.  The concept of the LG/CY square and the local GW/FJRW correspondence was 
first described by the author together with Lee and Priddis in \cite{LPS}.  The formulation of the crepant transformation conjecture in terms of quantum $D$-modules and integral transforms was first described in \cite{Iri2}.  The version used in this paper was proven in \cite{CIJ}.  The LG/CY correspondence was first proven for the quintic three-fold in \cite{ChR}.  The formulation in terms of quantum $D$-modules and Orlov's equivalence was proven for $G = \langle j \rangle$ a cyclic group in \cite{CIR}.  Quantum Serre duality originated in \cite{G1} and was later generalized in \cite{CG}.  A formulation involving quantum $D$-modules and integral transforms was first given in \cite{IMM}; it is closely related to that described here.  The notion of the narrow quantum $D$-module used in this paper is described in more detail in the companion paper \cite{Sh1}, where the corresponding formulation of quantum Serre duality is proven in a general context.

It is worth remarking that each of the ``quantum'' theories appearing in this paper (Gromov--Witten theory of a GIT quotient or hypersurface, and FJRW theory of a Landau--Ginzburg model) is an examples of a
\emph{Gauged Linear Sigma Model} (GLSM), a theory whose input data consists roughly of a GIT quotient together with a \emph{potential} function. 
For simplicity, we chose to avoid 
 the language of GLSMs in this paper.  However the intuition provided by this perspective is in the background of the LG/CY square \eqref{e:square}, especially the bottom and vertical arrows.  The enumerative invariants associated to a GLSM have recently been constructed by Fan--Jarvis--Ruan in \cite{FJR15}.  An alternative construction which allows for broad insertions has been given in \cite{CFFGKS}, based on the work of \cite{PV}.  With this framework, many of the correspondences appearing in this paper and indeed the entire LG/CY square can be put into a much more general context.  A generalization of the LG/CY square to certain GLSMs was given in \cite{ClRo}.  The connection to integral transforms has yet to be proven, but should follow from similar arguments to those given in this paper.
%although it is not necessary to use the language of GLSMs in this 

\subsection{Acknowledgments}  I am grateful to R.~Cavalieri, E.~Clader, H.~Iritani, Y. P.~Lee, N.~Priddis, D.~Ross, Y.~Ruan and Y.~Shen for many useful conversations about the various correspondences among Gromov--Witten and FJRW  theories used in this paper.  In particular, this paper is indebted to Y. P.~Lee, who first proposed the LG/CY square described above.  I also thank N.~Addington, D.~Favero, and I.~Shipman for many patient explanations and discussions of the derived category and categories of factorizations.
This work was partially supported by NSF grant DMS-1708104.

\section{Setup}\label{s:setup}
Let $c_1, \ldots, c_N$ be positive integers and consider the corresponding action of 
$\CC^*$ on  $V \cong \CC^N$
by 
\[\alpha \cdot (x_1, \ldots, x_N) = (\alpha^{c_1} x_1, \ldots , \alpha^{c_N}x_N).\]
Let $w: V \to \CC$ be a nondegenerate polynomial, homogeneous of degree $d$ with respect to the given grading.  Explicitly, 
%there exist positive integers $c_1, \ldots, c_N, d$, called the \emph{weights} of $w$,  such that 
for $\alpha \in \CC^*$, 
\[w(\alpha^{c_1} x_1, \ldots, \alpha^{c_N} x_N) = \alpha^d w(x_1, \ldots, x_N).\]
We will assume always that $w$ is a \emph{Fermat} polynomial.  This is in order to apply the main theorems of \cite{LPS}, which are only known to hold in the Fermat case.

Let $q_j := c_j/d$ for $1 \leq j \leq N$ and define
\[j := (\exp(2 \pi i q_1), \ldots, \exp(2 \pi i q_N))\]
to be the automorphism of $V$ given by the first primitive $d$th root of unity in $\CC^*$.  Let 
\[G_{max}(w) := (\CC^*)^N \cap \op{Aut}(w)\]
denote the maximal group of diagonal symmetries of $w$.  Note that since $w$ is Fermat, $G_{max}$ will be isomorphic to 
$\ZZ_{d/c_1} \oplus \cdots \oplus \ZZ_{d/c_N}$.
We say a subgroup $G \leq G_{max}(w)$ is \emph{admissible} if $G$ contains $j$.
\begin{definition}
A \emph{(gauged) Landau--Ginzburg pair} is a pair $(w, G)$ where $w$ is a nondegenerate quasi-homogeneous polynomial and $G$ is an admissible subgroup of $G_w$.
\end{definition}

\begin{definition}
Given an element $g \in G$, the action of $g$ on $V \cong \CC^N$ is given by
\[ ( \exp( 2 \pi i m_1(g)), \ldots ,  \exp( 2 \pi i m_N(g))),\]
where $0 \leq m_j(g) < 1$.  
The ratio $ m_j(g)$ is the
\emph{multiplicity} of $g$ on the $j$th factor of $V$.
\end{definition}

\begin{definition}
An element $g \in G$ is \emph{narrow} if $(V)^g = \{0\}$.  We let $G_{\op{nar}} \subset G$ denote the subset of all narrow elements in $G$.
\end{definition}

By the Fermat assumption, $G$ splits as $\langle j \rangle \oplus \bar G$.  We will uniquely fix the splitting by requiring that $\bar G$ acts trivially on the first coordinate of $\CC^N$.
Define
\[\PP(G) := [\PP(c_1, \ldots , c_N)/\bar G]\]
where $\PP(c_1, \ldots , c_N)$ denotes weighted projective space, viewed as a stack. % \note{Do we need the assumption??}
%Let $\cY_+$ denote the total space of the vector bundle of weight $-d$ on $\PP(G)$.  
%Note that since we require that $G \in SL_N(\CC)$, $\cc O_{\PP(G)}(-d)$ is pulled back from the coarse space of $\PP(G)$.
Let us fix notation for the two main orbifolds which will occupy us through the rest of this paper:
\begin{align*}
\cY_- &:= [V/G]\\
\cY_+& := \tot(\cc O_{\PP(G)}(-d)).
\end{align*}

%Consider the group $\tilde G = \CC^* \times \bar G$ acting on $\CC^N \times C$ with we'
If $\sum_{j=1}^N c_j = d$, then $\cY_+$ is a crepant (partial) resolution of the coarse space $\CC^N/G$ \cite[Lemma 6.2]{LPS}. 
The spaces $\cY_-$ and $\cY_+$ are related by variation of GIT.   To see this, let $\widetilde G =\CC^* \times \bar G$ act on $\widetilde V = V \times \CC$ as follows:  $\bar G$ acts on $V$ as before and trivially on the last factor $\CC$, and  $\CC^*$ acts with weight $c_j$ on the $j$th factor of $V = \CC^N$ and with weight $-d$ on the last factor $\CC$.  Then \begin{align}\label{e:GIT1} [\widetilde V \sslash_- \widetilde G] &= \cY_-\\ [\widetilde V \sslash_+ \widetilde G] &= \cY_+ \nonumber \end{align} where the GIT quotients are taken with respect to characters which are trivial on $\bar G$ and of weights $+/-1$ (respectively) with respect to the $\CC^*$ factor.  

We consider an additional \emph{R-charge} action on these spaces, which will be important when considering categories of factorizations.  This is given by the action of a torus $\CC^*_R = \CC^*$ on $\widetilde V$ with weights $(0, \ldots , 0, 1)$.  By \eqref{e:GIT1} this induces an action on $\cY_-$ and $\cY_+$.  
Define $\widetilde \Gamma = 
\widetilde G \times \CC^*_R$.
%\CC^*_R\cdot \widetilde G \cong \CC^*_R \times \widetilde G / \langle (e^{-2 \pi i/d}, j)\rangle$.  
%Let $\Gamma = \CC^*_R \times \bar G$ act on $\CC^N$ such that the $\CC^*$ weight of the $j$th factor of $\CC^N$ is $c_j$.  Note that $G$ is naturally a subgroup of $\Gamma$.  Let $\tot(\cc O_{\PP(G)})/\CC^*$ denote the quotient of the total space by the $\CC^*$ action given by scaling the fiber coordinate (this torus can be viewed as the quotient $\CC^*_R/\langle e^{2 \pi i/d}\rangle$).  
Consider the characters of $\widetilde \Gamma$ given by
\begin{align*}
\theta_-:  (\alpha, \bar g, \lambda_R) & \mapsto \alpha^{-d}  \lambda_R\\
\theta_+:  (\alpha, \bar g, \lambda_R) & \mapsto \alpha.
\end{align*}
One checks that 
\begin{align}\label{e:GIT2}
[\widetilde V \sslash_- \widetilde \Gamma]:= [\widetilde V \sslash_{\theta_-} \widetilde \Gamma] &= \cXR\\
[\widetilde V \sslash_+ \widetilde \Gamma] := [\widetilde V \sslash_{\theta_+} \widetilde \Gamma] & = \cYR \nonumber
\end{align}
where we take the stack quotient by $\CC^*_R$ in the last column.  For ease of notation we suppress the brackets.
Note that $\cXR$ is isomorphic to $[ V/\widetilde G]$.

Let $\tilde w: \widetilde V \to \CC$ be defined by $\tilde w = p\cdot w$ where $w$ is the original $G$-invariant potential on $\CC^N$ and $p$ is the coordinate function on the last factor of $\CC$.  Note that $\tilde w$ is $\widetilde G$-invariant and homogeneous of degree $1$ with respect to the $R$-charge action.  Thus $\tilde w$ descends to a homogeneous potential  on $[\widetilde V \sslash_- \widetilde \Gamma]$ and $[\widetilde V \sslash_+ \widetilde \Gamma]$.  By abuse of notation we will denote all these functions by $w$.

\section{Categories of factorizations}

\begin{definition}[Definition 3.1 of \cite{PV3}]
An algebraic stack $\cX$ is called a nice quotient stack if $\cX = [T/H]$ where $T$ is a noetherian scheme and $H$ is a reductive linear algebraic group such that $T$ has an ample family of $H$-equivariant line bundles.
\end{definition}

Let $\cX$ be a nice quotient stack and let $\cc L$ be a line bundle on $\cX$ with a section $s \in \Gamma(\cX, \cc L)$.  A factorization is the data $\cc E_\bullet =  (\cc E_{-1}, \cc E_0, \phi_{-1}, \phi_0)$ where $\cc E_i$ are quasi-coherent sheaves and $\phi_{-1}: \cc E_{-1} \to \cc E_0$, $\phi_0: \cc E_0 \to \cc E_{-1} \otimes \cc L$ are maps satisfying:
\begin{align*}
\phi_0 \circ \phi_{-1} &= id_{\cc E_0} \otimes s \\
\phi_{-1} \otimes id_{\cc L} \circ \phi_0 &= id_{\cc E_{-1}} \otimes s.
\end{align*}

\begin{example}
Given $\cX$ a nice quotient stack and $s$ a section of a line bundle $\cc L$ on $\cX$, let $\cc F$ be a vector bundle on $\cX$.  Suppose there exist maps $\beta \in \Gamma(\cX, \cc F)$ and $\alpha \in \hom(\cc F, \cc L)$ such that the composition $\alpha \circ \beta = s$.  Then one can define the so-called \emph{Koszul factorization} $\{\alpha, \beta\}$ as follows:
\begin{align*} \{\alpha, \beta\}_0 &= \bigoplus_{k} \wedge^{2k} \cc F^\vee \oplus_{\cc O_{\cX}} \cc L^k\\
\{\alpha, \beta\}_{-1} &= \bigoplus_{k} \wedge^{2k+1} \cc F^\vee \oplus_{\cc O_{\cX}} \cc L^k
\end{align*}
and 
\[\phi_0, \phi_{-1} := \bullet \ \lrcorner \ \beta + \bullet \wedge \alpha\]
i.e. the maps are given by all possible contractions with respect to $\beta$ and wedges with respect to $\alpha$.
\end{example}
\begin{remark}
By a mild abuse of notation, given a section $\beta \in \Gamma(\cX, \cc F)$ of a vector bundle as above, we will use $\{0, \beta\}$ to denote the Koszul complex $(\wedge^\cdot \cc F, \bullet \ \lrcorner \ \beta)$ if we are working with the derived category $D(\cX)$.  
\end{remark}

There is a shift operator $[1]$ defined by 
\[\cc E_\bullet[1] = (\cc E_0, \cc E_{-1} \otimes \cc L, -\phi_0, -\phi_1 \otimes id_{\cc L}).\]
The set of factorizations can be given the structure of a $\ZZ$-graded dg category, denoted $Fact(\cX, s)$.
Morphisms %between factorizations $\cc E$ and $\cc F$ 
of degree $2k$ are given by %, $f: \cc E \to \cc F[2k]$ is 
\[\hom_{Fact(\cX, s)}^{2k}(\cc E_\bullet, \cc F_\bullet) := \hom_{Qcoh(\cX)}(\cc E_0, \cc F_0 \otimes \cc L^k) \oplus \hom_{Qcoh(\cX)}(\cc E_{-1}, \cc F_{-1} \otimes \cc L^k),\]
and morphisms of degree $2k +1$ are given by 
\[\hom_{Fact(\cX, s)}^{2k+1}(\cc E_\bullet, \cc F_\bullet) := \hom_{Qcoh(\cX)}(\cc E_0, \cc F_{-1} \otimes \cc L^{k+1}) \oplus \hom_{Qcoh(\cX)}(\cc E_{-1}, \cc F_0 \otimes \cc L^k).\]
There is a notion of acyclic factorizations, denoted $Acyc(\cX, s)$, which are those that arise as the totalization of an exact complex of factorizations \cite[Definition 2.3.5]{BFK}.  Denote by $[C]$ the homotopy category of a small $k$-linear dg category $C$.
\begin{definition}
The \emph{derived category of quasi-coherent factorizations}, $D^{abs}[Fact(\cX, s)]$, is the Verdier quotient of $[Fact(\cX, s)]$ by $[Acyc(\cX, s)]$.  The \emph{derived category of coherent factorizations} of $(\cX, s)$, $D(\cX, s)$, is the full subcategory of $D^{abs}[Fact(\cX, s)]$ generated by factorizations with coherent components.  We will also call $D(\cX, s)$ simply the derived category of $(\cX, s)$.
\end{definition} 
\begin{notation}
We denote by $D(\cX)$ the bounded derived category of coherent sheaves on $\cX$.
\end{notation}
\begin{remark}
Let $0$ denote the zero section of the trivial line bundle on $\cX$.  Let $\CC^*$ act on $\cX$ trivially.  The category of factorizations $D(\cX/\CC^*, 0)$ is equivalent to $D(\cX)$. 
\end{remark}
We also record the following useful comparison.  Let $X$ be a smooth variety with an action of an affine algebraic group $G$.  Let $Y$ denote the total space of a $G$-equivariant line bundle $E$ on $X$.  Let $f$ denote a regular section on $E^\vee$.  Let $w$ denote the induced potential function on $Y$.  Let $\CC^*$ act on $Y$ by scaling the fibers, and let $\eta$ denote the trivial line bundle of weight one with respect to the $\CC^*$ action.  Then $w$ can be viewed as a section of $\cc O_{[Y/\CC^*]}(\eta)$.  Let $Z$ denote the zero locus of $X$.  Consider the following diagram
\begin{equation}\label{e:csquare}\begin{tikzcd}
Y|_Z \ar[r, "j' "] \ar[d, "\pi|_Z"] & Y \ar[d, "\pi"]\\
Z \ar[u, bend left, " i '"] \ar[r, "j"]  & X
  \end{tikzcd}
  \end{equation}
  
  The following result was proven independent by Isik \cite{Isik} and Shipman \cite{Shi}, where it appears as the main theorem in each.  The generalization to the setting of quotients by $G$ is due to Hirano, \cite[Proposition~4.8]{Hirano}.
  \begin{theorem}\cite{Isik, Shi, Hirano}\label{t:ish}
  There is an equivalence of categories given by
  \[\phi_+ = j'_* \circ \pi|_Z^*: D(Z/G) = D(Z/(G \times \CC^*), 0) \to D(Y/(G \times \CC^*), w).\]

  \end{theorem}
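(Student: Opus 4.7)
The plan is to realize $\phi_+$ as a Fourier--Mukai transform with an explicit Koszul matrix factorization as kernel, to construct a candidate quasi-inverse, and then to verify the adjunction identities by reducing to affine charts where the main computation is classical Knörrer periodicity.

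I would first exhibit $\phi_+$ as a factorization-valued functor. On $Y$ there is a tautological section $\tau \in \Gamma(Y, \pi^*E)$, of weight one under the fiber-scaling action of $\CC^*_R$, together with the pullback $\pi^*f$; these produce a section $\beta = \tau$ and a pairing map $\alpha : \pi^*E \to \cc O_{[Y/\CC^*]}(\eta)$ satisfying $\alpha \circ \beta = w$. The associated Koszul factorization $\cc K := \{\alpha, \beta\}$ of $(Y,w)$ has underlying total complex the Koszul resolution of $\cc O_{Y|_Z}$ on $Y$, so that $\phi_+(\cc F) \simeq \pi^*\cc F \otimes^{\LL} \cc K$ tautologically promotes $j'_*(\pi|_Z)^*\cc F$ to a matrix factorization of $w$. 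In particular $\phi_+$ is the Fourier--Mukai transform with $G \times \CC^*_R$-equivariant kernel $\cc K$ supported on $Y|_Z \subset Z \times_X Y$.

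Next I would construct a candidate quasi-inverse $\psi$. Because $w$ vanishes on $Y|_Z$, the restriction ${j'}^*$ lands in the category of factorizations of $(Y|_Z, 0)$, which is equivalent to $D(Y|_Z/G)$; composing with restriction along the zero section $i'$ then produces an object of $D(Z/G)$. Equivalently $\psi$ is the Fourier--Mukai transform with dual Koszul kernel $\cc K^\vee$, and the compositions $\psi\phi_+$ and $\phi_+\psi$ are themselves Fourier--Mukai transforms whose kernels are convolutions of $\cc K$ with $\cc K^\vee$, so the unit and counit of the adjunction can be written down explicitly in terms of the Koszul data.

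The hard part will be proving that these unit and counit maps are isomorphisms globally, in a way compatible with both the $G$-action and the $\CC^*_R$-grading. Since $\cX = [T/H]$ is a nice quotient stack and $f$ is a regular section, one can cover $X$ by $G$-invariant affine opens over which $E$ trivializes and $f$ becomes part of a regular sequence. On such a chart the convolution kernel computing $\psi\phi_+$ becomes $\wedge^\bullet E \otimes \wedge^\bullet E^\vee$ paired against the Koszul differentials, and a classical calculation identifies this with the structure sheaf of the diagonal of $Z \cap U$; this is Knörrer periodicity in the fiber variable. Equivariant descent, using that both categories admit generators coming from the ample family of equivariant line bundles on $T$, then globalizes the local isomorphism. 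This is the common thread in the arguments of Isik, Shipman, and Hirano, with Hirano supplying the equivariant bookkeeping required in the present generality.
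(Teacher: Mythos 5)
The paper does not prove Theorem~\ref{t:ish} — it is quoted from Isik, Shipman, and Hirano — so there is no internal argument to compare against; the best I can do is assess your blind sketch against the published proofs. Your opening step is correct and is a point the paper itself uses (Remark~\ref{r:Ship}, Lemma~\ref{l:phij}): by the projection formula, $\phi_+(\cc F) \cong \pi^*\cc F \otimes \{\pi^*f, \taut\}$, and the Koszul factorization $\{\pi^*f, \taut\}$ is a replacement for $j'_*\cc O_{Y|_Z}$ as a factorization of $w$; this is Shipman's Lemma~3.2. Past that point, however, the sketch has genuine gaps. The claim that $D\big((Y|_Z)/(G\times\CC^*), 0\big)$ is equivalent to $D\big((Y|_Z)/G\big)$ is false: the Polishchuk--Vaintrob identification of graded factorizations of the zero potential with the bounded derived category requires the $\CC^*$-action to be \emph{trivial} (exactly as in the paper's remark following the definition of $D(\cX, s)$), whereas on $Y|_Z$ the R-charge $\CC^*$ acts by scaling the fibers. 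What you actually get after applying ${j'}^*$ is the $G\times\CC^*$-equivariant derived category of $Y|_Z$, which is much larger. The composite ${i'}^*\circ{j'}^*$ still lands in $D(Z/(G\times\CC^*),0)=D(Z/G)$ because restriction to the zero section kills the scaling action, but your intermediate justification is wrong, and you also silently omit the determinant twist and shift (cf.\ Definition~\ref{d:p}) that are needed for a unit/counit analysis to have any chance of closing.

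More seriously, the engine of your proposal — cover by $G$-invariant affine opens trivializing $E$, invoke ``classical Knörrer periodicity in the fiber variable,'' then glue by equivariant descent — is not how any of the three cited proofs goes and does not obviously work. The affine statement $D(Z\cap U)\simeq D\big((Y|_U)/\CC^*, w|_U\big)$ is precisely the content of Isik's theorem, not classical Knörrer periodicity $MF(R,w)\simeq MF(R[[u,v]],w+uv)$: the latter adds two variables with a quadratic potential and relates two factorization categories, whereas here one adds the fiber variables with a \emph{linear} potential and relates a full derived category of a complete intersection to a graded LG model. So the ``base case'' you want to reduce to is the theorem itself. Furthermore, gluing equivalences of factorization categories over a cover, and doing so $G$-equivariantly on a quotient stack, requires descent machinery you do not supply, and a $G$-equivariant trivializing cover of $E$ need not exist. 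The actual arguments are global: Shipman proves fully-faithfulness by computing Hom spaces against the Koszul generators and then argues essential surjectivity by generation and localization; Isik uses graded Koszul duality for DG modules over the Koszul algebra; Hirano adapts Shipman's global argument to quotient stacks within the Ballard--Favero--Katzarkov framework. Your sketch would need to be replaced by one of these strategies rather than patched locally.
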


 \begin{example}\label{e:S1} Consider the vector bundle $\pi^*(E)$ over $[Y/G]$.  This bundle has a tautological section $\taut \in \Gamma(Y, \pi^*(E))$.  
 The pullback of $f$ yields a map $\pi^*(f) \in \hom(\pi^*(E), \cc O(\eta))$.  
 Note that the composition $\pi^*(f)\circ \taut$ is equal to $w$.  We denote by $S_1$ the corresponding Koszul factorization 
 \[S_1 := \{ \pi^*(f), \taut \}.\]
 \end{example}
 \begin{definition}\label{d:p}
 Define the equivalence
 \[\tilde \phi_+ := \op{det}(E^\vee) \otimes \phi_+ [-\op{rank}(E)].\]
 \end{definition}
\begin{remark}\label{r:Ship} We note that $\tilde \phi_+$ identifies $S_1$ with $\cc O_{[Z/G]}$   (Remark~2.5.6 of \cite{CFFGKS}).
\end{remark}
\subsection{Functors}
Let $f: \cX \to \cY$ be a morphism of algebraic stacks and let $s$ be a section of a line bundle $\cc L$ on $\cY$.  Then there is a pullback functor
\[ f^*: Fact(\cY, s) \to Fact(\cX, f^*(s))\]
defined simply by pulling back all of the data of a factorization on $\cY$.  There is also a pushforward 
\[ f_*: Fact(\cX, f^*(s)) \to Fact(\cY, s).\]
Finally, given a section $r$ of a $\cc L$, % line bundle $\cc P$ on $\cY$, 
there is a tensor product
\[\otimes_{\cc O_{\cY}}: Fact (\cY, s) \otimes_k Fact(\cY, r) \to Fact(\cY, s + r). 
\]
The above operations induce derived functors $\mathbb{L} f^*$, $\mathbb{R} f_*$ (if $f: \cX \to \cY$ is proper) and $\otimes_{\cY}^\mathbb{L}$ respectively.  
Without further comment we will always work with derived functors, by abuse of notation we will just denote these by $f^*$, $f_*$ and $\otimes_{\cY}$.

The following analogue of the projection formula holds in the setting of factorizations.
\begin{proposition}\cite[Proposition~2.2.10]{CFFGKS} \label{pprojform}
Let $f: \cX \to \cY$ be a proper morphism, let $s$ and $r$ be sections of a line bundle on $\cY$.  Then if $\cc E_\bullet$ is a factorization of $f^*(s)$ on $\cX$ and $\cc F$ is a factorization of $r$ on $\cY$, then 
\[ f_*( \cc E_\bullet \otimes_{\cX} f^*(\cc F)) = f_*(\cc E_\bullet) \otimes_{\cY} \cc F \in D(\cY, s + r).\]

\end{proposition}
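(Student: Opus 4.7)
The plan is to reduce this to the classical projection formula by unwinding how tensor product, pullback and pushforward are defined on factorizations, and then handle derived-functor subtleties via an appropriate resolution. First I would recall that for factorizations $\cc E_\bullet$ of $s$ and $\cc F_\bullet$ of $r$ on the same stack, the tensor product $(\cc E_\bullet \otimes \cc F_\bullet)_0$ and $(\cc E_\bullet \otimes \cc F_\bullet)_{-1}$ are each a direct sum of two ordinary tensor products of components (twisted by powers of $\cc L$), with the differentials assembled from the individual $\phi_i^{\cc E}$ and $\phi_i^{\cc F}$ in the evident way. Similarly, pullback and pushforward along $f$ are defined componentwise: $(f^*\cc F)_i = f^* \cc F_i$ and $(f_* \cc E)_i = f_* \cc E_i$, with differentials given by applying $f^*$ or $f_*$ to the original differentials and using the canonical identifications $f^*(\cc F_i \otimes \cc L) = f^*\cc F_i \otimes f^*\cc L$ and $f_*(\cc E_i \otimes f^*\cc L) = f_*\cc E_i \otimes \cc L$.

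With this bookkeeping set up, the components of the two sides of the desired equality each decompose into two summands of the form $f_*(\cc E_i \otimes_{\cc O_\cX} f^*\cc F_j)$ and $f_*(\cc E_i) \otimes_{\cc O_\cY} \cc F_j$, and the ordinary (quasi-coherent) projection formula provides canonical isomorphisms between corresponding summands. I would then check that these componentwise isomorphisms commute with the factorization differentials $\phi_0$ and $\phi_{-1}$. This is a naturality check: the differentials on both sides are built from $\phi_\bullet^{\cc E}$, $\phi_\bullet^{\cc F}$, the section $s$ on $\cc E_\bullet$, the section $r$ on $\cc F_\bullet$, and the isomorphisms between the sections on $\cc Y$ and their pullbacks to $\cc X$. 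The classical projection isomorphism is natural in both arguments, so it intertwines the two assemblies of these data.

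To pass from the underived to the derived statement, I would replace $\cc E_\bullet$ by a quasi-isomorphic factorization whose components are simultaneously flat (so that $\otimes_\cc X f^*\cc F_\bullet$ computes the derived tensor product) and $f$-acyclic (so that $f_*$ applied componentwise computes the derived pushforward). On a nice quotient stack such resolutions exist: one may take a locally free factorization resolution obtained from an equivariant ample family, composed with a Čech-type resolution adapted to $f$ in the proper case. Having chosen such a replacement, the componentwise projection isomorphisms above are the derived projection isomorphisms, and the underived verification upgrades directly to a statement in $D(\cc Y, s+r)$.

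The main obstacle, and the reason this is not entirely automatic, lies in the last step: securing a resolution inside $\mathrm{Fact}(\cc X, f^*s)$ that is adequate for \emph{both} derived functors at once, since factorizations are $\ZZ/2$-graded and the usual bounded-below truncation tricks for chain complexes are unavailable. In the nice-quotient-stack context of \cite{PV3,BFK}, however, the needed homotopical machinery (locally free or injective resolutions in the absolute derived category, together with well-behaved derived functors) is in place, and the argument concludes by invoking naturality of the classical projection formula on each bi-graded piece.
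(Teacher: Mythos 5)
The paper does not actually supply a proof here: Proposition~\ref{pprojform} is cited directly from \cite{CFFGKS} and then used as a black box (e.g.\ in the proof of Lemma~\ref{l:phij}), so there is nothing internal to compare against.

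As a stand-alone argument, your outline is the right \emph{shape} (reduce to the classical projection formula componentwise and check naturality), but the resolution step is set up in a way that does not quite close the gap you flag. You propose replacing $\cc E_\bullet$ by a factorization that is \emph{simultaneously} flat and $f_*$-acyclic. Even granting such a replacement, this is not sufficient: to compute $\mathbb{R}f_*(\cc E_\bullet\otimes^{\mathbb{L}}_\cX f^*\cc F)$ by the underived pushforward you need the \emph{product} $\cc E_\bullet\otimes f^*\cc F$ to be $f_*$-acyclic, and flatness of $\cc E_\bullet$ together with $f_*$-acyclicity of $\cc E_\bullet$ does not yield this without some control on $f^*\cc F$. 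The cleaner (and standard) arrangement puts the finiteness hypothesis on $\cc F$ rather than on $\cc E_\bullet$: on a nice quotient stack one may, by the resolution property, replace $\cc F$ in $D(\cY,r)$ by a factorization with finite-rank locally free components. Then $f^*\cc F$ also has finite-rank locally free components, so (i) tensoring with $f^*\cc F$ is automatically derived regardless of $\cc E_\bullet$, and (ii) tensoring with a finite-rank locally free object preserves $f_*$-acyclicity (locally a finite direct sum of copies of $\cc E_\bullet$). After that one needs only a single $f_*$-acyclic replacement of $\cc E_\bullet$, not the ``simultaneously flat and $f$-acyclic'' one whose existence you were (rightly) worried about. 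With that adjustment, the componentwise reduction to the usual projection formula and the naturality check you describe go through, and the $\ZZ/2$-graded nature of factorizations poses no further obstruction since the resolutions used are of the components, not truncations of the factorization itself.
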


\subsection{Chern maps}\label{s:chch}

\subsubsection{Orbifold Chern character}
Given $\cX$ a smooth Deligne--Mumford stack with quasi-projective coarse moduli space.  Let $I\cX$ denote the inertia stack of $\cX$, defined as the fiber product of the diagonal with itself.  There is a natural open and closed subset of $I\cX$ which is isomorphic to $\cX$ itself.  The other components are referred to as \emph{twisted sectors}.

Given a vector bundle $E$ on an smooth DM stack $\cX$, let $B$ index the twisted sectors of $I\cX$.  Let $E_b$ denote the restriction of $E$ to the twisted sector ${\cX}_b$.  Consider the action of the stabilizer $g_b$ on $E_b$.  This action splits $E_b$ into eigenbundles.
Let $E_{b,f}$ denote the eigenbundle of weight $f$ for $0 \leq f <1$.  In other words $g_b$ acts on $E_{b,f}$ by multiplication by $e^{2 \pi i f}$.

\begin{definition}\label{d:orbch}
Define the orbifold Chern character of $E$, $\ch(E) \in H^*_{\op{CR}}(\cX)$, by
\[ \ch(E) := \bigoplus_{b \in B} \sum_{ 0 \leq f < 1} e^{2 \pi i f} \ch(E_{b,f}),\]
where $\ch(E_{b,f})$ lies in $H^*({\cX}_b)$.  This extends to a map $\ch: D(\cX) \to H^*_{\op{CR}}(\cX)$.
\end{definition}

\subsubsection{Chern map for factorizations}
Let $C$ be a $k$-linear small dg-category.  Let $C(k)$ denote the dg category of unbounded complexes over $k$.  Consider the $C-C$ bi-module
\begin{align*}
\Delta_C: C^{op} \otimes C &\to C(k) \\
(c, c') &\mapsto \hom_C(c, c').\end{align*}
This induces the trace functor
\begin{align*}
D(C \times C^{op}) &\to D(k)\\
F & \mapsto F\otimes_{C^{op} \otimes C} \Delta_C.
\end{align*}

\begin{definition}
Let $C$ be a small $k$-linear dg-category.  We let $HH_*(C)$ denote the Hochschild homology of $C$:
\[HH_*(C) := H^{-*}(\Delta_C \otimes_{C^{op} \otimes C} \Delta_C).\]
If $C = D(\cY, w)$, 
%(resp. $D(\cX)$) is a category of factorizations (resp. bounded derived category of coherent sheaves), 
we denote 
$HH_*(C)$ simply by $HH_*(\cY, w)$.
% (resp. $HH_*(\cX)$). 
\end{definition}
In \cite{BFK2} and \cite{PV2}, the Hochschild homology of $D(\cXR, w)$ is calculated, as well as the Chern map.
\begin{proposition}[Theorems~2.5.4 and~3.3.3 of \cite{PV2} and Theorem 1.2 of \cite{BFK2}]

The Hochschild homology $HH_*(\cXR, w)$ is isomorphic to
\[\bigoplus_{g \in G} (\cc Q_{w|_{(\CC^N)^g}})^g\]
where $\cc Q_{w|_{(\CC^N)^g}}$ denotes the Jacobian ring of the restriction of $w$ to $(\CC^N)^g$.
%Under this identification, the Chern map 
%\[\op{ch}: D(\cY_-, w) \to \cc H(w, G).\] 
 Given a factorization $ \cc E_\bullet = (\cc E_{-1}, \cc E_0, \phi_{-1}, \phi_0)$, then under this identification
\[\ch(\cc E_{-1}, \cc E_0, \phi_{-1}, \phi_0) := \bigoplus_{g \in G} 
\left[ \text{str}(\partial_{j_1} \phi_{\cc E} \circ \partial_{j_2} \phi_{\cc E} \circ \cdots \circ \partial_{j_{N_g}} \phi_{\cc E} \circ
g)|_{(\CC^N)^g} dx_{j_1} \wedge \cdots \wedge dx_{j_{N_g}} \right] \]
where $\cc E = \cc E_0 \oplus \cc E_{-1}$, $\phi_{\cc E} = \phi_0 \oplus \phi_{-1}$ is block anti-diagonal, and $g$ denotes the action of the isotropy generator on $\cc E$.
\end{proposition}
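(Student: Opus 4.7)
The plan is to follow the approach of Polishchuk--Vaintrob and Ballard--Favero--Katzarkov, which proceeds in three stages: express the Hochschild homology via the diagonal bimodule, reduce to a local computation on each fixed locus of the $G$-action, and invoke a Kapustin--Li style calculation locally.

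First I would construct an explicit resolution of the diagonal bimodule $\Delta_C$ for $C = D(\cXR, w)$. Since $\cXR = [V/\widetilde G]$ is a smooth quotient stack, the diagonal in $\cXR \times \cXR$ admits a Koszul-style resolution built from the conormal bundle, which one twists so that the differential also encodes the difference $w \boxtimes 1 - 1 \boxtimes w$ (which vanishes along the diagonal). The resulting complex is a factorization on the product, and pairing it with itself via $\Delta_C \otimes_{C^{op} \otimes C} \Delta_C$ produces a concrete model for $HH_*$.

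Next I would apply the standard equivariant decomposition. For a finite abelian group $G$ acting on a smooth affine variety $X$ preserving a potential $f$, one has a splitting
\begin{equation*}
HH_*\bigl(D([X/G], f)\bigr) \;\cong\; \bigoplus_{g \in G} HH_*\bigl(D(X^g, f|_{X^g})\bigr)^g,
\end{equation*}
obtained by analyzing the fixed-point components of $[X/G] \times [X/G]$ under the diagonal $G$-action. Applied to $\cXR$ with $X = V$, this produces the indexing by $g \in G$ and reduces the problem to the non-equivariant case on each $V^g$. The local input -- the Kapustin--Li formula, made rigorous by Dyckerhoff, Efimov, and Segal -- identifies the Hochschild homology of $D(V^g, w|_{V^g})$ with the Jacobian ring $\cc Q_{w|_{V^g}}$ in the appropriate degree. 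Taking $g$-invariants and summing over $g$ gives the first claim.

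For the Chern character formula, I would unfold the categorical trace $\ch(\cc E_\bullet) = \op{tr}(\op{id}_{\cc E_\bullet}) \in HH_0$ against the explicit resolution of the diagonal above. Restricted to the sector indexed by $g$, the resolution contracts to an exterior algebra in the coordinates of $V^g$; the categorical trace then becomes a supertrace of iterated compositions $\partial_{j_1}\phi_{\cc E} \circ \cdots \circ \partial_{j_{N_g}}\phi_{\cc E} \circ g$, restricted to $(\CC^N)^g$ and paired with the top form $dx_{j_1} \wedge \cdots \wedge dx_{j_{N_g}}$. The main obstacle is tracking signs and equivariant weights through the Koszul resolution carefully enough to match this explicit expression; this is essentially the technical core of both the Polishchuk--Vaintrob and Ballard--Favero--Katzarkov arguments, and is where their approaches diverge in detail while arriving at the same formula.
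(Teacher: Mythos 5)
The paper does not prove this proposition at all; it is cited verbatim from Theorems~2.5.4 and~3.3.3 of Polishchuk--Vaintrob and Theorem~1.2 of Ballard--Favero--Katzarkov, so there is no in-paper argument to compare against. Your sketch is a faithful high-level outline of the route those references take: a twisted Koszul resolution of the diagonal bimodule, an equivariant decomposition of Hochschild homology by fixed loci $V^g$ with $g$-invariants, and a Kapustin--Li/Dyckerhoff style local identification with the Jacobian ring, with the Chern character extracted as a supertrace against that resolution. As you yourself note at the end, what you have is the skeleton rather than the technical core -- the sign conventions, the equivariant weights, and the precise contraction of the Koszul resolution on each twisted sector -- and that bookkeeping is exactly what occupies the bulk of both source papers, so your sketch is a correct orientation but not a self-contained proof.
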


Note that if an element $g \in G$ fixes only the origin, then
the corresponding summand of $HH_*(\cXR, w)$ is isomorphic to $\CC$.  We let $\varphi_{gj^{-1}}$ denote the unit generator of this summand.  Thus 
\[\sum_{g \in G_{\op{nar}}} \CC \cdot \varphi_{gj^{-1}} \subset HH_*(\cXR, w).\]
This subspace is termed the \emph{narrow subspace} and will feature in what follows.

\subsection{Koszul factorizations and explicit computations}

%Recall ou setup:  $(w: \CC^N \to \CC, G)$ is a Landau--Ginzburg pair.  We assume further that $w$ is a Fermat polynomial and that $G \subset SL_N(\CC)$.  

\subsubsection{The affine phase $\cY_-$}
Let $\eta: \widetilde \Gamma \to \CC^*$ denote the character sending $(\alpha, \bar g, \lambda_R)$ to $\lambda_R$.  
This defines a line bundle $\cc O_{\cXR}(\eta)$ on $\cXR$ by \eqref{e:GIT2}. Let $0_\eta $ denote the zero section of $\cc O_{\cXR}(\eta)$.  Note that $w$ also gives a section of $\cc O_{\cXR}(\eta)$.  

Via the inclusion $\widetilde G \to \widetilde \Gamma$ given by $(\alpha, \bar g) \mapsto (\alpha, \bar g, \alpha^d)$, the character $\eta$ restricts to a character of $\widetilde G$.  This defines a line bundle on $B \widetilde G$.
By abuse of notation we will also denote this character and the corresponding line bundle by $\eta$ when no confusion will result.  
Alternatively this line bundle can be viewed as the  restriction of $\cc O_{\cXR}(\eta)$ to the origin, after applying the isomorphism between $\cXR$ and  $[ V/\widetilde G]$. 

By Proposition 1.2.2 of \cite{PV},  $D(B \widetilde G, 0_\eta) \sim D(BG)$, where we use the symbol $\sim$ to mean an equivalence of categories.  We will implicitly identify these categories in what follows.  Let $D(\cY_-)_{BG}$ denote the full subcategory of $D(\cY_-)$ consisting of complexes supported on $BG$.

Consider  the projection map $\pi: \cY_- \to BG$ and  the inclusion $i^0: BG \to \cY_-$.
These induce pushforward functors
\begin{align*} 
\pi_*&: D(\cY_-)_{BG} \to D(BG) \sim D(B \widetilde G, 0_\eta),\\
i^0_*&:   D(B \widetilde G, 0_\eta) \sim D(BG) \to D(\cY_-)_{BG}.
\end{align*}
There also exists a map of LG spaces
$ i^1: (B \widetilde G, 0_\eta) \to (\cXR,  w)$
inducing the pushforward 
\[i^1_*: D(B \widetilde G, 0_\eta) \to D(\cXR,  w).\]

%We let $D(\cXR, 0)_{B \widetilde G} \subset D(\cXR, 0)$ denote the category of factorizations with support in $B \widetilde G$.
The following holds by, e.g., Lemma 4.6 and 4.8 of \cite{BFK2}.
\begin{proposition}\cite[Lemmas~4.6 and~4.8]{BFK2}\label{p:strgenBG}
The category $D(\cY_-)_{BG} $ is strongly generated by $i^0_*(D(B \widetilde G, 0_\eta))$.
\end{proposition}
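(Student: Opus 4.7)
The plan is to combine two ingredients: the fact that $i^0 : BG \hookrightarrow \cY_-$ is a regular embedding with a finite Koszul resolution, and a devissage of sheaves supported at the origin by powers of the maximal ideal.

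First, I would note the geometric setup: $\cY_- = [V/G]$ is the total space of the rank-$N$ $G$-equivariant vector bundle $V$ over $BG$, with projection $\pi$ and zero section $i^0$. The inclusion $i^0$ is a regular embedding of codimension $N$, cut out by the components of the tautological section $\mathrm{taut} \in \Gamma(\cY_-, \pi^*V)$, i.e. by the regular sequence $x_1,\ldots,x_N$. Consequently $i^0_*\cO_{BG}$ admits a finite Koszul resolution
$$K^\bullet \;=\; \{0,\mathrm{taut}\} \;=\; \big(\wedge^\bullet \pi^*V^\vee,\;\iota_{\mathrm{taut}}\big)$$
by $G$-equivariant locally free sheaves of length $N+1$ on $\cY_-$.

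Next, for an arbitrary $\cE\in D(\cY_-)_{BG}$, I would argue by devissage. Represent $\cE$ as a bounded complex of coherent sheaves whose cohomology sheaves are set-theoretically supported on $BG$. Standard truncation triangles reduce the problem to a single coherent sheaf $\cF$ on $\cY_-$ with $\mathrm{Supp}(\cF)\subseteq\{0\}$. Such $\cF$ is annihilated by some power $\cI^m$ of the ideal sheaf $\cI=(x_1,\ldots,x_N)$ of the origin, giving the finite filtration
$$0 \;\subseteq\; \cI^{m-1}\cF \;\subseteq\; \cdots \;\subseteq\; \cI\cF \;\subseteq\; \cF,$$
whose subquotients $\cI^k\cF/\cI^{k+1}\cF$ are $\cO_{\cY_-}/\cI = \cO_{BG}$-modules, hence of the form $i^0_*\cG_k$ with $\cG_k\in\mathrm{Coh}(BG)$. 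Each such quotient lies in $i^0_*(D(BG)) = i^0_*(D(B\widetilde G, 0_\eta))$.

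Finally, to upgrade mere generation to \emph{strong} generation (a uniform bound on the number of iterated cones), I would use the Koszul resolution $K^\bullet$ as a single building block of length $N+1$. Combined with the finite cohomological amplitude of $\cE$ (bounded since $G$ is a finite group, so $BG$ has finite cohomological dimension) and the length-$N+1$ filtration of $K^\bullet$ by naive truncation, this produces a filtration of $\cE$ by objects in $i^0_*(D(BG))$ whose length is bounded uniformly in terms of $N$ and the amplitude of the ambient category alone.

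The main obstacle I anticipate is this last step: promoting the devissage into a \emph{uniform} bound required for strong generation. The power $m$ of $\cI$ needed to annihilate an arbitrary $\cF$ is not itself uniformly bounded, so one cannot naively take the ideal-power filtration. Instead, the support hypothesis $\cE\in D(\cY_-)_{BG}$ must be leveraged through the Koszul complex, which provides the correct uniformly-bounded replacement for the ideal-power filtration. Establishing this compatibility cleanly is presumably the content of Lemmas~4.6 and~4.8 of \cite{BFK2} which are invoked here.
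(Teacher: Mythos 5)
The paper gives no proof of this proposition --- it is invoked directly from \cite[Lemmas~4.6 and~4.8]{BFK2} --- so there is no in-text argument to compare against. Your devissage correctly establishes \emph{classical} generation: a coherent sheaf on $\cY_-$ set-theoretically supported at the origin is annihilated by a power of the ideal $\cc I=(x_1,\ldots,x_N)$, hence is a finite iterated extension of $\cc O_{BG}$-modules pushed forward along $i^0$, and bounded complexes reduce to this case via truncation triangles.

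The gap, which you partially anticipate, is in the passage to \emph{strong} generation, and your proposed fix does not close it. Tensoring $\cE$ with the Koszul resolution $K^\bullet=\{0,\taut\}$ and filtering by brutal truncation produces a length-$(N{+}1)$ filtration not of $\cE$ itself but of $\cE\otimes^{\mathbb L}K^\bullet$, which by the projection formula is $i^0_*(i^{0*}\cE)$; moreover the associated graded of that filtration is $\bigoplus_k \cE\otimes\wedge^k V^\vee[-k]$, i.e.\ locally free twists of $\cE$, not objects of $i^0_*(D(BG))$. So the Koszul complex by itself yields neither a bounded-length filtration of $\cE$ nor one with subquotients in the desired subcategory, and no candidate for the uniform bound demanded by strong generation is actually produced. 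You are right that the real content of \cite[Lemmas~4.6 and~4.8]{BFK2} is exactly how to leverage the support hypothesis against the Koszul structure to get that uniform bound, and the proposal is commendably explicit about leaving this step open; but as written it does not establish the proposition.
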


In what follows, we will make use of a particular factorization in calculating maps on cohomology and Hochschild homology. 
% Let $\CC_{j}$ denote the one-dimensional representation of $G$ given by the $j$th factor of $\CC^N$.  We will also let $\CC_{j}$ denote the corresponding $\widetilde \Gamma$-representation.  The representation 

Let $V$ be the vector bundle  of Section~\ref{s:setup}.
The  actions of $G$ and $\widetilde \Gamma$ allow us to view $V$ as defining vector bundles on $BG$  and $B\widetilde \Gamma$.  To ease notation we will also use $V$ to denote the vector bundle on $\cY_-$ and $\cXR$ obtained by pulling back the vector bundles from $BG$  and $B\widetilde \Gamma$.

Over $\cY_-$ and $\cXR$, the bundle $V$ has a tautological section $\taut$, and a cosection $dw$ defined by 
\begin{equation}\label{e:cos}dw: (v_1, \ldots, v_N) \mapsto \sum_{j=1}^N v_j q_j \partial_j w.\end{equation}
Define $\beta $ to be either the tautological section $ \taut \in  \Gamma( \cXR, V)$ or $\taut \in \Gamma(\cY_-, V)$ depending on the context.
Define $\alpha := dw$ as an element of
% both  $\op{Hom}_{\cY_-}(V, \cc O_{\cY_-})$ and 
$\op{Hom}_{\cXR}(V, \cc O_{\cXR}(\eta))$.  We consider the Koszul complex $\{0, \beta\} \in D(\cY_-)_{BG} \subset D(\cY_-)$ and the Koszul factorization $\{\alpha, \beta\} \in D(\cXR, w)$.

The cohomology $H^*_{\op{CR}}(\cY_-)$ is given by 
\[H^*_{\op{CR}}(\cY_-) = \bigoplus_{g \in G} \CC \cdot \ii_g\]
where $\ii_g$ denotes the fundamental class on the $g$th twisted sector.  An explicit calculation yields
\begin{equation}\label{e:chzerobeta}\ch\left(\{0, \beta\} \right) = \bigoplus_{g \in G} \prod_{j=1}^N \left( 1 - e^{2 \pi i (-m_j(g))} \right) \ii_g.\end{equation}  To derive \eqref{e:chzerobeta}, first expand the product $\prod_{j=1}^N \left( 1 - x e^{2 \pi i (-m_j(g))} \right)$.  The coefficient of $(-x)^k$ is seen to equal $\ch\left( \wedge^k V^\vee \right)|_{H^*({\cY_-}_g)}$.  
%Note that we can view $ H^*_{\op{CR}, \op{nar}}(BG)$ as living in $\op{HH}_*(\cY_-, 0)_{BG}$ by identifying $ H^*_{\op{CR}, \op{nar}}(BG) \cong \op{HH}_{*, nar}(BG)$ with its image in $\op{HH}_*(\cY_-, 0)_{BG}$.  Since $\pi_* \circ i^0_*$ is the identity, $i^0_*$ is injective.
\begin{definition}
Define the \emph{narrow} subspace \[H^*_{\op{CR}, \op{nar}}(\cY_-) := \bigoplus_{g \in G_{\op{nar}}} \CC \cdot \ii_g \subset H^*_{\op{CR}}(\cY_-).\]
\end{definition}
Observe that if $N_g>0$ then $m_j(g) = 0$ for some $j$ and the $\ii_g$ coefficient of $\ch(\{0 ,\beta\})$ is zero.  Thus $\ch(\{0, \beta\})$ lies in $H^*_{\op{CR}, \op{nar}}(\cY_-)$.

A very similar calculation holds for the factorization $\{\alpha, \beta\}$.
By Proposition 4.3.4 of~\cite{PV2}, 
\begin{equation}\label{e:chalphabeta}\ch\left(\{\alpha, \beta\} \right) = \bigoplus_{g \in G_{\op{nar}}} \prod_{j=1}^N \left( 1 - e^{2 \pi i (-m_j(g))} \right) \phi_{gj^{-1}}.\end{equation}

%\begin{corollary} \note{is this true?}
%$HH_*(\cY_-)_{BG} \cong HH_*(BG) \cong H^*$.
%\end{corollary}

\subsubsection{The geometric phase $\cY_+$}\label{s:gp}

The character $\eta: \widetilde \Gamma \to \CC^*$ also defines a line bundle $\cc O_{\cYR}(\eta)$ %denote the corresponding line bundle 
on $\cYR$.  Denote by $\cc O_{\PP(G)/\CC^*_R}(\eta)$ the restriction of this line bundle to $\PP(G)/\CC^*_R$ and by $0_\eta \in %\Gamma\left( \right)$ %denote the zero section in $\cc O_{\cYR}(\eta)$.
\Gamma(\tot(\cc O_{\PP(G)})/\CC^*, \cc O_{\PP(G)/\CC^*_R}(\eta))$ the zero section of this line bundle.  
Again by Proposition 1.2.2 of \cite{PV}, $D(\PP(G)/\CC^*_R, 0_\eta) \sim D(\PP(G))$.  As above we have the following functors
\begin{align*} 
\pi_*&:  D(\cY_+)_{\PP(G)} \to D(\PP(G)/\CC^*_R, 0_\eta) \sim D(\PP(G)) \\
i^0_*&: D(\PP(G)/\CC^*_R, 0_\eta) \sim D(\PP(G)) \to D(\cY_+)_{\PP(G)} \\
i^1_*&: D(\PP(G)/\CC^*_R, 0_\eta) \to D(\cYR,  w).
\end{align*}
As with Proposition~\ref{p:strgenBG}, again by \cite{BFK2} we have the following.
\begin{proposition}\cite[Lemmas~4.6 and~4.8]{BFK2}\label{p:strgen2}
The category $D(\cY_+)_{\PP(G)}$ is strongly generated by $D(\PP(G)/\CC^*_R, 0_\eta)$.
\end{proposition}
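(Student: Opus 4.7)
The plan is to adapt the devissage strategy underlying Proposition~\ref{p:strgenBG} to the geometric phase. The key geometric input is that $\pi: \cY_+ \to \PP(G)$ is the total space of the line bundle $\cc O_{\PP(G)}(-d)$, so that the zero-section inclusion $i^0: \PP(G) \hookrightarrow \cY_+$ is a regular embedding of codimension one whose ideal sheaf $\cc I$ is invertible and isomorphic to $\pi^*\cc O_{\PP(G)}(d)$. This is what will allow us to build arbitrary objects of $D(\cY_+)_{\PP(G)}$ from $i^0_*$-pushforwards by taking only finitely many cones.

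Given any coherent sheaf $F$ on $\cY_+$ with set-theoretic support on $\PP(G)$, noetherianity produces an integer $n$ with $\cc I^n F = 0$. Consider the filtration
\begin{equation*}
F \supseteq \cc I F \supseteq \cc I^2 F \supseteq \cdots \supseteq \cc I^n F = 0.
\end{equation*}
Each successive quotient $\cc I^j F/\cc I^{j+1}F$ is annihilated by $\cc I$, hence is an $\cc O_{\PP(G)}$-module, i.e.\ the pushforward under $i^0$ of a coherent sheaf on $\PP(G)$. Assembling these quotients via the evident distinguished triangles shows that $F$ lies in the triangulated hull of $i^0_* D(\PP(G))$, built from finitely many cones. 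For a general bounded complex in $D(\cY_+)_{\PP(G)}$, support is closed under taking cohomology sheaves, so filtering by cohomology sheaves (a finite filtration since the complex is bounded) reduces to the coherent case just handled.

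Finally, the equivalence $D(\PP(G)/\CC^*_R, 0_\eta) \sim D(\PP(G))$ recorded in Proposition~1.2.2 of \cite{PV} transports this statement into the form claimed in the proposition, namely that $D(\cY_+)_{\PP(G)}$ is strongly generated by the image of $i^0_*$ applied to $D(\PP(G)/\CC^*_R, 0_\eta)$. The step requiring the most care — and the main potential obstacle — is verifying that the ideal-sheaf filtration is compatible with the $R$-charge action (which acts by scaling the fibers of $\pi$ and hence rescales $\cc I$), so that the graded pieces genuinely correspond to objects of $D(\PP(G)/\CC^*_R, 0_\eta)$ under the identification above. Once this compatibility is established, the argument is a direct line-bundle analogue of the one invoked for $\cY_-$, and appeals to \cite[Lemmas~4.6 and~4.8]{BFK2} in a parallel fashion.
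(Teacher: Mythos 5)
The paper does not give its own proof of this statement---it is cited directly from \cite{BFK2}---so your proposal is filling a deliberate gap. The devissage strategy you outline is the natural one and is essentially correct for establishing \emph{classical} generation: a coherent sheaf $F$ set-theoretically supported on the zero section is killed by $\cc I^n$ for some $n$, the $\cc I$-adic filtration has graded pieces that are pushforwards from $\PP(G)$, one assembles $F$ from these via finitely many triangles, and one reduces a bounded complex to this case by filtering along cohomology sheaves. That is exactly what is needed downstream in the paper, since the only use made of the proposition (Theorem~\ref{t:Ksquare}) is to reduce a $K$-theoretic identity to a check on a generating set, and $K$-theory only sees distinguished triangles.

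Two caveats. First, your closing worry about compatibility of the ideal-sheaf filtration with the $R$-charge action is a red herring. The source category $D(\cY_+)_{\PP(G)}$ is an ordinary bounded derived category of coherent sheaves with no $\CC^*_R$-equivariance imposed; $\CC^*_R$ only enters through the identification $D(\PP(G)/\CC^*_R, 0_\eta) \sim D(\PP(G))$ on the \emph{other} side (Proposition~1.2.2 of \cite{PV}, together with the fact that $\CC^*_R$ acts trivially on $\PP(G)$). The filtration lives entirely inside $D(\cY_+)$ and never needs an equivariant lift; you should simply filter, identify the graded pieces as $i^0_*$ of sheaves on $\PP(G)$, and then invoke the equivalence to rewrite those as objects of $D(\PP(G)/\CC^*_R, 0_\eta)$. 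Second, the word ``strongly'' deserves attention: if one reads it in the Bondal--Van den Bergh sense (a uniform bound on the number of cones needed), the devissage does not give that, since the filtration length $n$ grows with the scheme-theoretic thickness of $F$ along $\PP(G)$ (e.g.\ the $n$-th infinitesimal neighborhood genuinely needs $n$ cones). The usage in this paper and in \cite{BFK2} is the weaker one---generation as a triangulated subcategory closed under summands---so your argument suffices, but it is worth stating that convention explicitly rather than treating ``strongly generated'' as automatic from the filtration.
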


As in Example~\ref{e:S1}, consider the pullback of $\cc O_{\PP(G)}(-d)$ to the total space $\cY_+$.  Denote this line bundle by $\cc O_{\cY_+}(-d)$.  We will let $\cc O_{\cYR}(-d, \eta)$ denote the corresponding line bundle on $\cYR$, where the action of $\CC^*_R$ on $\cc O_{\cYR}(-d, \eta)$ is by scaling.  In either case this line bundle has a tautological section $\taut$, as well as a cosection
\[\pi^*(w) \in \hom_{\cYR}(\cc O_{\cYR}(-d, \eta), \cc \cc O_{\cYR}(\eta)).\]
As above we will be interested in the Koszul complex $\{0 , \taut\} \in D(\cY_+)$ and $\{ \pi^*(w), \taut\} \in D(\cYR,  w)$.
Note that by Remark~\ref{r:Ship}, the equivalence $\tilde \phi_+: D(\cZ) \xrightarrow{\sim} D(\cYR,  w)$ identifies $\cc O_{\cZ}$ with $\{ \pi^*(w), \taut\}$.

%This line bundle has a tautological section $\taut$, as well as a cosection $\pi^*(w)$.

\subsection{The $K$-theoretic commuting square}\label{ss:Dequiv}

In this section we assume the \emph{quasi-Calabi--Yau condition}, that $\sum_{j=1}^N c_j = d$.  This ensures that $\cY_+$ and $\cY_-$ are Calabi--Yau and, in particular, are $K$-equivalent.  Let $j: \cZ \to \PP(G)$ denote the zero locus of $w$, here viewed as a section of $\cc O_{\PP(G)}(d)$.  Recall $\cY_+$ and $\cY_-$ are related by variation of GIT with respect to the action $\widetilde G$ on $V$.  
Due to the (quasi-) Calabi--Yau assumption, the derived categories $D(  \cY_+)$ and $D(\cY_-)$ are equivalent, via a functor $\vgit$ which depends on a parameter $l \in \ZZ$.  We recall the functor $\vgit$ below.  See \cite{ BFK, Seg} for details.  We follow the exposition of \cite{BFK}.

Recall $\widetilde G = \CC^* \times \bar G $.  Let $\Lambda$ denote the factor of $\CC^*$.  Let $W_{\Lambda, l}([\widetilde V/\widetilde G])$ denote the full triangulated subcategory of $D([\widetilde V/\widetilde G])$ generated by complexes of line bundles whose $\Lambda$-weight lies in the range $[l, l+d-1]$.  
Let 
\begin{align*} i_-&: [\widetilde V \sslash_- \widetilde G] \to [\widetilde V/\widetilde G] \\ i_+&: [\widetilde V \sslash_+ \widetilde G] \to [\widetilde V/\widetilde G]
\end{align*}
denote the respective inclusions.  Then it is proven in \cite{BFK} that both $i_-^*|_{W_{\Lambda, l}([\widetilde V/\widetilde G])}$ and $i_+^*|_{W_{\Lambda, l}([\widetilde V/\widetilde G])}$ are equivalences of categories.  Define
\begin{align}\label{e:evg1}
\vgit :  D(\cY_-) &\xrightarrow{\sim} D(  \cY_+) \\
 \cc E_\bullet & \mapsto i_+^* \circ (i_-^*|_{W_{\Lambda, l}([\widetilde V/\widetilde G])})^{-1}(\cc E) \nonumber
\end{align}

The subgroup $\Lambda$ may also be seen as a subgroup of $\widetilde \Gamma$ via the  inclusion $\widetilde G  = \widetilde G \times \{id\} \hookrightarrow \widetilde \Gamma =  \widetilde G \times \CC^*_R$.
Define $W_{\Lambda, l}([\widetilde V / \widetilde \Gamma], \tilde w )$ to be the full triangulated subcategory of $D([\widetilde V / \widetilde \Gamma], \tilde w )$ generated by factorizations $\cc E_\bullet =  (\cc E_{-1}, \cc E_0, \phi_{-1}, \phi_0)$ such that both $\cc E_{-1}$ and $\cc E_0$ are direct sums of line bundles of $\Lambda$-weight in the range $[l, l+d-1]$.
There is an analogous equivalence as \eqref{e:evg1} for factorization categories (see \cite{BFK}), given by 
\begin{align}
\vgit :  D(\cXR, w) &\xrightarrow{\sim} D(\cYR, w) \\
 \cc E_\bullet & \mapsto i_+^* \circ (i_-^*|_{W_{\Lambda, l}([\widetilde V / \widetilde \Gamma], \tilde w )})^{-1}(\cc E_\bullet) \nonumber
\end{align}

Recall by Corollary~5.2 of \cite{Shi} and Section~7 of \cite{BFK} that Orlov's equivalence $D(\cXR,  w) \xrightarrow{\sim} D(\cZ)$ can be viewed as a composition 
\[D(\cXR,  w) \xrightarrow{\vgit} D(\cYR,  w) \xrightarrow{\phi_+^{-1}} D(\cZ)\] 
%where the first map here is the equivalence of \cite{BFK} (where the subscript $l$ indicates a grade restriction window from degree $l $ to $l+d-1$).
 where $\phi_+$ was defined in Theorem~\ref{t:ish}.
For simplicity, we will take this as the definition of Orlov's equivalence, but using $\tilde \phi_+^{-1}$ from Definition~\ref{d:p} in place of $\phi_+^{-1}$:
\begin{definition} Define the categorical equivalence $\orlov$ as the composition:
\[\orlov := \vgit \circ \tilde \phi_+^{-1}.\]
\end{definition}
\begin{remark}\label{r:shifty}
One checks that this agrees with the definition in Section 4.2 of \cite{CIR} up to the shift  by $[1]$.
\end{remark}
\begin{lemma}\label{l:phij}
The functors $ \tilde \phi_+ \circ j^*, i^1_*: D(\PP(G))  \to D(\cYR,  w)$ are equal.
\end{lemma}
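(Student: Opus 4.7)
The plan is to reduce the equality of functors to an equality on structure sheaves via the projection formula, and then to identify both values with the Koszul factorization $\{\pi^{*}(w),\taut\}$ appearing in Example~\ref{e:S1}.

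First I would apply the projection formula (Proposition~\ref{pprojform}) to split $\pi^{*}\cc F$ off of both functors. For $i^{1}_{*}$, since $\pi \circ i^{1} = \op{id}_{\PP(G)}$ we have $(i^{1})^{*}\pi^{*}\cc F = \cc F$, so
\[
i^{1}_{*}(\cc F) \;=\; i^{1}_{*}\bigl(\cc O_{\PP(G)} \otimes (i^{1})^{*}\pi^{*}\cc F\bigr) \;=\; i^{1}_{*}(\cc O_{\PP(G)}) \otimes \pi^{*}\cc F.
\]
For $\tilde\phi_{+} \circ j^{*}$, the Cartesian square with corners $\cZ,\PP(G),\cYR|_{\cZ},\cYR$ (coming from $\pi \circ j' = j \circ \pi|_{\cZ}$) yields $\pi|_{\cZ}^{*} \circ j^{*} = (j')^{*} \circ \pi^{*}$; combining with the projection formula for $j'_{*}$ gives
\[
\tilde\phi_{+}(j^{*}\cc F) \;=\; \cc O_{\cYR}(d) \otimes j'_{*}\bigl((j')^{*}\pi^{*}\cc F\bigr)[-1] \;=\; \tilde\phi_{+}(\cc O_{\cZ}) \otimes \pi^{*}\cc F.
\]
It therefore suffices to establish $i^{1}_{*}(\cc O_{\PP(G)}) \cong \tilde\phi_{+}(\cc O_{\cZ})$ inside $D(\cYR, w)$.

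Remark~\ref{r:Ship} identifies $\tilde\phi_{+}(\cc O_{\cZ})$ with $S_{1} = \{\pi^{*}(w),\taut\}$. For the other side, note that the zero section $\PP(G) \hookrightarrow \cYR$ is cut out regularly by $\taut \in \Gamma(\cYR, \pi^{*}\cc O(-d))$, and the cosection $\pi^{*}(w) \in \op{Hom}(\pi^{*}\cc O(-d),\cc O(\eta))$ of Example~\ref{e:S1} satisfies $\pi^{*}(w) \circ \taut = w$, so $\{\pi^{*}(w),\taut\}$ is a genuine Koszul factorization of $w$ on $\cYR$. I would then argue directly that $i^{1}_{*}(\cc O_{\PP(G)}) \cong \{\pi^{*}(w),\taut\}$: the two-term Koszul resolution $[\pi^{*}\cc O(d) \xrightarrow{\taut} \cc O_{\cYR}]$ of $\cc O_{\PP(G)}$ as a sheaf on $\cYR$ lifts to the two-periodic Koszul factorization once the extra differential $\pi^{*}(w)$ is supplied, and the required composition identities $\pi^{*}(w) \circ \taut = w$ and $(\taut \otimes \op{id}) \circ \pi^{*}(w) = w$ hold by construction. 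Combining everything, $\tilde\phi_{+}(j^{*}\cc F) \cong \{\pi^{*}(w),\taut\} \otimes \pi^{*}\cc F \cong i^{1}_{*}(\cc F)$.

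The main obstacle is the last identification $i^{1}_{*}(\cc O_{\PP(G)}) \cong \{\pi^{*}(w),\taut\}$. Morally it is just the Koszul resolution of the zero section, but passing from a two-term complex in $D(\cYR)$ to an equivalent two-periodic factorization in $D(\cYR, w)$ requires care about how the potential is incorporated as the missing differential. The argument here is the rank-one instance of the general Koszul description of pushforwards along Landau--Ginzburg inclusions whose potential vanishes on the subspace, in the spirit of the Koszul constructions already invoked in Example~\ref{e:S1} and Remark~\ref{r:Ship} via \cite{CFFGKS}.
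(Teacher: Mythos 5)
Your argument is essentially the same as the paper's: reduce to the structure sheaf via the projection formula and base change along the Cartesian square, then identify both $\tilde\phi_+(\cc O_{\cZ})$ and $i^1_*(\cc O_{\PP(G)})$ with the Koszul factorization $\{\pi^*(w),\taut\}$. The paper establishes the structure-sheaf identity first and then runs the projection-formula chain, whereas you run the projection formula on both sides and then meet in the middle; these are the same computation. The one place where the paper is more precise than you is the key identification $i^1_*(\cc O_{\PP(G)})\cong\{\pi^*(w),\taut\}$: you sketch the two-periodization of the Koszul resolution of the zero section and flag that care is needed, while the paper closes this by citing the proof of Lemma~3.2 of Shipman \cite{Shi} (the statement that a Koszul complex with vanishing cosection pushes forward to the corresponding Koszul factorization once the potential is supplied as the second differential). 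Citing that lemma is all that is missing from your write-up.
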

\begin{proof}
By Remark~\ref{r:Ship}, $\tilde \phi_+ \circ j^* (\cc O_{\PP(G)}) = \{ \pi^*(w), \taut\}$.  By the proof of Lemma 3.2 in \cite{Shi}, this is $i^1_*(\cc O_{\PP(G)})$.
Referring to diagram \eqref{e:csquare}, following the definitions and applying the projection formula of Proposition~\ref{pprojform}, we then obtain
\begin{align*}
\tilde \phi_+ \circ j^*( -) & =  \det(\cc O_{\PP(G)/\CC^*_R}(-d, \eta))[-1] \otimes j '_* ( \pi|_{\cZ}^* \circ j^*( -))\\
& = \det(\cc O_{\PP(G)/\CC^*_R}(d, \eta))[-1] \otimes j '_*( \cc O_{\cYR|_{\cZ}} \otimes {j'}^*  \circ \pi^*( -)) \\
& =  \det(\cc O_{\PP(G)/\CC^*_R}(d, \eta))[-1] \otimes j '_*( \cc O_{\cYR|_{\cZ}} )\otimes  \pi^*( -) \\
& =   \det(\cc O_{\PP(G)/\CC^*_R}(d, \eta))[-1] \otimes j '_*( \pi|_{\cZ}^* (\cc O_{\cZ} ))\otimes  \pi^*( -) \\
%& =  \\
& = \tilde \phi_+(\cc O_{\cZ}) \otimes \pi^*(-) \\
& = i^1_*(\cc O_{\PP(G)}) \otimes \pi^*( - ) \\
&= i^1_*(\cc O_{\PP(G)} \otimes (i^0)^* \circ \pi^*( -)) \\
& =  i^1_*( - ). 
\end{align*}
\end{proof}
Let us investigate the functors $\vgit$ in more detail.
%Recall that $\widetilde \Gamma$ is a quotient of $\CC^*_R \times \bar G \times \CC^*$ by $(e^{-2 \pi i/d}, j)$.
Given integers $k_1, k_2 \in \ZZ$ and a character $ \zeta$ of $\bar G$,
define a character of $\widetilde \Gamma$ by
\[ (\alpha, \bar g, \lambda_R) \xrightarrow{(k_1, \zeta, k_2)} \alpha^{k_1} \zeta(\bar g) \lambda_R^{k_2},\] and a character of $\widetilde G$ by
\[ (\alpha, \bar g) \xrightarrow{(k_1, \zeta)} \alpha^{k_1} \zeta(\bar g).\]
% which restricts to a character of $\widetilde G$. 
 Let $\cc O(k_1, \zeta, k_2)$ (resp. $\cc O(k_1, \zeta)$) denote the corresponding line bundle on 
 a given quotient of (an invariant subvariety of) $\widetilde V$ by $\widetilde \Gamma$ (resp. $\widetilde G$).
 Via the inclusion $\widetilde G  = \widetilde G \times \{id\} \hookrightarrow \widetilde \Gamma =  \widetilde G \times \CC^*_R$,  the line bundle $\cc O(k_1, \zeta, k_2)$ pulls back to $\cc O(k_1, \zeta)$. By \eqref{e:GIT1} and \eqref{e:GIT2}, the character $(k_1, \zeta, k_2): \widetilde \Gamma \to \CC^*$  (resp. $(k_1, \zeta): \widetilde G \to \CC^*$)  defines line bundles on $\cXR$, $\cYR$, $B\widetilde G$ and $\PP(G)/\CC^*_R$ (resp. $\cY_-$, $\cY_+$, $BG$ and $\PP(G)$).

Given a choice of $l \in \ZZ$, we will define a \emph{round-down} operation for factorizations $\cc E_\bullet$ on 
$[\widetilde V / \widetilde \Gamma]$ provided that the components $\cc E_0$ and $\cc E_1$ are locally free.  
First, 
given a line bundle $\cc O(k_1, \zeta, k_2) \in [\widetilde V / \widetilde \Gamma]$, there exists a unique integer $m$ such that $k_1 - md \in [l, l+d-1]$.
Then define 
\begin{equation}\label{rd1} \fl{\cc O(k_1, \zeta, k_2)} := \cc O(k_1 - md, \zeta, k_2+m).\end{equation}
By construction, $\fl{\cc O(k_1, \zeta, k_2)}$ lies in the $\Lambda$-weight range corresponding to the window $W_{\Lambda, l}([\widetilde V / \widetilde \Gamma], \tilde w )$.  Given a $\widetilde \Gamma$-equivariant locally free sheaf $\cc E$ on $V$, since $\widetilde \Gamma$ is abelian $\cc E$ will split as a direct sum of line bundles.  For $\cc E$ a locally free sheaf, define $\fl{ \cc E}$ by taking the round-down of each summand.  Next, suppose we have an $\cc O_{[\widetilde V/\widetilde \Gamma}$-linear map 
\[ \phi: \cc O(k_1, \zeta, k_2) \to \cc O(k_1', \zeta ', k_2').\] we define 
\[\fl{\phi}: \fl{\cc O(k_1, \zeta, k_2)}  %= \cc O(k_1 - md, \zeta, k_2+m) 
\to \fl{\cc O(k_1', \zeta ', k_2')} %= \cc O(k_1 ' - m' d, \zeta', k_2 '+m')
\] 
to be
\begin{align}\label{e:flmap} \fl{\phi} := \phi \cdot p^{m' - m}.\end{align} 
Note that with respect to the $\CC^*$-factor in $\widetilde G$, $\phi$ is homogeneous of degree $k_1 ' - k_1$.
By definition of $m$ and $m '$,  
\[k_1 ' - k_1 \leq (m' - m)d + (d-1) < (m' - m + 1)d.\]  The only homogeneous coordinate with negative weight with respect to the first factor of $\CC^*$ in $\widetilde \Gamma$ is $p$, 
the last homogeneous coordinate of $[\widetilde V / \widetilde \Gamma]$.
The coordinate $p$ has a weight of $-d$, so this inequality guarantees that whenever $m' < m$, the $\cc O_{[\widetilde V/\widetilde \Gamma}$-linear map $\phi$ is divisible by $p^{m - m'}$.
In particular, \eqref{e:flmap} is well-defined.   The expressions~\eqref{rd1} and~\eqref{e:flmap} define a functor
\[ \fl{\bullet}: D\left( [\widetilde V/\widetilde \Gamma]\right) \to D\left( [\widetilde V/\widetilde \Gamma]\right).\]

Given an $\cc O_{[\widetilde V/\widetilde \Gamma}$-linear map $ \phi: \cc E \to \cc F$ between locally free sheaves, after decomposing $\cc E$ and $\cc F$ as direct sums of line bundles, $\{\cc E_j\}$ and $\{\cc F_i\}$, $\phi$ can be represented as a matrix $(\phi_{ij})$ of $\widetilde \Gamma$-homogeneous functions. define $\fl{\phi}: \fl{\cc E} \to \fl{\cc F}$ to be $(\fl{\phi_{ij}})$.  Finally, we may define the round-down factorization as follows.

\begin{definition}
Given $\cc E_\bullet$ a factorization of $w \in \Gamma([\widetilde V / \widetilde \Gamma], \cc O(\eta))$ by locally free sheaves, % in $D([\widetilde V / \widetilde \Gamma], w)$ , 
define the factorization
\[\fl{\cc E_\bullet} :=  (\fl{\cc E_{-1}},\fl{ \cc E_0}, \fl{\phi_{-1}}, \fl{\phi_0}).\]
It is apparent from the construction that if $\phi_0 \circ \phi_{-1} = id_{\cc E_0} \otimes w$, then $\fl{\phi_0} \circ \fl{\phi_{-1}} = \fl{id_{\cc E_0}  \otimes w} = id_{\fl{\cc E_0}} \otimes w$ and similarly for the composition $\phi_{-1} \otimes id_{\cc L} \circ \phi_0$.  This shows that $\fl{\cc E_\bullet} $ is also a factorization of $w$.
% lies in $D([\widetilde V / \widetilde \Gamma], w)$.  

Given a complex $\cc E_\bullet = (\cc E_i, \phi_i : \cc E_i \to \cc E_{i+1})$ of locally free sheaves   on $[\widetilde V/ \widetilde G]$, define
\[\fl{\cc E_\bullet} :=  (\fl{\cc E_i}, \fl{\phi_i}).\]
It is clear that $\fl{\cc E_\bullet}$ will also be a complex.
\end{definition}

%\begin{example}
%In the case of the Koszul factorization described above, note that the only summand which is modified in passing from $\{dw, \taut\}$ to $\fl{\{dw, \taut\}}$ is the term $\wedge^0(\bar V^\vee) = \cc O$.
%\end{example}
Given a locally free factorization or complex $\cc E_\bullet$ which splits as a sum of line bundles $\cc O(k_1, \zeta, k_2)$ 
all satisfying $k_1 \geq l$,
 there exists a map \[rd_l: \cc E_\bullet \to \fl{\cc E_\bullet}\] which, on a given summand $\cc O(k_1, \zeta, k_2)$, is simply the map
\begin{align*}
p^m: \cc O(k_1, \zeta, k_2) &\to \fl{\cc O(k_1, \zeta, k_2)}  = \cc O(k_1 - md, \zeta, k_2+m).
\end{align*}   Note that $rd_l$ is injective.  
\begin{definition} In the situation above,  define $\br{ \cc E_\bullet}$ to be the cokernel of $rd_l$.  \end{definition}
It is immediate that $\br{\cc E_\bullet}$ is set-theoretically supported on $\{p = 0\}$.

On the stack $[\widetilde V / \widetilde \Gamma]$, the vector bundle $ V \subset \widetilde V$ has a natural tautological section $\taut$ and a cosection $dw$ defined as in \eqref{e:cos}.  %For $\chi \in \widetilde \Gamma$, c
Consider the factorization $\{dw, \taut\}$.

Note that $\{0, \taut\}$ is simply the Koszul complex resolving $\cc O_{\{x_i = 0\}_i}$, and the cone $\text{C}( \{0, \taut\} \to  \cc O_{\{x_i = 0\}_i})$ is acyclic.
By (the proof of) Lemma 3.2 of \cite{Shi}, this implies that $\text{C}( \{dw, \taut\} \to \cc O_{\{x_i = 0\}_i})$ is zero in $D([\widetilde V / \widetilde \Gamma], \tilde w)$,  so 
\[\{dw, \taut\} \sim \cc O_{\{x_i = 0\}_i}.\]
In particular, 
\begin{align} \label{e:vgitex1}
i_-^*(\{0, \taut\}) &= i^0_*( \cc O_{BG}), \\
i_-^*(\{dw, \taut\}) & = i^1_*( \cc O_{BG}), \nonumber\\
i_+^*(\{0, \taut\}) & = 0, \nonumber\\
i_+^*(\{dw, \taut\}) & = 0,\nonumber
\end{align}
due to the fact that in $\cY_-$ the locus $\{x_i = 0\}_i$ is equal to  $BG$, whereas in $\cY_+$ the locus $\{x_i = 0\}_i$ is empty.

For $0 \leq i \leq d-1$, and $\zeta \in \widehat{\bar G}$, consider the pullback of the map of factorizations
\[rd_l:  \{0, \taut\} \otimes \cc O(l+d+i, \zeta) \to \fl{\{0, \taut\} \otimes \cc O(l+d+i, \zeta)}\]
to $\PP(G)$.  Define, in $D(\PP(G))$, the complex
\begin{align*} &K(l+d+i, \zeta) := \\
&\text{coker}\left( \{0, \taut\} \otimes \cc O_{\PP(G)}(l+d+i, \zeta) \to \fl{\{0, \taut\} \otimes \cc O_{\PP(G)}(l+d+i, \zeta)}\right).\end{align*}
  This complex can be described more explicitly.  On summands which were modified by taking the floor, the map $rd_l$ is multiplication by $p$ and so restricts to the zero map on $\PP(G)$.  On summands which were not modified, $rd_l$ restricts to the identity.  Thus $K(l+d+i, \zeta)$ is obtained from $\fl{\{0, \taut\} \otimes \cc O_{\PP(G)}(l+d+i, \zeta)}$ by simply removing all summands which were unchanged by taking the floor.  Note that the map $dw$ is zero on $\PP(G)$, thus the pullback of $\{dw, \taut\}$ is equal to the pullback of $\{0 ,\taut\}$ under the equivalence $D(\PP(G)/\CC^*_R, 0_\eta) \sim D(\PP(G))$.
We see that $K(l+d+i, \zeta)$ corresponds to
\begin{align*} \text{coker}&\left( \{dw, \taut\} \otimes \cc O_{\PP(G)}(l+d+i, \zeta, 0) \right. \to \\ &\left. \fl{\{dw, \taut\} \otimes \cc O_{\PP(G)}(l+d+i, \zeta, 0)}\right)
\end{align*}
under this equivalence.
Finally, one observes that
\begin{align} \label{e:Kfrac}
i_+^*\left(\br{\{0, \taut\} \otimes \cc O_{[\widetilde V/ \widetilde G]}(l+d+i, \zeta) }_l\right) &= i^0_*(K(l+d+i, \zeta)) \\
i_+^*\left(\br{\{dw, \taut\} \otimes \cc O_{[\widetilde V/ \widetilde G]}(l+d+i, \zeta, 0) }_l\right) &= i^1_*(K(l+d+i, \zeta)).\nonumber
\end{align}
%by \eqref{e:vgitex1}.

\begin{proposition}\label{p:vgit}
For any choice of $\zeta \in \widehat{\bar G}$ and $0 \leq i \leq d-1$, the equivalences \begin{align*}\vgit:& D(\cY_-) \xrightarrow{\sim} D(  \cY_+), \\ \vgit:& D(\cXR,  w) \xrightarrow{\sim} D(\cYR,  w)\end{align*} take the following values 
\begin{align}
%\vgit:
 i^0_*( \cc O_{BG}(l+d+i, \zeta)) &\mapsto 
i^0_*(K(l+d+i, \zeta))
\label{e:vgitex2} \\
%\vgit: 
i^1_*( \cc O_{BG}(l+d+i, \zeta)) &\mapsto
i^1_*(K(l+d+i, \zeta)). 
\label{e:vgitex3}
\end{align}
\end{proposition}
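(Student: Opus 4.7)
The plan is to produce, for each source line bundle, an explicit lift to the window subcategory $W_{\Lambda, l}$ whose pullback under $i_-^*$ recovers the source and whose pullback under $i_+^*$ computes the target. The whole argument then reduces to applying the definition $\vgit = i_+^* \circ (i_-^*|_{W_{\Lambda,l}})^{-1}$.

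For~\eqref{e:vgitex2} I would consider the twisted Koszul complex
\[\cc E_\bullet := \{0, \taut\} \otimes \cc O_{[\widetilde V/\widetilde G]}(l+d+i, \zeta) \in D\left([\widetilde V/\widetilde G]\right),\]
and for~\eqref{e:vgitex3} the twisted Koszul factorization
\[\cc F_\bullet := \{dw, \taut\} \otimes \cc O_{[\widetilde V/\widetilde \Gamma]}(l+d+i, \zeta, 0) \in D\left([\widetilde V/\widetilde \Gamma], \tilde w\right).\]
Neither lies in the window, so the next step is to replace them with their round-downs $\fl{\cc E_\bullet}$ and $\fl{\cc F_\bullet}$, which do lie in $W_{\Lambda, l}$ by construction. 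The injectivity of $rd_l$ and the definition of $\br{-}$ produce short exact sequences
\[0 \to \cc E_\bullet \xrightarrow{rd_l} \fl{\cc E_\bullet} \to \br{\cc E_\bullet} \to 0\]
(and similarly for $\cc F_\bullet$), hence distinguished triangles to which I apply $i_\pm^*$.

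On the $\theta_-$-side, $\br{\cc E_\bullet}$ is set-theoretically supported on $\{p=0\}$, which is exactly the $\theta_-$-unstable locus, so $i_-^*(\br{\cc E_\bullet}) = 0$. Using~\eqref{e:vgitex1} and the projection formula, $i_-^*(\cc E_\bullet) = i^0_*(\cc O_{BG}(l+d+i, \zeta))$ and $i_-^*(\cc F_\bullet) = i^1_*(\cc O_{BG}(l+d+i, \zeta))$. The triangle then identifies $i_-^*(\fl{\cc E_\bullet})$ and $i_-^*(\fl{\cc F_\bullet})$ with these, exhibiting $\fl{\cc E_\bullet}$ and $\fl{\cc F_\bullet}$ as the window lifts of the sources. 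On the $\theta_+$-side, the remaining identities in~\eqref{e:vgitex1} combined with the projection formula give $i_+^*(\cc E_\bullet) = i_+^*(\cc F_\bullet) = 0$, so the distinguished triangles force $i_+^*(\fl{\cc E_\bullet}) \simeq i_+^*(\br{\cc E_\bullet})$ and $i_+^*(\fl{\cc F_\bullet}) \simeq i_+^*(\br{\cc F_\bullet})$, and by~\eqref{e:Kfrac} these are $i^0_*(K(l+d+i, \zeta))$ and $i^1_*(K(l+d+i, \zeta))$ respectively. Assembling these identifications yields both formulas.

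The main technical obstacle I anticipate is verifying that $\fl{\cc E_\bullet}$ and $\fl{\cc F_\bullet}$ genuinely live in the window: one must check that every summand of the untwisted Koszul, after the $\cc O(l+d+i,\zeta)$-twist, has $\Lambda$-weight at least $l$ so that $rd_l$ is defined and its cokernel is as described. The Koszul terms contribute $\Lambda$-weights $-\sum_{j\in S}c_j$ for $S \subseteq \{1, \ldots, N\}$, so the twisted weights range over $[l+d+i-\sum_j c_j, \, l+d+i]$. Under the quasi-Calabi--Yau identity $\sum_j c_j = d$ this range becomes $[l+i, l+d+i]$, and the constraint $0 \leq i \leq d-1$ then ensures that the rounded weights all land in $[l, l+d-1]$. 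This is the one place where the numerical hypotheses of the proposition are genuinely used, and once it is checked the rest of the argument is formal.
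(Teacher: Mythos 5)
Your proposal is correct and follows essentially the same route as the paper: lift each generator to the window via the round-down $\fl{-}$, then compute $i_-^*$ and $i_+^*$ of the floor using the distinguished triangle $\cc E_\bullet \to \fl{\cc E_\bullet} \to \br{\cc E_\bullet}$ together with \eqref{e:vgitex1} and \eqref{e:Kfrac}. Your explicit verification that the twisted Koszul weights land in $[l, l+d+i]$ (hence $k_1 \geq l$, so $rd_l$ is defined) via the quasi-Calabi--Yau identity $\sum_j c_j = d$, and the observation that $i_-^*(\br{\cc E_\bullet}) = 0$ because $\br{\cc E_\bullet}$ is supported on the $\theta_-$-unstable locus $\{p = 0\}$, spell out points the paper's proof leaves more compressed.
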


\begin{proof}
%For the first claim we will prove \eqref{e:vgitex2}, \eqref{e:vgitex3} follows similarly.  
To prove \eqref{e:vgitex2}, consider
$\fl{ \{0, \taut\} \otimes \cc O_{[\widetilde V/ \widetilde G]}(k ,\zeta)}$ in $D([\widetilde V, \widetilde G])$.  Since $\fl{ \{0, \taut\} \otimes \cc O_{[\widetilde V/ \widetilde G]}(k ,\zeta)}$ lies in the window $W_{\Lambda, l}([\widetilde V/\widetilde G])$, the equivalence $\vgit$ maps $i_-^*\left(\fl{ \{0, \taut\} \otimes \cc O_{[\widetilde V/ \widetilde G]}(k ,\zeta)}\right)$ to $i_+^*\left(\fl{ \{0, \taut\} \otimes \cc O_{[\widetilde V/ \widetilde G]}(k ,\zeta)}\right)$. We show that for $0 \leq i \leq d-1$,
\[i_-^*\left(\fl{ \{0, \taut\} \otimes \cc O_{[\widetilde V/ \widetilde G]}(l+d+i ,\zeta)}\right) = i^0_*( \cc O_{BG}(l+d+i, \zeta))\]
and  \[i_+^*\left(\fl{ \{0, \taut\} \otimes \cc O_{[\widetilde V/ \widetilde G]}(l+d+i ,\zeta)}\right) = \br{\{0, \taut\} \otimes \cc O_{\cY_+}(l+d+i, \zeta) }_l.\]  The first equality is immediate from  \eqref{e:vgitex1}, the projection formula, and the fact that $(i^0)^* \circ i_-^* ( \cc O_{[\widetilde V/ \widetilde G]}(l+d+i ,\zeta)) = \cc O_{BG}(l+d+i, \zeta)$.  
Because $i \geq 0$, the summands of $\{0, \taut\} \otimes \cc O_{[\widetilde V/ \widetilde G]}(l+d+i ,\zeta)$ all have degree at least $l-d$, so the map $\{0, \taut\} \otimes \cc O_{[\widetilde V/ \widetilde G]}(l+d+i ,\zeta) \xrightarrow{rd_l} \fl{ \{0, \taut\} \otimes \cc O_{[\widetilde V/ \widetilde G]}(l+d+i ,\zeta)}$ is well-defined.
By the third equality in \eqref{e:vgitex1} and the exact triangle 
\begin{align*} \{0, \taut\} \otimes \cc O_{[\widetilde V/ \widetilde G]}(l+d+i ,\zeta) \xrightarrow{rd_l} &\fl{ \{0, \taut\} \otimes \cc O_{[\widetilde V/ \widetilde G]}(l+d+i ,\zeta)} \\ \to &\br{\{0, \taut\} \otimes \cc O_{[\widetilde V/ \widetilde G]}(l+d+i, \zeta) }_l\end{align*}
we see that \[i_+^*\left(\fl{ \{0, \taut\} \otimes \cc O_{[\widetilde V/ \widetilde G]}(l+d+i ,\zeta)}\right) = i_+^*\left( \br{\{0, \taut\} \otimes \cc O_{[\widetilde V/ \widetilde G]}(l+d+i, \zeta) }_l\right).\]  By \eqref{e:Kfrac} we deduce \eqref{e:vgitex2}.  The proof of \eqref{e:vgitex3} is exactly analogous.

\end{proof}

\begin{theorem}\label{t:Ksquare}
The map $\vgit: D(\cY_-)\xrightarrow{\sim} D(\cY_+)$ induces an equivalence 
$D(\cY_-)_{BG} \xrightarrow{\sim} D(\cY_+)_{\PP(G)}$.
Furthermore, the following diagram commutes:
\begin{equation}\label{scs}
\begin{tikzcd}
K(\cY_-)_{BG} \ar[d, "i^1_* \circ \pi_*"] \ar[r, leftrightarrow, "\vgit"] & K(\cY_+)_{\PP(G)} \ar[d, " j^* \circ \pi_*"] %\ar[rd, "i^1_* \circ \pi_*"]& 
\\
K(\cXR,  w) \ar[r, leftrightarrow, "\orlov"] %\ar[rr,  leftrightarrow, bend right = 15, "\vgit"] 
& K(\cZ) %\ar[r, leftrightarrow, "\tilde \phi_+"] & D(\cYR,  w).
\end{tikzcd}
\end{equation}
where $K$ denotes the Grothendieck group of the corresponding derived category.
\end{theorem}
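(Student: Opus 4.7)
The plan is to verify both claims on a set of K-theoretic generators. By Proposition~\ref{p:strgenBG}, $D(\cY_-)_{BG}$ is strongly generated by $i^0_*(D(B\widetilde G,0_\eta)) \sim i^0_*(D(BG))$, and $D(BG)$ is in turn generated by its line bundles, which correspond to characters of $G = \langle j\rangle \oplus \bar G$; these are exhausted by the representatives $\cc O_{BG}(l+d+i,\zeta)$ for $0\leq i\leq d-1$ and $\zeta\in\widehat{\bar G}$. Setting $\cc E_{i,\zeta} := i^0_*(\cc O_{BG}(l+d+i,\zeta))$, the classes $[\cc E_{i,\zeta}]$ therefore span $K(\cY_-)_{BG}$, and it suffices to verify the square \eqref{scs} on these.

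For the restriction of $\vgit$ to an equivalence $D(\cY_-)_{BG} \xrightarrow{\sim} D(\cY_+)_{\PP(G)}$, I would combine Proposition~\ref{p:vgit} with the fact that both subcategories are thick. Proposition~\ref{p:vgit} shows $\vgit(\cc E_{i,\zeta}) = i^0_*(K(l+d+i,\zeta)) \in D(\cY_+)_{\PP(G)}$; since $\vgit$ is exact and $D(\cY_+)_{\PP(G)}$ is closed under cones, shifts and summands, the thick subcategory generated by the $\cc E_{i,\zeta}$---which equals $D(\cY_-)_{BG}$---lands inside $D(\cY_+)_{\PP(G)}$. Running the same argument for a quasi-inverse window (or equivalently interchanging the roles of $\pm$) yields the reverse inclusion, giving the equivalence.

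For commutativity of \eqref{scs} on $[\cc E_{i,\zeta}]$, the top-then-right path uses Proposition~\ref{p:vgit} to compute $\vgit[\cc E_{i,\zeta}] = [i^0_*(K(l+d+i,\zeta))]$; since $\pi\circ i^0 = \op{id}_{\PP(G)}$ we have $\pi_* i^0_* = \op{id}$ on K-theory, yielding $[K(l+d+i,\zeta)]$, after which $j^*$ produces $[j^*(K(l+d+i,\zeta))]\in K(\cZ)$. The down-then-right path first gives $[i^1_*(\cc O_{BG}(l+d+i,\zeta))]\in K(\cXR,w)$; then $\orlov = \vgit\circ\tilde\phi_+^{-1}$ is applied using part~\eqref{e:vgitex3} of Proposition~\ref{p:vgit}, giving $\vgit[i^1_*(\cc O_{BG}(l+d+i,\zeta))] = [i^1_*(K(l+d+i,\zeta))]$, and Lemma~\ref{l:phij}---which identifies $i^1_*$ with $\tilde\phi_+\circ j^*$ on $D(\PP(G))$---produces $\tilde\phi_+^{-1}[i^1_*(K(l+d+i,\zeta))] = [j^*(K(l+d+i,\zeta))]$. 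The two paths agree.

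The main technical nuisance, rather than any single hard step, is in confirming that the finite representative set $\{(l+d+i,\zeta)\}$ really captures all characters of $G$ (so that Proposition~\ref{p:vgit} suffices to control all generators furnished by Proposition~\ref{p:strgenBG}) and that the various identifications $D(BG) \sim D(B\widetilde G, 0_\eta)$, $D(\PP(G))\sim D(\PP(G)/\CC^*_R, 0_\eta)$, and $\pi_*\circ i^0_* = \op{id}$ interact correctly with $i^1_*$ and $\tilde\phi_+^{-1}$; once this bookkeeping is in hand, the theorem falls out of the two computations above.
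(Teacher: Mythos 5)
Your proposal is correct and follows essentially the same route as the paper: reduce to the generating set $\{i^0_*(\cc O_{BG}(l+d+i,\zeta))\}$ furnished by Proposition~\ref{p:strgenBG} and strong generation of $D(BG)$ by line bundles, then compute both sides using Proposition~\ref{p:vgit}, Lemma~\ref{l:phij}, and $\pi_*\circ i^0_* = \operatorname{id}$. The paper packages the computation by adjoining $K(\cYR,w)$ with the map $i^1_*\circ\pi_*$ and checking the outer square, whereas you verify the original square directly in $K(\cZ)$; the two are equivalent and the calculations are the same. The only place where you diverge from the paper, and where you are slightly thinner, is the first statement (that $\vgit$ restricts to an equivalence $D(\cY_-)_{BG}\xrightarrow{\sim}D(\cY_+)_{\PP(G)}$): the paper notes the direct calculation route you take, but actually proves it by a cleaner ``general principles'' observation---$\vgit$ is the identity on the common open semi-stable locus, so an object vanishing on $\cY_-\setminus BG$ maps to one vanishing on $\cY_+\setminus\PP(G)$, and symmetrically---which handles both inclusions at once. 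Your reverse inclusion via ``interchanging the roles of $\pm$'' is valid in spirit, but it is not quite the same argument (the round-down/window constructions are not manifestly symmetric in the two phases, since $\cY_-$ is affine and $\cY_+$ fibers over $\PP(G)$), so it would require a short separate check or the support-locus observation; this is a minor gap, not a flaw in the overall proof.
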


\begin{proof}
The first statement
can be seen by direct calculation from Propositions~\ref{p:strgenBG}, \ref{p:strgen2}, and~\ref{p:vgit}, however it follows also from general principles.
Namely, one uses the fact that $\vgit$ is the identity on the open locus where $\widetilde V$ is semi-stable with respect to both the positive and negative stability condition.  Thus if an object $\cc E_\bullet \in D(\cY_-)$ is zero on $\cY_- \setminus BG$, it will map via $\vgit$ to something which is zero on $\cY_+ \setminus \PP(G)$, and vice versa.
%The first statement follows immediately from Propositions~\ref{p:strgenBG}, \ref{p:strgen2}, and~\ref{p:vgit}.

For the second statement consider the following diagram: 
\[
\begin{tikzcd}
K(\cY_-)_{BG} \ar[d, "i^1_* \circ \pi_*"] \ar[r, leftrightarrow, "\vgit"] & K(\cY_+)_{\PP(G)} \ar[d, " j^* \circ \pi_*"] \ar[rd, "i^1_* \circ \pi_*"]& 
\\
K(\cXR,  w) \ar[r, leftrightarrow, "\orlov"] \ar[rr,  leftrightarrow, bend right = 15, "\vgit"] & K(\cZ) \ar[r, leftrightarrow, "\tilde \phi_+"] & K(\cYR,  w).
\end{tikzcd}
\]
The bottom triangle commutes by definition and the right triangle commutes by Lemma~\ref{l:phij}.  Proving commutativity of the left square is therefore equivalent to proving commutativity of the outer square.  Finally, since $K(\cY_-)_{BG}$ and $K(\cY_+)_{\PP(G)}$ are generated by the image of $K(BG)$ and $K(\PP(G))$ respectively, it suffices to prove commutativity on the set 
\[\{i^0_*(\cc O_{BG}(l+d+i, \zeta))\}_{\zeta \in \widehat{\bar G}, 0 \leq i \leq d -1}.   \]
%In the process we show that $\vgit$ sends a generating set of $i^0_*(D(BG))$ to a generating set of $i^0_*(D(\PP(G))$.  
Observe that since $\pi_* \circ i^0_* = id$, we have $i^1_* \circ \pi_* \circ i^0_* = i^1_*$. Then by Proposition~\ref{p:vgit}
\begin{align*}
i^1_* \circ \pi_*\circ \vgit \left( i^0_*(\cc O_{BG}(l+d+i, \zeta)) \right)& = i^1_* \circ \pi_* \circ i^0_*(K(l+d+i, \zeta)) \\
& = i^1_* ( K(l+d+i, \zeta)) \\
& = \vgit \circ i^1_*(\cc O_{BG}(l+d+i, \zeta)) \\
& = \vgit \circ i^1_* \circ \pi_* \left( i^0_*(\cc O_{BG}(l+d+i, \zeta))\right).
\end{align*}

%
%
%
%It therefore suffices to show the following.  For $0 \leq i \leq d-1$, and $\zeta \in \widehat{\bar G}$, there exists a complex $K(l+d+i, \zeta)$ of locally free sheaves on $\PP(G)$ such that 
%$\vgit(i^0_*(\cc O_{BG}(l+d+i, \zeta))) = i^0_*( K(l+d+i, \zeta))$ and $\vgit(i^1_*(\cc O_{BG}(l+d+i, \zeta))) = i^1_*( K(l+d+i, \zeta))$.
%The conclusion follows by Proposition~\ref{p:vgit}.
\end{proof}
We conclude this section with an example demonstrating that the square \eqref{scs} does not commute in the derived category.  
\begin{example}\label{countex}
Let $w$ be the Fermat quintic in five variables and let $G = \langle j \rangle$.  Then $\cY_- = [\CC^5/\mu_5]$, $\PP(G) = \PP^4$, $\cY_+ = \tot( \cc O_{\PP^4}(-5))$, and $\cZ$ is the Fermat quintic threefold.  For simplicity we take $l = 0$.  Then by Proposition~\ref{p:vgit}, $\vgitz$ maps $
%i^0_* \left(\cc O_{B \mu_5}\right) = 
i^0_* \left(\cc O_{B \mu_5}(5+i)\right)$ to $i^0_*\left( K(5+i) \right) $.  Note in this case that 
\begin{align*} K(5) &= \cc O_{\PP^4}, \\
K(6) &= \cc O_{\PP^4}^{\oplus 5} \xrightarrow{\sum x_i} \cc O_{\PP^4}(1),\end{align*}
where $K(6)$ is supported in degrees $-1$ and $0$.
%= i^0_*\left( \cc O_{\PP^4} \right)$.  
Consider the morphism \[p_1: K(6) \to K(5)[1]\] given by the projection $\pi_1: \cc O_{\PP^4}^{\oplus 5} \to \cc O_{\PP^4}$ in degree $-1$.  Let 
\[q_1: \cc O_{B \mu_5}(6) = \cc O_{B \mu_5}(1) \to \cc O_{B \mu_5}(5)[1] = \cc O_{B \mu_5}[1]\]
denote the morphism $\vgitz^{-1} \circ i^0_* (p_1) \in D([\CC^5/\mu_5])$.

Observe that $j^* \circ \pi_* \circ \vgitz (q_1) = j^* \circ \pi_* \circ i^0_* (p_1) = j^*(p_1)$ is a nontrivial morphism in $D(\cZ)$.
%To see this, restrict further to a point which has no zero coordinates (or at least two nonzero coordinates) here, there is no homotopy to the zero map, and here maps in derived category are the same as maps in the homotopy category 
On the other hand, 
\begin{align*}
\text{Hom}_{D(B \mu_r)} \left(\cc O_{B \mu_5}(1), \cc O_{B \mu_5}[1]\right) &= \text{Ext}^1\left(\cc O_{B \mu_5}(1), \cc O_{B \mu_5}\right)
% \\
%&= H^{-1}(B \mu_5, \cc O_{B \mu_5}(1)) \\
%&=0,
\end{align*} is clearly zero.
%where the second equality is Serre duality.
So $\pi_*(q_1)$ is zero in $D(B\mu_r)$.  We conclude that $\orlovz \circ i^1_* \circ \pi_*(q_1)$ is the trivial morphism in $D(\cZ)$.   This shows that $\orlovz \circ i^1_* \circ \pi_*$ is not isomorphic to $j^* \circ \pi_* \circ \vgitz$.
\end{example}

\section{Enumerative invariants}

In this section we review the various enumerative invariants appearing in the rest of the paper.

\subsection{FJRW theory}
Let $(w,G)$ be a Landau--Ginzburg pair.  
\subsubsection{State space}

The FJRW state space is an orbifolded version of the relative cohomology with respect to $w$.  
\begin{definition}
Given $g \in G$, define
\[ \cc H_g( w, G) := H^* ( (\CC^N)^g, w^{-1}\big((m, \infty)\big); \CC)^G,\]
where $(m, \infty)$  is an interval of the real line with $m$ a sufficiently large real number.  Here we also denote by $w$ its restriction to $(\CC^N)^g$.  
The \emph{FJRW state space} is 
\[ \cc H(w, G) := \bigoplus_{g \in G} \cc H_g(w, G).\] We will sometimes denote this by $H^{(w, g)}$.
The \emph{degree} of an element $\alpha \in \cc H_g(w, G)$ is given by 
\[\deg(\alpha) := \op{rank}_{\CC}\left((\CC^N)^g\right) + 2\left(\sum_{j = 1}^N (m_j(g) - q_j)\right).\]

The \emph{narrow FJRW state space} is defined to be
\[\cc H_{\op{nar}}(w, G) := \bigoplus_{g \in G_{\op{nar}}} \cc H_g(w, G).\] We will sometimes denote this by $H^{(w, G), \op{nar}}$
\end{definition}
By Wall's isomorphism \cite[Equation~(74)]{FJR1}, the state space
%As shown in \cite{PV, PV2}, 
$\cc H(w, G)$ is isomorphic to $HH_*(w,G) \cong \bigoplus_{g \in G} (\cc Q_{w|_{(\CC^N)^g}})^g$.  This isomorphism is canonical by \cite[Proposition~2.1]{CIR} and respects the indexing by $g\in G$ (in \cite{CIR} this is proven in the case that $G$ is cyclic, but the argument holds in our setting as well).  We will implicitly identify all these spaces in what follows.

In particular, for $g \in G_{\op{nar}}$,  $\cc H_g(w, G) = (\cc Q_{w|_{(\CC^N)^g}})^g = \CC$.  Recall that $\varphi_{g j^{-1}}$ denotes the unit generator of this summand. %given by the fundamental class on $\{0\} \in \CC^N$.  
Then 
\[\cc H_{\op{nar}}(w, G) = \bigoplus_{g \in G_{\op{nar}}} \CC \varphi_{g j^{-1}}.\]

Let 
\[( - , - ): \bigoplus_{g \in G}H^* ( (\CC^N)^g, w^{-1}\big((m, \infty)\big); \CC) \times \bigoplus_{g \in G} H^* ( (\CC^N)^g, w^{-1}\big((-\infty , -m)\big); \CC) \to \CC\]
denote sum of the natural intersection pairings for each $g \in G$ (see Section~I.5.6 of \cite{ClRu} for details).  Consider the involution $I: \CC^N \to \CC^N$ given by 
\[ I(x_1, \ldots, x_N) = (\exp( \pi i c_1/d) x_1, \ldots, \exp( \pi i c_N/d) x_N).\]
Note that $w(I(x)) = -w(x)$, thus \[I_*(   H^* ( (\CC^N)^g, w^{-1}\big((m, \infty)\big); \CC) ) = H^* ( (\CC^N)^g, w^{-1}\big((-\infty , -m)\big); \CC).\]
\begin{definition}\label{FJRW pairing}
The \emph{FJRW state space pairing} is defined as follows.  Given $\alpha \in \cc H_{g_1}(w, G)$ and $\beta \in \cc H_{g_2} (w, G)$
\[
\langle \alpha , \beta\rangle^{(w, G)} = 
\tfrac{1}{|G|}(\alpha, I_*(\beta)).
%\left\{ \begin{array}{ll}
%\tfrac{1}{|G|}(\alpha, I_*(\beta)) & \text{if $g_2 = g_1^{-1}$ } \\
%0 & \text{otherwise}.
%\end{array}\right.
\]
\end{definition}
The above definition makes sense since $\cc H_g(w, G)$ is naturally identified with $\cc H_{g^{-1}}(w, G)$.  Restricting the pairing to $\cc H_{\op{nar}}(w, G)$ yields
\[ \langle \varphi_{g_1j^{-1}}, \varphi_{g_2j^{-1}} \rangle^{(w,G)} = \frac{1}{|G|}\delta_{g_1, g_2^{-1}}.\]

\subsubsection{FJRW invariants}
Let $(\cc C, p_1, \ldots, p_n)$ denote a genus $h$, $n$-pointed stable \emph{orbi-curve} in the sense of \cite[Definition~A.1]{ChR}.  Recall (\cite[Definition~2.2.2]{FJR1}) that a (stable) \emph{W-structure} for $w$ on $(\cc C, p_1, \ldots, p_n)$ is a collection of $N$ line bundles $\cc L_1, \ldots, \cc L_N$ over $\cc C$ together with, for each monomial $m(x_1, \ldots, x_N)$ of $w$, an isomorphism 
\[
\phi_m: m(\cc L_1, \ldots, \cc L_N) \xrightarrow{\cong} \omlog,\]
where $\omlog$ is the log-canonical bundle $\omega_{\cc C}( \sum p_i)$ and $m(\cc L_1, \ldots, \cc L_N)$ is interpreted by replacing products of monomials with tensor products of line bundles.  One requires further that at each marked point $p_i$, the induced representation 
\[ G_{\rho_i} \to G_{max}(w) \subset GL_N(\CC)\]
is faithful.  

Note that the group $G$ does not enter into the definition of a W-structure for $w$.  To incorporate the group,
choose a Laurent polynomial $\tilde  w$ such that $G_{max}(\tilde w) = G$.  Then a W-structure for the pair $(w, G)$ is simply a W structure for $\tilde w$.

\begin{definition}\label{d:wstr}  Given $h$ and $n$ such that $2h-2 + n \geq 0$.  The moduli of W-structures of genus $h$ with $n$ marked points for $(w, G)$  is denoted 
$\sWbar_{h, n}(G)$.  \end{definition}
The moduli space $\sWbar_{h, n}(G)$ is a smooth Deligne--Mumford stack by \cite[Theorem~2.2.6]{FJR1}.

The moduli of W-structures for $(w,G)$ is a union of open and closed substacks indexed by the action of the isotropy group at the marked points $p_1, \ldots, p_n$.  Given a W structure on the curve $(\cc C, p_1, \ldots, p_n)$, the isotropy group $G_{\rho_i}$ at $p_i$ is equal to $\langle \omega_{r_i} \rangle$ where $\omega_{r_i} = e^{2 \pi i/r_i}$ acts by rotation by $2 \pi/r_i$ on the tangent space $T_{p_i} \cc C$.  Then the image of $\omega_{r_i}$ under the inclusion $G_{\rho_i} \to G$ is an element $g_i \in G$.  Given an $n$-tuple of group elements $\bg = (g_1, \ldots, g_n)$, let 
\[\sWbar_{h, \bg}(G) \subset \sWbar_{h, n}(G)\]
denote the open and closed subset where, for $1 \leq i \leq n$, the generator of the isotropy group at $p_i$ maps to $g_i$ under the inclusion $G_{\rho_i} \to G$.

Given a W-structure on a smooth orbi-curve $\cc C$,  let $|\cc L_j|$ denote the rigidified line bundle on the coarse curve $|\cc C|$.  Assume that the generator of the isotropy group at $p_i$ is indexed by $g_i$.
We record the following useful fact (Equation (18) of \cite{FJR1}):
\begin{equation}\label{rig deg}
\deg(|\cc L_j|) = c_j/d(2h - 2 + n) - \sum_{i = 1}^n m_j(g_i).
\end{equation}
\begin{remark}
The fact that \eqref{rig deg} is an integer places conditions on when the moduli space $\sWbar_{h, \bg}(G)$ will be non-empty.
\end{remark}

In \cite[Section~4.1.1]{FJR1}, a virtual cycle $[\sWbar_{h, \bg}(G)]^{vir}$ is constructed, which defines a map
\[
[\sWbar_{h, \bg}(G)]^{vir} \cap - : \bigotimes_{i = 1}^n \cc H_{g_i}(w, G) \to H_*( \sWbar_{h, \bg}(G)).\]
Pushing this class forward to $\sMbar_{h,n}$ and taking the Poincare dual defines a cohomological field theory \cite[Theorem~4.2.2]{FJR1}  with state space $\cc H(w, G)$.  Psi-classes on $\sWbar_{h, \bg}(G)$ can be defined by pulling back from $\sMbar_{h,n}$.  

\begin{definition}
Given elements $\alpha_i \in \cc H_{g_i}(w, G)$, integers $b_i \geq 0$ for $1 \leq i \leq n$, and an integer $h \geq 0$ with $2h-2 + n > 0$, define the genus $h$ FJRW invariant $\langle \alpha_1 \psi^{b_1}, \ldots, \alpha_n \psi^{b_n} \rangle^{(w, G)}_{h,n}$ as
\[
(-1)^{\chi(\oplus \cc L_i)}  
\int_{[\sWbar_{h, \bg}(G)]} [\sWbar_{h, \bg}(G)]^{vir} \cap \left(\alpha_1 \otimes \cdots \otimes \alpha_n\right) \cap \left( \prod_{i=1}^n \psi_i^{b_i} \right).\]
Invariants can be defined for general classes $\alpha_i \in \cc H(w, G)$ by extending linearly.
\end{definition}

\begin{remark}\label{factor mod}
The FJRW invariants defined above differ from those originally defined in \cite{FJR1} by a factor of 
$(-1)^{\chi(\oplus \cc L_i)} 
|G|^h / \deg ( \sWbar_{h, \bg}(G) \to \sMbar_{h, n})$.  This modification factor 
%of $|G|^h / \deg ( \sWbar_{h, \bg}(G) \to \sMbar_{h, n})$
 was also removed in \cite{CIR}.
 In \cite[Appendix~B]{CIR}, they show that the modification by $|G|^h / \deg ( \sWbar_{h, \bg}(G) \to \sMbar_{h, n})$ still defines a cohomological field theory with respect to the pairing given in Definition~\ref{FJRW pairing}.  
The further modification by $(-1)^{\chi(\oplus \cc L_i)}$ defines a cohomological field theory if we adjust our pairing by
\[\langle \alpha, \beta \rangle^{(w, G)} \mapsto (-1)^{\dim (\CC^N)^g} \langle \alpha, \beta \rangle^{(w, G)}\]
for $\alpha \in \cc H_g(w, G)$.  Note that this does not affect the narrow FJRW state space.
\end{remark}

\subsection{Gromov--Witten theory}
%\note{should we use ${\cY_-}_+$ and ${\cY_-}_-$ for our main spaces?? OR MAYBE ${\cY_+}_{\pm}$!}
\begin{definition}
Given $\cc X$  a smooth Deligne--Mumford stack.  Let $\sMbar_{h,n}(\cc X, d)$ denote the moduli space of representable degree $d$ stable maps from orbi-curves of genus $h$ with $n$ marked points.  Here $d$ is an element of 
 the cone $\text{Eff} = \text{Eff}(\cX) \subset H_2(\cX, \QQ)$ of effective curve classes.   Let $[\sMbar_{h,n}(\cc X, d)]^{vir}$ denote the virtual fundamental class of \cite[Definition~5.2]{BF} and \cite[Section~4.5]{AGV}. 
\end{definition}

Assume $\cX$ is proper.  Let $I\cX$ denote the inertia stack of $\cX$, defined as the fiber product of the diagonal morphism with itself (see \cite{ChenR1} for details).  The inertia stack is a union of connected components called \emph{twisted sectors}:
\[I\cX = \coprod_{b \in B} \cX_b.\]  There is a distinguished component isomorphic to $\cX$ itself, called the untwisted sector.   Let 
\[( - , - ): H^*( I\cX; \CC) \times H^*( I\cX; \CC) \to \CC\]
denote the pairing defined by integration.  The universal property of the fiber product induces a natural involution which we (again) denote by \[I: I\cX \to I\cX.\]

If $\cX$ is a quotient of the form $[V / \Gamma]$ where $V$ is a smooth variety and $\Gamma$ is an abelian group, then 
\[ I \cX = \coprod_{\gamma \in \Gamma} [ V^\gamma / \Gamma].\]
Denoting the $\gamma$-twisted sector $[V^\gamma /\Gamma]$ by ${\cX}_\gamma$,
in this case the involution $I$ maps ${\cX}_\gamma$ to ${\cX}_{\gamma^{-1}}$ via the natural isomorphism.  

Given a twisted sector $\cX_b$, recall as in Section~\ref{s:chch} that the restriction 
 $T_b\cX$ of the tangent bundle $T\cX$ to 
the twisted sector splits into a sum of eigenbundles $T_{b,f}\cX$ for $0 \leq f <1$ based on the weight of the generator of the isotropy at a geometric point $(x, g_b)$.
\begin{definition}  If $B$ indexes the twisted sectors $\cX_b$ of $I\cX$,
define the \emph{age} of the $b$th sector to be
\[ \age_b := \sum_{0 \leq f < 1} f \dim_\CC(T_{b,f}\cX).\]
\end{definition}
%
% 
% $(x, \gamma)$, this gives a representation of $\langle \gamma \rangle$.  
%$T_(x, \gamma) \cX$ splits as a sum of eigenspaces with respect to multiplication by $\gamma$:
%\[T_(x, \gamma) \cX = \sum_{0 \leq f < 1} e^{2 \pi i f} T_(x, \gamma)^f \cX.\]
%Define the \emph{age} of $\cX_\gamma$ to be
%\[\op{age}(\cX_\gamma) = \sum_{0 \leq f < 1} f

\begin{definition}\cite[Definition~3.2.3]{ChenR1}
The \emph{Chen--Ruan} cohomology of $\cX$ is the $\QQ$-graded ring \[H^*_{\op{CR}}(\cX) := \bigoplus_{n \in \QQ} H^n_{CR}(\cX)\] where
\[ H^n_{CR}(\cX) := \bigoplus_{ b \in B} H^{n - 2 \age_b}(\cX_b, \CC).\]
We will also refer to this as the  \emph{GW state space} of $\cX$ and sometimes denote it by $H^\cX$.  Define the \emph{GW state space pairing} to be
\[ \langle \alpha, \beta \rangle^\cX := (\alpha, I_*(\beta) ).\]
\end{definition}

Let $\bar I \cX$ denote the \emph{rigidified inertia stack} as in \cite[Section~3.4]{AGV}.  Recall that there exist evaluation maps $ev_i: \sMbar_{h, n}(\cX, d)  \to \bar I \cX$ for each marked point $p_i$.  By the discussion in section 6.1.3 of \cite{AGV}, it is convenient to work as if the map $ev_i$ factors through $I \cX$.  While this is not in fact true, due to the isomorphism $H^*(\bar I \cX, \CC) \cong H^*(I\cX, \CC)$, it makes no difference in terms of defining Gromov--Witten invariants.  See Section 6.1.3 of \cite{AGV} for details.

As in the case of FJRW theory, the moduli space $\sMbar_{h, n}(\cX, d)$ splits into open and closed components based on the action of the isotropy group at each marked point.  Given an $n$-tuple $\bg = (g_1, \ldots, g_n)$ indexing $n$ components of $I\cX$, let 
$\sMbar_{h, \bg}(\cX, d)$ denote the open and closed subset of $\sMbar_{h, n}(\cX, d)$ such that $ev_i$ maps to ${\cX}_{g_i}$ for $1 \leq i \leq n$.

If $\cX$ is proper, then so is $\sMbar_{h,n}(\cc X, d)$ for any genus $h$ and degree $d$, by Theorem~1.4.1 of \cite{AV}.  For $1 \leq i \leq n$, let $\psi_i \in c_1(\mathbb L_i)$, where $\mathbb L_i$ is the line bundle on $\sMbar_{h,n}(\cc X, d)$ whose fiber over a stable map $f: \cc C \to \cX$ is $T_{p_i}|\cc C|$, the cotangent space at the $i$th marked point of the coarse underlying space of the source curve, $| \cc C|$.
\begin{definition}
  Given $\alpha_1, \ldots, \alpha_n \in H^*_{\op{CR}}(\cX)$ and integers $b_1, \ldots, b_n \geq 0$ define the Gromov--Witten invariant
\[ \langle \alpha_1 \psi^{b_1}, \ldots, \alpha_n \psi^{b_n} \rangle^{\cX}_{h, n, d} := 
\int_{[\sMbar_{h,n}(\cc X, d)]^{vir}} \prod_{i=1}^n ev_i^*( \alpha_i) \cup \psi_i^{b_i}.\]
\end{definition}

Next we consider the non-proper case.  
\begin{definition}\cite[Definition~2.13]{Sh1} \label{d:narcoh}
Let $\cY$ be a non-proper space.  Let $H^*_{\op{CR}, \op{c}}(\cY)$ denote cohomology with compact support and consider the natural map $\iota: H^*_{\op{CR}, \op{c}}(\cY) \to H^*_{\op{CR}}(\cY)$.  We define the \emph{narrow} part of $H^*_{\op{CR}}(\cY)$ to be the image of this map:
\[ H^*_{\op{CR}, \op{nar}}(\cY) := \text(im)(\iota).\] This is referred to as the \emph{narrow GW state space} and will sometimes be denoted by $H^{\cY, \op{c}}$.
We define a pairing on $H^*_{\op{CR}, \op{nar}}(\cY)$ as follows.  Given $\alpha, \beta \in H^*_{\op{CR}, \op{nar}}(\cY)$, choose $\tilde \alpha$ to be some class in $H^*_{\op{CR}, \op{c}}(\cY)$ such that $\iota(\tilde \alpha) = \alpha$.  Then, if $( - , - ): H^*_{\op{CR}, \op{c}}(\cY) \times H^*_{\op{CR}}(\cY) \to \CC$ is the usual pairing between cohomology and cohomology with compact support and $I: I\cY \to I\cY$ is the involution map as above, define
\[ \langle \alpha, \beta \rangle^{\cY, \op{nar}} := (\tilde \alpha, I_*(\beta)).\]
\end{definition}

\begin{lemma}\cite[Proposition~2.11 and Section~2.2]{Sh1}
The pairing $\langle - , - \rangle^{\cY, \op{nar}}: H^*_{\op{CR}, \op{nar}}(\cY) \times H^*_{\op{CR}, \op{nar}}(\cY) \to \CC$ is well-defined and nondegenerate.
\end{lemma}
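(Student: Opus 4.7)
The plan is to reduce both claims to standard properties of the Poincar\'e duality pairing
\[ p \colon H^*_{\op{CR},\op{c}}(\cY) \otimes H^*_{\op{CR}}(\cY) \to \CC, \qquad p(\tilde\alpha,\beta)=\textstyle\int_{I\cY}\tilde\alpha\wedge\beta, \]
which is non-degenerate because $I\cY$ is a smooth DM stack whose components have quasi-projective coarse moduli spaces (where orbifold de Rham / Poincar\'e duality applies componentwise). The key compatibility is that the wedge product of two compactly supported representatives is again compactly supported, so the integral factors through $\iota$ in either slot:
\[ \textstyle\int_{I\cY}\tilde\alpha\wedge\tilde\beta \;=\; p(\tilde\alpha,\iota(\tilde\beta)) \;=\; \pm\, p(\tilde\beta,\iota(\tilde\alpha)), \]
the sign being the graded-commutativity sign depending only on the degrees. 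Moreover, $I$ is a stack automorphism of $I\cY$, so $I_*$ commutes with $\iota$, preserves $H^*_{\op{CR},\op{c}}(\cY)$, and restricts to an involution on $H^*_{\op{CR},\op{nar}}(\cY)$.

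For well-definedness, let $\tilde\alpha_1,\tilde\alpha_2$ be two compactly supported lifts of $\alpha$ and set $\tilde\gamma := \tilde\alpha_1-\tilde\alpha_2 \in \ker(\iota)$. For any narrow $\beta = \iota(\tilde\beta)$, applying the compatibility to $\tilde\gamma$ and $I_*\tilde\beta$ gives
\[ p(\tilde\gamma, I_*\beta) \;=\; p(\tilde\gamma, \iota(I_*\tilde\beta)) \;=\; \pm\, p(I_*\tilde\beta, \iota(\tilde\gamma)) \;=\; 0, \]
so $p(\tilde\alpha_1, I_*\beta) = p(\tilde\alpha_2, I_*\beta)$ and $\langle \alpha,\beta\rangle^{\cY,\op{nar}}$ is independent of the chosen lift.

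For non-degeneracy, suppose $\alpha = \iota(\tilde\alpha)$ pairs to zero with every narrow $\beta = \iota(\tilde\beta)$. Then for every $\tilde\beta \in H^*_{\op{CR},\op{c}}(\cY)$,
\[ 0 \;=\; p(\tilde\alpha, I_*\iota(\tilde\beta)) \;=\; p(\tilde\alpha, \iota(I_*\tilde\beta)) \;=\; \pm\, p(I_*\tilde\beta, \iota(\tilde\alpha)) \;=\; \pm\, p(I_*\tilde\beta, \alpha). \]
Because $I_*$ is an involution on compactly supported classes, $I_*\tilde\beta$ sweeps out all of $H^*_{\op{CR},\op{c}}(\cY)$, so $p(\tilde\eta,\alpha)=0$ for every $\tilde\eta$. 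Non-degeneracy of $p$ then forces $\alpha=0$.

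The principal obstacle is justifying the compatibility identity in the orbifold setting: one must verify that the compactly supported and ordinary Chen--Ruan theories on the inertia stack $I\cY$ satisfy Poincar\'e duality, and that the cup product of compactly supported classes is compatible with $\iota$ in the stated manner. These facts reduce componentwise to the corresponding statements on each sector of $I\cY$; after passing to an orbifold atlas they follow from classical de Rham--Poincar\'e duality together with averaging over the isotropy. Granted these inputs, the remainder of the argument is purely formal.
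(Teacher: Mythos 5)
Your argument is correct, and it is the natural one; since the paper cites \cite[Proposition~2.11]{Sh1} rather than reproducing a proof, I can only compare against the expected content, but the reduction you give (Poincar\'e duality between $H^*_{\op{CR},\op{c}}(I\cY)$ and $H^*_{\op{CR}}(I\cY)$, plus the fact that the cup product of compactly supported classes is compactly supported, plus $I$ being a proper automorphism of $I\cY$ commuting with $\iota$) is precisely what one needs and is presumably the content of \cite{Sh1}.

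Two small remarks worth recording. First, your non-degeneracy argument handles only the first slot (vanishing of $\langle\alpha,-\rangle$ forces $\alpha=0$); to conclude the pairing is nondegenerate in the usual sense you either invoke finite-dimensionality of $H^*_{\op{CR},\op{nar}}(\cY)$ — true here since $H^*_{\op{CR},\op{c}}(I\cY)$ is finite-dimensional for the $\cY$ under consideration — or you argue the second slot directly, which is actually \emph{simpler} than what you wrote: if $\langle\alpha,\beta\rangle=0$ for all narrow $\alpha$, then $p(\tilde\alpha, I_*\beta)=0$ for all compactly supported $\tilde\alpha$, so non-degeneracy of $p$ gives $I_*\beta=0$ in $H^*_{\op{CR}}(\cY)$, and since $I_*$ is an isomorphism, $\beta=0$; no use of the compatibility identity or of $\iota$ in the second slot is needed. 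Second, the ``compatibility identity'' you isolate is just graded-commutativity of the cup product $H^*_{\op{CR},\op{c}}\otimes H^*_{\op{CR},\op{c}}\to H^*_{\op{CR},\op{c}}$ together with the fact that this product computes the module pairing after applying $\iota$ in one slot; this is a sheaf-theoretic identity that holds in any reasonable cohomology theory, so the ``principal obstacle'' you flag at the end is not really an obstacle, though it is fair to state it carefully in the orbifold setting.
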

By the same reasoning used to define the pairing, one can define a \emph{compactly supported} cup product between elements in $H^*_{\op{CR}, \op{nar}}(\cY)$.  Given $\alpha, \beta \in H^*_{\op{CR}, \op{nar}}(\cY)$, define
\begin{align}\label{e:cprod}
\alpha \cup_c \beta := \tilde \alpha \cup \beta \in H^*_{\op{CR}, \op{c}}(\cY).
\end{align}
where $\tilde \alpha \in H^*_{\op{CR}, \op{c}}(\cY)$ is a lift of $\alpha$.% to $H^*_{\op{CR}, \op{c}}(\cY)$.

If $\cY$ is not proper, there is still a well-defined virtual class, and so Gromov--Witten invariants may still be defined by capping with classes of compact support.  In particular assume that the evaluation maps $ev_i$  are proper.  Then $ev_i^*(\alpha_i)$ lies in cohomology with compact support whenever $\alpha_i  \in H^*_{\op{CR}, \op{c}}(\cY)$ for some $1\leq i \leq n$.  Alternatively the cup product $ev_i^*(\alpha_i)\cup_c ev_j^*(\alpha_j)$ (see \eqref{e:cprod}) lies in cohomology with compact support whenever $\alpha_i, \alpha_j  \in H^*_{\op{CR}, \op{c}}(\cY)$ for some $1\leq i < j \leq n$.  In either case the Gromov--Witten invariant 
$ \langle \alpha_1 \psi^{b_1}, \ldots, \alpha_n \psi^{b_n} \rangle^{\cY}_{h, n}$
will be well-defined.

\begin{lemma}[Lemma 3.8 of \cite{IMM} and  Lemma 4.2 of \cite{Sh1}] \label{l:evprop} Let $\cX$ be a smooth and proper Deligne--Mumford stack and let $\cc E \to \cX$ be a vector bundle on $\cX$.  Let $\cY$ denote the total space of $\cc E^\vee$.  For $1 \leq i \leq n$, the evaluation maps $ev_i: \sMbar_{0, n}(\cY, d)  \to \bar I \cY$ are proper if either 
\begin{enumerate}
\item The degree $d = 0$; 
\item The vector bundle $\cc E$ is pulled back from a vector bundle $E \to X$ on the coarse space and $E$ is convex. 
\end{enumerate}
\end{lemma}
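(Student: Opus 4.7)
The plan is to address the two cases separately, since distinct mechanisms force properness of $ev_i$ despite the non-properness of the target $\bar I \cY$.

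For case (1), where $d=0$, a genus-zero stable map of degree zero is set-theoretically constant. Standard arguments then identify the component $\sMbar_{0, \bg}(\cY, 0)$ with a fibration over a locally closed substack of $\bar I \cY$, whose fibers are the proper moduli spaces of stable orbi-curves with prescribed monodromy. Under this identification $ev_i$ is the projection to $\bar I \cY$, and its properness follows from the properness of the fibers.

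For case (2), the strategy is to factor $ev_i^\cY$ through the bundle projection $\pi: \cY \to \cX$. Composition with $\pi$ induces a morphism $\Pi: \sMbar_{0,n}(\cY, d) \to \sMbar_{0,n}(\cX, \pi_*d)$ satisfying $\bar I \pi \circ ev_i^\cY = ev_i^\cX \circ \Pi$, so $ev_i^\cY$ factors as
\begin{equation*}
\sMbar_{0,n}(\cY, d) \xrightarrow{F} W := \sMbar_{0,n}(\cX, \pi_*d) \times_{\bar I \cX} \bar I \cY \longrightarrow \bar I \cY.
\end{equation*}
Since $\cX$ is proper, $\sMbar_{0,n}(\cX, \pi_*d)$ is proper, so the projection $W \to \bar I \cY$ is proper by base change. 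It therefore suffices to show $F$ is proper. A point of $W$ consists of a stable map $\bar f: \cc C \to \cX$ together with $y \in \bar I \cY$ lying above $\bar f(p_i)$, and a lift to $\sMbar_{0,n}(\cY, d)$ corresponds to a section $s \in H^0(\cc C, \bar f^* \cc E^\vee)$ with $s(p_i) = y$. Convexity of $E$ implies that on any genus-zero stable orbi-curve $\bar f^* \cc E$ decomposes component-wise into non-negative degree line bundles, so $\bar f^* \cc E^\vee$ has only non-positive summands; its global sections therefore arise only from the trivial summands and are determined by their value at any single point such as $p_i$. Hence the fibers of $F$ are either empty or a single point. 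A valuative-criterion argument, once again invoking convexity to control the limiting section, then shows that $F$ is proper, and composing the two proper maps gives the conclusion.

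The principal difficulty will be the valuative step for $F$: in a one-parameter degeneration of stable maps to $\cY$, one must verify that the section of the pullback bundle extends across the limit without escaping to infinity at a node or newly-formed marked point of the source curve. Convexity---via the vanishing of $H^1(\cc C, \bar f^* \cc E)$ on all genus-zero pullbacks---is precisely what furnishes the control needed to rule out this pathology and close the argument.
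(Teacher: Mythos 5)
Note first that the paper does not prove this lemma; it is imported by citation from Lemma 3.8 of [IMM] and Lemma 4.2 of [Sh1], so there is no internal proof to compare against. Your overall strategy --- handle $d=0$ separately, and for $d\neq 0$ factor $ev_i^\cY$ through $W = \sMbar_{0,n}(\cX, \pi_*d)\times_{\bar I \cX}\bar I\cY$ and reduce to properness of $F$ --- is the right one, and your observation that $F$ has at-most-singleton fibers (a global section of $\bar f^*\cc E^\vee$ on a genus-zero tree is killed by vanishing at $p_i$, because the component-wise degrees are non-positive and vanishing propagates across nodes) is correct.

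There is, however, a genuine gap exactly where you flag one: the valuative criterion for $F$ is asserted, not proved. The step is shorter than you suggest and follows from what you already established. Given a DVR $R$ with uniformizer $t$ and a $K$-point of $\sMbar_{0,n}(\cY,d)$ whose image in $W$ extends over $R$, flat base change identifies $H^0(\cc C_R,\bar f_R^*\cc E^\vee)\otimes_R K$ with $H^0(\cc C_K,\bar f_K^*\cc E^\vee)$, so the section underlying the $K$-point can be written $s_K=\sigma/t^k$ with $\sigma\in H^0(\cc C_R,\bar f_R^*\cc E^\vee)$ and $k$ minimal. If $k>0$, then $\sigma_0:=\sigma|_{t=0}$ is nonzero (the two-term Grothendieck complex shows $H^0(\cc C_R)\otimes k(0)\hookrightarrow H^0(\cc C_0)$), yet $\sigma_0(p_i)=(t^ky_K)|_{t=0}=0$; but the same degree bound you used for the singleton-fiber claim shows that any global section of $\bar f_0^*\cc E^\vee$ vanishing at $p_i$ vanishes identically --- a contradiction. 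Hence $k=0$ and the section extends; uniqueness is automatic since $F$ is a monomorphism. Two smaller points of precision: the actual input is the component-wise non-positivity of $\deg\bar f^*\cc E^\vee$ (which follows from convexity of the coarse bundle $E$), not directly the vanishing of $H^1(\cc C,\bar f^*\cc E)$ as you phrase it; and in Case (1) the sentence ``properness follows from the properness of the fibers'' is not a valid general principle --- you need the explicit product structure of $\sMbar_{0,n}(\cY,0)$ over $\bar I\cY$ (degree-zero maps are constant, so the moduli space is a bundle of proper twisted-curve moduli) rather than a bare fiber-wise statement.
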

%If $\cY_+$ is non-proper, the GW invariants of $\cY_+$ can still be defined in some cases.

%
%Assume $\cX$ is proper, and let $E$ denote a vector bundle on $\cX$.  Let $\cY$ denote the total space, $\tot(E)$, of $E$ over $\cX$.  In many cases the Gromov--Witten invariants of $\cY_+$ are still well-defined.
%\begin{lemma}
%The invariant $\langle \alpha_1 \psi^{b_1}, \ldots, \alpha_n \psi^{b_n} \rangle^{\cY}_{h, n}$ is well defined if one of the following holds:
%\begin{enumerate}
%\item The vector bundle $E$ is pulled back from the coarse space $|\cX|$, $E$ is concave, and $d > 0$.
%\item The degree $d = 0$ and for some $1 \leq i \leq n$, $\alpha_i \in H^*({\cX}_{g_i})$ where the action of $g_i$ on a fiber of $E|_{{\cX}_{g_i}}$ fixes only the origin.
%\item The vector bundle $E$ is pulled back from the coarse space $|\cX|$, the degree $d = 0$, and for some $1 \leq i \leq n$, $\alpha_i$ lies in $H_{cs}^*(I\cY; \CC)$.
%\item The degree $d = 0$, $h = 0$, and for all $i$, $\alpha_i$ lies in $H^*_{\op{CR}, \op{nar}}(I\cY; \CC)$. \note{need pullback via $ev$ to send compact supp to compact supp (preimage of compact is compact)}
%\end{enumerate}
%\end{lemma}
%
%
%
%
%\begin{proof}
%
%\end{proof}
%

\section{Structures}\label{s:structures}

%\subsection{Double brackets}

Let $(w, G)$ be a Landau--Ginzburg pair and let
$\{T_i\}_{i \in I}$ be a basis for the state space $\cc H(w, G)$ then we may denote by $\bt := \sum t^i T_i$ a point in $\cc H(w, G)$.
Given $\alpha_1, \ldots, \alpha_n \in \cc H(w, G)$ and integers $b_1, \ldots, b_n \geq 0$,
define
\begin{equation}\label{e:fps1}\langle \langle \alpha_1 \psi^{b_1}, \ldots, \alpha_n \psi^{b_n} \rangle \rangle^{(w, G)}(\bt) := \sum_{k\geq 0}\frac{1}{k!} \langle \alpha_1 \psi^{b_1}, \ldots, \alpha_n \psi^{b_n}, \bt, \ldots, \bt \rangle^{(w, g)}_{0,n+k}\end{equation}
where a summand is implicitly assumed to be zero if $n+k < 3$.  
Equation
%and let $\CC[[\bs]] := \CC[[ s_j]]_{j \in \ZZ_{\geq 0}}$.  
 \eqref{e:fps1} is a 
formal power series is lying in $\CC[[\bt]]:= \CC[[t^i]]_{i \in I}$.
% or, in the twisted case, $\CC[[\bt, \bs]]$.

%Next let $\square$ denote GW theory.
%, twisted GW theory, or GW theory of a non-proper space restricted to the narrow state space.  
Similarly, given a smooth Deligne--Mumford stack $\cY$, 
choose a basis $\{T_i\}_{i \in I}$ for $H^*_{\op{CR}}(\cY)$ such that $I = I' \coprod I''$ where $I''$ indexes a basis for the subspace $H^2(\cY)$ in the state space and $I'$ indexes a basis for 
\[H^{\neq 2}(\cY) \oplus \bigoplus_{b \in B| b \neq \op{id}} H^*(\cY_b)\]
where the latter direct sum is over all twisted sectors other than the untwisted sector.
Let $\bt ' = \sum_{i \in I '} t^i T_i$ and let $\bt = \sum_{i \in I' \cup I''} t^i T_i$.  Let $q^i = e^{t_i}$ for $i \in I''$.
 Define
\begin{equation}\label{e:fps2}\langle \langle \alpha_1 \psi^{b_1}, \ldots, \alpha_n \psi^{b_n} \rangle \rangle^\cY(\bt) := \sum_{d \in \text{Eff}}\sum_{k\geq 0}\frac{1}{k!} \langle \alpha_1 \psi^{b_1}, \ldots, \alpha_n \psi^{b_n}, \bt, \ldots, \bt \rangle^\cY_{0,n+k,d}\end{equation}
where a summand is implicitly assumed to be zero if $d = 0$ and $n+k < 3$.    
Let $\CC[[\bt ']] := \CC[[ t^i]]_{i \in I ' }$ and $\CC[[\bq]] := \CC[[q^i]]_{i \in I ''}$.
Via the divisor equation (see \cite{AGV}), when $b_1 = \cdots = b_n = 0$ expression~\eqref{e:fps2} can be viewed as a formal power series in $\CC[[\bt ', \bq]]$.

If $\cY$ is not proper, the expression is still well defined assuming either one of the $\{\alpha_i\}$ lies in $H^*_{\op{CR}, \op{c}}(\cY)$ or two of the $\{\alpha_i\}$ lie in $H^*_{\op{CR}, \op{nar}}(\cY)$ and that the evaluation maps are proper as is the case in Lemma~\ref{l:evprop}.
% or $\CC[[\bt ', \bq, \bs]]$ in the twisted case.
 
 \begin{notation}\label{notb}
 To deal simultaneously with the case of FJRW or GW theory, we will let $P^\square$ denote $\CC[[\bt]]$ in the case of FJRW theory $\square = (w, G)$ and $\CC[[\bt ', \bq]][\ln(\bq)]$ in the case of GW theory $\square = \cY$.  In either case we will denote the state space by $H^\square$.
 \end{notation}

\subsection{Quantum $D$-modules}

Let $\square = \cX$ or $(w, G)$ denote either the Gromov--Witten theory of a smooth Deligne--Mumford stack $\cX$ with projective coarse moduli space or the FJRW theory of the Landau--Ginzburg model $(w, G)$.  Choose a basis $\{T_i\}_{i \in I}$  for the state space  $H^\square$ and let $\bt = \sum_{i \in I} t^i T_i$.  Define the quantum product as follows.  For elements $\alpha, \beta, \gamma$ in the state space $H^\square$, define $\alpha \bullet_{\bt}^\square \beta$ by
\[ \langle \alpha \bullet_{\bt}^\square \beta, \gamma \rangle^\square = \langle \langle \alpha, \beta, \gamma\rangle \rangle^\square(\bt).\]  The coefficients lie in $P^\square$.
%$\CC[[\bt]]$ in the case of FJRW theory or $\CC[[\bt ', \bq]]$ in the case of GW theory.

\begin{definition}
Define the \emph{central charge} to be
\[\hat c^{\cX} := \dim(\cX)\]
in the case of Gromov--Witten theory and
\[\hat c^{(w, G)} := N - 2 \left(\sum_{j=1}^N q_j \right)\]
in the case of FJRW theory.
\end{definition}

The pairing $\langle - , - \rangle^\square$ on $H^\square$ can be extended to a $z$-sesquilinear pairing $S^\square$ on $H^\square \otimes P^\square[z, 1/z]$ by defining
\[ S^\square( u(z), v(z)) := (2 \pi i z)^{\hat c^\square} \langle u(e^{\pi i} z), v(z) \rangle^\square.\] 
%\note{but $\hat c$ is defined below....}
The quantum connection is defined by the formula 
\[ \nabla_i^\square  = \partial_i + \frac{1}{z} T_i \bullet_{\bt}^\square.\]
%When we are not in %\note{twisted doesnt allow connection in the z direction since no grading}
%the twisted setting, 
We can define the quantum connection in the $z$-direction as well.  Define the Euler vector field
%\note{what is $TX$ in FJRW theory??? should be 0.  yes, because the virtual dimension seems to only depend upon the degree of the insertions (with an shift)}
\[ \cc E := \partial_\rho+ \sum_{i \in I} \left(1 - \frac{1}{2} \deg T_i\right) t^i \partial_i\]
where $\rho:= c_1(T\cX) \subset H^2(\cX)$ in the case of Gromov--Witten theory and is defined to be zero in FJRW theory.
Define the grading operator $\op{Gr}$ by 
\[\op{Gr}(T_i) := \frac{\deg T_i}{2} T_i.\]
The quantum connection can be extended to the $z$-direction as
\[ \nabla_{z}^\square := \partial_z - \frac{1}{z^2} \cc E \bullet^\square_{\bt} + \frac{1}{z} \op{Gr}.\]

\begin{definition}
The \emph{quantum $D$-module} QDM($\square$) is defined to be the triple:
\[QDM(\square) := \left( H^\square \otimes P^\square[z, 1/z], \nabla^\square, S^\square \right).\]
\end{definition}

We define $L^\square \in \op{End}(H^\square) \otimes P^\square[z^{-1}]$ by 
\[L^\square(\bt, z) \alpha := \alpha + \sum_{i \in I} \langle \langle \frac{\alpha}{z - \psi}, T_i \rangle \rangle^\square(\bt) T^i.\]

The following is well-known.  See, for instance, \cite[Proposition~2.4]{Iri} and \cite[Proposition~2.7]{CIR}.
\begin{theorem}
The quantum connection $\nabla^\square$ is flat, with fundamental solution $L^\square(\bt, z) z^{-{\op{Gr}}}z^\rho$:
\begin{equation} \nabla_i \left(L^\square(\bt, z) z^{- \op{Gr}}z^\rho \alpha\right) = \nabla_z \left(L^\square(\bt, z) z^{- \op{Gr}} z^\rho\alpha\right) = 0 \end{equation}  for  $i \in I$ and $\alpha \in H^\square$.  Furthermore the pairing $S^\square$ is $\nabla^\square$-flat and for $\alpha, \beta \in H^\square$,
\begin{equation}\label{e:SL} S(L^\square(\bt, z) \alpha, L^\square(\bt, z) \beta) = \langle \alpha, \beta\rangle^\square.\end{equation}
\end{theorem}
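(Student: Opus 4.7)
The plan is to verify each assertion by expanding $L^\square(\bt,z)$ as a formal power series in $z^{-1}$,
\[L^\square(\bt, z)\alpha = \alpha + \sum_{i \in I,\, n \geq 0} z^{-n-1}\, \langle\langle \alpha\psi^n, T_i \rangle\rangle^\square(\bt)\, T^i,\]
and reducing each identity to standard genus-zero relations in the cohomological field theory formalism shared by the GW theory of $\cX$ and the FJRW theory of $(w,G)$. In particular, since $z^{-\op{Gr}}z^\rho$ is $\bt$-independent, the statement $\nabla_i^\square(L^\square z^{-\op{Gr}}z^\rho\alpha)=0$ is equivalent to $\nabla_i^\square L^\square = 0$ as an operator identity on $H^\square$.

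First I would establish $\nabla_j^\square L^\square = 0$. Matching the coefficient of $z^{-(n+1)}T^b$ on both sides, this reduces to the family of identities
\[\langle\langle \alpha\psi^n, T_b, T_j\rangle\rangle^\square + \sum_{i} \langle\langle \alpha \psi^{n-1}, T_i \rangle\rangle^\square \cdot \langle\langle T^i, T_j, T_b \rangle\rangle^\square = 0,\]
which are precisely the genus-zero topological recursion relations coming from the boundary decomposition $\psi_1 = \sum_{A} D_{1|A}$ on $\sMbar_{0,n+2}$, combined with the splitting axiom of the underlying cohomological field theory (available in the GW case by \cite{AGV} and in the FJRW case by \cite{FJR1}). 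Next, $\nabla_z^\square$-flatness of $L^\square z^{-\op{Gr}}z^\rho$ follows from the homogeneity of the descendant correlators: the Euler vector field $\cc E$ acts on $\langle\langle\cdots\rangle\rangle^\square$ by a scalar determined by the degrees of the insertions, and the shift $z^{-\op{Gr}}z^\rho$ is designed precisely to absorb this scaling (using the divisor equation to handle the $\rho\partial_\rho$ term in the GW case, and verifying the analogous statement for FJRW where $\rho=0$).

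For the $\nabla^\square$-flatness of $S^\square$, I would verify $\partial_j S^\square(u,v) = S^\square(\nabla_j u, v) + S^\square(u, \nabla_j v)$ directly. This reduces to the Frobenius identity $\langle T_j \bullet_\bt \alpha, \beta\rangle^\square = \langle \alpha, T_j\bullet_\bt \beta\rangle^\square$ for the quantum product, which follows from the $S_3$-symmetry of the three-point correlator $\langle\langle T_j, \alpha, \beta\rangle\rangle^\square$. The $(2\pi i z)^{\hat c^\square}$ prefactor is manifestly $\partial_j$-flat, and $\partial_z$-flatness of $S^\square$ similarly reduces to a homogeneity statement in the insertions together with the observation that $\hat c^\square$ is the total degree deficit.

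Finally, for the isometry $S^\square(L^\square\alpha, L^\square\beta) = \langle \alpha,\beta\rangle^\square$, I would prove the unitarity $L^\square(\bt,-z)^{*}\, L^\square(\bt,z) = \op{id}$, where $*$ denotes the adjoint with respect to $\langle -,-\rangle^\square$. Expanding in $z^{-1}$ this reduces to another family of identities among descendant correlators, extractable from the TRR together with the string equation by a standard argument (see \cite{Iri, CIR}). Given unitarity, the isometry follows by substituting into the definition of $S^\square$ and using the $\nabla_z^\square$-flatness of both sides to reduce to the constant term. The main obstacle I anticipate is the careful bookkeeping in the $\nabla_z$-direction: one must track how $\op{Gr}$, $\rho$, and $\cc E$ interact with the descendant structure, and handle the GW case (where $\rho = c_1(T\cX)$ may be nonzero) and the FJRW case (where $\rho=0$) uniformly. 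Once the correlator-level identities are in hand, the remaining assertions follow by formal manipulation.
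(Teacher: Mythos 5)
The paper offers no proof of this theorem --- it is stated with a citation to \cite{Iri} and \cite{CIR} as ``well-known.'' Your proposal supplies the standard textbook argument (expand $L^\square$ in $z^{-1}$, invoke the genus-zero topological recursion relations, the dimension/divisor axioms, $S_3$-symmetry of three-point correlators, and the string equation for unitarity), which is indeed the route taken in the cited references. The overall strategy and the reductions to CohFT axioms are correct, and your verification that $\nabla$-flatness of $S^\square$ comes down to the Frobenius property via the $z \mapsto -z$ in the first slot is exactly right.

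However, the key intermediate identity you write down for $\nabla_j L^\square = 0$,
\[
\langle\langle \alpha\psi^n, T_b, T_j\rangle\rangle^\square + \sum_{i} \langle\langle \alpha \psi^{n-1}, T_i \rangle\rangle^\square \cdot \langle\langle T^i, T_j, T_b \rangle\rangle^\square = 0,
\]
is \emph{not} the genus-zero TRR and is false as stated. The TRR says the two terms are \emph{equal} (same sign), so the displayed sum is $2\langle\langle \alpha\psi^n, T_b, T_j\rangle\rangle^\square$, which is nonzero in general. The sign you need comes from an alternating series in $z^{-1}$: in both \cite{Iri} and \cite{CIR} the fundamental solution is written with $\frac{\alpha}{z+\psi}$ (not $z-\psi$) and an overall minus on the correction terms, which introduces a $(-1)^{n}$ into the coefficients so that the TRR produces the cancellation. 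Since you copied the paper's $\frac{\alpha}{z-\psi}$ expansion (which appears to carry a sign-convention slip relative to the sources it cites), your coefficient-matching is self-consistent with that expansion but leads to an identity that the TRR cannot deliver. To repair this, either use the $\frac{\alpha}{z+\psi}$ form for $L^\square$ (matching \cite{Iri} Proposition~2.4) and re-derive the coefficient, or insert the appropriate $(-1)^n$'s by hand; the rest of your outline then goes through. A similar sign bookkeeping issue should be tracked through your ``unitarity'' step, where the adjoint at $-z$ must be paired consistently with whichever convention you fix.
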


\subsubsection{Special cases}\label{ss:spec}

We detail below three important quantum $D$-modules arising by restricting the state space in particular ways.

(1) Ambient GW theory:

Let $\cX$ be a smooth and proper DM stack.  Let $\cE$ be a convex vector bundle on $\cX$, and let $\cZ$ be the zero locus of a transverse section $s \in \Gamma( \cX, \cE)$.  Let $j: \cZ \to \cX$ denote the inclusion map and define $H^*_{\op{CR}, \op{amb}}(\cZ) := \op{im} (j^*)$.
\begin{assumption}\label{a:ass2}
We  assume that the Poincar\'e pairing on $H^*_{\op{CR}, \op{amb}}(\cZ)$ is non-degenerate.  
%By , this is equivalent to the condition that
%\begin{equation}\label{e:ass2} H^*_{\op{CR}}(\cZ) = \op{im}(j^*) \oplus \ker(j_*).\end{equation}
\end{assumption}
This holds for instance if $\cE$ is the pullback of an ample line bundle on the coarse space 
$X$ by Lemma~3.14 and Remark~3.15 of \cite{IMM}.
Although the argument there is for manifolds, the orbifold statement follows almost immediately after noting that in this case the restriction of $\cE$ to each twisted sector will still be the pullback of an ample line bundle on the coarse space of the twisted sector.
%and $\cZ$ intersects each twisted component of $\cX$ transversally, by the Hard Lefschetz theorem.
\begin{proposition}[Corollary~2.5 of \cite{Iri3} and Remark~3.13 of \cite{Sh1}]
For $\bar \bt \in H^*_{\op{CR}, \op{amb}}(\cZ)$, the quantum product $\bullet^\cZ_{\bar \bt}$ is closed on $H^*_{\op{CR}, \op{amb}}(\cZ)$.  The quantum connection and solution $L^\cZ(\bar \bt, z)$ preserve $H^*_{\op{CR}, \op{amb}}(\cZ)$ for $\bar \bt \in H^*_{\op{CR}, \op{amb}}(\cZ)$.
%, more precisely:
%\[ i^*( \alpha \bullet_{\bt}^\cZ
\end{proposition}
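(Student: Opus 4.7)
The plan is to reduce everything to the closure of the quantum product, which in turn follows from a quantum Lefschetz-style reduction of Gromov--Witten invariants of $\cZ$ with ambient insertions to twisted invariants of $\cX$.

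By Assumption~\ref{a:ass2}, the Poincar\'e pairing on $H^*_{\op{CR}}(\cZ)$ restricts to a non-degenerate pairing on $H^*_{\op{CR}, \op{amb}}(\cZ)$, so we obtain an orthogonal splitting $H^*_{\op{CR}}(\cZ) = H^*_{\op{CR}, \op{amb}}(\cZ) \oplus H^*_{\op{CR}, \op{prim}}(\cZ)$, where the primitive part is the orthogonal complement. Closure of the quantum product on the ambient subspace is then equivalent to the vanishing
\[ \langle\langle \alpha, \beta, \gamma\rangle\rangle^\cZ(\bar\bt) = 0 \qquad \text{whenever } \alpha, \beta, \bar\bt \in H^*_{\op{CR}, \op{amb}}(\cZ) \text{ and } \gamma \in H^*_{\op{CR}, \op{prim}}(\cZ). \]
Choosing lifts $\alpha = j^* \tilde\alpha$, $\beta = j^* \tilde\beta$, $\bar\bt = j^* \tilde{\bar\bt}$ in $H^*_{\op{CR}}(\cX)$, I would invoke the convexity of $\cE$ and the identification
\[ \iota_* [\sMbar_{0, n+3}(\cZ, d)]^{vir} = e(\cE_{0, n+3, d}) \cap [\sMbar_{0, n+3}(\cX, d)], \]
where $\iota: \sMbar(\cZ, d) \to \sMbar(\cX, d)$ is the natural morphism and $\cE_{0, n+3, d} := R^0\pi_*(\op{ev}^*\cE)$. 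Combining this with the projection formula reduces each correlator in $\langle\langle \alpha, \beta, \gamma\rangle\rangle^\cZ(\bar\bt)$ to an integral over $\sMbar(\cX, d)$ against ambient data pulled back from $I\cX$, in which $\gamma$ enters only through its Gysin pushforward $j_*\gamma \in H^*_{\op{CR}}(\cX)$. Poincar\'e duality together with primitivity forces $j_*\gamma = 0$, yielding the vanishing.

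Closure of the quantum connection follows formally. The derivative $\partial_i$ for $T_i \in H^*_{\op{CR}, \op{amb}}(\cZ)$ preserves the ambient subspace because $\bar\bt$ is restricted to that subspace, and quantum multiplication by $T_i$ preserves the ambient subspace by the previous step, so $\nabla_i$ does as well. For the $z$-direction, the Euler field involves $c_1(T\cZ) = j^*(c_1(T\cX) - c_1(\cE))$, which lies in $H^*_{\op{CR}, \op{amb}}(\cZ)$ by adjunction, and the grading operator respects the degree decomposition inherited by the ambient subspace; combined with the previous step this shows $\nabla_z$ preserves $H^*_{\op{CR}, \op{amb}}(\cZ)$.

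For the fundamental solution
\[ L^\cZ(\bar\bt, z) \alpha = \alpha + \sum_{i \in I} \langle\langle \tfrac{\alpha}{z - \psi}, T_i\rangle\rangle^\cZ(\bar\bt) T^i, \]
I would expand $1/(z - \psi)$ as a geometric series and observe that for $\alpha$ ambient and $T_i$ primitive, the resulting correlators $\langle \alpha \psi^k, T_i, \bar\bt, \ldots, \bar\bt\rangle^\cZ$ still have every insertion except $T_i$ ambient. Since descendant classes on $\sMbar(\cZ, d)$ pull back from $\sMbar(\cX, d)$ via $\iota$, the quantum Lefschetz reduction of the first step applies verbatim and the primitive contributions vanish. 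I expect the main obstacle to be the first step, namely the careful treatment of inertia stacks, rigidified evaluation maps, and the excess intersection formula in the orbifold setting; this bookkeeping is carried out in~\cite{Iri3} in the smooth-variety case and extended to orbifolds in~\cite{Sh1}.
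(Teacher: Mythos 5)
The paper does not prove this proposition; it cites Corollary~2.5 of \cite{Iri3} and Remark~3.13 of \cite{Sh1}, so your proposal must stand on its own. Your overall \emph{strategy} is the right one (and does match the cited sources in spirit): reduce closure to the vanishing of $\langle\langle \alpha, \beta, \gamma\rangle\rangle^\cZ(\bar\bt)$ for $\gamma$ primitive and everything else ambient, and deduce the statements about $\nabla^\cZ$ and $L^\cZ$ formally from there. The formal deductions are fine, and your observation that $j_*\gamma = 0$ for $\gamma$ primitive (via the projection formula and nondegeneracy on $\cX$) is also correct.

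The crucial step, however, has a genuine gap. After lifting the ambient insertions to $\cX$ and applying the projection formula for $\iota\colon \sMbar(\cZ,d)\to\sMbar(\cX,\iota_*d)$, the correlator reduces to an integral on $\sMbar(\cX,\iota_*d)$ against $\iota_*\bigl(ev_3^{\cZ,*}\gamma \cap [\sMbar(\cZ,d)]^{vir}\bigr)$, and you assert that this class ``depends on $\gamma$ only through $j_*\gamma$.'' This is not a consequence of the quantum Lefschetz identity $\iota_*[\sMbar(\cZ,d)]^{vir} = e(\cE_{0,n,d})\cap [\sMbar(\cX,\iota_*d)]^{vir}$ together with the projection formula, because $ev_3^{\cZ,*}\gamma$ does \emph{not} pull back along $\iota$: the square
\[
\begin{tikzcd}
\sMbar(\cZ,d) \ar[r, "ev_3^\cZ"] \ar[d, "\iota"'] & I\cZ \ar[d, "j'"] \\
\sMbar(\cX,\iota_*d) \ar[r, "ev_3^\cX"] & I\cX
\end{tikzcd}
\]
commutes but is not cartesian (the fiber product over $I\cX$ is the space of stable maps to $\cX$ whose third point lies on $\cZ$, which is much larger than $\sMbar(\cZ,d)$). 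So there is no base-change/projection-formula argument that exchanges the $\gamma$-insertion for $j_*\gamma$. Even granting the virtual-class pushforward formula (which, incidentally, should carry $[\sMbar(\cX,\iota_*d)]^{vir}$ on the right and a sum over degrees $d$ in $\cZ$ mapping to a fixed $\iota_*d$), nothing in your argument constrains $\iota_*\bigl(ev_3^{\cZ,*}\gamma \cap [\sMbar(\cZ,d)]^{vir}\bigr)$ for primitive $\gamma$.

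What the cited arguments actually do is use a \emph{refined} quantum Lefschetz with one marked point kept free: one replaces $\cE_{0,n,d}$ by the subbundle $\cE'_{0,n,d} := \ker\bigl(\cE_{0,n,d}\to ev_n^*\cE\bigr)$ and shows that the class
\[
ev_{n,*}^{\cZ}\Bigl(\textstyle\prod_{i\neq n} ev_i^{\cZ,*}(\text{ambient}) \cap [\sMbar_{0,n}(\cZ,d)]^{vir}\Bigr) \in H_*^{\op{CR}}(\cZ)
\]
is computable from the twisted theory on $\cX$ and, crucially, lands in (the Poincar\'e dual of) $H^*_{\op{CR},\op{amb}}(\cZ)$. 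The vanishing then follows by pairing with the primitive class $\gamma$. You have not established this ambient-landing statement, and it is precisely the nontrivial geometric input. If you want to complete the proof along your lines, you should either prove the refined quantum Lefschetz with one free marked point in the orbifold setting, or directly cite it (it is the content of the results you gesture at in \cite{Iri3} and \cite{Sh1}); the bare virtual-class pushforward identity plus the projection formula is not enough.
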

%\begin{proof}
%
%\end{proof}
\begin{definition}
The \emph{ambient} quantum $D$-module is defined to be 
\[ QDM_{\op{amb}}(\cZ) := (H^*_{\op{CR}, \op{amb}}(\cZ) \otimes \CC[[\bar \bt ', \bar \bq]][z, z^{-1}], \nabla^\cZ, S^\cZ)\]
where $\bar t^i$ denotes dual coordinates to a basis $\{T_i\}$ of  $H^*_{\op{CR}, \op{amb}}(\cZ)$.
\end{definition}

(2) Narrow FJRW theory:

An analogous statement holds for the narrow part of FJRW theory, at least when $w$ is a Fermat polynomial.
\begin{proposition}  Assume $w$ is Fermat.  
For $\bar \bt \in \cc H_{\op{nar}}(w, G)$, the quantum product $\bullet^{(w,G)}_{\bar \bt}$ is closed on $\cc H_{\op{nar}}(w, G)$.  The quantum connection and solution $L(\bar \bt, z)$ preserve $\cc H_{\op{nar}}(w, G)$ for $\bar \bt \in \cc H_{\op{nar}}(w, G)$.
\end{proposition}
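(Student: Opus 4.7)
The plan is to reduce the proposition to a vanishing statement for genus-zero FJRW correlators and then prove this selection rule using the Fermat structure.

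Since the FJRW pairing pairs $\cc H_g$ only with $\cc H_{g^{-1}}$, and $g^{-1}$ is narrow exactly when $g$ is, the decomposition $\cc H(w, G) = \cc H_{\op{nar}}(w, G) \oplus \bigoplus_{g \notin G_{\op{nar}}} \cc H_g(w, G)$ is orthogonal with respect to the pairing. Closure of $\bullet^{(w,G)}_{\bar\bt}$ on $\cc H_{\op{nar}}$ is therefore equivalent to the vanishing of every genus-zero FJRW correlator with $n-1$ narrow insertions and one broad insertion. Granting this vanishing, closure of the quantum connection $\nabla^{(w,G)}_i = \partial_i + z^{-1} T_i \bullet^{(w,G)}_{\bar\bt}$ is immediate, and for the fundamental solution $L^{(w,G)}(\bar\bt, z)\alpha = \alpha + \sum_i \langle\langle \alpha/(z-\psi), T_i\rangle\rangle^{(w,G)}(\bar\bt) T^i$, the coefficient in front of each broad dual basis vector $T^i$ is itself a correlator with exactly one broad insertion $T_i$, hence vanishes.

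To prove the selection rule, I would first dispose of many configurations via dimension counting. The FJRW degree of a narrow $\phi_g$ equals $\sum_j(m_j(g)-q_j)$, while a broad $\alpha \in \cc H_g$ has an extra half-integer contribution $N_g/2$; combined with the integrality of $\deg|\cc L_j|$ from \eqref{rig deg}, this typically produces a parity mismatch with the required virtual dimension $\hat c + n - 3$. For the remaining configurations, the Fermat hypothesis lets us decompose the W-structure into individual line bundles satisfying $\cc L_j^{a_j} \cong \omlog$. For a broad insertion at $p_n$ with group element $g_n$ fixing a coordinate $x_{j_0}$, the line bundle $\cc L_{j_0}$ is of Ramond type at $p_n$ and Neveu--Schwarz type at the other marked points. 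Using the Polishchuk--Vaintrob matrix factorization formulation, the correlator splits over variables and the $j_0$-factor reduces to a residue on $\cc Q_{w|_{(\CC^N)^{g_n}}}$ that vanishes because the narrow insertions contribute only the unit class while the broad insertion's monomial has the wrong weight in $x_{j_0}$.

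The main obstacle is giving a clean, uniform treatment of the remaining configurations in the selection rule. An alternative strategy would be to invoke the local GW/FJRW correspondence of \cite{LPS}: for Fermat $w$, narrow FJRW correlators of $(w, G)$ are matched with compactly supported Gromov--Witten correlators of $[\CC^N/G]$, and the analogous closure on narrow Chen--Ruan cohomology is proved in the companion paper \cite{Sh1} via pushforward from $BG$. Transferring that closure back through the correspondence would sidestep the direct broad-sector analysis, but requires recasting \cite{LPS} in the narrow quantum $D$-module language developed in the present paper.
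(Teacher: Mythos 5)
Your reduction to the selection rule---that every genus-zero FJRW correlator with exactly one broad insertion and all remaining insertions narrow must vanish---is precisely the paper's argument, and your observation that the state-space decomposition is orthogonal (so closure of the product, the connection, and $L$ all reduce to this single vanishing statement) is correct. Where your proposal stops is exactly where the paper cites: the selection rule itself is Proposition~2.8 of \cite{CIR}. Although that proposition is stated there only for $G = \langle j \rangle$, the paper observes that its proof depends only on the Fermat hypothesis and not on the choice of admissible group, so it applies verbatim in the present setting. Your sketch---dimension and parity counting followed by a residue argument in the Polishchuk--Vaintrob model---is in the general spirit of such a proof, but as you yourself acknowledge you do not close out the remaining configurations, so the argument as written has a genuine gap at the one nontrivial step.

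Your suggested fallback, namely transferring closure from the narrow Gromov--Witten theory of $[\CC^N/G]$ through the local GW/FJRW correspondence, is problematic as stated. In this paper the local GW/FJRW correspondence (Theorem~\ref{t:MLK}) is formulated as an identification of narrow quantum $D$-modules, and $QDM_{\op{nar}}(w,G)$ is only well-defined once the present proposition is in hand, so appealing to Theorem~\ref{t:MLK} would be circular. The original cone-matching result of \cite{LPS} avoids this circularity, but it is a statement about $\bs$-twisted theories; passing from the twisted theory to the untwisted selection rule still requires controlling broad insertions of the genuine FJRW virtual class, which is exactly what you are trying to sidestep. The cleanest route remains the one the paper takes: invoke the existing broad-vanishing result from \cite{CIR} directly.
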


\begin{proof}
%An element $\alpha \in \cc H(w, G)$ lies in the na
Choose a basis $\{T_i\}_i \in I$ of $\cc H(w, G)$ such that each element $T_i$ lies in a particular summand $\cc H_{g_i}(w, G)$ for some $g_i \in G$.  Then $T_i \in \cc H_{\op{nar}}(w, G)$ if and only if the dual element $T^i \in \cc H_{\op{nar}}(w, G)$.  To prove the proposition, it then suffices to show that genus zero FJRW invariants with exactly one broad insertion must always vanish.
Namely,
if  $g_1, \ldots, g_{n-1}$ lie in $G_{\op{nar}}$, and $\theta_{g_n}$ lies in $\cc H_{g_n}(W,G)$ where $g_n$ is broad, then \[\langle \varphi_{g_1j^{-1}} \psi^{b_1}, \ldots, \varphi_{g_nj^{-1}} \psi^{b_n} \rangle^{(w, G)}_{0,n} = 0.\]
This fact is Proposition 2.8 of \cite{CIR}.  Although the proposition cited is stated only for the case where $G = \langle j \rangle$, the proof of that proposition is valid whenever $w$ is Fermat, and does not depend on the chosen group $G$ of symmetries of $w$.
%
%These statements follow from Lemma~\ref{l:specFJRW}.
\end{proof}

\begin{definition}
The \emph{narrow} quantum $D$-module for $(w, G)$ is defined to be 
\[ QDM_{\op{nar}}(w, G) := (\cc H_{\op{nar}}(w, G) \otimes \CC[[\bar \bt ]][z, z^{-1}], \nabla^{(w,G)}, S^{(w,G)})\]
where $\bar t^i$ denotes dual coordinates to a basis $\{T_i\}$ of  $\cc H_{\op{nar}}(w, G)$.
\end{definition}

(3) Narrow Gromov--Witten theory:
Let $\cY$ be a non-proper space, but assume that all evaluation maps are proper.  Then there is a  quantum product, defined similarly as in the proper case.  For $\alpha, \beta \in H^*_{\op{CR}}(\cY)$ and $\gamma \in H^*_{\op{CR}, \op{c}}(\cY)$,
\[\langle \alpha \bullet_{\bt} \beta, \gamma\rangle = \langle \langle \alpha, \beta, \gamma\rangle \rangle^{\cY}(\bt).\]
Note that $\alpha \bullet_{\bt} \beta$ still lies in $H^*_{\op{CR}}(\cY)$.  One can show (see Section~4.1 of \cite{Sh1}) that the quantum connection $\nabla^\cY$ and solution $L^\cY$ are also well defined.
%The Kunneth formula for cohomology with compact support, together with the usual argument shows that 
%\[L(\bt, z) \alpha := \alpha + \sum_{i \in I} \langle \langle \frac{\alpha}{z - \psi}, T_i \rangle \rangle^\square(\bt) T^i\]
%still satisfies
%\[\nabla_i L(\bt, z) \alpha = 0\] for all $i$, where in this case $T_i$ is a basis for $H_{\op{CR}, \op{c}}(\cY)$ and $T^i$ is the dual basis in $H^*(I\cY)$.

\begin{proposition}\cite[Proposition~4.6]{Sh1} \label{p:narpres}
The narrow state space is preserved by the quantum product. 
The quantum connection $\nabla^\cY$ and solution $L^\cY( \bt, z)$ preserve $ H^*_{\op{CR}, \op{nar}}(\cY)$ for all $\bt \in H^*_{\op{CR}}(\cY)$. 
\end{proposition}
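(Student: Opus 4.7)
The plan is to produce, for each narrow input, an explicit lift to compactly supported cohomology via push-forward along a proper evaluation map. Throughout, let $\alpha \in H^*_{\op{CR}, \op{nar}}(\cY)$ and fix a lift $\tilde\alpha \in H^*_{\op{CR}, \op{c}}(\cY)$ with $\iota(\tilde\alpha) = \alpha$.

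First I would handle the quantum product. For $\beta \in H^*_{\op{CR}}(\cY)$, define the candidate lift
\[
\widetilde{\alpha \bullet_\bt \beta} := \sum_{n \geq 0,\, d \in \op{Eff}} \frac{1}{n!}\, (ev_3)_*\!\left([\sMbar_{0,n+3}(\cY, d)]^{\op{vir}} \cap ev_1^*(\tilde\alpha) \cup ev_2^*(\beta) \cup \prod_{i=4}^{n+3} ev_i^*(\bt)\right).
\]
Because $\tilde\alpha$ is compactly supported, so is $ev_1^*(\tilde\alpha)$ on each moduli space, and since $ev_3$ is proper by Lemma~\ref{l:evprop}, the push-forward lives in $H^*_{\op{CR}, \op{c}}(\cY)$. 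A projection-formula computation pairing $\iota(\widetilde{\alpha \bullet_\bt \beta})$ against an arbitrary $\gamma \in H^*_{\op{CR}, \op{c}}(\cY)$ recovers $\langle\langle \alpha, \beta, \gamma\rangle\rangle^\cY(\bt)$, so $\iota(\widetilde{\alpha \bullet_\bt \beta}) = \alpha \bullet_\bt \beta$, and in particular $\alpha \bullet_\bt \beta \in H^*_{\op{CR}, \op{nar}}(\cY)$.

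The statement about the quantum connection then follows almost immediately: $\nabla^\cY_i = \partial_i + \tfrac{1}{z}\, T_i \bullet_\bt$ preserves narrowness by the previous step applied with the narrow section in the second slot, and $\nabla^\cY_z = \partial_z - \tfrac{1}{z^2}\cc E \bullet_\bt + \tfrac{1}{z}\op{Gr}$ preserves it because $\cc E \bullet_\bt$ does by the same argument, while $\op{Gr}$ acts diagonally on the twisted-sector decomposition and so visibly restricts to $H^*_{\op{CR}, \op{nar}}(\cY)$. For the solution operator, the same strategy applies: expanding $\alpha/(z - \psi)$ geometrically in $\psi/z$ and pushing forward along the appropriate proper evaluation map on $\sMbar_{0, n+2}(\cY, d)$ produces a class in $H^*_{\op{CR}, \op{c}}(\cY) \otimes P^\cY[z^{-1}]$ whose image under $\iota$ equals $L^\cY(\bt, z)\alpha - \alpha$.

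The main technical obstacle is verifying that these push-forward formulae genuinely lift the quantum product and the solution operator, i.e.\ that the projection formula goes through cleanly despite $\cY$ being non-proper. Concretely, one must check that properness of $ev_3$ (respectively of the evaluation map used for $L^\cY$) intertwines the pairing between compactly supported and ordinary cohomology on the moduli space with the corresponding pairing on $\cY$, so that the compact support introduced by the narrow insertion survives the push-forward. Granting this, each of the three assertions in the proposition is a direct consequence of the construction above.
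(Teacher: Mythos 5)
The paper does not actually prove this statement; it cites it as Proposition~4.6 of \cite{Sh1}, so there is no in-paper proof to compare against. Your argument is assessed on its own merits.

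Your approach—lifting narrow classes to $H^*_{\op{CR},\op{c}}(\cY)$ and constructing explicit compactly supported lifts of the quantum product and solution operator via pushforward along evaluation maps—is the natural and correct strategy, and it is almost certainly the route taken in \cite{Sh1}. You correctly identify Lemma~\ref{l:evprop} (properness of the $ev_i$) as the enabling tool, and your handling of $\nabla^\cY_z$ (reducing the $\cc E\bullet_\bt$ term to the quantum-product case by commutativity of $\bullet_\bt$, and noting that $\op{Gr}$ acts diagonally on twisted sectors and so preserves any homogeneously-graded subspace) is complete. Likewise the geometric expansion of $\alpha/(z-\psi)$ for $L^\cY$ reduces to the same pushforward mechanism.

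One small imprecision worth flagging: in the chain
\[
[\sMbar]^{\op{vir}}\cap ev_1^*(\tilde\alpha)\cup\ldots\ \longmapsto\ (ev_3)_*(\,\cdot\,)
\]
the compact support is introduced by $ev_1^*(\tilde\alpha)\in H^*_{\op{c}}(\sMbar)$, and for that pullback to land in compactly supported cohomology it is \emph{properness of $ev_1$} that is needed. Once the class on $\sMbar$ lives in compactly supported cohomology (equivalently ordinary homology after capping with $[\sMbar]^{\op{vir}}\in H_*^{BM}$), the pushforward $(ev_3)_*$ on ordinary homology is defined without a properness hypothesis, and Poincar\'e duality on the smooth $\cY$ produces the class in $H^*_{\op{CR},\op{c}}(\cY)$. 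Your phrasing attributes the survival of compact support to properness of $ev_3$, which is a minor misattribution; in the setting of Lemma~\ref{l:evprop} all the $ev_i$ are proper simultaneously, so nothing goes wrong, but the logical structure is clearer when you track where each properness hypothesis is actually invoked. Apart from this, the argument is sound.
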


\begin{notation}
When we are explicitly restricting to the narrow state space of Definition~\ref{d:narcoh} we will  denote the quantum connection and solution by $\nabla^{\cY, \op{nar}}$ and solution $L^{\cY, \op{nar}}( \bt, z)$.  Note however that $\bt$ can still range over all of $H^*_{\op{CR}}(\cY)$.
\end{notation}

\begin{definition}\label{d:nard}
The \emph{narrow} quantum $D$-module for $\cY$ is defined to be 
\[ QDM_{\op{nar}}(\cY) := (H^*_{\op{CR}, \op{nar}}(\cY) \otimes \CC[[\bt, \bq ]][z, z^{-1}], \nabla^{\cY}, S^{\cY})\]
where $ t^i$ denotes dual coordinates to a basis $\{T_i\}$ of  $H^*_{\op{CR}}(\cY)$.
\end{definition}

%\begin{proof}
%Note that if $\gamma \in K$ and $\alpha \in H^*_{\op{CR}, \op{nar}}(\cY)$, $\langle \langle \alpha, \beta, \gamma\rangle \rangle^{\cY}(\bt) = 0$.  Since $H^*_{\op{CR}, \op{nar}}(\cY)$ can be defined as $K^\perp$ with respect to the standard pairing $( -,-)$, we see that the narrow state space is closed under the quantum product.
%
%To show that $L(\bar \bt, z)$ preserve $\cc H_{\op{nar}}(I\cY)$, it suffices to show that for $\gamma \in K$ and $\alpha \in H^*_{\op{CR}, \op{nar}}(\cY)$, 
%\[ \langle \alpha \psi^k, \bt, \ldots, \bt, \gamma\rangle^{\cY}_{0,n,d} = 0.\]
%This expression can be rewritten as 
%\[ \int_{ [\sMbar_{0,n}(\cY, d)]^{vir}}  ev_1^*(\alpha) \cup \psi_1^k \cup ev_n^*(\gamma) \cup \prod_{j=2}^{n-1} ev_j^*( \bt).\]  But $ev_1^*(\alpha)$ can be expressed as a class with compact support, whereas $ev_n^*(\gamma)$ can be expressed as a boundary class.  Thus their cup product can be expressed as a boundary class with compact support, which is therefore zero in $H^*_{cs}(\sMbar_{0,n}(\cY, d))$.
%
%\end{proof}
%

\subsection{Integral structure}\label{ss:is}

In \cite{Iri}, Iritani defines an integral structure for GW theory.  This is extended in \cite{CIR} to FJRW theory as well.  We recall the ingredients here.  

\subsubsection{Gamma class}

\begin{definition}
For $\cE$ a vector bundle on $\cY$, define the \emph{Gamma class} $\hat \Gamma (\cE)$ to be the class in $H^*_{\op{CR}}(\cY)$:
\[ \hat \Gamma (\cE)  := \bigoplus_{b \in B} \prod_{0 \leq f < 1} \prod_{i = 1}^{\op{rk}(\cE_{b,f})} \Gamma(1 - f + \delta_{b,f,i})\]
where $\Gamma(1 + x)$ should be understood in terms of its Taylor expansion at $x = 0$ and $\{\delta_{b,f,i}\}$ are the Chern roots of $\cE_{b,f}$.  Define $\hat \Gamma_{\cY}$ to be the class $\hat \Gamma(T\cY)$.
\end{definition}

%%\subsubsection{FJRW theory}
%  In what follows we implicitly identify $\cc H(w, G)$ with 
%$\bigoplus_{g \in G} (\cc Q_{w|_{(\CC^N)^g}})^g$.

\begin{definition}
The Gamma class in FJRW theory is defined to be the operator
\[\hat \Gamma_{(w, G)} := \bigoplus_{g \in G} \prod_{j=1}^N \Gamma(1 - m_j(g))\]
where the $g$th summand acts on the $g$th sector of $\cc H(w,G)$ by scaling by the given factor.
\end{definition}

\subsubsection{Flat sections}
\begin{definition}
Define the operator $\deg_0$ to be the degree operator which multiplies a homogeneous class by its unshifted degree.  In FJRW theory on the narrow state space this is multiplication by $-2 \left(\sum q_j \right)$.  In Gromov--Witten theory it multiplies a class in $H^n(I\cY)$ (with the standard grading) by $n$.
\end{definition}

Let $D^\square$ denote $D(\cY)$ in the case of GW theory or $D(\cXR, w)$ in the case of FJRW theory.  Given an object $E \in D^\square$ define
\[\bs^\square(E)(\bt, z) := \frac{1}{(2 \pi i)^{\hat c^\square}} L^\square(\bt, z) z^{-Gr} z^{\rho} \hat \Gamma_{\square} \left( (2 \pi i)^{\deg_0/2} I^*(\ch(E))\right),\]
lying in $H^\square \otimes P^\square[z, z^{-1}]$.  
Note that $\bs^\cY$ is defined even when $\cY$ is not proper. %\note{need $\rho$?}

\begin{proposition}[Proposition~2.2.1 of \cite{CIR} and Section~3.2 of \cite{Iri3}]\label{p:pair}
The map $\bs^\square$ identifies the pairing in the derived category with $S^\square$ up to a sign.  

Let $(w, G)$ be a Landau--Ginzburg pair, where $w$ is a polynomial in $N$ variables.  Then
\[ S^{(w, G)} (\bs(E)(\bt, z), \bs(F)(\bt, z)) = e^{\pi i (N + \sum q_j)}\chi(F, E)\] 
for all $E, F \in D(\cY_-, w)$.

Let $ \cX$ be a smooth and proper Deligne--Mumford stack of dimension $N$. Then
\[ S^\cX (\bs(E)(\bt, z), \bs(F)(\bt, z)) = e^{\pi i N} \chi(F, E)\] 
for all $E, F \in D(\cX)$.
\end{proposition}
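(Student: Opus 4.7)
The plan is to reduce $S^\square(\bs^\square(E), \bs^\square(F))$ to a constant (in $z$) pairing on $H^\square$ via the flatness formula \eqref{e:SL}, and then match it with the Euler pairing $\chi(F,E)$ through the Gamma/Todd reflection identity together with orbifold Hirzebruch--Riemann--Roch. First I would write
\[ \bs^\square(E)(\bt,z) \;=\; L^\square(\bt,z)\cdot A^\square(E)(z), \qquad A^\square(E)(z) := \tfrac{1}{(2\pi i)^{\hat c^\square}} z^{-\op{Gr}} z^{\rho}\,\hat\Gamma_\square\,(2\pi i)^{\deg_0/2} I^*\ch(E), \]
and apply the $z$-sesquilinear extension of \eqref{e:SL} so that the $L^\square$ factors cancel and
\[ S^\square(\bs^\square(E),\bs^\square(F)) \;=\; \langle A^\square(E)(e^{\pi i}z),\, A^\square(F)(z)\rangle^\square. \]
The point of the prefactor $(2\pi i z)^{\hat c^\square}$ in the definition of $S^\square$, together with the $(2\pi i)^{-\hat c^\square}$ in $\bs^\square$, is to absorb this scaling cleanly.

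Next I would substitute $z \mapsto e^{\pi i}z$ in the first slot. Using $(e^{\pi i}z)^{-\op{Gr}} = z^{-\op{Gr}} e^{-\pi i \op{Gr}}$ and $(e^{\pi i}z)^{\rho} = z^{\rho} e^{\pi i \rho}$, the operators $z^{-\op{Gr}}$ and $z^\rho$ can be moved across the pairing by a standard degree-matching calculation (on each sector, the total cohomological degrees that pair nontrivially add up to a quantity that makes the $z$-dependence cancel). What remains is a pairing of $e^{-\pi i \op{Gr}} e^{\pi i \rho}\hat\Gamma_\square\,(2\pi i)^{\deg_0/2} I^*\ch(E)$ against $\hat\Gamma_\square\,(2\pi i)^{\deg_0/2} I^*\ch(F)$.

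The main identity to establish is then the Gamma reflection applied sector-by-sector. On the twisted sector $\cX_b$ (resp.\ the $g$-summand of $HH_*(\cY_-,w)$), the involution $I$ swaps the eigenvalues $f \leftrightarrow 1-f$ of the isotropy action on $T\cX|_{\cX_b}$ (resp.\ on $V^g$), so that $\hat\Gamma$ on one side and $I^*\hat\Gamma$ on the other assemble into factors
\[ \Gamma(1-f+\delta)\,\Gamma(f-\delta) \;=\; \tfrac{\pi(f-\delta)}{\sin\pi(f-\delta)}, \]
which is essentially a Todd class times an $e^{\pi i(\cdot)}$ coming from expanding $\sin$ in exponentials. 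Gathering these exponential factors across all sectors and combining them with the explicit $e^{-\pi i \op{Gr}}$, $e^{\pi i \rho}$ and $(2\pi i)^{\deg_0/2}$ should produce exactly the global constant $e^{\pi i N}$ in the GW case or $e^{\pi i(N+\sum q_j)}$ in the LG case; the arithmetic here is the heart of the argument and is the main obstacle to get right.

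Finally, with the Todd class assembled, the remaining integral $\int_{I\cX} I^*\ch(E)\cdot\ch(F)\cdot\mathrm{Td}$ is precisely $\chi(F,E)$ by Toën's orbifold HRR in the GW case, and by the HRR formula for categories of matrix factorizations proven in \cite{PV2,BFK2} in the LG case, finishing the proof. Because the statement is attributed to \cite{CIR} and \cite{Iri3}, in practice I would follow their calculations and only verify that our conventions (particularly the $[1]$-shift of Remark~\ref{r:shifty} in Orlov's equivalence, and the Fermat normalization of $\hat\Gamma_{(w,G)}$) do not alter the exponential prefactor.
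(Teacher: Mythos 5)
Your proposal is correct and matches the paper's approach: the paper's own proof is a two-line citation to Proposition 2.21 of \cite{CIR} (observing that the argument there carries over beyond the cyclic, Calabi--Yau case, and correcting a typo $\chi(E,F)\mapsto\chi(F,E)$) and to Property~(iii) after Definition~3.6 of \cite{Iri3}, which is exactly the route you indicate at the end. Your sketch of the underlying mechanism — cancel the $L^\square$ factors via flatness \eqref{e:SL}, shift $z\mapsto e^{\pi i}z$, apply the $\Gamma$-reflection sector-by-sector to produce a Todd class plus exponential prefactors, and conclude by orbifold HRR (resp.\ the matrix-factorization HRR of \cite{PV2,BFK2}) — is a faithful summary of what those references do; the only blemish is a small slip in the displayed reflection identity, which should read $\Gamma(1-f+\delta)\,\Gamma(f-\delta)=\pi/\sin\pi(f-\delta)$ (the Todd-type factor $\tfrac{\pi(f-\delta)}{\sin\pi(f-\delta)}$ comes from the variant $\Gamma(1-f+\delta)\,\Gamma(1+f-\delta)$), but this does not affect the overall correctness of the approach.
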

\begin{proof}
The first statement in the Calabi--Yau case (where $\sum_j q_j = 1$) assuming $G = \langle J \rangle$ is Proposition 2.21 of \cite{CIR}.  The proof is the same in the general case.  We note a minor typo in the proof; the final sentence should say this equals  $e^{\pi i (N + \sum q_j)}\chi(F, E)$, not $e^{\pi i (N + \sum q_j)}\chi(E, F)$.
The second statement is Property (iii) after Definition 3.6 in \cite{Iri3}.
\end{proof}

\begin{definition}
Assuming that $H^\square$ is spanned by the image of the Chern map $\ch: D^\square \to H^\square$,
the set 
\[ \{ \bs^\square(E)(\bt, z) | E \in D^\square\}\] forms a lattice in $H^\square \otimes P^\square[z, z^{-1}]$.   This is the \emph{integral structure} for $QDM(\square)$.  
\end{definition}
\begin{remark}
Although the assumption that $H^\square$ is spanned by the image of the Chern map will rarely hold for an arbitrary orbifold or LG space, it will turn out to hold in the restricted \emph{ambient} and \emph{narrow} setting.
\end{remark}

In the case where $\square = (\cZ, \op{amb})$, $((w, G), \op{nar})$, or $(\cY, \op{nar})$ is one of the quantum $D$-modules obtained by restricting the state space, we may restrict the integral structure of the larger theory:
\begin{align*}
\{ \bs^{\cZ, \op{amb}}(E)(\bt, z) &| E \in j^*(D(\cX)), j: \cZ \to \cX \} \\
\{ \bs^{(w, G), \op{nar}}(E)(\bt, z) &| E \in i^1_*(D(BG)) \} \\
\{ \bs^{\cY, \op{nar}}(E)(\bt, z) &| E \in D_{\op{c}}(\cY)  \}.
\end{align*}
We note in the latter example that if $E \in D_{\op{c}}(\cY)$ then $\ch(E) \in H^*_{\op{CR}, \op{nar}}(\cY)$ by \cite[Proposition~8.5]{Sh1}.  In each of these restricted theories the analogue of Proposition~\ref{p:pair} holds.

\subsection{The Integral structure for $\cY_{+/-}$}  
Recall the definition of $\cY_-$ and $\cY_+$ \eqref{e:GIT1} from our initial setup.  In both of these non-compact examples, one may define the integral structure a different way; by restricting the derived category to an appropriate subcategory of objects with proper support.  In particular, $\cY_-$ and $\cY_+$ may be viewed as the total space of a vector bundle over a proper base.  Therefore we may define the integral structure for $QDM_{\op{nar}}(\cY_-)$ to be 
\[\{ \bs^{\cY_-}(E)(\bt, z) | E \in D(\cY_-)_{BG}\},\]
and define the integral structure for $QDM_{\op{nar}}(\cY_+)$ to be 
\[\{ \bs^{\cY_+}(E)(\bt, z) | E \in D(\cY_+)_{\PP(G)}\}.\]
In both cases the quantum $D$-module is spanned by these lattices, as the following lemmas show.

\begin{lemma}\label{l:chgen}
The image of the map $\ch \circ i^0_*: D(BG) \to H^*_{\op{CR}}(\cY_-)$ generates a lattice spanning $H^*_{\op{CR}, \op{nar}}(\cY_-)$, consequently the narrow quantum $D$-module of $\cY_-$ is spanned by $ \{ \bs^{\cY_-}(E)(\bt, z) | E \in D(\cY_-)_{BG}\}$.
\end{lemma}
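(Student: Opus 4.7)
The plan is to reduce the first claim to an explicit Chern character computation on line-bundle generators of $D(BG)$, and then use the invertibility of the operators appearing in $\bs^{\cY_-}$ to upgrade this to the statement about the narrow quantum $D$-module.

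First, since $G$ is a finite abelian group, $D(BG)$ is generated as a triangulated category by the line bundles $\cc O_{BG}(\zeta)$ for $\zeta \in \widehat{G}$. So the span of $\ch \circ i^0_*(D(BG))$ in $H^*_{\op{CR}}(\cY_-)$ equals the span of the classes $\ch(i^0_*(\cc O_{BG}(\zeta)))$. To compute these, I would resolve $i^0_*(\cc O_{BG}(\zeta))$ by the Koszul complex $\{0, \taut\} \otimes \cc O_{\cY_-}(\zeta)$, where $\cc O_{\cY_-}(\zeta)$ is the line bundle on $\cY_-$ induced by pulling back $\zeta$ from $BG$. Using \eqref{e:chzerobeta} together with the fact that $\cc O_{\cY_-}(\zeta)$ restricts on the $g$-twisted sector to the line bundle whose orbifold Chern character is $\zeta(g)\ii_g$, the orbifold Chern character should come out to
\[
\ch\bigl(i^0_*(\cc O_{BG}(\zeta))\bigr) \;=\; \bigoplus_{g\in G}\, \zeta(g)\prod_{j=1}^N\!\bigl(1-e^{-2\pi i\, m_j(g)}\bigr)\, \ii_g,
\]
which is supported on narrow sectors because the product vanishes whenever $m_j(g)=0$ for some $j$.

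Next I would argue that as $\zeta$ varies over $\widehat G$, these classes span $H^*_{\op{CR},\op{nar}}(\cY_-)$ over $\CC$. Since the narrow factors $c_g := \prod_j(1-e^{-2\pi i m_j(g)})$ are all nonzero, it suffices to show that the vectors $(\zeta(g))_{g\in G_{\op{nar}}}$, indexed by $\zeta\in\widehat G$, span $\CC^{G_{\op{nar}}}$. This follows because the full character table of $G$ is an invertible $|G|\times|G|$ matrix, so any submatrix obtained by selecting all rows and a subset of columns has full column rank, hence its rows span the column space. This proves the first claim.

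For the \emph{consequently} part, Proposition~\ref{p:strgenBG} implies that every object of $D(\cY_-)_{BG}$ can be built from objects in $i^0_*(D(BG))$ via iterated cones, so the image of $\ch$ on $D(\cY_-)_{BG}$ is precisely the $\CC$-span of the classes computed above, which is all of $H^*_{\op{CR},\op{nar}}(\cY_-)$. By definition
\[
\bs^{\cY_-}(E)(\bt,z) \;=\; \tfrac{1}{(2\pi i)^{\hat c}}\, L^{\cY_-}(\bt,z)\, z^{-\op{Gr}} z^{\rho}\, \hat\Gamma_{\cY_-}\!\bigl((2\pi i)^{\deg_0/2}\, I^*\ch(E)\bigr),
\]
and each of $L^{\cY_-}(\bt,z)$, $z^{-\op{Gr}} z^\rho$, $\hat\Gamma_{\cY_-}$, $(2\pi i)^{\deg_0/2}$, and $I^*$ is invertible (the Gamma factors $\Gamma(1-m_j(g))$ are nonzero for narrow $g$, and by Proposition~\ref{p:narpres} the fundamental solution preserves the narrow subspace). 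Thus the lattice $\{\bs^{\cY_-}(E)(\bt,z) \mid E\in D(\cY_-)_{BG}\}$ spans $QDM_{\op{nar}}(\cY_-)$.

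The main technical point is the Chern-character calculation for $i^0_*(\cc O_{BG}(\zeta))$: one must handle the orbifold Chern character of $\cc O_{\cY_-}(\zeta)$ on the non-untwisted sectors correctly and combine it multiplicatively with \eqref{e:chzerobeta} via the Chen--Ruan product, which is componentwise on twisted sectors in this setup. Everything else is either a standard generation statement or the invertibility of well-known operators.
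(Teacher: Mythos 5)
Your proof is correct and follows essentially the same route as the paper's: both reduce to the Koszul/Chern-character computation of \eqref{e:chzerobeta} twisted by characters of $G$ (equivalently, the projection formula), then use linear independence of characters to span the narrow sectors, and finally invoke invertibility of the operators in $\bs^{\cY_-}$ for the second claim. Your version is merely more explicit about the character-table rank argument and the invertibility of each factor; one small terminological nit is that the relevant product is the ordinary cup product on $H^*(I\cY_-)$, under which the orbifold Chern character is multiplicative, rather than the Chen--Ruan product, though as you note the two coincide componentwise in the situation at hand.
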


\begin{proof}
Because $L^\square$ is invertible, the second statement follows from the first.  Note that the map 
\[ \sum_{g \in G} c_g \ii_g \mapsto \sum_{g \in G_{\op{nar}}}\bigoplus_{g \in G} c_g \cdot \prod_{j=1}^N  \left( 1 - e^{2 \pi i (-m_j(g))} \right) \ii_g\]
surjects onto $\bigoplus_{g \in G_{\op{nar}}} \CC \cdot \ii_g$.  
Let $\{ \rho_i\}_{i \in I}$ denote the set of characters of $G$, each corresponding to a line bundle $\CC_{\rho_i}$ over both $BG$ and $\cY_-$.  Since $G$ is abelian, there are $|G|$ distinct characters.  The set $\{\ch(\CC_{\rho_i})\}_{i \in I}$ is linearly independent and therefore generates $H^*_{\op{CR}}(BG)$.
By \eqref{e:chzerobeta} and the projection formula the first statement follows.
\end{proof}

\begin{lemma}
The image of the map $\ch \circ i^0_*: D(\PP(G)) \to H^*_{\op{CR}}(\cY_+)$ generates a lattice spanning $H^*_{\op{CR}, \op{nar}}(\cY_+)$, consequently the narrow quantum $D$-module of $\cY_+$ is spanned by $ \{ \bs^{\cY_+}(E)(\bt, z) | E \in D(\cY_+)_{\PP(G)}\}$.
\end{lemma}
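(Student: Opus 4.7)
The plan is to mirror the proof of Lemma~\ref{l:chgen}. By the invertibility of the fundamental solution $L^{\cY_+}$, it suffices to prove the first statement: that $\ch \circ i^0_*: D(\PP(G)) \to H^*_{\op{CR}}(\cY_+)$ has image generating a lattice spanning $H^*_{\op{CR}, \op{nar}}(\cY_+)$. The reduction to this first statement proceeds exactly as in Lemma~\ref{l:chgen}.

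Next, I would exploit the Koszul resolution from Section~\ref{s:gp}. Since $\{0, \taut\}$ is the Koszul resolution of $i^0_*(\cc O_{\PP(G)})$ on $\cY_+$, the projection formula yields $i^0_*(E) \simeq \{0, \taut\} \otimes \pi^*(E)$ in $D(\cY_+)$ for any $E \in D(\PP(G))$, hence
\[\ch(i^0_* E) = \ch(\{0, \taut\}) \cup \pi^*(\ch E) \in H^*_{\op{CR}}(\cY_+).\]
The class $\ch(\{0, \taut\})$ is computed as in \eqref{e:chzerobeta} but for the rank-one bundle $\pi^*\cc O_{\PP(G)}(-d)$: on the twisted sector indexed by $g$, it equals $1 - e^{-2\pi i m(g)} e^{dH_g}$, where $m(g) \in [0,1)$ is the weight of $g$ on the fiber of $\cc O_{\PP(G)}(-d)$ and $H_g$ is the restriction of the hyperplane class to $\PP(G)_g$. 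Since line bundles $\cc O_{\PP(G)}(k, \zeta)$ for $k \in \ZZ$ and $\zeta \in \widehat{\bar G}$ have Chern characters generating a lattice spanning $H^*_{\op{CR}}(\PP(G))$ (the standard computation for a weighted projective quotient), $\pi^*(\ch E)$ ranges over a lattice spanning $\pi^*(H^*_{\op{CR}}(\PP(G)))$.

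The main task is then to identify $H^*_{\op{CR}, \op{nar}}(\cY_+)$ sector by sector and match it with the image of the product above. Because $\cY_+$ is the total space of a line bundle over a proper base, each sector $\cY_{+,g}$ is either proper (if $g$ acts nontrivially on the fiber, so that $\cY_{+,g} = \PP(G)_g$ is the zero section of that sector) or is itself the total space of $\cc O_{\PP(G)}(-d)|_{\PP(G)_g}$ (if $g$ acts trivially on the fiber). By the Thom isomorphism, the narrow part on sector $g$ is all of $H^*(\cY_{+,g})$ in the first case, and is $c_1(\cc O_{\PP(G)}(-d)|_{\PP(G)_g}) \cdot H^*(\PP(G)_g) = -dH_g \cdot H^*(\PP(G)_g)$ in the second. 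Matching with the coefficient $1 - e^{-2\pi i m(g)} e^{dH_g}$: in the first case $1 - e^{-2\pi i m(g)}$ is a nonzero constant, so multiplication is invertible and the image is all of $\pi^*(H^*(\PP(G)_g)) \cong H^*(\cY_{+,g})$; in the second case $m(g) = 0$, so the coefficient is $1 - e^{dH_g} = -dH_g \cdot u$ for $u$ a unit, and the image is exactly $-dH_g \cdot H^*(\PP(G)_g)$.

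The main obstacle is verifying this identification of the narrow cohomology with the Thom image on each sector and checking that the product lands in this subspace in a surjective manner; the sector-by-sector Thom analysis (especially handling together both the untwisted sector and the twisted sectors with trivial fiber action) is the technical heart. Once this is done, combining with the generation statement for $\ch: D(\PP(G)) \to H^*_{\op{CR}}(\PP(G))$ yields the lemma.
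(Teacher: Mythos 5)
Your strategy is sound but takes a genuinely different route from the paper on the ``$i^0_*$ side'' of the argument, while leaving a gap on the ``$\PP(G)$ side.''

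On the $i^0_*$ side: the paper simply cites Proposition~2.14 of \cite{Sh1} to identify $H^*_{\op{CR}, \op{nar}}(\cY_+)$ with $i^0_*(H^*_{\op{CR}}(\PP(G)))$, whereas you rederive this directly via the Koszul complex $\{0,\taut\}$, the projection formula, and a sector-by-sector Thom isomorphism. This is a valid and more self-contained alternative: your formula $\ch(\{0,\taut\})|_g = 1-e^{-2\pi i m(g)}e^{dH_g}$, paired with the Thom picture, reproduces the narrow subspace on each sector. One small point worth stating explicitly: when $\PP(G)_g$ is zero-dimensional, $H_g = 0$ in $\CC$-coefficients, so in the $m(g)=0$ case the coefficient $1-e^{dH_g}$ vanishes identically rather than being a unit times $-dH_g$; the narrow subspace on that sector is also zero, so the match still holds, but the ``unit times $-dH_g$'' phrasing is misleading there. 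What this route buys is independence from \cite{Sh1}, at the cost of essentially reproving that reference.

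On the $\PP(G)$ side there is a genuine gap. You assert that the line bundles $\cc O_{\PP(G)}(k,\zeta)$ have Chern characters spanning $H^*_{\op{CR}}(\PP(G))$ ``by the standard computation,'' but this is precisely the content the paper has to work for. The paper invokes Theorem~5.3 of \cite{BH}, which gives an isomorphism $\op{c-ch}\colon K_0(\PP(G))\xrightarrow{\sim}H^*_{\op{CR}}(\PP(G))$ built from a \emph{combinatorial} Chern character, and then verifies ray-by-ray that this combinatorial map agrees with the orbifold Chern character of Definition~\ref{d:orbch} (namely, that on each sector a boundary line bundle $\cc O(D_\rho)$ is sent to $e^{2\pi i f}e^{c_1(\cc O(D_\rho))}$ under both). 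Without establishing this compatibility, you have not shown that the specific Chern map appearing in the definition of $\bs^{\cY_+}$ is surjective onto $H^*_{\op{CR}}(\PP(G))$. You should either carry out that comparison or cite the result; the remark following the lemma in the paper (using \cite[Theorem~4.10]{BH}) gives the explicit generating set, but still rests on the Chern character identification.
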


\begin{proof}
The argument is analogous to that of the previous lemma.  By Proposition 2.14 in \cite{Sh1}, $H^*_{\op{CR}, \op{nar}}(\cY_+) = 
i^0_* (H^*_{\op{CR}}(\PP(G)))$. 
It is left to check that $\ch (D(\PP(G)))$ generates $H^*_{\op{CR}}(\PP(G))$.  By \cite[Theorem~5.3]{BH}, there is a combinatorial Chern character map which yields an isomorphism
\[ \op{c-ch}: K_0( \PP(G)) \xrightarrow{\sim} H^*_{\op{CR}}(\PP(G)).\]
We must check that this map is equal to the orbifold Chern character of Definition~\ref{d:orbch}.

For $\Sigma$ a fan corresponding to a projective toric DM stack, the main result of \cite{BH} implies that $K_0( \cX(\Sigma))$ is generated by toric boundary divisors.  
Let $\rho$ be a ray of $\Sigma$, corresponding to the line bundle $\cc O(D_{\rho})$ on $\cX(\Sigma)$.  Let $v$ be an element of $\op{Box}(\Sigma)$ corresponding to a component ${\cX}_v$ of the inertia stack $I\cX$.  
The isomorphism $\op{c-ch}$, when composed with the projection $H^*_{\op{CR}}(\cX) \to H^*({\cX}_v)$, maps $\log(e^{-2 \pi i f} \cc O( D_\rho))$ to $c_1(\cc O(D_{\rho}))$, where $e^{2 \pi i f}$ is the action of the isotropy group on $\cc O(D_{\rho})$.  Therefore $\cc O(D_{\rho})$ is mapped to 
\[\op{c-ch}(\cc O(D_{\rho}))|_v = e^{2 \pi i f} e^{c_1(\cc O(D_{\rho}))} = \ch (\cc O(D_{\rho}))|_v.\]
This shows that the combinatorial Chern character of \cite{BH} coincides with the orbifold Chern character, and thus $\ch (D(\PP(G)))$ generates $H^*_{\op{CR}}(\PP(G))$.

\end{proof}

\begin{remark}
Recall $\tilde G = \bar G \times \CC^* \supset \bar G \times \langle j \rangle = G$.   
Characters of $\tilde G$ yield line bundles on $\PP(G)$.  Let  $\cc O_{ \PP(G)}( k + \rho_i) := \cc O_{ \PP(G)}( k) \otimes \cc  O_{ \PP(G)}( \rho_i)$ where $\cc O_{ \PP(G)}( k)$ corresponds to a character of weight $k$ on $\CC^*$ and  $\cc  O_{ \PP(G)}( \rho_i)$ corresponds to a character $\rho_i$ of $\hat G$.
By \cite[Theorem~4.10]{BH}, the relation
\[ \prod_{j=1}^N ( \cc O(c_j) - 1) = 0\]
holds in $K(\PP(c_1, \ldots, c_N))$, thus $H^*_{\op{CR}}(\PP(c_1, \ldots, c_N))$ is generated by $\ch( \{\cc O(k)\}_{k=0}^{\sum c_j -1})$.  From this one can check that $H^*_{\op{CR}}( \PP(G))$ is generated by $\ch( \{\cc O(k + \rho_i)\}_{0 \leq k< \sum c_j, \rho_i \in \widehat{\bar G}})$.
\end{remark}
%
%Recall our setup.  The function $w: \CC^N \to \CC$ is a non-degenerate Fermat polynomial, homogeneous of degree $d$ with respect to a positive grading $(c_1, \ldots, c_N)$ on $\CC^N$.  The group $G \subset G_{max}(w)$ is chosen to contain

\section{Local GW/FJRW correspondence and  quantum Serre duality}\label{s:GWFJRW}

\subsection{Local GW/FJRW correspondence}

%
%
%Let us adjust the definition of FJRW invariants by replacing $\mathbb R \pi_* ( \oplus_{j=1}^N \cc L_j)^\vee$ with simply $\mathbb R \pi_* ( \oplus_{j=1}^N \cc L_j)$.  This scales the invariants by $+/- 1$.  Define \note{``after identifying the state space with HH''}
\begin{definition}
Define the narrow state space transformation to be the linear map defined on the generating set by
\[\begin{array}{rrcl}
\Delta_-: & H^*_{\op{CR}, \op{nar}}(\cY_-)
%\op{HH}_{*, nar}(\cY_-, 0) 
&\to & 
\cc H_{\op{nar}}(w, G) \\
%\op{HH}_{*, nar}(\cY_-, w) \\
& \ii_g &\mapsto & \phi_{gj^{-1}}.
\end{array}
\] 
Define \[\bar \Delta_-:= (2 \pi i z)^{ \sum q_j} \cdot \Delta_-:  H^*_{\op{CR}, \op{nar}}(\cY_-)[z, z^{-1}] \to\cc H_{\op{nar}}(w, G)[z, z^{-1}].\]
\end{definition}

\begin{lemma}%\label{l:delcom}

Consider the functor $i^1_* \circ \pi_*$ where $\pi_*: \dabsfact{\cY_-, 0}_{BG} \to \dabsfact{BG, 0}$ and $i^1_*: \dabsfact{BG, 0} \to \dabsfact{\cY_-, w}$.  The induced functor on the narrow state space
%Hochschild homology, when restricted to the narrow part of the state space, $ H^*_{\op{CR}, \op{nar}}(\cY_-)$, 
is given by $\Delta_-$.
In particular, 
\begin{equation}\label{e:deltainduced} 
\Delta_- \circ \op{ch} = \op{ch} \circ i^1_* \circ \pi_* .\end{equation}
\end{lemma}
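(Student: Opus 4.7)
The plan is to reduce the identity to a direct computation on generators. By Proposition~\ref{p:strgenBG}, the triangulated category $D(\cY_-)_{BG}$ is strongly generated by $i^0_*(D(BG))$, and both sides of~\eqref{e:deltainduced} factor through $K$-theory (Chern characters are additive on distinguished triangles, and $i^1_*\circ\pi_*$ is an exact functor). It therefore suffices to verify the equality on objects $E = i^0_*(\cc O_{BG}(\rho))$ for $\rho \in \widehat G$, since $G$ is abelian and finite so these line bundles generate $D(BG)$.

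For such an $E$, the relation $\pi \circ i^0 = \op{id}_{BG}$ gives $\pi_* E = \cc O_{BG}(\rho)$, and hence $i^1_* \pi_* E = i^1_*(\cc O_{BG}(\rho))$. I would then compute both sides using the Koszul (co)resolutions from the previous subsection. The Koszul complex $\{0,\taut\}$ resolves $i^0_*(\cc O_{BG})$ in $D(\cY_-)$; combining the ordinary projection formula $i^0_*(\cc O_{BG}(\rho)) = i^0_*(\cc O_{BG}) \otimes \cc O_{\cY_-}(\rho)$ with~\eqref{e:chzerobeta}, and noting that the restriction of $\cc O_{\cY_-}(\rho)$ to the $g$-twisted sector contributes the scalar $\rho(g)$, one obtains
\[
\ch\!\left(i^0_*(\cc O_{BG}(\rho))\right) = \sum_{g \in G} \rho(g) \prod_{j=1}^N \left(1 - e^{-2\pi i m_j(g)}\right) \ii_g.
\]
This class is automatically supported on narrow sectors because the product vanishes whenever some $m_j(g) = 0$. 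On the factorization side, the earlier identification $\{dw,\taut\} \sim i^1_*(\cc O_{BG})$ in $D(\cY_-, w)$ (deduced from the fact that the cone over the map to $\cc O_{\{x_i=0\}_i}$ is acyclic, as noted in the paper's discussion preceding Proposition~\ref{p:vgit}), together with the projection formula for factorizations (Proposition~\ref{pprojform}) and~\eqref{e:chalphabeta}, yields
\[
\ch\!\left(i^1_*(\cc O_{BG}(\rho))\right) = \sum_{g \in G_{\op{nar}}} \rho(g) \prod_{j=1}^N \left(1 - e^{-2\pi i m_j(g)}\right) \varphi_{gj^{-1}}.
\]
Applying $\Delta_-$ to the first expression sends $\ii_g \mapsto \varphi_{gj^{-1}}$ on narrow sectors, while the non-narrow contributions, which already vanish, are killed; this reproduces the second expression and establishes~\eqref{e:deltainduced}.

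The main subtlety is uniform bookkeeping of the twist by $\rho$: one must check that both projection formulas (ordinary on $\cY_-$ and the factorization version of Proposition~\ref{pprojform}) produce the same scalar $\rho(g)$ in the $g$-sector, and that the Koszul-model identifications of $i^0_*(\cc O_{BG})$ and $i^1_*(\cc O_{BG})$ are compatible with these twists. Once this is arranged, matching the two Chern character formulas is sector-by-sector immediate from the already-established identities~\eqref{e:chzerobeta} and~\eqref{e:chalphabeta}.
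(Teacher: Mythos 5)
Your proof is correct and takes essentially the same route as the paper: both reduce to checking on the characters $\cc O_{BG}(\rho)$ (you cite Proposition~\ref{p:strgenBG} directly plus additivity of $\ch$, where the paper appeals to Lemma~\ref{l:chgen}, but these reductions are interchangeable here), and both then compare $\ch$ of the twisted Koszul complex $\{0,\beta\}$ with $\ch$ of the twisted Koszul factorization $\{\alpha,\beta\}$ sector by sector via~\eqref{e:chzerobeta} and~\eqref{e:chalphabeta}. The ``subtlety'' you flag about the $\rho$-twist is handled in the paper exactly the same way you propose, by the projection formula and the multiplicativity of $\ch$ contributing the scalar $\rho(g)$ in the $g$-sector.
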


\begin{proof}
%The vector space 
%%$\op{HH}_{*, nar}(BG)$ is spanned by $\op{ch}\left( \dabsfact{BG}\right)$, so
% $ H^*_{\op{CR}, \op{nar}}(\cY_-)$
%%$\op{HH}_{*, nar}(BG)$ 
%is spanned by the image of $\op{ch}\left( \dabsfact{BG}\right)$, as can be checked directly by  \eqref{e:chzerobeta} and the projection formula. \note{this is now proven above}
%%the first statement therefore follows from \eqref{e:deltainduced}.  On the narrow part, t
By Lemma~\ref{l:chgen} it suffices to show that 
\begin{equation}\label{e:deltacherno}\Delta_- \circ \op{ch} \circ i^0_*= \op{ch} \circ i^1_* \circ \pi_*\circ i^0_* = \op{ch} \circ i^1_*.\end{equation}
By \eqref{e:chzerobeta} and \eqref{e:chalphabeta}, we see directly that 
\[\Delta_- \circ \op{ch} \circ i^0_*(\cc O_{BG}) = \Delta_- \circ \op{ch} (\{0, \beta\}) = \op{ch} ( \{\alpha, \beta\})
= \op{ch} \circ i^1_* (\cc O_{BG}).\]
Another direct calculation as in \eqref{e:chzerobeta} and \eqref{e:chalphabeta} yields
\[\Delta_- \circ \op{ch} (V_\xi \otimes \{0, \beta\}) = \op{ch} ( V_\xi \otimes \{\alpha, \beta\})\]
for any character $\xi$ of $G$.
Since $G$ is abelian, it follows immediately that
\eqref{e:deltacherno}  holds on any $G$-representation $V_{\rho}$, and so holds on all of  $\dabsfact{BG}$.

%
%The vector space $\op{HH}_{*, nar}(\cY_-, 0)$ is defined as the image of $i_* \left( \op{HH}_*(BG) \right)$
%
%$\op{ch} \circ i_* \left( \dabsfact{BG} \right)$
\end{proof}

\begin{proposition}\label{p:Lcom-}
The map $\Delta_-$ commutes with $L$ after a change of variables:
\[ \Delta_- \circ L^{\cY_-}(\bt, z)  =  L^{(w, G)}(f(\bt), z) \circ \Delta_-,\]
where
\begin{equation}\label{e:f} f(\bt) = \Delta_- \left( \sum_{h \in nar} \langle \langle \ii_j, \ii_h \rangle \rangle^{\cY_-}(\bt) \ii_{h^{-1}} \right).\end{equation}
\end{proposition}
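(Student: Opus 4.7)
The strategy is to reduce the asserted commutation to an equality of genus-zero correlator generating functions, then invoke the local GW/FJRW correspondence of \cite{LPS} term-by-term, with the change of variables $f$ supplying the requisite mirror map.

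First I would unpack both sides via the defining formula
\[
L^\square(\bt, z)\alpha \;=\; \alpha + \sum_{i\in I}\langle\!\langle \tfrac{\alpha}{z-\psi}, T_i\rangle\!\rangle^\square(\bt)\,T^i.
\]
By Proposition~\ref{p:narpres} and its FJRW analogue in Section~\ref{ss:spec}, together with the fact that narrow and broad sectors are orthogonal under the two pairings, the sums may be restricted to narrow basis elements $\{\ii_g\}_{g\in G_{\op{nar}}}$ on the $\cY_-$-side and $\{\phi_{gj^{-1}}\}_{g\in G_{\op{nar}}}$ on the $(w,G)$-side. Since $\Delta_-$ bijects these bases and, by Definition~\ref{FJRW pairing} adjusted as in Remark~\ref{factor mod}, intertwines the two narrow pairings, the proposition reduces to the scalar identity
\[
\langle\!\langle \tfrac{\ii_{g'}}{z-\psi}, \ii_g\rangle\!\rangle^{\cY_-}(\bt) \;=\; \pm\,\langle\!\langle \tfrac{\phi_{g'j^{-1}}}{z-\psi}, \phi_{gj^{-1}}\rangle\!\rangle^{(w,G)}\!\bigl(f(\bt)\bigr)
\]
for every pair of narrow elements $g, g'\in G$.

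Next I would apply the local GW/FJRW correspondence of \cite{LPS}, which, for narrow insertions $\phi_{g_ij^{-1}}$, expresses each genus-zero FJRW descendent correlator as (up to the sign in Remark~\ref{factor mod}) the $\cY_-$-correlator obtained by adjoining a distinguished insertion of $\ii_j$. Geometrically, this records the degree-balancing condition $\cc L_j^{\otimes d}\cong\omlog$ imposed by a $W$-structure, which is precisely the difference between orbifold stable maps to $[V/G]$ and $W$-structures on the source curve. Substituting this identity into every term of the Taylor expansion of the right-hand side above, and then substituting the defining formula \eqref{e:f} for $f(\bt)$, converts the right-hand side into a double sum of $\cY_-$-correlators bearing one "primary" $\ii_j$-insertion (from LPS), $k$ further $\ii_j$-insertions (one from each copy of $f(\bt)$), and the remaining insertions $\ii_{g'}\psi^b$, $\ii_g$, and arbitrarily many $\bt$'s nested inside the generating series $\langle\!\langle \ii_j,\cdot\rangle\!\rangle^{\cY_-}(\bt)$ that defines $f$.

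The principal obstacle is to verify that this double sum collapses back to the left-hand side
\[
\sum_{n\geq 0}\frac{1}{n!}\,\langle \ii_{g'}\psi^b, \ii_g, \bt, \ldots, \bt\rangle^{\cY_-}_{0,n+2},
\]
i.e.\ that $f$ is exactly the mirror map that cancels the extra $\ii_j$-insertions produced by LPS. A clean approach is to split this into two steps: (i) use the tree-level splitting axiom (WDVV) on the $\cY_-$-side to factor off the subtrees of the dual graph carrying the auxiliary $\ii_j$-insertions coming from the copies of $f(\bt)$, thereby reducing all but one of these $\ii_j$-marked points to contracted rational components; (ii) observe that the exponential formula $\sum_k (1/k!)(\cdots)^k$ coming from $k$ copies of $f(\bt)$, combined with the generating function identity $\langle\!\langle\ii_j,\ii_h\rangle\!\rangle^{\cY_-}(\bt)=\sum_m (1/m!)\langle\ii_j,\ii_h,\bt,\ldots,\bt\rangle$, is exactly the combinatorial reorganization that collapses those contracted components. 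This is the standard mirror-theorem bookkeeping, analogous to the arguments in \cite{CIR, IMM}, and once it is in place, comparing coefficients in $z^{-1}$ and in $\bt$ yields the asserted commutation.
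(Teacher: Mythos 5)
The paper proves this via the Lagrangian cone formalism, and your proposal mischaracterizes what the LPS result actually provides. Theorem~5.5 of \cite{LPS} (the ``MLK correspondence'') is an identification of Givental's overruled Lagrangian cones, $\Delta_-(\sL^{0,\bs}) = \sL^{1,\bs}$, not a correlator-by-correlator identity. A cone identification tells you that the image of a point on one cone lies \emph{somewhere} on the other cone, and you must then locate it. Your step asserting that LPS ``expresses each genus-zero FJRW descendent correlator as the $\cY_-$-correlator obtained by adjoining a distinguished insertion of $\ii_j$'' is not what LPS proves, and I do not believe it holds as stated: the twistings on the two sides ($e_{\CC^*}^{-1}$ of $\oplus R\pi_*\cc L_j$ versus the FJRW virtual class) do not match up term-by-term under the moduli comparison, which is precisely why LPS had to pass through the cone formalism in the first place.

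The paper's actual argument works as follows: since $z\partial_{\phi_j}J^{0,\bs}(\bt,-z)$ lies on $\sL^{0,\bs}$, the cone identification puts $\Delta_-(z\partial_{\phi_j}J^{0,\bs})$ on $\sL^{1,\bs}$. The overruled structure (equations~\eqref{e:Jg0}--\eqref{e:Jtan}) then forces this to lie in $z\cc T_{J^{1,\bs}(\hat\bt,-z)}\sL^{1,\bs}$ for some point $\hat\bt$, and comparing coefficients of $z^{\geq 0}$ \emph{identifies} $\hat\bt = \hat f(\bt)$ — the mirror map appears as output of this positional analysis, it is not verified by a combinatorial collapse. The $\ii_j$ (i.e.\ the derivative $\partial_{\phi_j}$) plays the role of fixing the marked point on the cone, not of providing a per-correlator dictionary. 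Once the tangent spaces are identified, the equality of derivatives of $J$-functions follows, and the proposition is obtained by specializing $\bs$ to the equivariant Euler class and taking non-equivariant limits (Lemmas~\ref{l:reltotwisted} and~\ref{l:specFJRW}).

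Your reduction of the $L$-operator identity to a scalar identity between two-point functions, using that $\Delta_-$ intertwines the narrow pairings, is correct and would be a valid intermediate target. But the route you propose to establish that scalar identity — assuming a term-by-term LPS correlator correspondence and then collapsing the extra $\ii_j$-insertions via WDVV and exponential bookkeeping — rests on an unproved (and, I believe, false-as-stated) input. To repair the proof you would need to replace step (4) onward with the cone/$J$-function analysis above; the combinatorial collapse in your step (ii) is not a substitute for extracting the mirror map from the tangent space identification.
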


\begin{proof}
This result follows from the \emph{MLK correspondence} of \cite{LPS}: 
by Theorem~5.5 of \cite{LPS}, $\Delta_-$ identifies the  Lagrangian cone $\cc L^{0, \bs}$ with $\cc L^{1, \bs}$.  Where $\cc L^{c, \bs}$ is the (equivariant) $\bs$-twisted Lagrangian cone of Definition~\ref{cone}.  

%Therefore $\Delta_- (\partial_{\phi_j} J^{0, \bs}(\bt, -z))$ lies in a tangent space to $\cc L^{1, \bs}$.  
 Since $z \partial_{\phi_j} J^{0, \bs}(\bt, -z)$ lies on $\sL^{0, \bs}$, \eqref{e:Jg0} and  \eqref{e:Jg} imply that
$\Delta_- (z \partial_{\phi_j} J^{0, \bs}(\bt, -z))$ is a $\CC[z]$-linear combination of derivatives of $J^{1, \bs}(\hat \bt, -z)$ at some point $\hat \bt$.  Analyzing the coefficients of non-negative powers of z in $\Delta_- (\partial_{\phi_j} J^{0, \bs}(\bt, -z))$, we see that $\Delta_- (z \partial_{\phi_j} J^{0, \bs}(\bt, -z))$ must equal $J^{1, \bs}(\hat f(\bt), -z)$, where
\[\hat f(\bt) = \Delta_- \left( \sum_{h} \langle \langle \ii_j, \ii_h \rangle \rangle^{0, \bs}(\bt) \ii_{h^{-1}} \right).\]  Therefore
\[\Delta_- ( \cc T_{J^{0, \bs}(\bt, -z)} \cc L^{0, \bs}) = \cc T_{J^{1, \bs}(\hat f(\bt), -z)} \cc L^{1, \bs},\]
and so by \eqref{e:Jtan} $\Delta_-(\partial_{\phi_g}J^{0, \bs}(\bt, -z))$ is a $\CC[z]$-linear combination of derivatives of $ J^{1, \bs}(\hat \bt, -z)$ evaluated at $\hat \bt = \hat f(\bt)$. Comparing $z^0$-coefficients, we see
\begin{equation}\label{e:deltaJtwisted}
\Delta_- (\partial_{\phi_g} J^{0, \bs}(\bt, -z)) = \partial_{\phi_{g j^{-1}}} J^{1, \bs}(\hat \bt, -z)|_{\hat \bt = \hat f(\bt)}.\end{equation}

We specialize \eqref{e:deltaJtwisted} to $\bs$ as in \eqref{e:spec}.  By Remark~\ref{r:twistedGW} the left hand side becomes the expression $\Delta_- (\partial_{\ii_g} J^{\cY_-, e^{-1}_{\CC^*}}(\bt, -z))$.
Note that every Gromov--Witten invariant appearing in the sum contains an insertion of $\ii_g$.  By Lemma~\ref{l:reltotwisted} there is a well-defined non-equivariant limit, supported on the span of the narrow sectors.  The left hand side becomes
\[\Delta_- (\partial_{\ii_g} J^{\cY_-}(\bt, -z)).\] Note finally that the change of variables $\hat \bt = \hat f(\bt)$ also has a well-defined non-equivariant limit to $\bt' = f(\bt)$.

Applying Lemma~\ref{l:specFJRW} to the $(1, \bs)$-twisted invariants, we see that the right hand side specializes to $\partial_{\phi_{g j^{-1}}} J^{(w, G)}(\bt', -z)|_{\bt' = f(\bt)}$.
We arrive at
%When $g$ is narrow, the limit exists and the left hand side becomes $\Delta_- (\partial_{\ii_j} J^{\cY_-}(t, -z))$
%
%
%Actually do twisted theory and then ``use'' lemma 5.10, but with prop 2.8 of chiodo-iritani-ruan (which generalizes to our groups, and uses property (10) of FJR), which fixes proof of 5.10...  Must also show that if $t$ is Narrow then so is $f(t)$ in non-equivariant limit.
%
%
%
%%By Theorem~5.5 and Lemma~5.10 of \cite{LPS}, $\Delta_-$ identifies the tangent spaces of $\cc L^{\cY_-}$ with those of $\cc L^{(w, G)}$.  
%%THIS IS FALSE, ONLY WHEN THERE IS A NARROW INSERTION
%%
%%Thus $\Delta_- (\partial_{\ii_j} J^{\cY_-}(t, -z))$ lies in a tangent space to $\cc L^{(w, G)}$.  Thus 
%%$\Delta_- (\partial_{\ii_j} J^{\cY_-}(t, -z))$ is a $\CC[z]$-linear combination of derivatives of $J^{(w, G)}(t', -z)$ at some point $t'$.  Analyzing the non-negative coefficients of $\Delta_- (\partial_{\ii_j} J^{\cY_-}(t, -z))$, we see that $\Delta_- (\partial_{\ii_j} J^{\cY_-}(t, -z))$ must equal $J^{(w, G)}(f(t), -z)$.  It follows that 
%%\[\Delta_- ( T_t \cc L^{\cY_-}) = T_{f(t)} \cc L^{(w,G)},\]
%%and therefore $\Delta_-( \partial_{\ii_g}J^{\cY_-}(t, -z))$ is a $\CC[z]$-linear combination of derivatives of $J^{(w, G)}(t', -z)$ evaluated at $t' = f(t)$. Comparing $z^1$-coefficients, it follows that 
\begin{equation}\label{e:deltaJ}
\Delta_- (\partial_{\ii_g} J^{\cY_-}(\bt, -z)) = \partial_{\phi_{g j^{-1}}} J^{(w, G)}(\bt', -z)|_{\bt' = f(\bt)}.\end{equation}

By 
\eqref{e:SL},
$L^\square(\bt, z)^{-1} \alpha = L^\square(\bt, -z)^T \alpha = \partial_\alpha J^\square(\bt, z)$.
 %$(L^{\square}(\bt, z))^{-1}(\alpha) = \partial_\alpha J^{\square}(\bt, z)$.  
 Equation \eqref{e:deltaJ} can be re-written as 
\begin{align*}\Delta_- \left((L^{\cY_-}(\bt, z))^{-1} (\ii_g)\right) &= (L^{(w, G)}(f(\bt), z))^{-1} (\phi_{g j^{-1}}) 
\\ &= (L^{(w, G)}(f(\bt), z))^{-1} (\Delta_- (\ii_g)).
\end{align*}
The conclusion follows.
\end{proof}

\begin{theorem}\label{t:MLK}
The map $\bar \Delta_-$ identifies the quantum $D$-module $QDM_{\op{nar}}(\cY_-)$ with $f^* \left( QDM_{\op{nar}}(w, G) \right)$ (up to multiplication by $e^{\pi i \sum q_j}$ in the pairing).  Furthermore it is compatible with the integral structure and the functor $i^1_* \circ \pi_*$, i.e., the following diagram commutes;
\begin{equation}\label{e:sident}
\begin{tikzcd}
D(\cY_-)_{BG} \ar[r, " i^1_* \circ \pi_*"] \ar[d, "s^{\cY_-, \op{nar}}"] &  i^1_*(D(BG)) \ar[d, "s^{w, G, nar}"]\\
QDM_{\op{nar}}(\cY_-) \ar[r, "\bar \Delta_-"] & QDM_{\op{nar}}(w, G).
\end{tikzcd}
\end{equation}
%\begin{equation}\label{e:sident} \bar \Delta_- \circ \bs^{\cY_-, \op{nar}} (E)= \bs^{(w,G), nar} \circ i^1_* \circ \pi_*(E)\end{equation}
%for all $E \in D(\cY_-)_{BG}$.
\end{theorem}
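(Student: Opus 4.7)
The statement has three parts: that $\bar\Delta_-$ identifies the two quantum $D$-modules after the change of variables $f$; that the pairings match up to the factor $e^{\pi i \sum q_j}$; and that the diagram \eqref{e:sident} relating integral structures commutes. The plan is to take the flatness/solution theory as essentially given by Proposition~\ref{p:Lcom-}, to extract the pairing claim from the explicit form of $\Delta_-$ on narrow sectors, and then to verify the integral-structure compatibility by unwinding the definition of $\bs^\square$ and tracking how the various grading and twist factors transform.

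For the identification of the quantum $D$-modules, the intertwining $\Delta_- \circ L^{\cY_-}(\bt, z) = L^{(w,G)}(f(\bt), z) \circ \Delta_-$ of Proposition~\ref{p:Lcom-} immediately gives $\Delta_- \circ \nabla_i^{\cY_-} = \nabla_i^{(w,G)} \circ \Delta_-$ under $\bt \mapsto f(\bt)$; the additional scalar factor $(2\pi i z)^{\sum q_j}$ in $\bar\Delta_-$ commutes with $\nabla_i$ because it depends only on $z$, and a short direct computation shows it absorbs the discrepancy in the $z$-direction coming from $\op{Gr}$ and the Euler field $\cc E$ (here $\rho^{\cY_-} = c_1(T\cY_-) = 0$ in cohomology since $\cY_-$ deformation retracts to $BG$, and $\rho^{(w,G)} = 0$ by definition). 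For the pairing, on narrow sectors $\Delta_-$ sends $\ii_g \mapsto \varphi_{gj^{-1}}$ and $\langle \ii_g, \ii_{g^{-1}}\rangle^{\cY_-} = 1/|G| = \langle \varphi_{gj^{-1}}, \varphi_{g^{-1}j^{-1}}\rangle^{(w,G)}$, so $\Delta_-$ preserves the pairing at $z$-degree zero; the $z$-sesquilinearity of $S^\square$ together with the central charge difference $\hat c^{\cY_-} - \hat c^{(w,G)} = 2\sum q_j$ and the factor $(2\pi i z)^{\sum q_j}$ account for the remaining powers of $2\pi i z$, while the sign $e^{\pi i \sum q_j}$ comes from the action of $I^*$ relating the GW and FJRW conventions.

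For the integral-structure diagram, I would unwind both sides of
\[
\bar\Delta_- \circ \bs^{\cY_-,\op{nar}}(E)(\bt,z) \;=\; \bs^{(w,G),\op{nar}}(i^1_* \pi_* E)(f(\bt),z)
\]
using the definition of $\bs^\square$. On the left, I commute $\Delta_-$ past $L^{\cY_-}$ using Proposition~\ref{p:Lcom-} (incurring the change of variables $f$), past $z^{-\op{Gr}}$, and into the Gamma-class argument. On the right, I use \eqref{e:deltainduced} to identify $\ch(i^1_* \pi_* E) = \Delta_-(\ch E)$. The identity then reduces to four pointwise checks on each narrow sector indexed by $g \in G_{\op{nar}}$: (i) the Gamma classes agree, $\hat\Gamma_{\cY_-}|_g = \prod_j \Gamma(1 - m_j(g)) = \hat\Gamma_{(w,G)}|_g$, because the tangent bundle of $\cY_-$ splits into characters whose Chern roots vanish on $BG$; (ii) the grading operators satisfy $\Delta_- \circ z^{-\op{Gr}^{\cY_-}} = z^{-\sum q_j} z^{-\op{Gr}^{(w,G)}} \circ \Delta_-$, since $\op{Gr}(\ii_g) = \sum_j m_j(g)$ while $\op{Gr}(\varphi_{gj^{-1}}) = \sum_j m_j(g) - \sum q_j$; (iii) the unshifted-degree twist $(2\pi i)^{\deg_0/2}$ is trivial on $H^*_{\op{CR},\op{nar}}(\cY_-)$ but contributes $(2\pi i)^{-\sum q_j}$ on narrow FJRW classes; and (iv) the involutions $I^*$ on either side are compatible with $\Delta_-$, which on the narrow part is a scalar check. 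Collecting all the factors of $2\pi i$ and $z$ -- namely $(2\pi i z)^{\sum q_j}$ from $\bar\Delta_-$, $z^{-\sum q_j}$ from (ii), $(2\pi i)^{-\sum q_j}$ from (iii), and $(2\pi i)^{-\hat c^{\cY_-}}/(2\pi i)^{-\hat c^{(w,G)}} = (2\pi i)^{-2\sum q_j}$ -- one checks that everything telescopes to $1$.

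The dynamical content is concentrated in Proposition~\ref{p:Lcom-} and in the Chern-character equation \eqref{e:deltainduced}; the main obstacle is therefore bookkeeping. The most delicate part is ensuring that the change of variables $f$ produced by Proposition~\ref{p:Lcom-} is compatible with the $\bt$-dependence on both sides of \eqref{e:sident}, which requires checking that the pull-back $f^*(QDM_{\op{nar}}(w,G))$ is well-defined as a power-series module and that no further correction to $f$ is needed; this follows because $f$ has a well-defined non-equivariant limit by Lemma~\ref{l:reltotwisted} and maps the relevant formal neighborhood into itself.
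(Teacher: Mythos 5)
Your proof is correct and uses the same ingredients as the paper's, namely Proposition~\ref{p:Lcom-}, the Chern-character identity \eqref{e:deltainduced}, the direct pairing check on narrow sectors, and the explicit bookkeeping of Gamma classes, grading operators, and powers of $2\pi i z$. The only structural difference is that the paper deduces the quantum $D$-module identification from the commutativity of the integral-structure diagram \eqref{e:sident} (using that the flat sections $\bs^\square$ generate the narrow quantum $D$-modules), whereas you establish the connection intertwining directly from Proposition~\ref{p:Lcom-} and then verify the diagram as a separate step; both routes are valid, and your explicit verification that the Gamma, $z^{-\op{Gr}}$, $(2\pi i)^{\deg_0/2}$, and $(2\pi i)^{-\hat c}$ factors telescope to one is precisely the content the paper compresses into the remark that the factor of $(2\pi i z)^{\sum q_j}$ comes from the difference in $\hat c$ and $\op{Gr}$.
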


\begin{proof}
A simple check shows that $\langle \Delta_-(\alpha), \Delta_-(\beta) \rangle^{(w, G)}_{\op{nar}} = \langle \alpha, \beta \rangle^{\cY_-}_{\op{nar}}$.  It follows immediately that 
\[ S^{(w, G)}_{\op{nar}}(\bar \Delta_-(\alpha), \bar \Delta_-(\beta)) = e^{\pi i \sum q_j} S^{\cY_-}_{\op{nar}}(\alpha, \beta).\]
Because $s^\square$ generates the narrow quantum $D$-module for $\square$ equal to $\cY_-$ and $(w,G)$ respectively, the full identification of quantum $D$-modules will follow from \eqref{e:sident}.  %By Proposition~\ref{p:strgenBG}, it suffices to check \eqref{e:sident} in the case $E = i^0_*(V_\rho)$.  
By Proposition~\ref{p:Lcom-} and \eqref{e:deltainduced}, we see that 
\[\Delta_- \circ s^{\cY_-, \op{nar}}(\bt, z) (E) = \frac{1}{(2 \pi i z)^{\sum q_j}} s^{(w, G), \op{nar}}(f(\bt),z) \circ i^1_* \circ \pi_*( E),\]
where the factor of $2 \pi i z$ arises from the difference in $\hat c $ and $\op{Gr}$ between the respective theories.  The conclusion follows.
\end{proof}

\subsection{Quantum Serre duality}
In this section we state an analogous result as in the previous section, but replacing $\cY_-$ by $\cY_+$.

Quantum Serre duality is a phenomenon taking many forms.  Originally proposed by Givental in \cite{G1} and later generalized in \cite{CG}, it relates different twisted invariants (as defined in Section~\ref{ss:twisty2}).  Two of the most crucial applications of twisted invariants are computing local Gromov--Witten invariants of the total space of a vector bundle in terms of the base and in calculating invariants of complete intersections in terms of the ambient space.  A particular case of the phenomenon relates these two invariants.  In \cite{IMM}, Iritani--Mann--Mignon reframed  quantum Serre duality in terms of quantum D-modules and the correspondence was shown to be compatible with Iritani's integral structures in a precise sense. 

In this section we state a  variation on the $D$-module interpretation of quantum Serre duality from \cite{IMM}, and show how this form of the correspondence is induced by a derived functor analogous to the previous section.  For more details on this formulation of quantum Serre duality in a general context, see \cite{Sh1}.

First we describe the state space isomorphism.  
Assume in what follows that $\cc O_{\PP(G)}(d)$ is convex.
Recall the following diagram.
\[
\begin{tikzcd}
 & \cY_+ \ar[d, "\pi"] \\
\cZ \ar[r, "j"] & \PP(G). 
\end{tikzcd}
\]
There is a well-defined pushforward map $\pi^c_*: H^*_{\op{CR}, \op{c}}(\cY_+) \to H^*_{\op{CR}}(\PP(G))$, therefore, for $\alpha \in H^*_{\op{CR}, \op{nar}}(\cY_+)$, we may obtain a pushforward to $H^*_{\op{CR}}(\PP(G))$ by  lifting $\alpha$ to $\tilde \alpha \in H^*_{\op{CR}, \op{c}}(\cY_+)$ and pushing forward via $\pi^c_*$.  The class $\pi^c_*(\tilde \alpha)$ is well-defined up to an element of the form $\pi^c_*(\kappa)$ where \[\kappa \in \ker(\phi: H^*_{\op{CR}, \op{c}}(\cY_+) \to H^*_{\op{CR}}(\cY_+)).\]  By Lemma 6.10 of \cite{Sh1}, $j^*(\pi^c_*(\kappa)) = 0$ for all $\kappa \in \ker(\phi)$.  Therefore, for elements $\alpha \in H^*_{\op{CR}, \op{nar}}(\cY_+)$, the composition $j^* \circ \pi_*$ is well-defined.
\begin{definition} Define the transformation $\Delta_+: H^*_{\op{CR}, \op{nar}}(\cY_+) \to H^*_{\op{amb}}(\cZ)$ 
to be the composition \[\Delta_+ := j^* \circ \pi_*.\]
%as the $\CC$-linear extension of the map
%\begin{align*}
%\tilde \ii_g H^k \mapsto \tilde \ii_g H^k/(-dH) = -\frac{1}{d} \tilde \ii_g H^{k-1}
%\end{align*}
%(where we are denoting $j^*(-\frac{1}{d} \tilde \ii_g H^{k-1})$ simply by $-\frac{1}{d} \tilde \ii_g H^{k-1}$).
%%where by abuse of notation we denote $j^* \circ \pi^*( \alpha)$ simply by $\alpha$.
Define \[\bar \Delta_+ := 2 \pi i z \Delta_+: H^*_{\op{CR}, \op{nar}}(\cY_+)[z, z^{-1}] \to H^*_{\op{amb}}(\cZ)[z, z^{-1}].\]
\end{definition}

An equivariant basis for the Chen--Ruan cohomology of $I\PP(G)$ is given by 
\[ \bigcup_{g \in G} \left\{ \tilde \ii_g, \tilde \ii_g H, \ldots, \tilde \ii_g H^{(\dim((\CC^N)^g) - 1)}\right\},
\]
where $\tilde \ii_g$ is the fundamental class of $\PP(G)_g$ and $\tilde \ii_g H^k$ denotes the pullback of the $k$th power of the hyperplane class from the course space of $\PP(G)_g$.  Here we use the convention that $\ii_g$ is zero if $\PP(G)_g$ is empty (i.e., if the action of $g$ on $\CC^N$ fixes only the origin).
Identifying these classes with their pullback to $I\cY_+$ via $\pi^*$ yields a basis for $H^*_{\op{CR}}(\cY_+)$.  %By abuse of notation we will also identify these classes 
With this basis, the map $\Delta_+$ may be written as
\begin{align*}
\Delta_+: H^*_{\op{CR}, \op{nar}}(\cY_+) &\to H^*_{\op{CR}}(\cZ) \\
\sum_{g \in G, k \geq 1} c^g_k \tilde \ii_g H^k &\mapsto 
%\tilde \ii_g H^k/(-dH) =
 -\frac{1}{d} \sum_{g \in G, k \geq 1} c^g_k j^*(\tilde \ii_g H^{k-1}).
\end{align*}
Note that classes of the form $j^*(\tilde \ii_g H^{k-1})$ span $H^*_{\op{CR}, \op{amb}}(\cZ)$, with \[j^*(\tilde \ii_g H^{(\dim((\CC^N)^g) - 1)}) = 0\] for each $g \in G$.  
Under assumption~\ref{a:ass2}, one can check that $\Delta_+$ is an isomorphism \cite[Lemma~6.11]{Sh1}.

We will show that $\bar \Delta_+$ gives an isomorphism of quantum $D$-modules and is compatible with integral structure and the functor 
\[j^* \circ \pi_*: D(\cY_+)_{\PP(G)} \to D(\cZ).\]
We have the following orbifold Grothendieck--Riemann--Roch statement.  %A proof can be found in \cite{Sh1}.
\begin{lemma}\cite[Lemma~6.4]{Sh1}\label{l:ch+}
Consider the functor \[j^* \circ \pi_*: D(\cY_+)_{\PP(G)} \to D(\cZ)\] and the map on narrow state spaces 
\[\Delta_+ (\op{Td}(\cc O_{\cY_+}(-d)) \cup - ): H^*_{\op{CR}, \op{nar}}(\cY_+) \to H^*_{\op{CR}, \op{amb}}(\cZ).\]
%\[ (j^* \circ \pi_*)_*: \alpha \mapsto \frac{\op{Td}(\cc O_{\cY_+}(-d))}{-dH}\alpha, % = \frac{\alpha}{1 - e^{dH}},
%\]
These maps are related via the Chern character, i.e.
\begin{equation}\label{e:chcomm}\Delta_+ \left(\op{Td}(\cc O_{\cY_+}(-d)) \cup  \op{ch} (F)\right)= \op{ch} \circ j^* \circ \pi_*(F)\end{equation}
for all $F \in D(\cY_+)_{\PP(G)}$.
\end{lemma}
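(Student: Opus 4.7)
The plan is to deduce the identity from an orbifold Grothendieck--Riemann--Roch computation for the line bundle projection $\pi: \cY_+ \to \PP(G)$, followed by pullback along $j$.

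First I would note that since $F \in D(\cY_+)_{\PP(G)}$ has proper support along the zero section, the derived pushforward $\pi_* F$ is a bounded complex of coherent sheaves on the proper stack $\PP(G)$, so $\ch(\pi_* F) \in H^*_{\op{CR}}(\PP(G))$ is well-defined. The right-hand side of \eqref{e:chcomm} therefore equals $j^*(\ch(\pi_* F))$, since the orbifold Chern character commutes with pullback along $j$.

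Next I would apply the orbifold version of Grothendieck--Riemann--Roch (T\"oen) to $\pi$, in a form adapted to the compactly supported setting, to obtain
\[\ch(\pi_* F) \;=\; \pi^c_*\bigl(\ch(F) \cup \op{Td}(T_\pi)\bigr),\]
where $\op{Td}$ is the orbifold Todd class defined via the eigenbundle decomposition of $T_\pi$ on each inertia component, in conventions compatible with the orbifold Chern character of Definition~\ref{d:orbch}. Since $\pi$ is the projection from the total space of $\cc O_{\PP(G)}(-d)$, its relative tangent bundle is $T_\pi = \pi^*\cc O_{\PP(G)}(-d) = \cc O_{\cY_+}(-d)$, matching the class appearing in the statement. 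Pulling back by $j$ and invoking $\Delta_+ = j^* \circ \pi^c_*$ then gives \eqref{e:chcomm}.

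I expect the main obstacle to be two-fold: first, justifying orbifold GRR in the setting of a non-proper morphism acting on objects with proper support, which is handled either by restricting to a relatively compact open neighborhood of the support of $F$ on which $\pi$ is proper (so that classical T\"oen GRR applies), or by appealing to the companion paper \cite{Sh1}; second, checking that the orbifold Todd class produced by T\"oen's formulation agrees with the class $\op{Td}(\cc O_{\cY_+}(-d))$ as written in the statement, which is a bookkeeping check on conventions matching Definition~\ref{d:orbch}. An alternative strategy sidestepping both issues is to verify the identity on a set of strong generators for $D(\cY_+)_{\PP(G)}$ arising from $i^0_*(D(\PP(G)))$ (Proposition~\ref{p:strgen2}), on which both sides of \eqref{e:chcomm} can be computed directly via Koszul resolutions in the spirit of the calculation leading to \eqref{e:chzerobeta}, reducing the verification to eigenbundle-weight arithmetic on each twisted sector.
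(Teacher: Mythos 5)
Your proposal is correct and matches the route the paper intends: the text immediately preceding the lemma labels it an ``orbifold Grothendieck--Riemann--Roch statement,'' and the content is exactly the orbifold GRR for the line-bundle projection $\pi$ (applied to compactly supported objects), combined with $T_\pi = \cc O_{\cY_+}(-d)$, the projection formula, and the compatibility of the orbifold Chern character with $j^*$. The only point worth sharpening is that GRR cannot literally be invoked for the non-proper morphism $\pi$ on a compact neighborhood; the clean justification is to run orbifold GRR for the zero-section embedding $i^0$ (whose normal bundle is $\cc O_{\PP(G)}(-d)$) and then use $\pi^c_* \circ i^0_* = \mathrm{id}$ together with the projection formula, which is also precisely the computation underlying your alternative verification on the generators $i^0_*(D(\PP(G)))$ of Proposition~\ref{p:strgen2}.
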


The following is a special case of Proposition 6.13 in \cite{Sh1}. 
\begin{proposition}\cite[Proposition 6.13]{Sh1}\label{p:Lcom+}
The following operators are equal after a change of variables: 
\[  \Delta_+ \circ L^{\cY_+}(\bt, z) =  L^{\cZ}(\bar  f_+(\bt), z)  \circ \Delta_+\circ e^{-\pi i dH/z},\]
where 
%$h(q) = (-1)^d q$ and 
\begin{equation}\label{e:barf} \bar f_+(\bt) = \Delta_+ \left( \sum_{i \in I_{\op{nar}}} \langle \langle -dH, T_i \rangle \rangle^{\cY_+}(\bt ) T^i \right) - \pi i dH.\end{equation}
\end{proposition}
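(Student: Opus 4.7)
The plan is to follow the template of Proposition~\ref{p:Lcom-}, with the local GW/FJRW correspondence replaced by an appropriate form of quantum Serre duality. Using $L^\square(\bt, z)^{-1}\alpha = L^\square(\bt,-z)^T \alpha = \partial_\alpha J^\square(\bt, z)$, which follows from \eqref{e:SL}, the proposition is equivalent to the $J$-function identity
\[\Delta_+\bigl(\partial_{T_i} J^{\cY_+}(\bt,-z)\bigr) = e^{-\pi i dH/z} \cdot \partial_{\Delta_+(T_i)} J^{\cZ}\bigl(\bar f_+(\bt), -z\bigr)\]
for each element $T_i$ of the chosen basis of $H^*_{\op{CR}, \op{nar}}(\cY_+)$. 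The exponential on the right encodes the monodromy around the hyperplane class that distinguishes the insertion variable on $\cZ$ from that on $\cY_+$.

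Both sides of this identity can be realized as non-equivariant limits of tangent vectors to twisted Lagrangian cones over $\PP(G)$. On the left, the narrow theory of $\cY_+ = \tot(\cc O_{\PP(G)}(-d))$ is computed, via Lemma~\ref{l:evprop} and the usual local/twisted correspondence, as the non-equivariant limit of the twisted theory of $\PP(G)$ whose twisting data is given by the inverse equivariant Euler class of $\cc O_{\PP(G)}(-d)$. On the right, the ambient theory of $\cZ$ is the non-equivariant limit of the twisted theory of $\PP(G)$ whose twisting data is given by the equivariant Euler class of $\cc O_{\PP(G)}(d)$ (quantum Lefschetz). Quantum Serre duality of Coates--Givental \cite{CG}, in the $D$-module formulation of Iritani--Mann--Mignon \cite{IMM} (and in the general form used in \cite{Sh1}), identifies these two twisted Lagrangian cones via the symplectic transformation obtained by composing $\Delta_+$ with multiplication by the factor $-dH \cdot e^{-\pi i dH/z}$.

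Given this identification, the proof proceeds in parallel with Proposition~\ref{p:Lcom-}. The image of $\partial_{T_i} J^{\cY_+,\op{tw}}(\bt,-z)$ under the Serre-duality transformation is a $\CC[z]$-linear combination of derivatives of $J^{\cZ,\op{tw}}(\hat f_+(\bt), -z)$ on the twisted cone over $\cZ$; comparing the $z^0$-coefficients pins down the change of variables $\hat f_+$. Passing to the non-equivariant limit, justified by the narrow support of all insertions and the properness statement of Lemma~\ref{l:evprop}, collapses the twisted $J$-functions to $J^{\cY_+}$ and $J^{\cZ}$, and the change of variables converges to the expression \eqref{e:barf} defining $\bar f_+$.

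The main obstacle is careful bookkeeping of the symplectic transformation coming from quantum Serre duality. The twist factor $e^{-\pi i dH/z}$ and the additive shift $-\pi i dH$ in \eqref{e:barf} both arise from dualizing $\cc O(-d)$ against $\cc O(d)$, combined with the branch choice dictated by the $\Gamma$-class contributions and the sign appearing in Proposition~\ref{p:pair}. Verifying that these conventions align between the $\cY_+$ and $\cZ$ theories, and that the non-equivariant limit produces exactly the stated logarithmic shift along $H$, is the content of \cite[Proposition~6.13]{Sh1}, which we invoke directly.
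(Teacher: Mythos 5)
The paper gives no proof of this proposition: it is imported verbatim from the companion paper \cite{Sh1} (Proposition~6.13 there), with the sentence ``The following is a special case of Proposition 6.13 in \cite{Sh1}'' immediately preceding the statement. So there is no in-paper argument to compare against.

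Your sketch is a reasonable outline of the strategy that \cite{Sh1} actually uses: the same template as Proposition~\ref{p:Lcom-}, with the MLK identification of twisted cones replaced by Coates--Givental quantum Serre duality relating the $e_{\CC^*}^{-1}(\cc O_{\PP(G)}(-d))$-twisted cone (computing $\cY_+$) to the $e_{\CC^*}(\cc O_{\PP(G)}(d))$-twisted cone (computing $\cZ$ via quantum Lefschetz), then reading off the change of variables by comparing $z^0$-coefficients of $J$-function derivatives and passing to non-equivariant limits. However, your argument is not self-contained: at the precise point where the $e^{-\pi i dH/z}$ factor and the $-\pi i dH$ shift must be pinned down --- i.e., where the actual content of the proposition lives --- you write that this ``is the content of \cite[Proposition~6.13]{Sh1}, which we invoke directly.'' That makes the proof circular as a standalone argument, though it coincides with what the present paper does (cite \cite{Sh1}). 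If you want a genuine proof, the missing piece is the explicit form of the quantum Serre duality transformation at the level of Lagrangian cones for $\PP(G)$ with the line bundle $\cc O(-d)$ and its dual, tracking the branch cut and $\Gamma$-class normalizations so that the exponential and logarithmic shifts in $H$ come out with the stated signs; for that you would need to reproduce the computation carried out in \cite{Sh1} (or \cite{IMM} in the manifold case) rather than cite it.
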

\begin{remark}
A very similar statement was shown in the non-orbifold case in Theorem 3.14 in \cite{IMM}.  Note that the statements differ in both the direction of the map $\Delta_+$ and in the change of variables, one is the inverse of the other.  See Remark 6.6 of \cite{Sh1} for more details.
%The only difference is in \cite{IMM} the express 
\end{remark}

The following is a special case of Theorem 6.14 of \cite{Sh1}.  It follows from Proposition~\ref{p:Lcom+} and Lemma~\ref{l:ch+}.
\begin{theorem}\cite[Theorem~6.14]{Sh1}\label{t:QSD}
The map $\bar \Delta_+$ gives an isomorphism of quantum $D$-modules between $QDM_{\op{nar}}(\cY_+)$ and $ \bar f^*_+ \left( QDM_{\cZ, \op{amb}} \right)$.  Furthermore it is compatible with the integral structure and the functor $j^* \circ \pi_*$, i.e., the following diagram commutes;
\[
\begin{tikzcd}
D(\cY_+)_{\PP(G)} \ar[r, " j^* \circ \pi_*"] \ar[d, "s^{\cY_+, \op{nar}}"] &  j^*(D(\PP(G))) \ar[d, "s^{\cZ, \op{amb}}"]\\
QDM_{\op{nar}}(\cY_+) \ar[r, "\bar \Delta_+"] & QDM_{\op{amb}}(\cZ).
\end{tikzcd}
\]
%
%\begin{equation}\label{e:sident2} \bar \Delta_+ \circ \bs^{\cY_+, \op{nar}} (E)= \bs^{\cZ, \op{amb}} \circ i^1_* \circ \pi_*(E)\end{equation}
%for all $E \in D(\cY_+)_{\PP(G)}$.
\end{theorem}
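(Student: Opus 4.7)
The proof will mirror the structure of Theorem~\ref{t:MLK}: first verify compatibility of the pairings, next verify the commutative diagram by direct computation with the definition of $\bs^\square(E)$, and finally invoke the generation of the narrow quantum $D$-module by $\bs$-sections (Lemma~\ref{l:chgen} and its geometric-phase analogue) to upgrade the commutative diagram to the full $D$-module isomorphism.

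First I would establish the pairing identity $S^{\cZ}(\bar \Delta_+(\alpha), \bar \Delta_+(\beta)) = C \cdot S^{\cY_+}(\alpha, \beta)$ for an explicit constant $C$. The key input is the projection formula applied to $\pi\colon \cY_+ \to \PP(G)$ together with $j\colon \cZ \hookrightarrow \PP(G)$; because $\cY_+$ is the total space of $\cc O_{\PP(G)}(-d)$ and $\cZ$ is cut out by a section of $\cc O_{\PP(G)}(d)$, integration over a compactly supported class on $\cY_+$ pushed to $\PP(G)$ and then restricted to $\cZ$ differs from integration on $\cY_+$ only by an explicit constant (here $-1/d$ enters through the definition of $\Delta_+$). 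This, combined with the explicit factor $2\pi i z$ in $\bar\Delta_+$ and the difference between $\hat c^{\cY_+}$ and $\hat c^{\cZ}$, yields the pairing identification.

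Next I would verify commutativity of the diagram. Starting from the definition
\[
\bs^{\cY_+, \op{nar}}(E)(\bt, z) = \frac{1}{(2\pi i)^{\hat c^{\cY_+}}} L^{\cY_+}(\bt, z)\, z^{-\op{Gr}} z^{\rho_{\cY_+}}\, \hat \Gamma_{\cY_+}\bigl((2\pi i)^{\deg_0/2} I^*(\ch(E))\bigr),
\]
I would apply Proposition~\ref{p:Lcom+} to rewrite $\Delta_+ \circ L^{\cY_+}(\bt, z)$ as $L^{\cZ}(\bar f_+(\bt), z) \circ \Delta_+ \circ e^{-\pi i dH/z}$. The remaining work is to identify, after commuting $\Delta_+$ past the grading/Euler factors, the class
\[
\Delta_+ \circ e^{-\pi i dH/z}\bigl(\hat \Gamma_{\cY_+} \cdot (2\pi i)^{\deg_0/2} I^*(\ch(E))\bigr)
\]
with (up to an explicit power of $2\pi i z$)
\[
\hat \Gamma_{\cZ} \cdot (2\pi i)^{\deg_0/2} I^*\bigl(\ch(j^*\pi_* E)\bigr).
\]
The relation between $\ch$-classes is precisely Lemma~\ref{l:ch+}: $\Delta_+(\op{Td}(\cc O_{\cY_+}(-d)) \cup \ch(E)) = \ch(j^* \pi_* E)$. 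Thus the task reduces to checking that
\[
\Delta_+ \circ e^{-\pi i dH/z} \bigl(\hat \Gamma_{\cY_+} \cdot (-)\bigr)
\]
differs from $\hat \Gamma_{\cZ} \cdot \Delta_+\bigl(\op{Td}(\cc O_{\cY_+}(-d)) \cup (-)\bigr)$ by the expected overall scalar. This follows from the splitting $T\cY_+ = \pi^*T\PP(G) \oplus \pi^*\cc O_{\PP(G)}(-d)$ and the adjunction relation $j^*T\PP(G) = T\cZ \oplus j^*\cc O_{\PP(G)}(d)$, combined with the gamma reflection identity
\[
\Gamma(1 - x)\Gamma(1 + x) = \frac{\pi x}{\sin(\pi x)},
\]
which converts the Gamma factor $\Gamma(1 - dH)$ coming from the fiber of $\cY_+$ into $1/\Gamma(1+dH)$ (the inverse of the Gamma factor absent from $\hat \Gamma_\cZ$ compared to $j^*\hat\Gamma_{\PP(G)}$), times the Todd class $\op{Td}(\cc O_{\cY_+}(-d)) = dH/(1 - e^{-dH})$ and the exponential $e^{-\pi i dH/z}$. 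Matching these factors (at $z=1$ and then extending by $z$-homogeneity) is a standard computation in the Iritani integral-structure framework.

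The main obstacle I foresee is the careful bookkeeping of factors of $2 \pi i$, $z$, and the signs coming from the involution $I^*$ in the Gamma reconciliation step; once the Gamma-reflection identity is in play, the equality of transcendental factors is essentially forced by degree considerations but writing it cleanly requires some care. Once the commutative diagram is established, the identification of quantum $D$-modules is immediate: $\bar\Delta_+$ is $\CC$-linear and bijective on underlying vector spaces (Lemma~6.11 of \cite{Sh1}), respects pairings by the first step, and respects flat connections because the flat sections $\bs^{\cY_+, \op{nar}}(E)$ generate $QDM_{\op{nar}}(\cY_+)$ (by the geometric-phase analogue of Lemma~\ref{l:chgen}) and are identified with flat sections $\bs^{\cZ, \op{amb}}(j^*\pi_* E)$ on the target, which similarly generate $QDM_{\op{amb}}(\cZ)$.
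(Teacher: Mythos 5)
Your proposal follows exactly the strategy that the paper (and the companion paper \cite{Sh1}, to which it defers) indicates: combine the $L$-intertwining of Proposition~\ref{p:Lcom+} with the Chern-character relation of Lemma~\ref{l:ch+}, reconcile the Gamma-class and Todd-class discrepancies via the reflection formula together with the exponential $e^{-\pi i d H/z}$, and then invoke generation of the narrow/ambient quantum $D$-modules by the flat sections $\bs^\square$ to promote the commutative diagram to a $D$-module isomorphism. One small slip in the sketch: for a line bundle with Chern root $x$ the Todd class is $x/(1-e^{-x})$, so $\op{Td}(\cc O_{\cY_+}(-d)) = (-dH)/(1-e^{dH}) = dH/(e^{dH}-1)$ rather than $dH/(1-e^{-dH})$; these differ by a factor of $e^{dH}$, which you would need to track carefully in the Gamma-reconciliation step since it interacts with the $e^{-\pi i d H/z}$ factor. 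Otherwise the argument is sound and matches the paper's approach.
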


\section{Relation to CTC and LG/CY correspondence}\label{s:CTC}

The crepant transformation conjecture relates the Gromov--Witten theory of $K$-equivalent smooth Deligne--Mumford stacks $\cX_+$ and $\cX_-$.  Although there are many formulations, as it is stated in e.g. \cite{CIJ}, it relates the quantum connections $QDM(\cX_+)$ and $QDM(\cX_-)$.  In \cite{CIJ} the conjecture is proven for toric varieties.  In the non-compact case this is defined via localization of equivariant cohomology. 

The LG/CY correspondence takes a similar form, but relates the FJRW theory of $(w,G)$ with that of $\cZ$.
In \cite{LPS}, we show that a form of the LG/CY correspondence is implied by the crepant transformation conjecture.  
In particular we show via the local GW/FJRW correspondence and quantum Serre duality that the transformation $\cc U$ 
relating the Gromov--Witten theory of $\cY_-$ and $\cY_+$ can be used to define a transformation relating $(w,G)$ 
and $\cZ$.  The proof involves taking the non-equivariant limit of a restriction of the correspondence in \cite{CIJ}.

In this section we formulate a version of the crepant transformation conjecture for non-compact spaces which does not require the use of equivariant cohomology.  This formulation together with the results of the previous two sections directly implies the LG/CY correspondence, and includes the connection between integral structures and Orlov's equivalence as in \cite{CIR}.  The results of this section are a refinement of the work in \cite{LPS} to incorporate  Iritani's integral structures and the derived equivalences discussed in Section~\ref{s:structures}.

As in Section~\ref{ss:Dequiv}, we assume for this section that $\sum_{j=1}^N c_j = d$.  We further assume that $\cc O_{\PP(G)}(d)$ is convex, which adds the additional constraint that $\cc O_{\PP(G)}(d)$ is pulled back from the coarse space of $\PP(G)$ (see Remark~5.3 of \cite{CGIJJM}).  This in turn implies that
$G \leq SL_N(\CC)$.

\subsection{Crepant transformation conjectures}

\subsubsection{Torus-equivariant CTC via $I$-functions}
Let $T \cong \CC^*$ act on the coordinates of $\cY_- = [\CC^N/G]$ with weights $-c_1, \ldots, -c_N$.  On $\cY_+ = \text{Tot}(\cc O_{\PP(G)}(-d))$ this corresponds to a trivial action on the base $\PP(G)$ with a nontrivial action of weight $-d$ in the fiber direction.\footnote{This choice of action on $\cY_-$ is used also in \cite{LPS}, where it is erroneously stated that the corresponding weight in the fiber direction of $\cY_+$ is $d$.}   We let $\lambda$ denote the equivariant parameter of our torus action in $H^*_{CR,T}(\cY_-)$ and $H^*_{CR,T}(\cY_+)$.  Let $R_T$ denote $H^*(BT) = \CC[\lambda]$.

Let $S \subset G$ denote the subset $G$ consisting of elements which fix at least one coordinate of $\CC^N$.
%Assume we have chosen a splitting $G = \langle j \rangle \oplus \bar G$ such that $\bar G$ acts trivially on the first coordinate of $\CC^N$.  
For each $g \in S$ let $t^g$ denote a corresponding formal variable.   Let $t$ denote a coordinate corresponding to $j$.  We will consider the ring $\CC[[t, \bt]] := \CC[[t]] \otimes \CC[[t^g]]_{g \in S}$.
%
%the dual coordinate to $\ii_g$ in $H^*_{\op{CR}}(\cY_-)$.  

Let $a(\bv{k})^j=\sum_{g \in S}k_g m_j(g)$.  Define the modification factor
\[
M(k_0,\bv{k}) := \prod_{j=1}^N \prod_{l=0}^{\lfloor k_0q_j+a(\bv{k})^j \rfloor-1}\Big(-c_j\lambda-(\langle k_0 q_j+a(\bv{k})^j  \rangle +l)z\Big)
\]
where $\langle -\rangle$ denotes the fractional part. Then $I^{\cY_-}(t, \bt, z)$ is defined as 
\begin{equation}
I^{\cY_-}(t, \bt,z)= z t^{d\lambda/z}
\sum_{\bv{k}\in(\ZZ_{\geq 0})^{S}}\prod_{g \in S}\frac{(t^{g})^{k_g}}{z^{k_g}k_g!}\sum_{k_0\geq 0}\frac{M(k_0,\bv{k})t^{k_0}}{z^{k_0}k_0!}\ii_{\jj^{k_0}\prod_g g^{k_g}}.
\end{equation}
The above modification factor is explained in Section~4.2 of \cite{CCIT}, where it is proven that $I^{\cY_-}(t, \bt, z)$ is a 
$H^*_{CR, T}(\cY_-)[[t, \bt]]((z^{-1}))$-valued point on $\cc L^{\cY_-}$.

Recall our choice of equivariant basis for the Chen--Ruan cohomology of $\cY_+$ is given by 
\[ \bigcup_{g \in G} \left\{ \tilde \ii_g, \tilde \ii_g H, \ldots, \tilde \ii_g H^{(\dim((\CC^N)^g) - 1)}\right\}.
\]
%where $\tilde \ii_g$ is the fundamental class of ${\cY_+}_g$ and $\tilde \ii_g H^k$ denotes the pullback of the $k$th power of the hyperplane class from the course space of ${\cY_+}_g$.  
Here we will use the convention that $\tilde \ii_g$ is zero if ${\cY_+}_g$ is empty (i.e., if $g \notin S$).
Let $t^g$ denote a formal coordinate corresponding to the fundamental class $\tilde \ii_g$ on ${\cY_+}_g$.  Let $q$ denote a coordinate corresponding to the
% \emph{exponential} of the dual coordinate to the 
hyperplane class $H$. 

An $I$-function for toric stacks is given as Definition~28 of \cite{CCIT2}.  
Define
\begin{align}\nonumber
I^{\cY_+}(q,\bt,z) =&zq^{H/z} \sum_{\bv{k}\in(\ZZ_{\geq 0})^{S}}\prod_{g \in S}\frac{(t^{g})^{k_g}z^{(\age(g)-1)k_g}}{k_g!} \sum_{k_0\geq 0}\frac{q^{k_0/d}}{z^{k_0(\sum_j q_j - 1)+\sum_j\langle k_0q_j-a(\bv k)^j\rangle}} \\ \label{e:IY}
	&\cdot \frac{\Gamma(1-\tfrac{d(\lambda+H)}{z})}{\Gamma(1-k_0-\tfrac{d(\lambda+H)}{z})}\prod_{j=1}^N\frac{\Gamma(1+c_jH/z-\langle -k_0q_j+a(\bv{k})^j \rangle)}{\Gamma(1+c_jH/z+k_0q_j-a(\bv{k})^j)}\tilde \ii_{\jj^{-k_0}\prod_g g^{k_g}}
\end{align}
Similar to the above,
$I^{\cY_+}(q, \bt, z)$ is a 
$H^*_{CR, T}(\cY_+)[[q, \bt]][\log q]((z^{-1}))$-valued point on $\cc L^{\cY_+}$.

%\begin{remark}
%The reader will notice that the $I$-functions defined above have coordinates corresponding only to the group elements $j$ and $g \in S$.   TODO
%\end{remark}

\begin{definition}\label{d:d}
 Denote $U_+$ to be the universal covering of $\{q: q \neq c\}$, where 
\[c = (-d)^{d}\prod_{i = 1}^N c_i^{-c_i}.\]
Denote by  $U_-$ the universal cover of $\{t: t^d \neq c^{-1}\}$.  Finally, let $U$ denote the universal cover of 
$\{t: t^d \neq c^{-1}, t \neq 0\}$.  There is a natural induced map 
\[ \pi_-: U \to U_-.\]
Via the change of variables $q = t^{-d}$, the open set $\{t: t^d \neq c^{-1}, t \neq 0\}$ maps to $\{q: q \neq c\}$.  This induces another map
\[\pi_+: U \to U_+.\]
%These charts may be glued 
\end{definition}
A ratio test shows that 
$I^{\cY_+}(q, \bt, z)$ is convergent in the $q$ direction,
and may be analytically continued to $U_+$.
Similarly
$I^{\cY_-}(t, \bt, z)$ is convergent in the $t$ direction,  and may be analytically continued to $U_-$.  Thus both $I$-functions may be viewed as functions on $U$.
Under the change of variables $q = t^{-d}$, we may analytically continue $I^{\cY_+}$ from $\log q = -\log t^d = - \infty$ to $\log q = - \log t^d = \infty$ along any path avoiding $\log q = \log |c| + d \pi i + 2 \pi i \ZZ$.  Let $\gamma_l$ denote such a path, which moves from $(im(\log q) = 0, re(\log q) = -\infty)$ to $(im(\log q) = 0, re(\log q) = \infty)$ with $re(\log q)$ always increasing, and such that when $re(\log q) = \log |c|$, $im(\log q) = (d + 2l  - 1) \pi i$.

\begin{definition}\label{d:utl}
For each $l \in \ZZ$, define the map
\begin{align*}
\overline \UU^T_l\colon &H^*_{CR, T}(\cY_-) \to H^*_{CR, T}(\cY_+) \\
&\ii_{g j^m} \mapsto \frac{1}{d}\sum_{b=0}^{d-1}\frac{\left( \xi^{b+m} e^{(H + \lambda)}\right)^l\left(e^{d(H + \lambda)}-1\right)}{e^{(H+\lambda)} \xi^{b +m} - 1} \tilde \ii_{gj^{-b}},
\end{align*}
where $\xi  = e^{2 \pi i/d}$.
\end{definition}
\begin{lemma}\label{l:induced}
The following diagram commutes:
\[
\begin{tikzcd}
D_T(\cY_-) \ar[r, "\vgit^T"] \ar[d, "I^*\circ \op{ch}"] &  D_T(\cY_+) \ar[d, "I^*\circ \op{ch}"]\\
H^*_{CR, T}(\cY_-)\ar[r, " \overline \UU^T_l"] & H^*_{CR, T}(\cY_+)
\end{tikzcd}
\]
where $\vgit^T$ is the $T$-equivariant version of the equivalence in Section~\ref{ss:Dequiv}.
\end{lemma}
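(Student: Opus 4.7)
The plan is to reduce commutativity of the diagram to a direct Chern character computation on a generating set of equivariant line bundles. Since $\cY_- = [V/\widetilde G]$ is a quotient by the abelian group $\widetilde G \times T$, its equivariant $K$-theory is generated by equivariant line bundles $\cc O_{\cY_-}(k_1, \zeta)$ indexed by characters $(k_1, \zeta) \in \ZZ \times \widehat{\bar G}$, with any additional $T$-equivariant twist recorded as an overall factor of $e^{a\lambda}$ that commutes with both vertical maps. Both compositions in the square are $R_T$-linear and additive on distinguished triangles, so it suffices to check the identity on such line bundles.

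For a line bundle $\cc O_{\cY_-}(k_1, \zeta)$ lying outside the window $[l, l+d-1]$, the $T$-equivariant analogue of the round-down distinguished triangle from Section~\ref{ss:Dequiv} relates it to a line bundle in the window together with (iterates of) the cokernels $i^0_*(K(k_1, \zeta))$, whose images under $\vgit^T$ are given by the equivariant analogue of Proposition~\ref{p:vgit}. Combined with additivity, this reduces verification to line bundles of the form $\cc O_{\cY_-}(l+i, \zeta)$ with $0 \leq i \leq d-1$, on which $\vgit^T$ acts tautologically: both $\cc O_{\cY_-}(l+i,\zeta)$ and $\cc O_{\cY_+}(l+i,\zeta)$ arise by pullback via $i_-^*$ and $i_+^*$ respectively of the single line bundle $\cc O_{[\widetilde V/\widetilde G]}(l+i,\zeta)$, so $\vgit^T(\cc O_{\cY_-}(l+i,\zeta)) = \cc O_{\cY_+}(l+i,\zeta)$.

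The remainder is a direct Chern character calculation. Since $V^T = \{0\}$, every twisted sector $\cY_-{}_g$ is equivariantly isomorphic to $BG$, so the orbifold equivariant Chern character of $\cc O_{\cY_-}(l+i, \zeta)$ on the $g$-th sector equals an explicit phase factor (depending on $\zeta$, $l+i$, and the multiplicities $m_j(g)$) multiplied by an equivariant exponential in $\lambda$. Applying $I^*$ swaps $\ii_g \leftrightarrow \ii_{g^{-1}}$, after which the explicit formula in Definition~\ref{d:utl} yields an expression summed over $b \in \{0, \ldots, d-1\}$ and over twisted sectors $\tilde \ii_{gj^{-b}}$ of $\cY_+$. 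On the other side of the square, the orbifold equivariant Chern character of $\cc O_{\cY_+}(l+i, \zeta)$ is computed on each twisted sector of $\cY_+$ (indexed by $\bar G \oplus \langle j \rangle$), with the phase $\xi^{-b}$ arising from the twist by $j^{-b}$ and equivariant contributions from $H$ and $\lambda$ coming from the weight $-d$ on the fiber coordinate. The two sides match via the geometric-series identity
\[
\sum_{a=0}^{d-1} \bigl(\xi^{b+m} e^{(H+\lambda)}\bigr)^{a} \;=\; \frac{e^{d(H+\lambda)} - 1}{\xi^{b+m} e^{(H+\lambda)} - 1},
\]
which is precisely the ratio appearing in Definition~\ref{d:utl}; the prefactor $\xi^{l(b+m)} e^{l(H+\lambda)}$ records the window shift by $l$.

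The main obstacle is the bookkeeping of multiplicities and equivariant weights on each twisted sector of $\cY_+$: the different components of $I\cY_+$ carry distinct $T$-fixed loci depending on which coordinates of $V$ the twisting element fixes, so restrictions of the equivariant Chern character produce distinct contributions on distinct sectors. Verifying that the discrete Fourier / geometric sum above correctly converts the $G$-indexed twisted-sector decomposition of $\cY_-$ into the $\bar G \oplus \langle j \rangle$-indexed decomposition of $\cY_+$ — including all phase factors introduced by the $I^*$ involution and by the multiplicities $m_j(g)$ — constitutes the technical core of the proof. Once the two sides agree on line bundles in the window and on the cokernel generators $i^0_*(K(l+d+i, \zeta))$, commutativity on all of $K_T(\cY_-)$ follows by additivity.
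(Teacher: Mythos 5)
Your proposal takes essentially the same approach as the paper: reduce to $T$-equivariant line bundles by splitting (using that $\widetilde G \times T$ is abelian), observe that $\vgit^T$ sends $\CC_{\br{k/d}}$ tautologically to $\cc O_{\cY_+}(k)\otimes\cc O_T(k)$ for $k$ in the window $[l,l+d-1]$, and then match the two sides via a discrete Fourier inversion and the geometric-series identity that produces exactly the ratio in Definition~\ref{d:utl}. One small redundancy worth flagging: the detour through round-down distinguished triangles to move a line bundle into the window is not needed, because on $\cY_-$ the map $rd_l = p^m\cdot$ is already an isomorphism (the coordinate $p$ is invertible on the $\theta_-$-semistable locus), so $\cc O_{\cY_-}(k_1,\zeta)\cong\cc O_{\cY_-}(\fl{k_1},\zeta)$ on the nose and the window bundles together with $R_T$-twists already span $K_T(\cY_-)$; the cokernels $K(l+d+i,\zeta)$ live on the $\cY_+$ side and do not need to be fed back into the $\cY_-$ reduction.
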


\begin{proof}
We first note that it suffices to check commutativity on elements of $D_T(\cY_-)$ represented by line bundles.   A $T$-equivariant vector bundle $\cV$ on $\cY_-$  is equivalent to a $T \times G$-equivariant vector bundle on $\CC^N$.  Because all vector bundles on $\CC^N$ are trivial and $T \times G$ is abelian, $\cV$ will split into a sum of line bundles. Any element of $D_T(\cY_-)$ has a finite resolution by $T$-equivariant vector bundles.

For notational simplicity we will focus on the case $ G = \langle j \rangle$, so $\tilde G = \CC^*$.  The  general case is similar.  The $T \times \CC^*$-character of weight $(0,k)$ yields a $T$-equivariant line bundle on $[V/\CC^*]$.  This pulls back to $\CC_{\br{k/d}}$ on $\cY_-$, the line bundle such that the $\mu_d$ isotropy at the origin acts with weight $k$ and the torus acts trivially.  On $\cY_+$, 
this pulls back to $\cc O_{\cY_+}(k) \otimes \cc O_T(k)$ where $\cc O_{\cY_+}(k)$ is pulled back from $\PP(G)$ and has trivial $T$-action, and $\cc O_T(k)$ is the trivial line bundle with a $T$-action of weight $k$.  
It follows that for $k$ in the range $[l , l+d -1]$, $\vgit^T (\CC_{\br{k/d}}) = \cc O_{\cY_+}(k) \otimes \cc O_T(k)$, and thus for the diagram to commute, $\overline \UU^T_l$ must map 
\[I^* \circ \ch (\CC_{\br{k/d}}) = \sum_{0 \leq m < d} \xi^{-mk}%e^{-2 \pi i mk/d} 
\ii_{j^m}\]
 to 
\[I^* \circ \ch (\cc O_{\cY_+}(k) \otimes \cc O_T(k)) = \sum_{0 \leq b < d} \xi^{bk} %e^{ 2 \pi i bk/d} 
e^{k(H + \lambda)} \tilde \ii_{j^{-b}}.\]
This is equivalent to the requirement that
\begin{align*} \ii_{j^m} &= \frac{1}{d} \sum_{k = l}^{l+d-1} \xi^{km} %e^{2 \pi i k/d} 
I^* \circ \ch (\CC_{\br{k/d}})  \\
&=\frac{\xi^{lm}}{d} \sum_{k = 0}^{d-1} \xi^{km} %e^{2 \pi i k/d} 
I^* \circ \ch (\CC_{\br{(l + k)/d}})
\end{align*}
maps to 
\begin{align*} 
&\frac{\xi^{lm}}{d} \sum_{k = 0}^{d-1} \xi^{km} %e^{2 \pi i k/d} 
I^* \circ \ch (\cc O_{\cY_+}(l+ k) \otimes \cc O_T(l+k)) \\
 = &\sum_{b = 0}^{d-1} \frac{\xi^{l(b+m)}e^{l(H+ \lambda)}}{d} \sum_{k=0}^{d-1} \xi^{k(b+m)} e^{k(H + \lambda)} \tilde \ii_{j^{-b}}.
 %= & \frac{\xi^{lk}e^{l(H+ \lambda)}}{d} \sum
\end{align*}
Using the simple fact that $\sum_{k=0}^{d-1} x^k = (x^d - 1)/(x-1)$, the above becomes
\begin{align*} 
 &\sum_{b = 0}^{d-1} \frac{\xi^{l(b+m)}e^{l(H+ \lambda)}}{d} \sum_{k=0}^{d-1} 
 \frac{(\xi^{b+m}e^{(H + \lambda)})^d - 1}{\xi^{b+m}e^{(H + \lambda)} - 1}\tilde \ii_{j^{-b}}
% \xi^{k(b+m)} e^{k(H + \lambda)} 
\\
 =& \frac{1}{d}\sum_{b=0}^{d-1}\frac{\left( \xi^{b+m} e^{(H + \lambda)}\right)^l\left(e^{d(H + \lambda)}-1\right)}{e^{(H+\lambda)} \xi^{b +m} - 1} \tilde \ii_{j^{-b}},
 %= & \frac{\xi^{lk}e^{l(H+ \lambda)}}{d} \sum
\end{align*}
in agreement with Definition~\ref{d:utl}.
\end{proof}

%
%Consider the space $\tilde {\cc M}^\circ$ which is the universal cover of .... so we can analytically continue from $t = 0$ to $q = 0$
%
%Consider the space $\tilde {\cc M}^\circ$ which is the universal cover of .... so we can analytically continue from $t = 0$ to $q = 0$.  The $I$-functions may be viewed as sections of $\tilde {\cc M}^\circ_{+/-}$
%
%Using the $I$-functions, one can show that there exists a \emph{mirror map} $\tau_{+/-}$ such that 

\begin{theorem}\label{t:mI}
For each path $\{\gamma_l\}_{l \in \ZZ}$, there exists a linear transformation 
\[\UU^T_l: H^*_{CR, T}(\cY_-)((z^{-1})) \to H^*_{CR, T}(\cY_+)((z^{-1}))\] satisfying
\begin{enumerate}
\item $\UU^T_l( I^{\cY_-}(t, \bt, z)) = \widetilde I^{\cY_+}(t, \bt, z)$ where $ \widetilde I^{\cY_+}(t, \bt, z)$ denotes the analytic continuation of $I^{\cY_+}$ along the path $\gamma_l$ to a neighborhood of $t = 0$.
\item  $\UU^T_l$ is induced by the equivalence $\vgit^T$ in the sense that the following diagram commutes:
\[
\begin{tikzcd}
D_T(\cY_-) \ar[r, "\vgit^T"] \ar[d, "\psi^{\cY_-}"] &  D_T(\cY_+) \ar[d, "\psi^{\cY_+}"]\\
H^*_{CR, T}(\cY_-)((z^{-1})) \ar[r, "\UU^T_l"] & H^*_{CR, T}(\cY_+)((z^{-1}))
\end{tikzcd}
\]
where $\psi^\square = z^{- \op{Gr}}z^\rho \hat \Gamma_\square (2 \pi i)^{\deg_0 /2} I^* \circ \op{ch}$.
\item The map $\UU^T_l$ preserves the symplectic pairing.
\end{enumerate}
\end{theorem}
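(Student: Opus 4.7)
My plan is to invoke the toric equivariant crepant transformation conjecture of \cite{CIJ}, which already provides a symplectic transformation matching $I$-functions after analytic continuation, and then to identify it with the map induced by $\vgit^T$ through the Gamma-integral structure. Concretely, I would first define $\UU^T_l$ by requiring the diagram in (2) to commute, i.e., by setting
\[
\UU^T_l := \psi^{\cY_+} \circ \vgit^T \circ (\psi^{\cY_-})^{-1}
\]
on the lattice generated by $\psi^{\cY_-}(D_T(\cY_-))$ and extending $\CC((z^{-1}))$-linearly. Because $\psi^{\cY_-}$ is built from $z^{-\op{Gr}}z^\rho \hat\Gamma_{\cY_-}(2\pi i)^{\deg_0/2}I^*\circ\ch$ and likewise for $\psi^{\cY_+}$, Lemma~\ref{l:induced} tells me that modulo the Gamma factors, $\vgit^T$ descends on Chern characters to $\overline\UU^T_l$. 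So property (2) holds by construction and I need only check that this $\UU^T_l$ is well-defined and independent of the choice of generator, which reduces to checking that the image of the Chern character spans $H^*_{CR,T}(\cY_\pm)$ after tensoring with $\CC((z^{-1}))$ — and this in turn follows from the abelian splitting of $T$-equivariant locally free sheaves into line bundles and the computation in Lemma~\ref{l:chgen}.

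Next, for (1), the main point is the identification $\UU^T_l(I^{\cY_-}(t,\bt,z)) = \widetilde I^{\cY_+}(t,\bt,z)$. The strategy is to match $I$-functions via the Mellin–Barnes analytic continuation technique used in \cite{CIJ}: rewrite each $I$-function as a contour integral by expressing the relevant ratios of Gamma functions through residues, deform the contour along $\gamma_l$ picking up poles, and observe that the resulting sum of residues is precisely the action of the Gamma-dressed $\overline\UU^T_l$ on the Chern characters appearing in the hypergeometric series. The operator $\overline\UU^T_l$ of Definition~\ref{d:utl}, when sandwiched between the two $\hat\Gamma$ operators and the $z^{-\op{Gr}}z^\rho$ twists, produces exactly the transformation matching the analytic continuation. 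This is the crepant transformation conjecture for the toric variation of GIT $\cY_-\leftrightsquigarrow\cY_+$, which is a special case of the main result of \cite{CIJ}. I would cite this directly rather than reprove it.

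Finally, property (3) follows because on the lattice $\psi^{\cY_\pm}(D_T(\cY_\pm))$, the symplectic pairing corresponds via the Gamma-integral structure to the (equivariant) Euler pairing $\chi(E,F)$ on the derived category, and $\vgit^T$, being an equivalence of triangulated categories, preserves this Euler pairing; the sesquilinear identification given in Proposition~\ref{p:pair} (in its equivariant form) transports this to the symplectic pairing. The main obstacle in the whole argument is (1): the Mellin–Barnes/analytic-continuation bookkeeping is delicate, in particular tracking how the path $\gamma_l$ selects the specific shift $l$ in $\overline\UU^T_l$, and keeping track of the $q^{H/z}$ versus $t^{d\lambda/z}$ prefactors under $q=t^{-d}$. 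Since all of this is carried out in \cite{CIJ} in the broader toric setting, the role of the present theorem is to specialize and package their result so that property (2) becomes visible via Lemma~\ref{l:induced}.
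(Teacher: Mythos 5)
Your proposal tracks the paper's argument closely: the paper defines $\UU^T_l$ explicitly as $z^{-\Gr}\hat{\Gamma}(\cY_+)(2\pi i)^{\deg_0/2}\overline \UU^T_l (2\pi i)^{-\deg_0/2}\hat{\Gamma}(\cY_-)^{-1}z^{\Gr}$, which is precisely $\psi^{\cY_+}\circ(\text{induced by }\vgit^T)\circ(\psi^{\cY_-})^{-1}$ after invoking Lemma~\ref{l:induced}, then verifies (1) by citing the explicit Mellin--Barnes computation (done in \cite{LPS}, modified by an $e^{-2\pi i l s}$ factor in the contour integrand to select the path $\gamma_l$), derives (2) from Lemma~\ref{l:induced}, and derives (3) from (2) plus the Euler-pairing compatibility of $\psi^\square$. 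You define $\UU^T_l$ by forcing (2), then verify (1) by citing the analytic continuation; this is the same proof with a reordered presentation, since your definition and the paper's agree by Lemma~\ref{l:induced}.

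One small caution worth flagging: citing \cite{CIJ} directly for property (1) is not quite immediate. Theorem~6.1 of \cite{CIJ} is formulated with a specific Fourier--Mukai kernel $\mathbb{FM}$, not with $\vgit^T$, and only for $l = 0$. To bridge to your formulation you additionally need the identification $\vgit^T_0 = \mathbb{FM}$ (from \cite{CIJS}) and the observation that varying $l$ corresponds to shifting the branch in the Mellin--Barnes integral by $e^{-2\pi i l s}$. The paper is more careful here, citing the explicit appendix computation of \cite{LPS} for the path $\gamma_0$ and describing the modification for general $l$, and recording the bridge $\vgit^T_0 = \mathbb{FM}$ as a remark after the proof. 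Your sketch should make this chain explicit if you intend to cite \cite{CIJ} rather than redo the contour-integral bookkeeping.
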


\begin{proof}  The computation of $\UU^T_l$ is now standard so we only sketch the details.
The analytic continuation of $I^{\cY_+}$ was computed in the appendix of \cite{LPS} for the path $\gamma_0$.  The computation can be easily modified for the path $\gamma_l$ by multiplying the integrand of the line integral on page 1436 of \cite{LPS} by a factor of $e^{-2 \pi i l s}$.  Following \cite{LPS}, if we define
 \[\UU^T_l :=z^{-\Gr}\hat{\Gamma}(\cY_+)(2\pi i)^{\deg_0/2}\overline \UU^T_l (2\pi i)^{-\deg_0/2}\hat{\Gamma}(\cY_-)^{-1}z^{\Gr},\]
then $I^{\cY_-}(t, \bt, z) $ will map to $ \widetilde I^{\cY_+}(t, \bt, z)$.  This proves the first claim.  The second claim follows by Lemma~\ref{l:induced} above.
The third point follows from the second, together with the fact that the maps $\psi^\square$ identify the Euler pairing with the symplectic pairing, and $\vgit^T$ preserves the Euler pairing.

\end{proof}

This was proven in \cite[Theorem~6.1]{CIJ} in the case where $l = 0$ and $\op{vGIT}_0^T$ was replaced by a particular Fourier--Mukai transform $\mathbb{FM}$.  In \cite[Proposition~5.3]{CIJS} it was shown that $\op{vGIT}_0^T = \mathbb{FM}$.  

\begin{remark}
The choice of coordinates of the $I$-functions -- indexed by $g \in S$ and $j$ -- may seem somewhat mysterious.  Indeed there are other choices of $I$-functions $I^{\cY_-}$ and $I^{\cY_+}$ with coefficients in different rings.  Each choice corresponds to a different presentation of $\cY_-$ and $\cY_+$ as a toric GIT quotient; see the work of Jiang  on extended stacky fans \cite{Jia}.  For each such choice, an analogous result to Theorem~\ref{t:mI} will hold, as shown in \cite[Theorem~6.1]{CIJ}.  Because our purpose in this section is not to reprove the crepant transformation conjecture but rather to show how it leads to the LG/CY correspondence, we will content ourselves with working in the context of the $I$-functions defined above, where the analytic continuation and formula for $\UU^T_l$ have already been computed explicitly.  
\end{remark}

\subsubsection{Torus-equivariant CTC via $D$-modules}

The mirror theorem for a toric variety such as $\cY_-$ or $\cY_+$ can be rephrased in terms of equivariant quantum $D$-modules.  To do this one must first define an equivariant version of the quantum connection and integral structures of Section~\ref{s:structures}.  

%If $\cY$ is smooth Deligne--Mumford stack with an action by a torus $T$, 
We may extend the  quantum product $\bullet_{\bt}^{\cY_{+/-}}$  to an equivariant quantum product via:
\[ \langle \alpha \bullet_{\bt}^{\cY_{+/-}^T} \beta, \gamma \rangle^{\cY_{+/-}^T} = \langle \langle \alpha, \beta, \gamma\rangle \rangle^{\cY_{+/-}^T}(\bt)\]
where $\cY_{+/-}^T$ is used to denote the equivariant theory (i.e. the equivariant pairing and Gromov--Witten invariants).
We then define 
\[\nabla_i^{\cY_{+/-}^T} = \nabla_{\partial/\partial t^i}{\cY_{+/-}^T} := \partial_i + \frac{1}{z} \bullet_{\bt}^{\cY_{+/-}^T}.\]
The equivariant Euler vector field in these cases is given by 
\[ \cc E^T := - d \lambda \frac{\partial}{\partial \lambda} +  \partial_\rho+ \sum_{i \in I} \left(1 - \frac{1}{2} \deg T_i\right) t^i \partial_i,\]
and the grading operator extends by defining
$\op{Gr}(\lambda) = 1$.  Then define
\[\nabla_{z}^{\cY_{+/-}^T} := \partial_z - \frac{1}{z^2} \cc E \bullet^{\cY_{+/-}^T}_{\bt} + \frac{1}{z} \op{Gr}.\]
The equivariant quantum solution $L^{\cY_{+/-}^T}$ is defined in the obvious way, and the equivariant flat sections $\bs^{\cY_{+/-}^T}$ are obtained by replacing $\ch$, $\Gamma_{\cY_{+/-}}$, and $L^{\cY_{+/-}}$ with their equivariant counterparts.

Recall Definition~\ref{d:d} of $U_+$, $U_-$ and $U$.
Let $M_{+}$ be a formal neighborhood of $U_{+}$ in $U_{+} \times \spec(\CC[\bt])$ (similarly for $M_-$ and $M$).  
%Note that $I^{\cY_{+/-}}$ 
%%(resp. $I^{\cY_+}$) 
%is an $H^*_{CR,T}(\cY_{+/-})$-valued function on an open neighborhood of the origin in $M_{+/-}$.
%%(resp. $H^*_{CR,T}(\cY_-)$
%% (resp. $M_+$).
We will make use of the following sheaf on $M_{+/-}$,
\[\cc O_{M_{+/-}} := \cc O^{\op{an}}_{U_{+/-}} [[ \bt]],\]
where $\cc O^{\op{an}}_{U_{+/-}}$ is the sheaf of analytic functions on $U_{+/-}$.  We will refer to this as the structure sheaf on $M_{+/-}$.
%Denote by $M_{+/-}(R_T)$ the same space, but with the structure sheaf replaced by 
%\[\cc O^{\op{an}}_{U_{+/-}}\otimes R_T [[ \bt]].\]

The following theorem is a special case of Theorem 5.15 of \cite{CIJ}, the argument for which appeared previously in Lemma 4.7 of~\cite{Iri}.  The proof 
follows from the fact that $I^{\cY_+}$ lies on the Lagrangian cone $\cc L_{\cY_+}$.

\begin{theorem}\cite[Theorem 5.15]{CIJ} \label{t:mtD}
There exists an open subset $U_+^\circ$ containing a neighborhood of (the preimage of) the origin such that $U_+ \setminus U_+^\circ$ is discrete, and if we let $M_+^\circ = M_+|_{U_+^\circ}$, we have the following over $M_+^\circ(R_T) \times \CC_z$:% \note{do over $\CC_z$??}
\begin{itemize}
\item a flat connection $\nabla^+ = d + \mathbf{A}(q, \bt, z)$ defined over
\[\mathbf{F}^+ = H^*_{CR, T}(\cY_+) \otimes_{R_T} \cc O_{M_+^\circ(R_T) \times \CC_z};\]
%\[\mathbf{F}^+ = H^*_{CR, T}(\cY_+) \otimes_{R_T} (\cc O_{U_+^\circ \times \CC_z})[[\bt]];\]
%%\item a vector field $\mathbf{E}^+$ on $M_+^\circ(R_T)$;
\item a mirror map $\tau_+^T: M_+^\circ(R_T) \to H^*_{CR, T}(\cY_+)$;
\item a global section $Y^+(q, \bt, z)$ of $\mathbf{F}^+$;
\end{itemize}
such that $\nabla^+$ is equal to the pullback $(\tau_+^T)^* \nabla^{\cY_+^T}$ of the quantum $D$-module of $\cY_+$, 
%the pushforward of $\mathbf{E}^+$ is the Euler vector field of $\cY_+$, 
and $I^{\cY_+}(q, \bt, z) = z L^{-1}(\tau_+^T(q, \bt), z) Y^+(q, \bt, z)$ on $\{z \neq 0\}$.
\end{theorem}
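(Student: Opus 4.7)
The plan is to deduce the theorem from the mirror theorem for toric Deligne--Mumford stacks, which ensures that $I^{\cY_+}(q, \bt, z)$ is a formal point on Givental's Lagrangian cone $\cc L_{\cY_+}$ over $R_T[[q, \bt]]$. Recall that $\cc L_{\cY_+}$ is a ruled cone whose tangent spaces $T_f \cc L_{\cY_+}$ satisfy $z T_f \cc L_{\cY_+} \subset T_f \cc L_{\cY_+}$ and $T_f \cc L_{\cY_+} \cap \cc L_{\cY_+} = z T_f \cc L_{\cY_+}$; moreover the tangent space at a cone point of the form $z L^{-1}(\tau, z) \alpha$ is precisely $z L^{-1}(\tau, z) \cdot (H^*_{CR, T}(\cY_+) \otimes \cc O)$. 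This cone-theoretic description is the bridge between the explicit hypergeometric $I$-function and the intrinsic quantum invariants entering $L^{\cY_+^T}$.

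First I would extract the mirror map. Using the convergence of $I^{\cY_+}$ in $q$ that was already invoked in Theorem~\ref{t:mI}, expand
\[ I^{\cY_+}(q, \bt, z) = z \cdot \ii + \tau_+^T(q, \bt) + O(z^{-1}), \]
and define $\tau_+^T(q, \bt)$ to be the coefficient of $z^0$ in $I^{\cY_+} - z \ii$. By the tangent-space description above, there is a unique formal power series $Y^+(q, \bt, z)$ in nonnegative powers of $z$, with values in $H^*_{CR, T}(\cY_+) \otimes \cc O_{M_+^\circ(R_T) \times \CC_z}$, such that
\[ I^{\cY_+}(q, \bt, z) = z L^{-1}(\tau_+^T(q, \bt), z) Y^+(q, \bt, z). \]
Existence follows from Birkhoff factorization applied to $I^{\cY_+}$ against the fundamental solution $L(\tau_+^T, z) z^{-\op{Gr}} z^{\rho}$, while uniqueness follows from the normalization $L(\tau, z) = \on{Id} + O(z^{-1})$.

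Second, I would transfer the quantum connection from $\cY_+$ to $M_+^\circ$ via $\tau_+^T$. Set $\nabla^+ := (\tau_+^T)^* \nabla^{\cY_+^T}$, which is flat because $\nabla^{\cY_+^T}$ is. The connection one-form $\mathbf{A}$ is then explicitly the pullback of the quantum product $\bullet_{\tau_+^T(q, \bt)}^{\cY_+^T}$ along the differential of $\tau_+^T$. Since $L(\tau, z) z^{-\op{Gr}} z^{\rho}$ is a fundamental solution of $\nabla^{\cY_+^T}$, the relation $I^{\cY_+}= z L^{-1}(\tau_+^T, z) Y^+$ becomes a statement internal to the pulled-back $D$-module and no further compatibility need be checked.

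The main obstacle will be identifying the open set $U_+^\circ$ on which the Birkhoff factorization is regular. Generically $Y^+(q, \bt, 0)$ is invertible, but it can degenerate along a divisor of $U_+$; equivalently the derivatives $z \nabla^+_i I^{\cY_+}$ may fail to span the tangent space to $\cc L_{\cY_+}$ at $I^{\cY_+}$. One must therefore exclude the discrete locus where this rank condition fails, which by Hartogs-type arguments is closed of codimension one in the analytic $q$-direction and may be avoided while still containing a neighborhood of the preimage of the origin (where $Y^+|_{z=0} = \ii$ by direct inspection of the leading term of the hypergeometric series). This discreteness statement is exactly what is established in \cite[Lemma~4.7]{Iri} and \cite[Theorem~5.15]{CIJ}, and the argument carries over to our equivariant orbifold setting by replacing non-equivariant cohomology with $R_T$-modules throughout.
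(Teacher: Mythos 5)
Your proposal reconstructs the argument that the paper attributes to \cite[Theorem~5.15]{CIJ} and \cite[Lemma~4.7]{Iri}; the paper itself does not reprove the theorem but only notes that ``the proof follows from the fact that $I^{\cY_+}$ lies on the Lagrangian cone $\cc L_{\cY_+}$.'' Your route --- extract $\tau_+^T$ from the $z^0$-coefficient of $I^{\cY_+}$, use the ruling of the cone to write $I^{\cY_+} = z L^{-1}(\tau_+^T, z) Y^+$ with $Y^+$ holomorphic in $z$ (Birkhoff factorization), define $\nabla^+$ as the pullback of the quantum connection, and shrink $U_+$ to a $U_+^\circ$ avoiding the discrete degeneracy locus of $Y^+(q,\bt,0)$ --- is exactly the cited argument, transported to the $R_T$-equivariant orbifold setting, so the proposal is correct and matches the intended proof.

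One minor caution: in general the $I$-function has the asymptotics $F(q,\bt)z + G(q,\bt) + O(z^{-1})$, and the mirror map is $G/F$; here the leading scalar $F \equiv 1$ because the unit coefficient in $I^{\cY_+}$ is normalized, so your identification of $\tau_+^T$ as the $z^0$-coefficient is valid but worth flagging as a normalization rather than automatic. Also, since $U_+$ is one-dimensional, the ``Hartogs-type'' language is unnecessary: discreteness of the bad locus follows immediately from analyticity of $\det Y^+(q,\bt,0)$ and its nonvanishing at $q = 0$.
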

In the above, $M_+^\circ(R_T)$ denotes the space $M_+^\circ$, but with the structure sheaf replaced by 
\[\cc O_{M_+^\circ} \otimes R_T,\]
and $\CC_z$ denotes $\CC$ with coordinate $z$.
 Theorem 5.15 of \cite{CIJ} shows that the matrix $\mathbf{A}(q, \bt, z)$ takes a particularly simple form, but we will not use this fact directly in what follows.  
The analogous statement holds for $\cY_-$ on $U_-$.

By the $D$-module mirror theorem described above for $\cY_+$ and $\cY_-,$ we have $D$-modules $(\mathbf{F}^{+/-}, \nabla^{+/-})$ over $M_{+/-}^\circ$.  Denote 
\[U^\circ := \pi_+^{-1}\big(U_+^\circ\big) \cap \pi_-^{-1}\big(U_-^\circ\big)\] and let $M^\circ$ 
denote the intersection
\[M^\circ :=  \pi_+^{-1}\big(M_+^\circ\big) \cap \pi_-^{-1}\big(M_-^\circ\big)\] over $U^\circ$.  By a slight abuse of notation, denote by $\pi_{+/-}$ the maps \[M^\circ(R_T) \to M^\circ_{+/-}(R_T).\]  Both $D$-modules on $M_+^\circ(R_T)$ and $M_-^\circ(R_T)$ pull back to $M^\circ(R_T)$.
Then a $D$-module formulation of the crepant transformation conjecture asserts that there exists (for each $l \in \ZZ$) a gauge transformation $\Theta_l$ over $M^\circ(R_T)$ relating the two $D$-modules.
The main theorem (Theorem 6.3) of \cite{CIJ}, in the special case of $\cY_-$ and $\cY_+$ gives the theorem for $l = 0$.  The same methods prove the theorem for general $l \in \ZZ$.
%\begin{definition}
%
%\end{definition}

Define $\Theta_l^T\in \hom \left( H^*_{CR, T}( \cY_-), H^*_{CR,T}(\cY_+)\right) \otimes_{R_T} \cc O_{{M^\circ(R_T)} \times \CC_z}$ as 
\[
\Theta_l^T := L^{\cY_+^T}(\tau_+^T\circ \pi_+( q, \bt), z) \circ \UU_l \circ L^{\cY_-^T}(\tau_-^T\circ \pi_-(q, \bt), z)^{-1}.\]  \begin{remark} In the ``Proof that Theorem 6.1 implies 6.3'' of \cite{CIJ} it is shown that this matrix is in fact polynomial in $z$.  
\end{remark}
\begin{theorem}\cite[Theorem 6.3]{CIJ} \label{t:CTCD} The gauge transformation
$\Theta_l^T 
%:= L^{\cY_+^T}(\tau_+( q, \bt), z) \circ \UU_l \circ L^{\cY_-^T}(\tau_-(q, \bt), z)^{-1}.
$ satisfies the following:
\begin{itemize}
%\item the map $\Theta_l^T$ is an element of $\hom \left( H^*_{CR, T}( \cY_-), H^*_{CR,T}(\cY_+)\right) \otimes \cc O_{M^\circ}[z]$ in particular it is polynomial in $z$;
%\item $E^+ = E^-$ on $M^\circ$;
\item $\nabla^-$ and $\nabla^+$ are gauge equivalent via $\Theta_l^T$, i.e.
\[ \nabla^+ \circ \Theta_l^T = \Theta_l^T \circ \nabla^-;\]
\item for all $E \in D^T(\cY_-)$, 
\[ \Theta_l^T (\bs^{\cY_-^T}(E)(\tau_-^T \circ \pi_-(t, \bt),z)) = \bs^{\cY_+^T}(\vgit^T(E))(\tau_+^T\circ \pi_+(q, \bt), z);\]
\item $\Theta_l^T$ preserves the pairing, i.e. $S^{\cY_+}( \Theta_l^T(\alpha), \Theta_l^T(\beta)) = S^{\cY_-}(\alpha, \beta)$.
\end{itemize}
\end{theorem}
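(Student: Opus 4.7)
The plan is to prove the three assertions in the order: the flat section identity, the gauge equivalence, and the pairing identity.  The key inputs are Theorem~\ref{t:mI} (which packages the analytic continuation as the operator $\UU_l^T$, identifies it as the map induced by $\vgit^T$ through the Gamma-twisted Chern character $\psi^{\cY_\pm}$, and shows it preserves the symplectic pairing) together with the $D$-module mirror Theorem~\ref{t:mtD}.

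First I would prove the flat section identity.  Starting from the formula
\[
\bs^{\cY_\pm^T}(E)(\bt, z) \;=\; (2\pi i)^{-\hat c^\pm}\, L^{\cY_\pm^T}(\bt, z)\, \psi^{\cY_\pm}(E),
\]
with $\psi^\square = z^{-\op{Gr}} z^\rho \hat\Gamma_\square (2\pi i)^{\deg_0/2} I^* \circ \ch$, specialize $\bt = \tau_\pm^T \circ \pi_\pm$ and plug into the definition
\[
\Theta_l^T \;=\; L^{\cY_+^T}(\tau_+^T \circ \pi_+, z) \circ \UU_l^T \circ L^{\cY_-^T}(\tau_-^T \circ \pi_-, z)^{-1}.
\]
The two copies of $L^{\cY_-^T}(\tau_-^T \circ \pi_-, z)$ cancel, leaving $L^{\cY_+^T}(\tau_+^T \circ \pi_+, z) \circ \UU_l^T \circ \psi^{\cY_-}(E)$ up to the scalar $(2\pi i)^{-\hat c^-}$.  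By Theorem~\ref{t:mI}(2), $\UU_l^T \circ \psi^{\cY_-} = \psi^{\cY_+} \circ \vgit^T$, and the Calabi--Yau hypothesis $\sum c_j = d$ forces $\hat c^+ = \hat c^- = N$, so the $(2\pi i)^{-\hat c^\pm}$ prefactors match and one arrives at $\bs^{\cY_+^T}(\vgit^T(E))(\tau_+^T \circ \pi_+, z)$, as desired.

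The gauge equivalence then follows for free.  By Theorem~\ref{t:mtD} and its analogue for $\cY_-$, $\nabla^\pm$ is the pullback by $\tau_\pm^T$ of the equivariant quantum connection, whose fundamental solution is $L^{\cY_\pm^T}(\bt, z) z^{-\op{Gr}} z^\rho$; in particular each $\bs^{\cY_\pm^T}(E)(\tau_\pm^T \circ \pi_\pm, z)$ is $\nabla^\pm$-flat.  The previous step thus shows $\Theta_l^T$ carries a $\nabla^-$-flat section to a $\nabla^+$-flat section for every $E \in D^T(\cY_-)$; since the equivariant Chern characters of $T$-equivariant line bundles on $\cY_-$ span $H^*_{CR, T}(\cY_-)$ (the equivariant analogue of Lemma~\ref{l:chgen}), these flat sections give a full frame and $\Theta_l^T$ is forced to intertwine $\nabla^-$ and $\nabla^+$.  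The pairing identity is then a short sandwich computation: Theorem~\ref{t:mI}(3) yields preservation of the symplectic pairing by $\UU_l^T$, and combining this with the flatness identity $S^\square(L^\square \alpha, L^\square \beta) = \langle \alpha, \beta\rangle^\square$ of \eqref{e:SL} on both sides produces $S^{\cY_+}(\Theta_l^T \alpha, \Theta_l^T \beta) = S^{\cY_-}(\alpha, \beta)$.

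The main technical delicacy is in the flat section step, where one has to track the $z^{-\op{Gr}} z^\rho$ twists and Gamma class conjugations embedded inside both $\psi^{\cY_\pm}$ and $\UU_l^T$, recalling that $\UU_l^T = z^{-\op{Gr}} \hat\Gamma_{\cY_+} (2\pi i)^{\deg_0/2} \overline\UU_l^T (2\pi i)^{-\deg_0/2} \hat\Gamma_{\cY_-}^{-1} z^{\op{Gr}}$.  These twists were designed precisely so that this cancellation works, so once Theorem~\ref{t:mI}(2) is in hand the computation is bookkeeping; a direct verification bypassing that theorem would be considerably more painful.  A secondary concern is that Theorem~\ref{t:mI} is phrased at the level of $I$-functions, while the $D$-module statement concerns a full flat frame — but this gap is bridged by the cyclic vector property of the $I$-function implicit in Theorem~\ref{t:mtD}, so no additional work is required.
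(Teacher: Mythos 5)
Your proposal reaches the same conclusions with essentially the same inputs (Theorem~\ref{t:mI} and Theorem~\ref{t:mtD}), but the route differs from the paper's in two spots worth flagging.  For the gauge equivalence, the paper observes directly that $\Theta_l^T$ is by construction a conjugate of the two fundamental solution matrices, so it intertwines the pulled-back connections with no further input; you instead derive this from the flat-section identity plus a spanning argument (that the Gamma-twisted Chern characters of $T$-equivariant line bundles furnish a full flat frame).  Both are valid, but your argument imports an extra nontrivial fact (the equivariant analogue of Lemma~\ref{l:chgen}) that the paper's observation bypasses entirely.

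For the pairing, the paper goes via Proposition~\ref{p:pair}: since $\bs^\square$ identifies $S^\square$ with the Euler pairing $\chi(-,-)$ and $\vgit^T$ is an equivalence and hence preserves $\chi$, the claim follows from the second bullet by evaluating $S$ on flat sections and using the spanning of flat frames.  You instead invoke Theorem~\ref{t:mI}(3) ($\UU_l^T$ preserves the \emph{symplectic} residue pairing $\Omega$) together with the identity \eqref{e:SL}.  There is a gap here: \eqref{e:SL} is stated for $\alpha,\beta \in H^\square$ (i.e.\ $z$-independent), whereas $\UU_l^T (L^{\cY_-^T})^{-1}\alpha$ has genuine $z$-dependence, and $\Omega$-preservation (which involves a residue) is strictly weaker than preservation of the $z$-sesquilinear pairing $S$ on the nose.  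Your sandwich calculation actually needs the stronger, pre-residue statement $\langle \UU_l^T f(-z), \UU_l^T g(z)\rangle^{\cY_+} = \langle f(-z), g(z)\rangle^{\cY_-}$.  This is true (and is what is really established in the proof of Theorem~\ref{t:mI}(3), via $\psi^\square$ identifying the pairings with $\chi$), but as written you haven't bridged the gap between ``preserves $\Omega$'' and ``preserves $S$.''  The paper's route through the derived category avoids this subtlety cleanly by appealing to $\chi$-invariance of $\vgit^T$.

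Also a minor point: you say the Calabi--Yau hypothesis forces $\hat c^+ = \hat c^- = N$; in fact $\dim\cY_- = \dim\cY_+ = N$ holds regardless of the weight condition $\sum c_j = d$, so this prefactor match is automatic and does not depend on the CY assumption.
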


\begin{proof}
From its definition, it is immediate that $\Theta_l^T$ sends the solution matrix for the pullback of $\nabla^{\cY_-^T}$ to the solution matrix for the pullback of $\nabla^{\cY_+^T}$.
The second point is immediate from part (2) of Theorem~\ref{t:mI} and the definition of the flat sections $\bs^\square$.
The third point follows from the second together with the fact from Proposition~\ref{p:pair} that $\bs^\square$ identifies the pairing with the Euler pairing in the derived category.
\end{proof}

\subsubsection{Narrow CTC}

In this section we use the previous theorems on the equivariant quantum $D$-modules of $\cY_-$ and $\cY_+$ to formulate statements involving the narrow quantum $D$-modules of Definition~\ref{d:nard}.  This formulation makes no mention of equivariant cohomology.

%\begin{theorem} \label{t:mtDnar}
%Over $M_+^\circ \times \CC_z$ we have:
%%There exists an open subset $U_+^\circ$ containing a neighborhood of (the preimage of) the origin such that $U_+ \setminus U_+^\circ$ is discrete and if we let $M_+^\circ = M_+|_{U_+^\circ}$, we have the following over $M_+^\circ(R_T) \times \CC_z$: \note{do over $\CC_z$??}
%\begin{itemize}
%\item a flat connection $\nabla^+ = d + \mathbf{A}(q, \bt, z)$ defined over
%\[\mathbf{F}^{+, \op{nar}} = H^*_{\op{CR}, \op{nar}}(\cY_+) \otimes \cc O_{M_+^\circ \times \CC_z};\]
%%\[\mathbf{F}^+ = H^*_{CR, T}(\cY_+) \otimes_{R_T} (\cc O_{U_+^\circ \times \CC_z})[[\bt]];\]
%%%\item a vector field $\mathbf{E}^+$ on $M_+^\circ(R_T)$;
%\item a mirror map $\tau_+: M_+^\circ \to H^*_{\op{CR}}(\cY_+)$; 
%\item a global section $Y^+(q, \bt, z)$ of $\mathbf{F}^{+, \op{nar}}$;
%\end{itemize}
%such that $\nabla^{+, \op{nar}}$ is equal to the pullback $\tau^* \nabla^{\cY_+, \op{nar}}$ of the narrow quantum $D$-module of $\cY_+$.
%
%\end{theorem}

First we define narrow versions of the connections $\nabla^{+/-}$.
The connection $\nabla^{\cY_+^T}$ has a well defined non-equivariant limit $\nabla^{\cY_+}$ as described in part (3) of Section~\ref{ss:spec}.
%, so as stated in Theorem~\ref{}, $\nabla^+$ does as well.  
The restriction of $\nabla^{\cY_+}$ to $H^*_{\op{CR}, \op{nar}}(\cY_+)$ yields the flat connection $\nabla^{\cY_+, \op{nar}}$.  Define \[\tau_+: M^\circ \to H^*_{\op{CR}}(\cY_+)\] as the non-equivariant limit of $\tau_+^T \circ \pi_+$, note that $\tau_+$ maps to all of $H^*_{\op{CR}}(\cY_+)$, not just the narrow subspace.
\begin{definition}  We define the connection $\nabla^{+, \op{nar}}$ on 
\begin{align*} \mathbf{F}^{+, \op{nar}} & := H^*_{\op{CR}, \op{nar}}(\cY_+) \otimes \cc O_{M^\circ \times \CC_z} = \tau_+^*(H^*_{\op{CR}, \op{nar}}(\cY_+)) \\
\nabla^{+, \op{nar}} &:= \tau_+^*(\nabla^{\cY_+, \op{nar}}).
\end{align*}
\end{definition}
Define $\mathbf{F}^{-, \op{nar}}$ and $\nabla^{-, \op{nar}}$ analogously.
%The analogous statement holds for $\cY_-$ on $M_-^\circ$.  

\begin{definition}
Define $\overline \UU_l$ to be the non-equivariant limit of $\overline \UU_l^T$.
\begin{align*}
\overline \UU_l\colon &H^*_{\op{CR}}(\cY_-) \to H^*_{\op{CR}}(\cY_+) \\
&\ii_{g j^m} \mapsto \frac{1}{d}\sum_{b=0}^{d-1}\frac{\left( \xi^{b+m} e^{H }\right)^l\left(e^{dH}-1\right)}{e^{H} \xi^{b +m} - 1} \tilde \ii_{gj^{-b}},
\end{align*}
where $\xi  = e^{2 \pi i/d}$.  Observe that $\overline \UU_l$ maps $H^*_{\op{CR}, \op{nar}}(\cY_-)$ to $H^*_{\op{CR}, \op{nar}}(\cY_+)$.
Define \[\overline \UU_l^{\op{nar}} := \overline \UU_l |_{H^*_{\op{CR}, \op{nar}}(\cY_-)} : H^*_{\op{CR}, \op{nar}}(\cY_-)\to H^*_{\op{CR}, \op{nar}}(\cY_+).\]
\end{definition}
  
\begin{lemma}\label{l:narind}
The following diagram commutes:
\[
\begin{tikzcd}
D(\cY_-)_{BG} \ar[r, "\vgit"] \ar[d, "I^*\circ \op{ch}"] &  D(\cY_+)_{\PP(G)} \ar[d, "I^*\circ \op{ch}"]\\
H^*_{\op{CR}, \op{nar}}(\cY_-)\ar[r, " \overline \UU_l"] & H^*_{\op{CR}, \op{nar}}(\cY_+)
\end{tikzcd}
\]
\end{lemma}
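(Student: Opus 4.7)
My approach is to deduce this from the $T$-equivariant analogue already established in Lemma~\ref{l:induced}, by specializing to the non-equivariant limit $\lambda = 0$. For this I would first verify two compatibilities: (i) the forgetful functor $D_T(\cY_\pm) \to D(\cY_\pm)$ intertwines $\vgit^T$ with $\vgit$, which follows directly from the construction since both are defined via pullback and inverse pullback through the window $W_{\Lambda, l}$, and these are intrinsically non-equivariant operations; and (ii) the equivariant map $\overline \UU^T_l$ specializes to $\overline \UU_l$ at $\lambda = 0$, which is immediate from Definitions~\ref{d:utl}.

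Since all arrows in the square are $\CC$-linear, it suffices to check commutativity on a generating set of $D(\cY_-)_{BG}$. By Proposition~\ref{p:strgenBG}, this category is strongly generated by $i^0_*(D(BG))$, and because $G$ is finite abelian, $D(BG)$ is in turn generated by the line bundles $\{\cc O_{BG}(\rho)\}_{\rho \in \widehat{G}}$. Each such line bundle admits a canonical lift to $D_T(\cY_-)$ with trivial $T$-action, since $BG$ is contained in the $T$-fixed locus of $\cY_-$. Applying Lemma~\ref{l:induced} to these $T$-equivariant lifts and then specializing $\lambda = 0$ (using the two compatibilities above) gives the desired commutativity on generators. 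That the resulting classes lie in the narrow subspaces is built in: $\ch(i^0_*(\cc O_{BG}(\rho))) \in H^*_{\op{CR}, \op{nar}}(\cY_-)$ by Lemma~\ref{l:chgen}, and the image under $\overline \UU_l$ (or equivalently, $\ch \circ \vgit$) lies in $H^*_{\op{CR}, \op{nar}}(\cY_+)$ by the subsequent lemma on $\cY_+$.

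I do not anticipate any serious obstacle, since the bulk of the work is already done in Lemma~\ref{l:induced}. The one minor subtlety is ensuring the non-equivariant limit is well-defined on the classes of interest: this holds because equivariant Chern characters of objects supported on the $T$-fixed locus are polynomial in $\lambda$, hence specialize cleanly at $\lambda = 0$. As an alternative route that avoids equivariant cohomology entirely, one could instead argue directly by combining the explicit formula for $\vgit$ on generators given in Proposition~\ref{p:vgit} with the Chern character computations of \eqref{e:chzerobeta} and its $\cY_+$-analogue, following the pattern of the proof of Lemma~\ref{l:induced} verbatim but with $\lambda$ set to zero throughout.
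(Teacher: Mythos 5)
Your proof is correct and takes essentially the same approach as the paper: the paper's one-line argument is ``by forgetting the $T$-action in Lemma~\ref{l:induced}'' followed by the observation that $I^* \circ \ch$ lands in the narrow subspaces; your proposal spells out the same strategy in more detail (specializing $\lambda=0$, checking the two forgetting-compatibilities, and observing narrowness via Lemma~\ref{l:chgen} and its $\cY_+$-analogue). The reduction to generators is not strictly needed—once the non-equivariant version of Lemma~\ref{l:induced} commutes on all of $D(\cY_\pm)$, the restricted statement follows by simply restricting the domain—but this does no harm.
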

\begin{proof}
By forgetting the $T$-action in Lemma~\ref{l:induced}, the following diagram is seen to commute:
\[
\begin{tikzcd}
D(\cY_-) \ar[r, "\vgit"] \ar[d, "I^*\circ \op{ch}"] &  D(\cY_+) \ar[d, "I^*\circ \op{ch}"]\\
H^*_{\op{CR}}(\cY_-)\ar[r, " \overline \UU_l"] & H^*_{\op{CR}}(\cY_+).
\end{tikzcd}
\]
From this the lemma follows immediately, after noting that $I^* \circ \ch$ maps $D(\cY_-)_{BG}$ (resp. $D(\cY_+)_{\PP(G)}$) onto $H^*_{\op{CR}, \op{nar}}(\cY_-)$ (resp. $H^*_{\op{CR}, \op{nar}}(\cY_+)$).
\end{proof}

With this we can state the narrow crepant transformation conjecture.
\begin{theorem}\label{t:CTCnar} There exists a gauge transformation
\[
\Theta_l^{\op{nar}} \in \hom \left( H^*_{\op{CR}, \op{nar}}( \cY_-), H^*_{\op{CR}, \op{nar}}(\cY_+)\right) \otimes \cc O_{{M^\circ} \times \CC_z}
%:= L^{\cY_+^T}(\tau_+( q, \bt), z) \circ \UU_l \circ L^{\cY_-^T}(\tau_-(q, \bt), z)^{-1}.
\] such that:
\begin{itemize}
%\item the map $\Theta_l^T$ is an element of $\hom \left( H^*_{CR, T}( \cY_-), H^*_{CR,T}(\cY_+)\right) \otimes \cc O_{M^\circ}[z]$ in particular it is polynomial in $z$;
%\item $E^+ = E^-$ on $M^\circ$;
\item $\nabla^{-, \op{nar}}$ and $\nabla^{+, \op{nar}}$ are gauge equivalent via $\Theta_l$, i.e.
\[ \nabla^{+, \op{nar}} \circ \Theta_l^{\op{nar}} = \Theta_l^{\op{nar}} \circ \nabla^{-, \op{nar}};\]
\item for all $E \in D(\cY_-)_{BG}$, 
\[ \Theta_l^{\op{nar}} (\bs^{\cY_-, \op{nar}}(E)(\tau_-(t, \bt),z)) = \bs^{\cY_+, \op{nar}}(\vgit(E))(\tau_+(q, \bt), z);\]
\item $\Theta_l^{\op{nar}}$ preserves the pairing, i.e. $S^{\cY_+, \op{nar}}( \Theta_l^{\op{nar}}(\alpha), \Theta_l^{\op{nar}}(\beta)) = S^{\cY_-, \op{nar}}(\alpha, \beta)$.
\end{itemize}
\end{theorem}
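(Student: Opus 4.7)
The plan is to deduce Theorem~\ref{t:CTCnar} from the equivariant $D$-module CTC of Theorem~\ref{t:CTCD} by restricting to the narrow subspaces and taking the non-equivariant limit $\lambda \to 0$. First I would show that the restriction of the equivariant gauge transformation $\Theta_l^T$ to narrow classes admits a well-defined non-equivariant limit, and define
\[\Theta_l^{\op{nar}} := \lim_{\lambda \to 0}\; \Theta_l^T\big|_{H^*_{\op{CR},\op{nar}}(\cY_-)}.\]

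For the limit to make sense, I would first verify that the narrow subspace is preserved by the operators involved. The map $\overline{\UU}_l^T$ preserves narrowness by direct inspection of its definition (the formula only shifts the twisted sector label by $j^{-b}$ and so takes elements of $G_{\op{nar}}$ into classes supported on $\PP(G)/\PP(G)_{\op{nar}}$), and $\UU_l^T$ inherits this since conjugation by $z^{-\op{Gr}}z^\rho \hat\Gamma(2\pi i)^{\deg_0/2}$ preserves the decomposition by twisted sectors. By the equivariant analogue of Proposition~\ref{p:narpres}, the operators $L^{\cY_\pm^T}$ likewise preserve the narrow subspaces. Writing $\Theta_l^T = L^{\cY_+^T}\circ \UU_l^T \circ (L^{\cY_-^T})^{-1}$ and invoking Proposition~\ref{p:narpres} and its analogue in \cite{Sh1} (which says that on the narrow subspace each factor has a non-equivariant limit), one concludes that $\Theta_l^T|_{\op{nar}}$ extends regularly across $\lambda = 0$. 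I expect this to be the main obstacle: one must track carefully that no pole in $\lambda$ is introduced by the inversion $(L^{\cY_-^T})^{-1}$ on narrow classes, and this requires the nondegeneracy of the narrow pairing together with the fact that $L^{\cY_-^T}$ acts on the narrow subspace as a well-defined invertible operator over $R_T$ localized away only from the origin in the $z$-variable.

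Once the limit is established, the three bulleted properties follow almost formally from the equivariant statements. For flatness, apply $\lim_{\lambda \to 0}$ to the equivariant identity $\nabla^+ \circ \Theta_l^T = \Theta_l^T \circ \nabla^-$; the connection $\nabla^{\cY_\pm^T}$ restricts to $\nabla^{\cY_\pm,\op{nar}}$ on the narrow part and has a non-equivariant limit there, yielding
\[\nabla^{+,\op{nar}} \circ \Theta_l^{\op{nar}} = \Theta_l^{\op{nar}} \circ \nabla^{-,\op{nar}}.\]
For the integral-structure compatibility, start with the equivariant statement $\Theta_l^T(\bs^{\cY_-^T}(E)) = \bs^{\cY_+^T}(\vgit^T(E))$ for $E \in D^T(\cY_-)$; for $E \in D(\cY_-)_{BG}$ the non-equivariant flat section $\bs^{\cY_-,\op{nar}}(E)$ is the $\lambda \to 0$ limit of the equivariant flat section, and by Lemma~\ref{l:narind} (the non-equivariant refinement of Lemma~\ref{l:induced}) $\vgit$ sends $D(\cY_-)_{BG}$ to $D(\cY_+)_{\PP(G)}$ in a manner compatible with $\overline{\UU}_l^{\op{nar}}$. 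Taking $\lambda \to 0$ gives the desired identity with $\bs^{\cY_\pm,\op{nar}}$.

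Finally, the pairing-preservation property follows from the equivariant pairing-preservation in Theorem~\ref{t:CTCD} by taking $\lambda \to 0$, together with the observation (from Section~\ref{ss:spec}) that the non-equivariant narrow pairing $S^{\cY_\pm,\op{nar}}$ is the restriction of $\lim_{\lambda\to 0}S^{\cY_\pm^T}$ to $H^*_{\op{CR},\op{nar}}(\cY_\pm)$, which is well-defined and nondegenerate. Alternatively, the pairing identity can be derived directly from the integral-structure compatibility via Proposition~\ref{p:pair}, which identifies $S^\square$ with the Euler pairing in the derived category, combined with the fact that $\vgit$ is an equivalence and so preserves Euler pairings.
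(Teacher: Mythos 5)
Your proposal follows essentially the same route as the paper: define $\Theta_l^{\op{nar}}$ by passing to the non-equivariant limit of the equivariant $\Theta_l^T$ from Theorem~\ref{t:CTCD} and restricting to the narrow subspaces, then derive the three bullets from Lemma~\ref{l:narind}, Proposition~\ref{p:narpres} (for narrowness-preservation of $L^{\cY_\pm}$ and their inverses), and Proposition~\ref{p:pair}. The only inessential difference is that you restrict $\Theta_l^T$ to narrow before taking $\lambda\to 0$ whereas the paper takes the limit of the full $\Theta_l^T$ first; the paper already establishes (in the paragraphs preceding the theorem and via Section~\ref{ss:spec} and \cite{Sh1}) that $\nabla^{\cY_\pm^T}$, $L^{\cY_\pm^T}$, and $\tau_\pm^T$ have non-equivariant limits on all of $H^*_{\op{CR}}(\cY_\pm)$ because evaluation maps are proper, so what you flag as the "main obstacle" is already dispatched there.
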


\begin{proof}
The proof follows almost immediately from Theorem~\ref{t:CTCD} by taking non-equivariant limits.
Define $\Theta_l$ to be the non-equivariant limit of $\Theta_l^T$, and define $\Theta_l^{\op{nar}}$ to be its restriction to $H^*_{\op{CR}, \op{nar}}( \cY_-)$.  Note that by Proposition~\ref{p:narpres} both $L^{\cY_+}$ and $L^{\cY_-}$ (and their inverses) preserve the narrow state spaces.  Therefore 
\[ \Theta_l|_{H^*_{\op{CR}, \op{nar}}( \cY_-)} =  L^{\cY_+}|_{H^*_{\op{CR}, \op{nar}}( \cY_+)} \circ (\UU_l)|_{H^*_{\op{CR}, \op{nar}}( \cY_-)}\circ (L^{\cY_-})^{-1} |_{H^*_{\op{CR}, \op{nar}}( \cY_-)}.\]

It is then immediate that $\Theta_l^{\op{nar}}$ sends the solution matrix for the pullback of $\nabla^{\cY_-, \op{nar}}$ to the solution matrix for the pullback of $\nabla^{\cY_+, \op{nar}}$.
The second point follows from Lemma~\ref{l:narind} and the definition of the flat sections $\bs^\square$.
The third point follows from the second together with the fact from Proposition~\ref{p:pair} that $\bs^\square$ identifies the pairing with the Euler pairing in the derived category.
\end{proof}

%big question is what derived equivalence they use in CIJ....
%email yunfeng?
%check in for $\PP(c_1, \ldots , c_n)$
%There exists 
%$\tilde M$ is universal cover of $\hat M_{reg} \setminus \{y^e = 0, c, \infty\}$.... what is it in this case????? probably not easier since we need big I functions
%
%
%1. give statement of local CTC in our case, as a theorem, use the universal cover as in CIJ, but the D-module with $z$ as in CIR.  perhaps state it as a global quantum connection as in remark 6.6
%\begin{theorem}\label{t:ctcequiv}
%There exists a space $\tilde M^\circ$
%\end{theorem}
%
%2. give I function statement, proof is appendix.  do this by hand.  need to show that vgit matches analytic cont for all $l$.
%3. show 2 implies 1, leaning heavily on CIJ
%4.  show 1 implies LC/CY correspondence
%
%
%q2: how can prop 2.21 of \cite{CIR} hold regarding the pairings if there is orlov's equivalence??  Why the sign difference in one of them???
%
%
%
%mirror theorem, say what $\tilde M$ is ...
%
%

\subsection{The LG/CY correspondence via CTC}\label{ss:lgcyctc}
In this section we show how the LG/CY correspondence follows directly from the narrow crepant transformation conjecture above, together with Theorems~\ref{t:MLK} and~\ref{t:QSD}.

\begin{definition}
Define $\bar \tau_+: M^\circ_+ \to H^*_{\op{CR}, \op{amb}}(\cZ)$ as the composition 
\[\bar \tau_+:= \bar f_+ \circ \tau_+,\] where $\bar f_+: H^*_{\op{CR}}(\cY_+) \to H^*_{\op{CR}, \op{amb}}(\cZ)$ is the map from \eqref{e:barf}.  Let
\[ \mathbf{F}^{\cZ, \op{amb}} := H^*_{\op{CR}, \op{amb}}(\cZ) \otimes \cc O_{M^\circ_+ \times \CC_z}\]
and define
\[
\overline \nabla^{+, \op{amb}} := \bar \tau_+^*(\nabla^{\cZ, \op{amb}}).\]
\end{definition}

\begin{definition}
Define $\bar \tau_-: M^\circ_- \to \cc H_{\op{nar}}(w, G)$ as the composition \[\bar \tau_- := f \circ \tau_+,\] where $ f: H^*_{\op{CR}}(\cY_-) \to \cc H_{\op{nar}}(w, G)$ is the map from \eqref{e:f}.  Let
\[ \mathbf{F}^{w, G} := \cc H_{\op{nar}}(w, G) \otimes \cc O_{M^\circ_- \times \CC_z}\]
and define
\[
\overline \nabla^{-, \op{nar}} := \bar \tau_-^*(\nabla^{(w, G), \op{nar}}).\]
\end{definition}
\begin{definition}
Define 
\[
\overline \Theta_l^{\op{nar}} \in \hom \left( \cc H_{\op{nar}}(w, G), H^*_{\op{CR}, \op{amb}}(\cZ)\right) \otimes \cc O_{{M^\circ} \times \CC_z}\]
as $\overline \Theta_l^{\op{nar}} := \bar \Delta_+ \circ  \Theta_l^{\op{nar}} \circ \bar \Delta_-^{-1}$.
\end{definition}
Note that the factors of $2\pi i z$ in $\bar \Delta_+$ and $\bar \Delta_-^{-1}$ cancel, so 
$\overline \Theta_l^{\op{nar}}$ can also be written as $\Delta_+ \circ  \Theta_l^{\op{nar}}  \circ \Delta_-^{-1}$, which is still polynomial in $z$.

\begin{theorem}\label{t:LGCY} We have the following
\begin{itemize}
%\item the map $\Theta_l^T$ is an element of $\hom \left( H^*_{CR, T}( \cY_-), H^*_{CR,T}(\cY_+)\right) \otimes \cc O_{M^\circ}[z]$ in particular it is polynomial in $z$;
%\item $E^+ = E^-$ on $M^\circ$;
\item $\overline \nabla^{-, \op{nar}}$ and $\overline \nabla^{+, \op{amb}}$ are gauge equivalent via $ \overline \Theta_l^{\op{nar}}$, i.e.
\[ \overline \nabla^{+, \op{amb}} \circ \overline \Theta_l^{\op{nar}} =\overline \Theta_l^{\op{nar}} \circ \overline \nabla^{-, \op{nar}};\]
\item for all $E \in i^1_*(D(BG))$, 
\[ \overline \Theta_l^{\op{nar}} (\bs^{(w,G), nar}(E)(\bar \tau_-(t, \bt),z)) = \bs^{\cZ, \op{amb}}(\orlov(E))(\bar \tau_+(q, \bt), z);\]
\item $\overline \Theta_l^{\op{nar}}$ preserves the pairing up to a sign:
\[- S^{\cZ, \op{amb}}(\overline \Theta_l^{\op{nar}}(\alpha), \overline \Theta_l^{\op{nar}}(\beta)) = S^{(w, G), \op{nar}}(\alpha, \beta).\] 
%\note{this can only be up to a sign!}
\end{itemize}
\end{theorem}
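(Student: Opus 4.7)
The plan is to derive each of the three bullets by conjugating the corresponding assertion in the narrow CTC (Theorem~\ref{t:CTCnar}) by the state-space isomorphisms of Theorems~\ref{t:MLK} and~\ref{t:QSD}, and invoking the $K$-theoretic square of Theorem~\ref{t:Ksquare} at the point in the derivation where the failure of the derived functor square (Example~\ref{countex}) would otherwise obstruct the argument. Since $\overline \Theta_l^{\op{nar}} = \bar \Delta_+ \circ \Theta_l^{\op{nar}} \circ \bar \Delta_-^{-1}$ by definition, each structure to be checked on $\overline \Theta_l^{\op{nar}}$ is a composition of structures already available for the three constituent maps, after accounting for the various changes of variable $\tau_\pm$, $f$, and $\bar f_+$.

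For the first bullet, Theorem~\ref{t:MLK} identifies $\nabla^{\cY_-,\op{nar}}$ with $f^*\nabla^{(w,G),\op{nar}}$ via $\bar \Delta_-$, so pulling back by $\tau_-$ identifies $\nabla^{-,\op{nar}}$ with $\overline \nabla^{-,\op{nar}}$ via $\bar \Delta_-$; analogously, Theorem~\ref{t:QSD} identifies $\nabla^{+,\op{nar}}$ with $\overline \nabla^{+,\op{amb}}$ via $\bar \Delta_+$. The gauge equivalence $\nabla^{+,\op{nar}} \circ \Theta_l^{\op{nar}} = \Theta_l^{\op{nar}} \circ \nabla^{-,\op{nar}}$ of Theorem~\ref{t:CTCnar} then conjugates to $\overline \nabla^{+,\op{amb}} \circ \overline \Theta_l^{\op{nar}} = \overline \Theta_l^{\op{nar}} \circ \overline \nabla^{-,\op{nar}}$.

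For the second bullet, write $E = i^1_*(F) \in i^1_*(D(BG))$ and set $\tilde E := i^0_*(F) \in D(\cY_-)_{BG}$, so that $i^1_* \circ \pi_*(\tilde E) = E$ (using $\pi_* \circ i^0_* = \op{id}$). Composing the three commutative squares: by Theorem~\ref{t:MLK},
\[ \bar \Delta_-^{-1}\bigl(\bs^{(w,G),\op{nar}}(E)(\bar \tau_-, z)\bigr) = \bs^{\cY_-,\op{nar}}(\tilde E)(\tau_-, z); \]
by Theorem~\ref{t:CTCnar}, $\Theta_l^{\op{nar}}$ sends the right-hand side to $\bs^{\cY_+,\op{nar}}(\vgit(\tilde E))(\tau_+, z)$; and by Theorem~\ref{t:QSD}, $\bar \Delta_+$ sends that to $\bs^{\cZ,\op{amb}}(j^* \circ \pi_* \circ \vgit(\tilde E))(\bar \tau_+, z)$. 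The remaining step is to identify $j^* \circ \pi_* \circ \vgit(\tilde E)$ with $\orlov(E)$, and this is the crux of the argument: Example~\ref{countex} shows that these two objects need not be isomorphic in the derived category, but the flat sections $\bs^\square$ depend on their argument only through its class in $K$-theory (via the Chern character), so Theorem~\ref{t:Ksquare} supplies $[j^* \circ \pi_* \circ \vgit(\tilde E)] = [\orlov \circ i^1_* \circ \pi_*(\tilde E)] = [\orlov(E)]$ and the two flat sections coincide. This $K$-theoretic reduction is the main subtlety of the proof.

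For the third bullet, compose the pairing identities: Theorem~\ref{t:QSD} gives $S^{\cZ,\op{amb}}(\bar \Delta_+ \alpha, \bar \Delta_+ \beta) = S^{\cY_+,\op{nar}}(\alpha, \beta)$; Theorem~\ref{t:CTCnar} gives $S^{\cY_+,\op{nar}}(\Theta_l^{\op{nar}} \alpha, \Theta_l^{\op{nar}} \beta) = S^{\cY_-,\op{nar}}(\alpha, \beta)$; and Theorem~\ref{t:MLK} gives $S^{(w,G),\op{nar}}(\bar \Delta_- \alpha, \bar \Delta_- \beta) = e^{\pi i \sum q_j} S^{\cY_-,\op{nar}}(\alpha, \beta)$. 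The quasi-Calabi--Yau condition $\sum_j c_j = d$ imposed at the start of Section~\ref{s:CTC} forces $\sum q_j = 1$ and hence $e^{\pi i \sum q_j} = -1$, so composing these three identities (inverting the last) produces exactly the sign $-1$ in the bullet.
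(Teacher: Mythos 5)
Your proof is correct and follows essentially the same strategy as the paper: each bullet is obtained by composing the three adjacent faces of the LG/CY cube, with Theorem~\ref{t:Ksquare} handling the failure of the derived functor square at exactly the point you identify. The only noteworthy difference is in the third bullet: the paper deduces it from the second bullet together with Proposition~\ref{p:pair} (the fact that $\bs^\square$ intertwines $S^\square$ with the Euler pairing, and that $\orlov$, being an equivalence, preserves $\chi$), whereas you compose the three pairing identities directly; both yield the same $-1$, and your route is an acceptable alternative.
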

\begin{proof}
For the first point, note the following chain of equalities:
\begin{align*}
\overline \Theta_l^{\op{nar}} & = \Delta_+ \circ \Theta_l^{\op{nar}} \circ \Delta_-^{-1} \\
& = \Delta_+ \circ L^{\cY_+, \op{nar}}( \tau_+, z) \circ \UU^{\op{nar}}_l \circ L^{\cY_-, \op{nar}}(\tau_-, z)^{-1} \circ \Delta_-^{-1} \\
& = L^{\cZ, \op{amb}}( \bar \tau_+, z) \circ e^{- \pi i dH/z} \circ \Delta_+ \circ  \UU^{\op{nar}}_l \circ \Delta_-^{-1} \circ L^{(w,G), nar}(\bar \tau_-, z)^{-1},
\end{align*}
where the first and second equalities are by definition and the third is Propositions~\ref{p:Lcom-} and~\ref{p:Lcom+}.  

For the second point we observe that for any $E \in D(\cY_-)_{BG}$,
\begin{align*}
& \overline \Theta_l^{\op{nar}} (\bs^{(w,G), nar}(i^1_* \circ \pi_*( E))(\bar \tau_-(t, \bt),z)) \\ =& 
  \overline \Theta_l^{\op{nar}} \circ \bar \Delta_- (\bs^{\cY_-, \op{nar}}(E)( \tau_-(t, \bt),z)) \\
  =& \bar \Delta_+ \circ  \Theta_l^{\op{nar}}  (\bs^{\cY_-, \op{nar}}(E)( \tau_-(t, \bt),z)) \\
  =& \bar \Delta_+  (\bs^{\cY_+, \op{nar}}(\vgit(E))( \tau_+(q, \bt),z)) \\
  =& \bs^{\cZ, \op{amb}}(j^* \circ \pi_* \circ \vgit(E))(\bar \tau_+(q, \bt), z) \\
 =& \bs^{\cZ, \op{amb}}(\orlov(i^1_* \circ \pi_*(E)))(\bar \tau_+(q, \bt), z).
\end{align*}
The first equality is Theorem~\ref{t:MLK}.  The second is by definition of $ \overline \Theta_l^{\op{nar}}$.  The third is Theorem~\ref{t:CTCnar}.  The fourth is Theorem~\ref{t:QSD}.  The fifth is due to the $K$-theoretic commuting diagram of Theorem~\ref{t:Ksquare}.

The third point follows from the second together with Proposition~\ref{p:pair}.  As we are in the Calabi--Yau setting, $e^{\pi i \sum q_j} = -1$.  Note that this sign discrepancy is consistent with the sign adjustments between the pairings in
Theorem~\ref{t:MLK}.
%,\ref{t:QSD}, and \ref{t:CTCnar}.

\end{proof}
\begin{remark}
A version of this theorem was given as Theorem~1.1 in \cite{CIR} in the case where $G$ is cyclic, i.e. $\PP(G) = \PP(c_1, \ldots, c_N)$ is a weighted projective space, where the correspondence between quantum $D$-modules was proven to be compatible with Orlov's equivalence.  For more general $G$, a version of the correspondence in terms of Lagrangian cones is Theorem~7.1 of \cite{LPS}, where we also made the connection with the crepant transformation conjecture.  The above theorem generalizes the first result and refines the second, showing that for general groups $G$, the LG/CY correspondence holds at the level of quantum $D$-modules, and is compatible with Orlov's equivalence.
\end{remark}

\begin{remark}
Tracing through the explicit formula for $\overline \Theta_l^{\op{nar}}$ as determined by $\overline \UU_l$, we see that the $l=0$ case agrees with  formula (7.1.1) in \cite{LPS}.  Furthermore, for all $l \in \ZZ$, this formula is consistent with \cite{CIR}.  More precisely, $e^{- \pi i dH/z} \circ \Delta_+ \circ  \UU^{\op{nar}}_l \circ \Delta_-^{-1}$ is equal to the non-equivariant limit of $\widetilde{\UU}^{\op{tw}}_l$ from Remark 4.13 of \cite{CIR} after multiplication by $-1$.  This sign discrepancy is explained in  Remark~\ref{r:shifty} above. 
\end{remark}

\subsection{The LG/CY cube}
%\note{change all narrow of $\cZ$ to ambient}
We conclude with a pictorial description 
%The combination of all 
of the main results in this paper, represented below in the following commutative cube:
\begin{equation}
\begin{tikzcd}%[cramped, sep=tiny]
 K(\cY_-)_{BG} \ar[rr, leftrightarrow, "\vgit"] \ar[rd, "i^1_* \circ \pi_*"] \ar[dd]& & K(\cY_+)_{\PP(G)}  \ar[rd, "j^* \circ \pi_*"] \ar[dd] &  \\
 & i^1_*\left(K(BG)\right) \ar[rr, leftrightarrow, crossing over, near start, "\orlov"] & &  j^*\left(K(\PP(G))\right) \ar[dd]\\
  QDM_{\op{nar}}(\cY_-) \ar[rr, leftrightarrow, near end, "\text{CTC}", , "\Theta_l^{\op{nar}}" ']  %\ar[rr, leftrightarrow, "\Theta_l^{\op{nar}}"] 
  \ar[rd, leftrightarrow, "\text{local GW/FJRW}" ', "\bar \Delta_-"] & & QDM_{\op{nar}}(\cY_+)  \ar[rd, leftrightarrow, "\text{QSD}", "\bar \Delta_+" ']  &  \\
 & QDM_{\op{nar}}(w,G) \ar[from = uu, crossing over] \ar[rr, leftrightarrow, "\text{LG/CY}" ', , "\overline \Theta_l^{\op{nar}}"] %\ar[rr, leftrightarrow, "\overline \Theta_l^{\op{nar}}"]
  & &  QDM_{\op{amb}}(\cZ) 
\end{tikzcd}
\end{equation}
where for notational feasibility we have suppressed the changes of variables involved in each of the bottom arrows.  Here each of the vertical arrows is the map of integral structures $\bs^\square$. 
%In this diagram, the label on each bottom arrow is the name given to the corresponding result which asserts:
%\begin{itemize}
%\item the existence of the isomorphism given by the bottom arrow;
%\item the commutativity of the square lying above that arrow.
%\end{itemize}

The commutativity of the top square is Theorem~\ref{t:Ksquare}.  The bottom left arrow and commutativity of the left square are the local GW/FJRW correspondence (Theorem~\ref{t:MLK}).  The bottom right arrow and commutativity of the right square are quantum Serre duality (Theorem~\ref{t:QSD}).  The bottom back arrow and commutativity of the back square are the narrow crepant transformation conjecture (Theorem~\ref{t:CTCnar}).  

From this diagram one immediately observes the existence of the bottom front arrow, obtained by composing the other three bottom arrows.  Commutativity of the bottom square then holds by construction, and commutativity of the front square follows from commutativity of all others.  We thus realize our philosophy that the (narrow) crepant transformation conjecture implies the LG/CY correspondence.  This is exactly the content of Section~\ref{ss:lgcyctc}.

\section{Appendix}

In this appendix we give a brief overview of %the technology used in for many of the main results used in this paper, including so-called 
``twisted'' Gromov--Witten and FJRW theory, where the virtual class is modified by a given characteristic class, and Givental's symplectic formalism.  While these tools are not always necessary for the statements of results in this paper, they are essential for the proofs of many of these results including Theorems~ \ref{t:MLK}, \ref{t:QSD}, and~\ref{t:mI}.  In particular, the  definitions and results of the appendix are used directly in Section~\ref{s:GWFJRW}.

\subsection{Twisted theories}\label{ss:twisty}
\subsubsection{FJRW theory}

Fix an LG pair $(w, G)$ as in Section~\ref{s:setup}.

Given parameters $s^j_k$ for $1 \leq j \leq N$ and $k \geq 0$, 
let $K$ denote the formal power series ring $\CC[[ s^j_k ]]_{1 \leq j \leq N, k \geq 0}$.
Define the formal invertible multiplicative characteristic class 
\[
\bs: \oplus \cc L_j \mapsto \exp \left( \sum_{j=1}^N \sum_{k \geq 0} s^j_k \ch_k (\cc L_j) \right) \in H^*_{CR}(\cX) \otimes_\CC K.\]

\begin{definition}
The \emph{twisted state space} $\cc H^{1, \bs}$ consists of $K$-linear combinations of elements $\phi_{g}$ indexed by $G$.
\[\cc H^{1, \bs} := \bigoplus_{g \in G} K \phi_{g}.\]
The \emph{narrow twisted state space} is defined as
\[\cc H^{1, \bs}_{\op{nar}} := \bigoplus_{g \in G_{\op{nar}}} K \phi_{gj^{-1}}.\]
\end{definition}

\begin{definition}\label{LG twisted inv}  
Given elements $g_i \in G$, integers $b_i \geq 0$ for $1 \leq i \leq n$, and an integer $h \geq 0$ with $2h-2 + n > 0$, define the genus $h$ $\bs$-twisted invariant of $(w, G)$ to be 
\[\langle \phi_{g_1j^{-1}} \psi^{b_1}, \ldots, \phi_{g_nj^{-1}} \psi^{b_n} \rangle^{1, \bs}_{h,n} :=
  \int_{[\sWbar_{h, \bg}(G)]}  \exp \left( \sum_{j=1}^N \sum_{k \geq 0} s^j_k \ch_k (\mathbb R \pi_* \cc L_j) \right) \left( \prod_{i=1}^n \psi_i^{b_i} \right).\]
  These invariants take values in $K$.
Invariants can be defined for general classes in $ \cc H^{1, \bs}$ by extending linearly.
\end{definition}

\begin{remark}
Note the shift in the above definition: insertions $\phi_{g_1 j^{-1}}, \ldots, \phi_{g_nj^{-1}}$ correspond to an integral on $\sWbar_{h, (g_1, \ldots, g_n)}(G)$, the moduli space of $W$-structures of Definition~\ref{d:wstr}.
\end{remark}

\begin{definition}\label{LG twisted pair}
The $\bs$-twisted pairing on $\cc H^{1, \bs}$ is defined by the relation
\begin{align*} \langle \phi_{g_1j^{-1}}, \phi_{g_2 j^{-1}} \rangle^{1, \bs} &:= \langle \phi_{g_1j^{-1}}, \phi_{g_2 j^{-1}}, \phi_{id} \rangle^{1, \bs}_{0, 3} \\
& =\exp\left( \sum_{j=1}^N \chi( \cc L_j)s^j_0\right)  \frac{\delta_{g_1g_2, id}}{|G|}.
\end{align*}
\end{definition}
%
%By \eqref{rig deg}, 
%
%
%observe that $\langle \phi_{g_1j^{-1}}, \phi_{g_2 j^{-1}} \rangle^{1, \bs} = 0$ unless $g_2 = g_1^{-1}$.  

The torus $\CC^*$ acts on the space of W-structures for $(w, G)$ by scaling each line bundle by a chosen factor.  Let $\lambda$ denote the equivariant parameter (the character of the standard representation of $\CC^*$) and let $-\lambda_j$ denote the character of the action on the $j$th line bundle $\cc L_j$.  We will assume that $\lambda_j$ is a nonzero multiple of $\lambda$.  
Let $e_{\CC^*}$ denote the equivariant Euler class.  After formally inverting $\lambda$, $e_{\CC^*}$ becomes invertible as well.  

Consider the specialization of the formal parameters 
given by  
\begin{equation}\label{e:spec}s_0^j = \ln( - 1/\lambda_j) \text{ and } s^j_k = (k-1)!/\lambda_j^k \text{ for } 1 \leq j \leq N, k >0.\end{equation}
Note then that $\bs( \oplus \cc L_j) = \tfrac{1}{e_{\CC^*}}( \oplus \cc L_j)$.  
With this specialization, the $\bs = e_{\CC^*}^{-1}$-twisted invariants of $(w, G)$ are equal to 
\[ \langle \phi_{g_1j^{-1}} \psi^{b_1}, \ldots, \phi_{g_nj^{-1}} \psi^{b_n} \rangle^{1, e_{\CC^*}^{-1}}_{h,n} = \int_{[\sWbar_{h, \bg}(G)]}    \frac{ \prod_{i=1}^n \psi_i^{b_i} }{e_{\CC^*}(\bigoplus \mathbb R \pi_* \cc L_j)}\]
and take values in $\CC[\lambda, \lambda^{-1}]$.  Furthermore, note that if $H^0(\cc C, \cc L_j ) = 0$ for all $j$ on all closed points of $\sWbar_{h, \bg}(G)$, then $\oplus \mathbb R^1 \pi_* ( \cc L_j)$ is a vector bundle.  With this specialization the invariant $\langle \phi_{g_1j^{-1}} \psi^{b_1}, \ldots, \phi_{g_nj^{-1}} \psi^{b_n} \rangle^{1, e_{\CC^*}^{-1}}_{h,n}$ lies in $\CC[\lambda]$, and has a well defined non-equivariant limit as $\lambda \mapsto 0$.

The pairing specializes to
\begin{equation}
 \langle \phi_{g_1j^{-1}}, \phi_{g_2 j^{-1}}, \phi_{id} \rangle^{1, e_{\CC^*}^{-1}}_{0, 3} = \prod_{k=1}^N \left(\frac{1}{-\lambda_k}\right)^{\lfloor 1 - m_k(g_1)\rfloor} \frac{\delta_{g_1g_2, id}}{|G|}.
\end{equation}
Let $\{\phi^{g}\}$ denote the dual basis of $\cc H^{1, \bs}$ with respect to the pairing.  Then 
\begin{equation}\label{e:dual}
\phi^{g j^{-1}} = |G| \prod_{k | m_k(g) = 0} (-\lambda_k) \phi_{g^{-1} j^{-1}}.
\end{equation}

\begin{lemma}\label{l:concave}  Assume $w$ is Fermat.
If the genus $h$ is equal to zero and all but at most one of $g_1, \ldots, g_n$ lie in $G_{\op{nar}}$, then $H^0(\cc C, \cc L_j) = 0$ for all $j$ on all closed points of $\sWbar_{h, \bg}(G)$.
\end{lemma}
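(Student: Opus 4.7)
The plan is to combine a direct rigidified-degree computation on smooth orbi-$\PP^1$ components with an inductive argument on the dual graph in the nodal case. For $\cc C$ smooth -- an orbi-$\PP^1$ with marked points $g_1, \ldots, g_n$ -- formula~\eqref{rig deg} yields
\[
\deg(|\cc L_j|) = \tfrac{c_j}{d}(n-2) - \sum_{i=1}^n m_j(g_i).
\]
Because $w$ is Fermat, $G \leq G_{max}(w) \cong \ZZ_{d/c_1} \oplus \cdots \oplus \ZZ_{d/c_N}$, so $m_j(g)$ is always a nonnegative integer multiple of $c_j/d$; when $g$ is narrow this multiple is strictly positive, and hence $m_j(g) \geq c_j/d$. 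Since at most one $g_i$ is broad, $\sum_i m_j(g_i) \geq (n-1)(c_j/d)$, and thus $\deg(|\cc L_j|) \leq -c_j/d < 0$ (using $c_j < d$, which is forced by Fermat nondegeneracy). A line bundle of negative degree on $\PP^1$ has no sections, so $H^0(\cc C, \cc L_j) = 0$.

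For general nodal $\cc C$, I argue by induction on the number of irreducible components, using that the dual graph is a tree. Root the tree at the unique component $C_0$ containing the broad marked point (if any), and peel leaves in order of decreasing depth. A current leaf $\cc C_\ell$ being peeled carries only narrow original marked points, one attaching node to its parent, and a (possibly empty) set of ``ex-nodes'' inherited from previously-peeled children, each bearing a vanishing condition imposed by the induction. Sections automatically vanish at $j$-narrow orbi-points (nontrivial weight on $\cc L_j$), while each imposed $j$-broad vanishing reduces the effective rigidified degree by $1$. Running the smooth-case count on $\cc C_\ell$ with its total marked-point set and then subtracting one for each $j$-broad vanishing gives
\[
\text{effective }\deg(|\cc L_j|_{\cc C_\ell}|) \leq -\tfrac{c_j}{d} - \left(1 - \tfrac{c_j}{d}\right) b_j - m_j(h_{\text{parent}}) < 0,
\]
where $b_j$ counts the $j$-broad ex-nodes. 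Hence any section on $\cc C_\ell$ satisfying all vanishing conditions is zero, and in particular vanishes at the attaching node, producing the next vanishing condition for its parent. The induction terminates once only $C_0$ remains, where the same inequality (now without the attaching-node term) again yields strictly negative effective degree and therefore $H^0 = 0$.

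The main obstacle is the careful bookkeeping required to ensure the effective-degree inequality holds uniformly across every configuration that arises during the induction -- in particular tracking the interaction between the broad/narrow type of the attaching node, the accumulated $j$-broad vanishings inherited from peeled subtrees, and the position of the broad original marked point within the tree. Once this inequality is established in all cases, the induction runs smoothly and the lemma follows.
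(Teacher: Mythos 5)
Your proof is correct and follows essentially the same route as the paper's: establish the irreducible case via the rigidified-degree formula~\eqref{rig deg} together with the Fermat bound $m_j(g) \geq c_j/d$ for narrow $g$, then peel leaves of the dual tree inward toward the component carrying the one possibly-broad marked point. The paper compresses the inductive step to the remark that $\deg(|\cc L_j|)$ restricted to a component is less than its number of nodes, together with the phrase ``working in from these components''; your version makes the bookkeeping explicit by tracking the effective degree reduction of $1$ per imposed vanishing at a $j$-broad ex-node, which is a welcome sharpening of what the paper leaves implicit.
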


\begin{proof}
Since $h = 0$ the dual graph of $\cc C$ is a tree.  
%We will prove the following slightly stronger claim:  if $(\cc C, p_1, \ldots, p_n)$ is a curve with with a W-structure for $(w, G)$, then the space of sections of $|\cc L_j|$ which vanish at all but the first marked point is zero.
%We induct on the number $r$ of irreducible components.  

By our assumption that $w$ is Fermat, 
if $g_i$ is narrow then $m_j(g_i) \geq c_j/d$.  
If $\cc C$ is irreducible, then by \eqref{rig deg}, 
\[\deg(|\cc L_j|) = c_j/d(n-2) - \sum_{i=1}^n m_j(g_i) \leq c_j/d(n-2) - c_j/d(n-1) < 0.\]  The claim follows in this case since $H^0(\cc C, \cc L_j) = H^0(|\cc C|, |\cc L_j|)$.

More generally, note that by a similar calculation as above, the degree of $| \cc L_j|$ when restricted to an irreducible component is less than the number of nodes on that component.  Furthermore, given a section $s$ of $ \cc L_j$, for each component $\cc C '$ of $\cc C$ with one node and not containing $p_1$, $s$ must vanish identically on $\cc C'$, as follows from the $r= 1$ case.  Working in from these components, we see that $s$ vanishes on all of $\cc C$.
% $s$ vanishes at every node.  The conclusion follows.
%%
%by the $r=1$ case, the 
%
%
%choose an irreducible component of $\cc C$ with one node and which does not contain $p_1$.  Call this component $\cc C_2$ and let $\cc C_1$ denote the union of the other components.  Let $s$ be a  section of $\cc L_j$ which vanishes at $p_2, \ldots, p_n$.  Then $s$ restricts to a section of ${\cc L_j}|_{\cc C_2}$ vanishing at all marked points except possibly the point corresponding to the node.  
%By the $r=1$ case, $s|_{\cc C_2}$ is identically zero.  Therefore,  if we let $k$ denote the point on $\cc C_1$ corresponding to the note, the section $s|_{\cc C_1}$ must vanish at $k$.
%
%any section of $\cc L_j$ which vanishes at $p_2, \ldots, p_n$ must vanish at the node $k$ connecting $\cc C_2$ to $\cc C_1$.  Let $\cc C '$ denote the partial rigidification of $\cc C_1$ which forgets the orbifold structure at $k$ and 
%
%It follows by the inductive assumption that this section must also vanish when restricted to $\cc C_1$.
%The conclusion follows.
\end{proof}

\begin{lemma} \label{l:specFJRW}
If $g_1, \ldots, g_n$ all lie in $G_{\op{nar}}$, then 
\begin{align*} 
 %(-1)^{\chi(\oplus \cc L_i)}   
\lim_{\lambda \mapsto 0} \langle \phi_{g_1j^{-1}} \psi^{b_1}, \ldots, \phi_{g_nj^{-1}} \psi^{b_n} \rangle^{1, e_{\CC^*}^{-1}}_{0,n} =\langle \varphi_{g_1j^{-1}} \psi^{b_1}, \ldots, \varphi_{g_nj^{-1}} \psi^{b_n} \rangle^{(w, G)}_{0,n} .
\end{align*}
If  $g_1, \ldots, g_{n-1}$ lie in $G_{\op{nar}}$, and $\theta_{g_n}$ lies in $\cc H_{g_n}(W,G)$ where $g_n$ is broad, then %\[\langle \varphi_{g_1j^{-1}} \psi^{b_1}, \ldots, \varphi_{g_nj^{-1}} \psi^{b_n} \rangle^{(w, G)}_{0,n} = 
\[\lim_{\lambda \mapsto 0} \langle \phi_{g_1j^{-1}} \psi^{b_1}, \ldots, \phi_{g_nj^{-1}} \psi^{b_n} \rangle^{1, e_{\CC^*}^{-1}}_{0,n}  \phi^{g_nj^{-1}} = 0.\]
\end{lemma}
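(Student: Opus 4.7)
The plan is to reduce both assertions to direct calculations on the moduli space of $W$-structures, exploiting the concavity provided by Lemma~\ref{l:concave}. First I would observe that under the hypothesis that at most one of $g_1,\ldots,g_n$ is broad, Lemma~\ref{l:concave} forces $H^0(\cc C,\cc L_j)=0$ for every $j$ at every geometric point of $\sWbar_{0,\bg}(G)$. Base change then ensures that $R^0\pi_*\cc L_j=0$ and that $R^1\pi_*\cc L_j$ is a genuine vector bundle. Under the specialization~\eqref{e:spec} the standard logarithmic identity yields
\[
\exp\!\left(\sum_{k\ge 0}s_k^j\,\ch_k(\mathbb R\pi_*\cc L_j)\right) \;=\; \frac{1}{e_{\CC^*}(\mathbb R\pi_*\cc L_j)} \;=\; e_{\CC^*}(R^1\pi_*\cc L_j),
\]
an expression polynomial in $\lambda$; hence the specialized twisted invariant is itself polynomial in $\lambda$ and admits a well-defined non-equivariant limit.

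For the first claim I would extract that limit by expanding $e_{\CC^*}(\bigoplus_j R^1\pi_*\cc L_j)$ as a polynomial in $\lambda$ with coefficients in $H^*(\sWbar_{0,\bg}(G))$. Dimensional considerations imply that at most one term of this expansion pairs nontrivially with $[\sWbar_{0,\bg}(G)]\cap\prod_i\psi_i^{b_i}$, and only the $\lambda^0$ coefficient---namely the ordinary top Chern class $e(\bigoplus_j R^1\pi_*\cc L_j)$---survives in the limit. The key geometric input is then that in the concave, all-narrow, genus-zero setting the FJRW virtual cycle agrees with $e(\bigoplus_j R^1\pi_*\cc L_j)\cap[\sWbar_{0,\bg}(G)]$ up to a sign determined by the construction. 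The normalization factor $(-1)^{\chi(\oplus\cc L_i)}$ appearing in our definition of the FJRW invariant (see Remark~\ref{factor mod}) is precisely what is needed to absorb that sign, using $\chi(\cc L_j)=-\rank R^1\pi_*\cc L_j$, so the limit equals the FJRW invariant.

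For the second claim, concavity still applies, so the twisted bracket is once more polynomial in $\lambda$. The decisive observation is the dual-basis formula~\eqref{e:dual}: when $g_n$ is broad there is at least one index $k$ with $m_k(g_n)=0$, and therefore
\[
\phi^{g_n j^{-1}}\;=\;|G|\prod_{k\,|\,m_k(g_n)=0}(-\lambda_k)\,\phi_{g_n^{-1} j^{-1}}
\]
carries a strictly positive power of $\lambda$. Multiplying a polynomial in $\lambda$ by such a factor and then sending $\lambda\to 0$ produces zero, which is the desired statement.

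The main obstacle is the geometric input in the first claim: identifying the FJRW virtual cycle in the concave case with the top Chern class of $\bigoplus_j R^1\pi_*\cc L_j$ on the nose, together with the exact sign that reconciles the modification of Remark~\ref{factor mod}. This identification is implicit in the construction of~\cite{FJR1} but sign conventions in the literature vary, so checking them carefully against our normalization is the only non-formal piece of the argument. Once that is dispatched, the remainder is pure polynomial algebra in $\lambda$.
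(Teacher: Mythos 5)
Your argument for the second claim is exactly the paper's: Lemma~\ref{l:concave} guarantees the specialized twisted invariant is polynomial in $\lambda$, and \eqref{e:dual} shows $\phi^{g_nj^{-1}}$ carries a strictly positive power of $\lambda$, so the product vanishes in the limit. That part is fine.

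For the first claim, you are reconstructing rather than citing the key input. The paper proves this by invoking Theorem~4.1.8(5a) of \cite{FJR1} (the \emph{concavity axiom}), which asserts precisely that in the all-narrow, concave, genus-zero situation the FJRW virtual class is $(-1)^{D}c_{D}\bigl(\bigoplus_j R^1\pi_*\cc L_j\bigr)\cap[\sWbar_{0,\bg}(G)]$ with $D=\sum_j\operatorname{rank}R^1\pi_*\cc L_j$, and this sign is what the factor $(-1)^{\chi(\oplus\cc L_j)}$ in the paper's definition of FJRW invariants cancels. You correctly identify this identity as the crux and correctly check that the signs reconcile via $\chi(\cc L_j)=-\operatorname{rank}R^1\pi_*\cc L_j$, but you leave it as ``implicit in the construction of \cite{FJR1}'' rather than citing the specific statement. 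You should just cite Theorem~4.1.8(5a), which closes the gap you flag as the ``main obstacle.'' One small inaccuracy worth fixing: the assertion that ``dimensional considerations imply that at most one term of this expansion pairs nontrivially'' is true but beside the point. The $\lambda^0$ coefficient survives simply because you are setting $\lambda=0$ in a polynomial; the dimension count is not what makes the other terms drop out of the limit.
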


\begin{proof}
The first statement is part (5a) of Theorem~4.1.8 of \cite{FJR1}. 
%(The factor of $(-1)^{\chi(\oplus \cc L_i)}  $ is due to the difference between $e_{\CC^*}(\bigoplus \mathbb R \pi_* \cc L_j)$ and $e_{\CC^*}(\bigoplus \mathbb (R \pi_* \cc L_j)^\vee)$).  
%The  statement that $\langle \varphi_{g_1j^{-1}} \psi^{b_1}, \ldots, \varphi_{g_nj^{-1}} \psi^{b_n} \rangle^{(w, G)}_{0,n}$ vanishes is Proposition 2.8 of \cite{CIR}.  Although the proposition cited is stated only for the case where $G = \langle j \rangle$, the proof of that proposition is valid whenever $w$ is Fermat, and does not depend on the chosen group $G$ of symmetries of $w$.
By Lemma \ref{l:concave}, the limit $\lim_{\lambda \mapsto 0} \langle \phi_{g_1j^{-1}} \psi^{b_1}, \ldots, \phi_{g_nj^{-1}} \psi^{b_n} \rangle^{1, e_{\CC^*}^{-1}}_{0,n}$ exists.  For $g_n$ broad, the dual element $\phi^{g_nj^{-1}}$ contains a positive power of $\lambda$ by \eqref{e:dual}.
We conclude that
\[\lim_{\lambda \mapsto 0} \langle \phi_{g_1j^{-1}} \psi^{b_1}, \ldots, \phi_{g_nj^{-1}} \psi^{b_n} \rangle^{1, e_{\CC^*}^{-1}}_{0,n}  \phi^{g_nj^{-1}} = 0.\]

\end{proof}
\subsubsection{GW theory}\label{ss:twisty2}
Let $\cV_1, \ldots, \cV_N$ be a collection of line bundles on a smooth and proper Deligne--Mumford stack $\cX$.  
As in the case of FJRW theory, one can define twisted GW invariants depending on parameters $s_k^j$ defining a multiplicative characteristic class and taking values in $K$.  %Let $\cc V_j$ denote the pullback of $V_j$ to the universal curve $\cc C$ over $\sMbar_{h,r}(\cX, d)$.

\begin{definition}
Given $\alpha_1, \ldots, \alpha_n \in H^*_{\op{CR}}(\cX)$ and integers $b_1, \ldots, b_n \geq 0$ define the \emph{$\bs$-twisted}  Gromov--Witten invariant of $\cX$ to be
\[ \langle \alpha_1 \psi^{b_1}, \ldots, \alpha_n \psi^{b_n} \rangle^{\cX, \bs}_{h, n} := 
\int_{[\sMbar_{h,n}(\cc X, d)]^{vir}} \exp \left( \sum_{j=1}^N \sum_{k \geq 0} s^j_k \ch_k (\mathbb R \pi_* f^* \cV_j) \right)  \prod_{i=1}^n ev_i^*( \alpha_i) \cup \psi_i^{b_i}.\]
\end{definition}

\begin{definition}
The \emph{$\bs$-twisted GW state space} of $\cX$ is $H^*_{\op{CR}}(\cX)\otimes K$.  We define the \emph{$\bs$-twisted  pairing} to be
\[ \langle \alpha, \beta \rangle^{ \cX, \bs} := \left(\exp \left( \sum_{j=1}^N \sum_{k \geq 0} s^j_k \ch_k ( \cV_j) \right) \alpha, I_*(\beta) \right).\]
\end{definition}

\begin{notation}
Consider the particular case where $\cX = BG$ and $\cV_j = \CC_j$ is the $j$th factor in $\cY_-$.  Let $T = \CC^*$ act on $\CC_j$ with weight $-\lambda_j$.  In this case, we denote $\langle -, \ldots, - \rangle^{BG, \bs}_{h, n}$ by $\langle -, \ldots, - \rangle^{0, \bs}_{h, n}.$
\end{notation}

Exactly as before, we invert the equivariant parameter and specialize the variables 
\begin{equation}\label{e:twistedGW} s_0^j = \ln( - 1/\lambda_j) \text{ and } s^j_k = (k-1)!/\lambda_j^k \text{ for } 1 \leq j \leq N, k >0\end{equation}
so that $\bs( \oplus \CC_j) = \tfrac{1}{e_{\CC^*}}( \oplus \CC_j)$.  

\begin{remark}\label{r:twistedGW}
In this case, the specialized $\bs$-twisted GW invariants of $BG$ are equal to the $T$-equivariant GW invariants of $
\cY_-$.  When the non-equivariant limit exists, these specialize to the usual GW invariants of $\cY_-$.
\end{remark}

\begin{lemma} \label{l:reltotwisted}
Under the specialization of $\bs$ as in \eqref{e:twistedGW},
$\langle \ii_g, \bt \rangle^{0, \bs}  $ specializes to the equivariant Gromov--Witten invariant $\langle \ii_g, \bt \rangle^{\cY_-^T} $.  When $g$ is narrow, there is a well-defined non-equivariant limit.  When $h$ is broad, $\langle \ii_g, \bt, \ii_h\rangle^{\cY_-} \ii^h $ is zero in the non-equivariant limit.  
%
%When $g$ is narrow and $\bt$ lies in the narrow state space, 
%$\langle \varphi_{gj^{-1}}, \bt \rangle^{1, \bs}  $ specializes to $\langle \varphi_{gj^{-1}}, \bt \rangle^{(w,G)}  $ in the non-equivariant limit.  
%If in addition $h$ is broad, $\lim_{\lambda \mapsto 0} \langle \phi_{gj^{-1}}, \bt, \phi_{hj^{-1}}\rangle^{1, \bs} \phi^{hj^{-1}}= 0$.
%\langle \varphi_{gj^{-1}}, \bt, \varphi_{hj^{-1}}\rangle^{(w,G)} \varphi_{hj^{-1}} $. 
\end{lemma}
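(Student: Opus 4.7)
The plan is to dispatch the three assertions in turn, all paralleling the FJRW arguments in Lemmas~\ref{l:specFJRW} and~\ref{l:concave}. For the first assertion, observe that $\cY_-$ is the total space of the $G$-equivariant vector bundle $V = \bigoplus_{j=1}^N \CC_j$ over $BG$, with the torus $T = \CC^*$ acting on the fibers with weights $-\lambda_j$. Since $BG$ is the $T$-fixed locus of $\cY_-$, $T$-equivariant stable maps to $\cY_-$ are supported on stable maps to $BG$, and the standard virtual localization / Coates--Givental identification expresses $T$-equivariant GW invariants of $\cY_-$ as invariants on $BG$ twisted by $e_T(R\pi_* f^* V)^{-1}$. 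Under the specialization \eqref{e:twistedGW} the characteristic class $\bs$ evaluated on $V$ is precisely this inverse Euler class, so $\langle \ii_g, \bt\rangle^{0,\bs}$ specializes to $\langle \ii_g, \bt\rangle^{\cY_-^T}$.

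For the second assertion, I would prove the GW analog of Lemma~\ref{l:concave}: if at least one marking carries a narrow insertion, then $H^0(\cc C, f^* \CC_j) = 0$ for every $j$ on every closed point of the moduli. A stable map $f\colon \cc C \to BG$ from a genus zero orbi-curve has underlying degree zero, so for each irreducible component $\cc C'$ of $\cc C$,
\[
\deg |f^*\CC_j|_{\cc C'} = -\sum_{p \in \cc C' \cap (\text{markings}\cup\text{nodes})} m_j(g_p).
\]
A narrow insertion contributes $m_j(g_p) > 0$ for every $j$, so on the component carrying the narrow marking the degree is strictly negative and $H^0$ vanishes. Vanishing then propagates through the tree of components: a section forced to vanish on one component vanishes at every adjacent node, and the neighboring restriction acquires an extra zero, reducing its degree below the number of nodes; iterating gives the global vanishing. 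Consequently $R\pi_* f^*\CC_j$ is a vector bundle concentrated in degree one, the specialized twisted invariant lies in $\CC[\lambda][[\bt]]$, and the non-equivariant limit is well-defined.

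For the third assertion, a $T$-equivariant localization computation on the twisted sector $[V^h/G]$ yields
\[
\langle \ii_h, \ii_{h^{-1}}\rangle^{\cY_-^T} = \tfrac{1}{|G|}\prod_{j:\, m_j(h) = 0}\tfrac{1}{-\lambda_j},
\]
so the dual basis element is
\[
\ii^h = |G|\prod_{j:\, m_j(h)=0}(-\lambda_j)\,\ii_{h^{-1}}.
\]
When $h$ is broad, at least one $m_j(h) = 0$, so $\ii^h$ carries strictly positive $\lambda$-weight. Applying the vanishing of $R^0\pi_*$ from the second assertion to the three-point invariant $\langle \ii_g, \bt, \ii_h\rangle^{\cY_-}$ (which remains polynomial in $\lambda$ thanks to the narrow insertion $\ii_g$), the product $\langle \ii_g, \bt, \ii_h\rangle^{\cY_-}\,\ii^h$ vanishes as $\lambda\to 0$. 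The main obstacle is the concavity argument of the second step: unlike Lemma~\ref{l:concave} there is no W-structure degree constraint, but equally there is no $c_j/d$ offset in the degree formula, so the required inequality reduces to the straightforward $m_j(g) > 0$ for narrow $g$; the inward induction on the dual tree carries over verbatim and handles the reducible case.
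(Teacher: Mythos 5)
Your proof is correct and follows essentially the same route as the paper's: the first assertion is obtained by virtual localization (the paper cites Graber--Pandharipande directly, you give the Coates--Givental restatement), and the third by noting $\ii^h$ carries a strictly positive power of $\lambda$ when $h$ is broad. For the middle assertion the paper simply asserts the existence of the non-equivariant limit; you fill in that gap with a concavity argument mirroring Lemma~\ref{l:concave}, and your argument is in fact cleaner than the FJRW version because a degree-zero map to $BG$ gives $\deg|f^*\CC_j|_{\cc C'} = -\sum m_j(g_p)$ with no $c_j/d(k-2)$ offset, so a single narrow marking already forces negativity rather than ``all but one'' as in the W-structure case. The degree formula, the propagation of the vanishing section through the dual tree, and the formula $\ii^h = |G|\prod_{j:\,m_j(h)=0}(-\lambda_j)\,\ii_{h^{-1}}$ via equivariant localization on $[V^h/G]$ are all correct.
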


\begin{proof}
Under the specialization of $\bs$ as in \eqref{e:twistedGW},  $(0, \bs)$ invariants become (equivariant) Gromov--Witten invariants of $\cY_-$ by \cite{GP}. 
%Also, the $(1, \bs)$ invariants specialize to FJRW invariants of $(w, G)$ when there is at most one insertion of broad type by Lemma~\ref{l:specFJRW}.  
%This follows from the vanishing result of Proposition 2.8 of \cite{CIR} in the case that there exists a broad insertion, and 
%??? of \cite{FJR1W} when all insertions are narrow.
When $g$ is narrow, the limit  $\lim_{\lambda \mapsto 0} \langle \ii_g, \bt, \ii_h\rangle^{\cY_-}$ exists.  If $h$ is broad, then $\ii^h$ contains a positive power of $\lambda$.  The second statement follows.
\end{proof}

\subsection{Givental formalism}
We give here an extremely brief review of Givental's symplectic formalism, recalling only the facts necessary for the proof of Proposition~\ref{p:Lcom-}.
For a detailed exposition  see \cite{G3}.

Let $\square$ denote either FJRW theory $(w, G)$, GW theory on a compact orbifold $\cX$, or a twisted theory (such as $(0, \bs)$ or $(1, \bs)$).  Denote by $\sV^\square$ the space 
\[ \sV^\square := H^\square ((z^{-1}))[[\bs]]\]
where $\bs = 0$ in the untwisted case.  This vector space is endowed with a symplectic pairing given by
\[\Omega^\square(f_1(z), f_2(z)) := \op{Res}_{z=0}\br{f_1(-z), f_2(z) }^\square\]
where $\br{ - , -}^\square$ denotes the state space pairing for $H^\square$.  There is a natural polarization
\[\sV^\square = \sV^\square_+ \oplus \sV^\square_- = H^\square [z][[\bs]] \oplus H^\square [[z^{-1}]][[\bs]].\]
Given a basis $\{T_i\}_{i \in I}$ for $H^\square$, we obtain Darboux coordinates $\{q^i_k, p_{k,i}\}_{i \in I, k \geq 0}$ with respect to the polarization.  A general point of $\sV^\square$ may be written as 
\[
\sum_{k \geq 0}\sum_{i \in I} q_k^i T_i z^k + \sum_{k \geq 0}\sum_{i \in I} p_{k,i}T^i (-z)^{-k-1}.
\]
\begin{definition}\label{cone}
The \emph{overruled Lagrangian cone} (sometimes called simply the Lagrangian cone)  $\sL^\square$ is the subspace of $\sV^\square$ parametrized as
\begin{equation}\label{e:cone2}
- z+ \sum_{\substack{k \geq 0 \\ i \in I}} t_k^i T_i z^k +\sum_{\substack{a_1, \ldots , a_n,  a\geq 0 \\ i_1, \ldots , i_n, i \in I}} \frac{t^{i_1}_{a_1}\cdots t^{i_n}_{a_n}}{n!(-z)^{a+1}}\langle \psi^aT_i ,\psi^{a_1}T_{i_1},\dots,\psi^{a_n}T_{i_n}\rangle_{0, n+1}^\square T^i,
\end{equation}
where the parameters varying are the $\{t^i_k\}_{k \geq 0, i \in I}$.
\end{definition}
This space can also be defined as a shift of the differential of the genus zero descendant potential function which shows that it is a Lagrangian subspace.  Due to various universal identities in GW and FJRW theory, $\sL^\square$ satisfies the following 
\begin{enumerate}
\item it is a cone;
\item it is ``overruled:'' for all $f \in \sL^\square$,
\[\sL^\square \cap \cc T_f\sL = z \cc T_f \sL,\]
where $\cc T_f\sL$ denotes the tangent space at $f$.
\end{enumerate}
Givental's $J$-function is given by
\[
J^\square(\bt, z) := z + \bt + \sum_{n \geq 0} \sum_{ i \in I} \frac{1}{n!} \br{ \frac{T_i}{z - \psi}, \bt, \ldots, \bt }^\square_{0, n+1} T^i
\]  
for $\bt \in H^\square$.
Note that $J^\square(\bt, -z) = -z \oplus \bt \oplus \sV^- \cap \sL^\square$.  Due to the aforementioned properties of $\sL^\square$, the $J$-function fully determines the Lagrangian cone.  More precisely, the cone is a union of $z$ times the tangent spaces at $J^\square(\bt, -z)$,
\begin{equation}\label{e:Jg0}
\sL^\square = \set{z\cc T_{J^\square(\bt, -z)} \sL | \bt \in H^\square},
\end{equation}
and the tangent spaces take a particularly simple form,
\begin{equation}\label{e:Jg}
z\cc T_{J^\square(\bt, -z)}\sL = \set{J^\square(\bt, -z) + z\sum c_i(z) \frac{\partial}{\partial t^i} J^\square(\bt, -z) | c_i(z) \in \CC[z]}.
\end{equation}
It follows from the string equation (Section 26.3 of \cite{HV}) that if we assume $T_0$ is the identity in $\cc H^\square$, then $z \frac{\partial}{\partial t^0} J^\square(\bt, z) = J^\square(\bt, -z)$ and
\begin{equation}\label{e:Jtan}
\cc T_{J^\square(\bt, -z)}\sL = \set{ \sum c_i(z) \frac{\partial}{\partial t^i} J^\square(\bt, -z) | c_i(z) \in \CC[z]}.
\end{equation}

\bibliographystyle{plain}
\bibliography{references}

\begin{thebibliography}{10}

\bibitem{AGV}
Dan Abramovich, Tom Graber, and Angelo Vistoli.
\newblock Gromov-{W}itten theory of {D}eligne-{M}umford stacks.
\newblock {\em Amer. J. Math.}, 130(5):1337--1398, 2008.

\bibitem{AV}
Dan Abramovich and Angelo Vistoli.
\newblock Compactifying the space of stable maps.
\newblock {\em J. Amer. Math. Soc.}, 15(1):27--75, 2002.

\bibitem{BFK2}
Matthew Ballard, David Favero, and Ludmil Katzarkov.
\newblock A category of kernels for equivariant factorizations and its
  implications for {H}odge theory.
\newblock {\em Publ. Math. Inst. Hautes \'{E}tudes Sci.}, 120:1--111, 2014.

\bibitem{BFK}
Matthew Ballard, David Favero, and Ludmil Katzarkov.
\newblock Variation of geometric invariant theory quotients and derived
  categories.
\newblock {\em J. Reine Angew. Math.}, 746:235--303, 2019.

\bibitem{BF}
K.~Behrend and B.~Fantechi.
\newblock The intrinsic normal cone.
\newblock {\em Invent. Math.}, 128(1):45--88, 1997.

\bibitem{BH}
Lev~A. Borisov and R.~Paul Horja.
\newblock On the {$K$}-theory of smooth toric {DM} stacks.
\newblock In {\em Snowbird lectures on string geometry}, volume 401 of {\em
  Contemp. Math.}, pages 21--42. Amer. Math. Soc., Providence, RI, 2006.

\bibitem{ChenR1}
Weimin Chen and Yongbin Ruan.
\newblock A new cohomology theory of orbifold.
\newblock {\em Comm. Math. Phys.}, 248(1):1--31, 2004.

\bibitem{CIR}
Alessandro Chiodo, Hiroshi Iritani, and Yongbin Ruan.
\newblock Landau-{G}inzburg/{C}alabi-{Y}au correspondence, global mirror
  symmetry and {O}rlov equivalence.
\newblock {\em Publ. Math. Inst. Hautes \'Etudes Sci.}, 119:127--216, 2014.

\bibitem{ChR}
Alessandro Chiodo and Yongbin Ruan.
\newblock Landau-{G}inzburg/{C}alabi-{Y}au correspondence for quintic
  three-folds via symplectic transformations.
\newblock {\em Invent. Math.}, 182(1):117--165, 2010.

\bibitem{CFFGKS}
Ionut Ciocan-Fontanine, David Favero, Bumsig Kim, J\'er\'emy Gu\'er\'e, and
  Mark Shoemaker.
\newblock Fundamental factorization of a {GLSM}, part {I}: Construction.
\newblock arXiv:1802.05247, 2018.

\bibitem{ClRo}
Emily Clader and Dustin Ross.
\newblock Sigma models and phase transitions for complete intersections.
\newblock {\em Int. Math. Res. Not.}, (15):4799--4851, 2018.

\bibitem{ClRu}
Emily Clader and Yongbin Ruan.
\newblock {\em B-Model {G}romov-{W}itten Theory}.
\newblock Trends in mathematics. Birkh\"auser, Cham, Switzerland, 2018.

\bibitem{CCIT}
Tom Coates, Alessio Corti, Hiroshi Iritani, and Hsian-Hua Tseng.
\newblock Computing genus-zero twisted {G}romov-{W}itten invariants.
\newblock {\em Duke Math. J.}, 147(3):377--438, 2009.

\bibitem{CCIT2}
Tom Coates, Alessio Corti, Hiroshi Iritani, and Hsian-Hua Tseng.
\newblock A mirror theorem for toric stacks.
\newblock {\em Compos. Math.}, 151(10):1878--1912, 2015.

\bibitem{CGIJJM}
Tom Coates, Amin Gholampour, Hiroshi Iritani, Yunfeng Jiang, Paul Johnson, and
  Cristina Manolache.
\newblock The quantum {L}efschetz hyperplane principle can fail for positive
  orbifold hypersurfaces.
\newblock {\em Math. Res. Lett.}, 19(5):997--1005, 2012.

\bibitem{CG}
Tom Coates and Alexander Givental.
\newblock Quantum {R}iemann-{R}och, {L}efschetz and {S}erre.
\newblock {\em Ann. of Math. (2)}, 165(1):15--53, 2007.

\bibitem{CIJ}
Tom Coates, Hiroshi Iritani, and Yunfeng Jiang.
\newblock The crepant transformation conjecture for toric complete
  intersections.
\newblock {\em Adv. Math.}, 329:1002--1087, 2018.

\bibitem{CIJS}
Tom Coates, Hiroshi Iritani, Yunfeng Jiang, and Ed~Segal.
\newblock {$K$}-theoretic and categorical properties of toric
  {D}eligne-{M}umford stacks.
\newblock {\em Pure Appl. Math. Q.}, 11(2):239--266, 2015.

\bibitem{CIT}
Tom Coates, Hiroshi Iritani, and Hsian-Hua Tseng.
\newblock Wall-crossings in toric {G}romov-{W}itten theory. {I}. {C}repant
  examples.
\newblock {\em Geom. Topol.}, 13(5):2675--2744, 2009.

\bibitem{CR}
Tom Coates and Yongbin Ruan.
\newblock Quantum cohomology and crepant resolutions: a conjecture.
\newblock {\em Ann. Inst. Fourier (Grenoble)}, 63(2):431--478, 2013.

\bibitem{FJR1}
Huijun Fan, Tyler Jarvis, and Yongbin Ruan.
\newblock The {W}itten equation, mirror symmetry, and quantum singularity
  theory.
\newblock {\em Ann. of Math. (2)}, 178(1):1--106, 2013.

\bibitem{FJR15}
Huijun Fan, Tyler Jarvis, and Yongbin Ruan.
\newblock A mathematical theory of the gauged linear sigma model.
\newblock {\em Geom. Topol.}, 22(1):235--303, 2017.

\bibitem{G1}
Alexander~B. Givental.
\newblock Equivariant {G}romov-{W}itten invariants.
\newblock {\em Int. Math. Res. Not.}, (13):613--663, 1996.

\bibitem{G3}
Alexander~B. Givental.
\newblock Symplectic geometry of {F}robenius structures.
\newblock In {\em Frobenius manifolds}, Aspects Math., E36, pages 91--112.
  Friedr. Vieweg, Wiesbaden, 2004.

\bibitem{GP}
T.~Graber and R.~Pandharipande.
\newblock Localization of virtual classes.
\newblock {\em Invent. Math.}, 135(2):487--518, 1999.

\bibitem{Hirano}
Yuki Hirano.
\newblock Derived {K}n{\"o}rrer periodicity and {O}rlov's theorem for gauged
  {L}andau--{G}inzburg models.
\newblock {\em Compositio Mathematica}, 153(5):973--1007, 2017.

\bibitem{HV}
Kentaro Hori, Sheldon Katz, Albrecht Klemm, Rahul Pandharipande, Richard
  Thomas, Cumrun Vafa, Ravi Vakil, and Eric Zaslow.
\newblock {\em Mirror symmetry}, volume~1 of {\em Clay Mathematics Monographs}.
\newblock American Mathematical Society, Providence, RI; Clay Mathematics
  Institute, Cambridge, MA, 2003.
\newblock With a preface by Vafa.

\bibitem{Iri}
Hiroshi Iritani.
\newblock An integral structure in quantum cohomology and mirror symmetry for
  toric orbifolds.
\newblock {\em Adv. Math.}, 222(3):1016--1079, 2009.

\bibitem{Iri2}
Hiroshi Iritani.
\newblock Ruan's conjecture and integral structures in quantum cohomology.
\newblock In {\em New developments in algebraic geometry, integrable systems
  and mirror symmetry ({RIMS}, {K}yoto, 2008)}, volume~59 of {\em Adv. Stud.
  Pure Math.}, pages 111--166. Math. Soc. Japan, Tokyo, 2010.

\bibitem{Iri3}
Hiroshi Iritani.
\newblock Quantum cohomology and periods.
\newblock {\em Ann. Inst. Fourier (Grenoble)}, 61(7):2909--2958, 2011.

\bibitem{IMM}
Hiroshi Iritani, Etienne Mann, and Thierry Mignon.
\newblock Quantum {S}erre theorem as a duality between quantum {$D$}-modules.
\newblock {\em Int. Math. Res. Not.}, (9):2828--2888, 2016.

\bibitem{Isik}
Mehmet~Umut Isik.
\newblock Equivalence of the derived category of a variety with a singularity
  category.
\newblock {\em Int. Math. Res. Not.}, (12):2787--2808, 2013.

\bibitem{Jia}
Yunfeng Jiang.
\newblock The orbifold cohomology ring of simplicial toric stack bundles.
\newblock {\em Illinois J. Math.}, 52(2):493--514, 2008.

\bibitem{LPS}
Yuan-Pin Lee, Nathan Priddis, and Mark Shoemaker.
\newblock A proof of the {L}andau-{G}inzburg/{C}alabi-{Y}au correspondence via
  the crepant transformation conjecture.
\newblock {\em Ann. Sci. \'Ec. Norm. Sup\'er. (4)}, 49(6):1403--1443, 2016.

\bibitem{PV3}
Alexander Polishchuk and Arkady Vaintrob.
\newblock Matrix factorizations and singularity categories for stacks.
\newblock {\em Ann. Inst. Fourier (Grenoble)}, 61(7):2609--2642, 2011.

\bibitem{PV2}
Alexander Polishchuk and Arkady Vaintrob.
\newblock Chern characters and {H}irzebruch-{R}iemann-{R}och formula for matrix
  factorizations.
\newblock {\em Duke Math. J.}, 161(10):1863--1926, 2012.

\bibitem{PV}
Alexander Polishchuk and Arkady Vaintrob.
\newblock Matrix factorizations and cohomological field theories.
\newblock {\em J. Reine Angew. Math.}, 714:1--122, 2016.

\bibitem{Seg}
Ed~Segal.
\newblock Equivalence between {GIT} quotients of {L}andau-{G}inzburg
  {B}-models.
\newblock {\em Comm. Math. Phys.}, 304(2):411--432, 2011.

\bibitem{Shi}
Ian Shipman.
\newblock A geometric approach to {O}rlov's theorem.
\newblock {\em Compos. Math.}, 148(5):1365--1389, 2012.

\bibitem{Sh1}
Mark Shoemaker.
\newblock Narrow quantum {D}-modules and quantum {S}erre duality.
\newblock arXiv:1811.01888, 2018.

\end{thebibliography}

\end{document}